\theoremstyle{plain}
\newtheorem{theorem}{Theorem}
\newtheorem{proposition}[theorem]{Proposition}
\newtheorem{lemma}[theorem]{Lemma}
\newtheorem{conjecture}[theorem]{Conjecture}
\theoremstyle{definition}
\newtheorem{definition}[theorem]{Definition}
\newtheorem{example}[theorem]{Example}
\newtheorem{exercise}[theorem]{Exercise}
\theoremstyle{remark}
\newtheorem{remark}[theorem]{Remark}
\numberwithin{theorem}{chapter}   
\def\cleardoublepage{\clearpage\if@twoside \ifodd\c@page\else
\hbox{}
\thispagestyle{empty}
\newpage
\if@twocolumn\hbox{}\newpage\fi\fi\fi}
\newcommand{\nc}{\newcommand}
\nc{\tab}{{\text{}\\}}
\nc{\homdeg}{{\text{hom deg }}}
\nc{\fa}{{\mathfrak{a}}}
\nc{\fb}{{\mathfrak{b}}}
\nc{\fg}{{\mathfrak{g}}}
\nc{\fh}{{\mathfrak{h}}}
\nc{\fj}{{\mathfrak{j}}}
\nc{\fn}{{\mathfrak{n}}}
\nc{\fu}{{\mathfrak{u}}}
\nc{\fp}{{\mathfrak{p}}}
\nc{\fr}{{\mathfrak{r}}}
\nc{\ft}{{\mathfrak{t}}}
\nc{\fsl}{{\mathfrak{sl}}}
\nc{\fgl}{{\mathfrak{gl}}}
\nc{\fA}{{\mathfrak{A}}}
\nc{\fB}{{\mathfrak{B}}}
\nc{\fC}{{\mathfrak{C}}}
\nc{\fG}{{\mathfrak{G}}}
\nc{\fK}{{\mathfrak{K}}}
\nc{\fP}{{\mathfrak{P}}}
\nc{\fR}{{\mathfrak{R}}}
\nc{\fV}{{\mathfrak{V}}}
\nc{\fZ}{{\mathfrak{Z}}}
\nc{\pol}{{\text{Poles}}}
\nc{\asyt}{{\text{ASYT}}}
\nc{\BA}{{\mathbb{A}}}
\nc{\BC}{{\mathbb{C}}}
\nc{\BM}{{\mathbb{M}}}
\nc{\BN}{{\mathbb{N}}}
\nc{\BQ}{{\mathbb{Q}}}
\nc{\BF}{{\mathbb{F}}}
\nc{\tBF}{{\widetilde{\mathbb{F}}}}
\nc{\Bk}{{\mathbb{k}}}
\nc{\BK}{{\mathbb{K}}}
\nc{\BP}{{\mathbb{P}}}
\nc{\BR}{{\mathbb{R}}}
\nc{\BZ}{{\mathbb{Z}}}
\nc{\CA}{{\mathcal{A}}}
\nc{\CB}{{\mathcal{B}}}
\nc{\CC}{{\mathcal{C}}}
\nc{\CE}{{\mathcal{E}}}
\nc{\CF}{{\mathcal{F}}}
\nc{\CG}{{\mathcal{G}}}
\nc{\CI}{{\mathcal{I}}}
\nc{\CL}{{\mathcal{L}}}
\nc{\CM}{{\mathcal{M}}}
\nc{\CMa}{\mathcal{M}}
\nc{\CH}{{\mathcal{H}}}
\nc{\CN}{{\mathcal{N}}}
\nc{\CO}{{\mathcal{O}}}
\nc{\CP}{{\mathcal{P}}}
\nc{\CQ}{{\mathcal{Q}}}
\nc{\CR}{{\mathcal{R}}}
\nc{\CS}{{\mathcal{S}}}
\nc{\CT}{{\mathcal{T}}}
\nc{\CU}{{\mathcal{U}}}
\nc{\CV}{{\mathcal{V}}}
\nc{\CW}{{\mathcal{W}}}
\nc{\CX}{{\mathcal{X}}}
\nc{\CZ}{{\mathcal{Z}}}
\nc{\uu}{{U_q(\hgl_n)}}
\nc{\uup}{{U_q^+(\hgl_n)}}
\nc{\uug}{{U_q^\geq(\hgl_n)}}
\nc{\uus}{{U_q(\hsl_n)}}
\nc{\uuu}{{U_q(\hgl_1)}}
\nc{\nni}{{\mathbb{N}}^I}
\nc{\zzi}{{\mathbb{Z}}^I}
\nc{\qqi}{{\mathbb{Q}}^I}
\nc{\rri}{{\mathbb{R}}^I}
\nc{\nn}{{\mathbb{N}}^n}
\nc{\zz}{{\mathbb{Z}}^n}
\nc{\qq}{{\mathbb{Q}}^n}
\nc{\rr}{{\mathbb{R}}^n}
\nc{\tT}{{T}}
\nc{\Ka}{{K}}
\nc{\wfZ}{{\widetilde{\fZ}}}
\nc{\wT}{{\widetilde{T}}}
\nc{\od}{{\overline{d}}}
\nc{\rg}{{\textrm{R}\Gamma}}
\nc{\erg}{{\emph{R}\Gamma}}
\nc{\id}{{\textrm{id}}}
\def\ph{\varphi}
\def\pt{\textrm{pt}}
\def\Ext{\textrm{Ext}}
\def\Hom{\textrm{Hom}}
\def\ph{\varphi}
\def\e{\varepsilon}
\def\vs{\varsigma}
\def\ind{\text{ind}}
\def\eind{\emph{ind}}
\def\high{\text{ht }}
\def\wide{\text{wd }}
\def\wt{\text{wt }}
\def\ehigh{\emph{ht }}
\def\ewide{\emph{wd }}
\def\op{\text{op}}
\def\sym{\text{Sym}}
\def\estab{\emph{Stab}}
\def\stab{\text{Stab}}
\def\su{{U_q(\dot{\fsl}_n)}}
\def\sup{{U_q^+(\dot{\fsl}_n)}}
\def\uu{{U_q(\dot{\fgl}_n)}}
\def\uuu{{U_q(\dot{\fgl}_1)}}
\def\uup{{U_q^+(\dot{\fgl}_n)}}
\def\uum{{U_q^-(\dot{\fgl}_n)}}
\def\uul{{U_q^\leq(\dot{\fgl}_n)}}
\def\uug{{U_q^\geq(\dot{\fgl}_n)}}
\def\UU{{U_{q,t}(\ddot{\fsl}_n)}}
\def\UUm{{U^-_{q,t}(\ddot{\fsl}_n)}}
\def\UUp{{U^+_{q,t}(\ddot{\fsl}_n)}}
\def\UUl{{U^\leq_{q,t}(\ddot{\fsl}_n)}}
\def\UUg{{U^\geq_{q,t}(\ddot{\fsl}_n)}}
\def\bla{{\boldsymbol{\lambda}}}
\def\bmu{{\boldsymbol{\mu}}}
\def\bnu{{\boldsymbol{\nu}}}
\def\bth{{\boldsymbol{\theta}}}
\def\ov{\overline}
\def\sh{\ast}
\def\woo{\widehat{\otimes}}
\def\ba{{\textbf{a}}}
\def\bb{{\textbf{b}}}
\def\bc{{\textbf{c}}}
\def\bk{{\textbf{k}}}
\def\bl{{\textbf{l}}}
\def\bm{{\textbf{m}}}
\def\bv{{\textbf{v}}}
\def\bw{{\textbf{w}}}
\def\bs{{\boldsymbol{\vs}}}
\def\ebv{{\emph{\textbf{v}}}}
\def\ebw{{\emph{\textbf{w}}}}
\def\ebm{{\emph{\textbf{m}}}}
\def\ebk{{\emph{\textbf{k}}}}
\def\od{\overline{d}}
\def\loccit{\emph{loc. cit. }}
\def\loc{\text{loc}}
\def\lamu{{\lambda \backslash \mu}}
\def\blamu{{\bla \backslash \bmu}}
\def\blanu{{\bla \backslash \bnu}}
\def\bnumu{{\bnu \backslash \bmu}}
\def\cray{{\bla^+ \backslash \bla^-}}
\def\ma{{\max}}
\def\la{{\lambda}}
\def\th{{\theta}}
\def\sq{{\square}}
\def\bsq{{\blacksquare}}
\def\col{{\text{color }}}
\def\ld{\text{l.d. }}
\def\eld{\emph{l.d. }}
\def\mindeg{\text{min deg }}
\def\maxdeg{\text{max deg }}
\def\emindeg{\emph{min deg }}
\def\emaxdeg{\emph{max deg }}
\def\homdeg{\text{hom deg }}
\def\La{{\Lambda}}
\def\b{\textbf}
\def\leaf{\text{Leaf}}
\def\sslash{\mathbin{/\mkern-6mu/}}
\def\un{\text{un}}
\def\ss{\text{ss}}
\def\s{\text{s}}
\def\pic{\text{Pic}}
\def\rep{\text{Rep}}
\def\ochi{\widetilde{\chi}}
\def\oP{P}
\def\oQ{Q}
\def\oZ{\widetilde{Z}}
\nc{\tCF}{{\widetilde{\CF}}}
\nc{\oCF}{{\overline{\CF}}}
\nc{\oCN}{{\widetilde{\CN}}}
\nc{\oCV}{{\overline{\CV}}}
\nc{\oCW}{{\overline{\CW}}}
\def\oij{{\overrightarrow{ij}}}
\begin{document}

\bibliographystyle{agu04} 

\titlepage{Quantum Algebras and Cyclic Quiver Varieties}{Andrei Negu\cb t}

%\titlepage{Quantum Algebras and Cyclic Quiver Varieties}{Andrei Negu\cb t}{Doctor of Philosophy}
%{Mathematics}{2015}
%{Professor Chiu-Chu Melissa Liu, Chair\\
%	Professor Andrei Okounkov, Sponsor\\
%	Professor Alexander Braverman \\
%	Professor Eugene Gorsky \\
%	Professor Davesh Maulik}

\frontispiece{\includegraphics[width=5.8in]{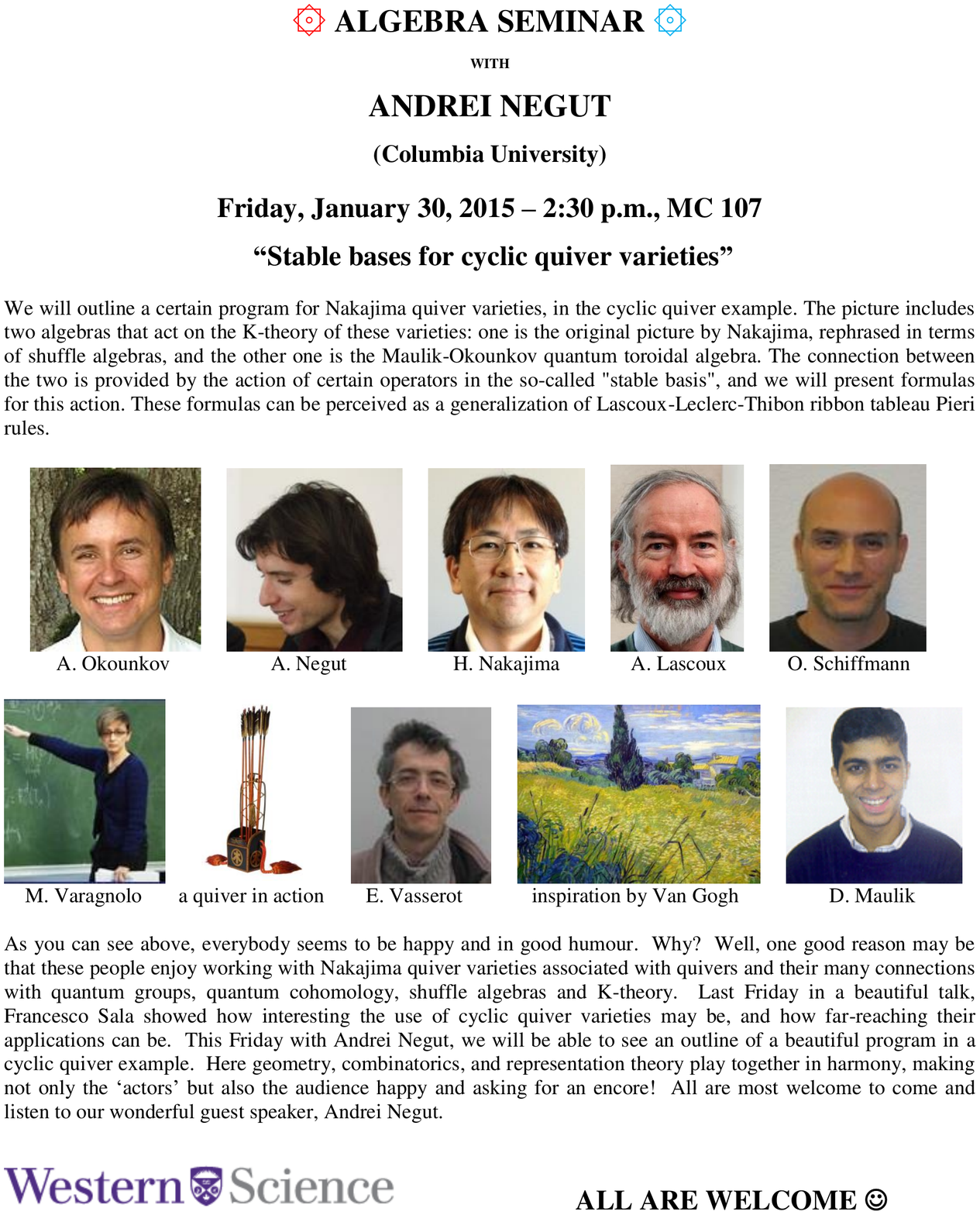} \\
\centering{\footnotesize{Poster, courtesy of Ján Mináč and Leslie Hallock, for my talk at the University of Western Ontario}}}

\copyrightpage{Andrei Negu\cb t}

\startabstractpage
{Quantum Algebras and Cyclic Quiver Varieties}{Andrei Negu\cb t}
\noindent The purpose of this thesis is to present certain viewpoints on the geometric representation theory of Nakajima cyclic quiver varieties, in relation to the Maulik-Okounkov stable basis. Our main technical tool is the shuffle algebra, which arises as the $K-$theoretic Hall algebra of the double cyclic quiver. We prove the isomorphism between the shuffle algebra and the quantum toroidal algebra $\UU$, and identify the quotients of Verma modules for the shuffle algebra with the $K-$theory groups of Nakajima cyclic quiver varieties, which were studied by Nakajima and Varagnolo-Vasserot. 

\tab 
The shuffle algebra viewpoint allows us to construct the universal $R-$matrix of the quantum toroidal algebra $\UU$, and to factor it in terms of pieces that arise from subalgebras isomorphic to quantum affine groups $U_q(\dot{\fgl}_{m})$, for various $m$. This factorization generalizes constructions of Khoroshkin-Tolstoy to the toroidal case, and matches the factorization that Maulik-Okounkov produce via the stable basis in the $K-$theory of Nakajima quiver varieties. We connect the two pictures by computing formulas for the root generators of $\UU$ acting on the stable basis, which provide a wide extension of Murnaghan-Nakayama and Pieri type rules from combinatorics.

%While much of the picture we will present applies to general quiver, we stay in the particular case of the cyclic quiver because of certain technical results that we do not know in full generality. The pace of this thesis will be expository, and we mean to start the geometric representation theory of Nakajima cyclic quiver varieties from as close to scratch as we can. 
\label{Abstract}

\initializefrontsections

\makeatletter
\if@twoside \setcounter{page}{2} \else \setcounter{page}{0} \fi
\makeatother

\tableofcontents    
\listoffigures      
%\listoftables  
%\listofmaps         
%\listofappendices 
%\listofabbreviations 

\startacknowledgementspage
I would first and foremost like to thank my advisor, Andrei Okounkov, for all his guidance over the past 8 years. Studying with him has been a great opportunity and the source of much inspiration. I would also like to thank the many wonderful mathematicians who have mentored and taught me over the years, including Roman Bezrukavnikov, Alexander Braverman, Ivan Cherednik, Boris Feigin, Michael Finkelberg, Dennis Gaitsgory, Adriano Garsia, Yulij Ilyashenko, Ivan Losev, Davesh Maulik, Hiraku Nakajima, Nikita Nekrasov, Rahul Pandharipande, Olivier Schiffmann, as well as the math teachers from my formative years: Dorela F\unichar{259}ini\cb si, Radu Gologan, Sever Moldoveanu, C\unichar{259}lin Popescu, Dan Schwartz. I would like to thank my collaborators Claudiu Raicu, Sachin Gautam, Vadim Gorin, Eugene Gorsky, Michael McBreen, Leonid Rybnikov, Francesco Sala, Peter Samuelson, Aaron Silberstein, Bhairav Singh, Andrey Smirnov, Alexander Tsymbaliuk, Shintaro Yanagida and many others, for incredibly useful discussions about mathematics and many great collaborations. 

\tab 
My deepest gratitude goes to Michelle for her love and support during my PhD studies, an experience which I've heard (rightfully) described as ``monastic". My parents Livia and Nicu, as well as my brother Radu, have shaped me into the person I am today. That is a priceless gift, one which transcends the many great things I have learned in graduate school. Many thanks are due to the people who supported me throughout my graduate studies, especially Terrance Cope for the amazing way he took care of all his graduate students at Columbia University. I would also like to thank the designers of this Dissertation format from the University of Michigan.
\label{Acknowledgements}

\dedicationpage{şi-ntocmai cum cu razele ei albe luna \\
nu micşorează, ci tremurătoare \\
măreşte şi mai tare taina nopţii, \\
aşa îmbogăţesc şi eu întunecata zare \\
cu largi fiori de sfânt mister \\ 
şi tot ce-i neînţeles \\
se schimbă-n neînţelesuri şi mai mari \\
sub ochii mei \\
\flushright{\emph{Lucian Blaga, 1919}}} 

%$$$$
%\flushleft{\noindent{``These verses represent the lyrical premonition of G\"odel's theorem, which at a rudimentary level may be stated thusly: it is impossible to prove the consistency of a system with nothing but the tools of the system itself. Any solution of a problem pertaining to the structure, operation or resolution of the inner contradictions of a system necessarily requires one to appeal to other elements and notions which, being of a higher degree of complexity, are the source of uncertainties and unknowns of even greater magitude than the original ones. This process of complexification and expansion of uncertainty is infinite. Popular wisdom had thus preceded G\"odel: the shadow grows along with the light."}} \\
%\flushright{\emph{Nicolae Steinhardt, 1991}}

\startthechapters 

\chapter{Introduction}
\label{chap:intro}
 \section{Overview}
\label{sec:overview}

\noindent The present thesis attempts to outline a program pertaining to the geometric representation theory of \b{symplectic resolutions}. By definition, these are holomorphic symplectic varieties $(X,\omega)$ endowed with a proper resolution of singularities:
$$
\pi: X \longrightarrow X_0
$$
to an affine variety $X_0$. All our symplectic resolutions will be \b{conical}, in the sense that there is a $\BC^*-$action with respect to which $\pi$ is equivariant. This action is assumed to contract $X_0$ to a point, and to scale the symplectic form $\omega$ by the character $t \rightarrow t^{2}$. 

\tab
Important examples of conical symplectic resolutions include cotangent bundles to flag varieties, hypertoric varieties, and transverse slices to Schubert varieties inside affine Grassmannians. However, the most important example of symplectic resolutions for us will be a class of moduli spaces known as \b{Nakajima quiver varieties}. These were introduced by \citep{Nak0} as certain ``cotangent bundles" to moduli spaces of framed representations of quivers. Consider the \b{cyclic quiver}: \\

\begin{figure}[H]
	
\begin{picture}(200,100)(-30,-10)

\put(120,40){$\widehat{A}_n \ = $}

\put(200,85){\circle*{3}}
\put(200,5){\circle*{3}}
\put(235,65){\circle*{3}}
\put(235,25){\circle*{3}}
\put(165,65){\circle*{3}}
\put(165,25){\circle*{3}}

\put(203,85){\vector(3,-2){29}}
\put(235,63){\vector(0,-1){36}}
\put(232,24){\vector(-3,-2){29}}
\put(197,5){\vector(-3,2){29}}
\put(165,27){\vector(0,1){36}}
\put(168,66){\vector(3,2){29}}

\put(198,89){$1$}
\put(196,-2){$...$}
\put(238,68){$2$}
\put(157,68){$n$}

%\put(175,-30){\text{Figure }1.1}

\end{picture}

\caption[The cyclic quiver]

\label{fig:quiver}

\end{figure}

\noindent and the corresponding Nakajima quiver varieties $\CN_{\bv,\bw}^\bth$ will be defined in Chapter \ref{chap:nakajima}. The construction depends on the data $\bv,\bw \in \nn$, as well as the choice of a so-called \b{stability condition} $\bth \in \rr$, which for most of this paper will be taken to be:
$$
\bth \ = \ (1,...,1) 
$$
Nakajima cyclic quiver varieties generalize three other important symplectic resolutions: cotangent bundles to type $A$ partial flag varieties, moduli spaces of framed sheaves on $\BP^2$, and Hilbert schemes of resolutions of type $A$ singularities.

\tab
The numerical invariants of moduli spaces play an important role in mathematical physics, as they can be identified with correlation functions in various quantum field theories. In particular, the $K-$theory of Nakajima quiver varieties is an area of significant current interest. Following Nakajima, let us define:
\begin{equation}
\label{eqn:jonsnow}
K(\bw) \ = \ \bigoplus_{\bv\in \nn} K_T(\CN^\bth_{\bv,\bw})
\end{equation}
with respect to a torus action $T \curvearrowright \CN^\bth_{\bv,\bw} \ $ which will be properly introduced in Chapter \ref{chap:nakajima}. Starting with the work of \citep{Nak1} and \citep{VV}, it became clear that $K(\bw)$ should be thought of as a representation of a certain algebra $\CA$. Among other things, this allows one to interpret various integrals of $K-$theory classes (which arise in physics as correlation functions) as characters of operators in the algebra $\CA$, which can be studied via representation theory.

\tab
The algebra $\CA$ has been interpreted as the quantum toroidal algebra $\UU$ in \citep{VV}, who proved that:
\begin{equation}
\label{eqn:vv}
\UU \ \curvearrowright \ K(\bw)
\end{equation} 
for any $\bw \in \nn$. In \citep{Nak1}, the construction was done for all quivers without loops, although this generality falls outside the scope of the present thesis. Quantum toroidal algebras admit a presentation as shuffle algebras $\CS$, a view which we will explain in Chapter \ref{chap:shuffle}. This connection was observed by \citep{E}, based on a construction of \citep{FO} for finite dimensional Lie algebras. When $\CA$ is interpreted as a shuffle algebra, its action \eqref{eqn:vv} on $K(\bw)$ was constructed independently by \citep{FT} and \citep{SV} when $n=1$ and $\bw = (1)$, which is the case of the Hilbert scheme. One of the technical results we will prove in the present thesis is: \\

\begin{theorem}
\label{thm:iso0}

The map $\UUp \hookrightarrow \CS^+$ defined by Enriquez is surjective. Since it is preserves the bialgebra structures on these algebras, it gives rise to an isomorphism: 
\begin{equation}
\label{eqn:iso0}
\UU \cong \CS
\end{equation}
of their Drinfeld doubles. 

\end{theorem}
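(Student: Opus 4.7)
The injectivity of $\Phi: \UUp \hookrightarrow \CS^+$ is due to Enriquez, so the task reduces to establishing surjectivity; once that is in hand, the promotion to an isomorphism of Drinfeld doubles will be automatic. Recall that $\CS^+$ consists of symmetric rational functions $F$ in variables $z_{i,a}$ indexed by $i \in I = \BZ/n\BZ$ and $a \in \{1, \ldots, k_i\}$, with prescribed pole structure (dictated by the moment-map datum of the double cyclic quiver) and wheel conditions (vanishing on certain three-variable specializations), endowed with a shuffle product inherited from the $K$-theoretic Hall convolution; the map $\Phi$ sends a Drinfeld generator $e_{i,d}$ to the one-variable monomial $z_{i,1}^d$.

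The plan is a character comparison combined with a PBW-type factorization on the shuffle side. On the quantum toroidal side, $\UUp$ has a PBW-type basis indexed by positive roots of the double affinization of $\fsl_n$, giving a well-known product formula for its $\nni \times \BZ$-graded Hilbert series. On the shuffle side, I would introduce a slope filtration: for each $\theta \in \BQ$, let $\CS^+_{\leq \theta} \subset \CS^+$ consist of those shuffle elements whose asymptotic behavior, under rescalings of the variables, is controlled by slope $\leq \theta$. The associated graded pieces are to be shown generated by ``small'' shuffle elements lying in subalgebras $\CS^+_\theta \subset \CS^+$, each isomorphic to the positive half of an affine quantum group $U_q(\dot{\fgl}_{m_\theta})$ for some divisor $m_\theta$ of $n$. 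These small subalgebras match, piece by piece, the Khoroshkin--Tolstoy style factors on the quantum toroidal side, so that the Hilbert series of $\CS^+$ and $\UUp$ are forced to agree in every graded component $\bk \in \nni$.

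The main obstacle is the upper bound $\dim \CS^+_\bk \leq \dim \UUp_\bk$: an arbitrary symmetric rational function satisfying the pole and wheel conditions must be exhibited as a sum of iterated shuffle products of one-variable monomials $z_{i,1}^{d}$. The argument proceeds by induction on $\bk$ and a secondary complexity measure, such as the leading monomial in a lexicographic order on the asymptotic expansion. The inductive step uses the wheel conditions to subtract an explicit shuffle product of strictly ``smaller'' elements so that the remainder has strictly lower complexity, and in the limit case reduces the problem to the already-known PBW structure of a single $U_q(\dot{\fgl}_{m_\theta})$ sitting inside $\CS^+$ at slope $\theta$. Making this subtraction combinatorially precise, and verifying that the subtracted term indeed lies in the image of $\Phi$, is the combinatorial heart of the argument and is where the bulk of the work in Chapter \ref{chap:shuffle} will go.

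Once $\Phi: \UUp \xrightarrow{\sim} \CS^+$ is established (and, by a mirror-image argument, also $\UUm \xrightarrow{\sim} \CS^-$), it remains to check that $\Phi$ intertwines the Drinfeld coproduct on $\UUp$ with the natural coproduct on $\CS^+$, and the Hopf pairing $\langle \cdot, \cdot \rangle : \UUp \otimes \UUm \to \BC(q,t)$ with the analogous residue/constant-term pairing on the shuffle side. Because the Drinfeld double is a functor of bialgebras with nondegenerate Hopf pairing, an isomorphism of the two inputs yields an isomorphism of the two outputs, producing $\UU \cong \CS$. A final routine compatibility check identifies the Cartan (zero-mode) generators $\psi^\pm_{i,d}$ of $\UU$ with the moment-map data packaged into $\CS$.
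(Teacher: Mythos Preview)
Your outline matches the paper's strategy closely: slope filtration, identification of the slope subalgebras with (tensor products of) quantum affine groups, a factorization $\CS^+ = \bigotimes_{r\in\BQ}^\rightarrow \CB^+_{\bm+r\bth}$, and a dimension comparison forcing surjectivity. Two corrections worth flagging. First, the slope is a vector $\bm\in\qq$, and the subalgebra $\CB_\bm$ at a given slope is in general a \emph{tensor product} $\bigotimes_{h=1}^g U_q(\dot{\fgl}_{l_h})$ governed by the cycle decomposition of the map $i\mapsto \upsilon_\bm(i)$ on $\BZ/n\BZ$; the $l_h$ need not be divisors of $n$. Second, the upper bound $\dim \CS^+_{\leq \bm|\bk,d}\leq \#\{\BZ\text{-indexed partitions of slope }\leq\bm\}$ is not obtained by subtracting shuffle products as you sketch, but by specializing variables to geometric progressions $z_a = y_s q_1^{a}$ along arcs and using the wheel conditions to force divisibilities in the resulting Laurent polynomial (Lemma~\ref{lem:bound}); this is the technical heart.

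One step you do not mention but which is essential: after constructing explicit root generators $P^\bm_{[i;j)}\in\CB^+_\bm$ realizing the quantum group generators, one must separately verify that these particular shuffle elements lie in the subalgebra generated by the degree-one pieces $z_i^d$, i.e.\ in $\text{Im}\,\Upsilon^+$. This is done directly (Exercise~\ref{ex:gen}), and only then does the factorization yield surjectivity.
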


%\tab
%As was mentioned, \citep{VV} proved that the algebra $\UU$ acts on the $K-$theory groups $K(\bw)$ associated to the cyclic quiver for any framing vector $\bw \in \nn$ (the construction will be recalled in Section \ref{sec:vv}) According to the above Theorem, this allows one to interpret it as an action of the shuffle algebra $\CS$ on $K(\bw)$. 

%At the same time, we will construct a family of coproducts: \rouge{Will we?}
%\begin{equation}
%\label{eqn:coproduct}
%\Delta_\bm \ : \ \CA \ \longrightarrow \ \CA \widehat{\otimes} \CA
%\end{equation}
%that depends on $\bm \in \qq$. 

%\begin{equation}
%\label{eqn:kraftwerk}
%\CA^\pm \ = \ \bigotimes_{r \in \BQ_+}^{\rightarrow} \CB^\pm_{\bm \pm r \bth} \ \otimes  \ \CB^\pm_\infty \ \otimes \ \bigotimes_{r \in \BQ_+}^{\leftarrow} \CB^{\mp}_{\bm \pm r \bth}
%\end{equation}

%\begin{equation}
%\label{eqn:rfac1}
%\CR_\bm = \prod_{r \in \BQ_+}^{\rightarrow}  \CR_{\CB_{\bm+ r \bth}} \cdot \CR_{\CB_\infty} \cdot \prod_{r \in \BQ_-}^{\leftarrow}  \CR_{\CB_{\bm + r \bth}}
%\end{equation}

\tab
We shall henceforth write $\CA$ for either $\UU$ or $\CS$. The definition of Drinfeld doubles will be reviewed in Section \ref{sec:basicquantum}, and the proof of Theorem \ref{thm:iso0} will occupy most of Chapter \ref{chap:subalgebras}. The main idea is to assign to each $\bm \in \qq$ a subalgebra:
\begin{equation}
\label{eqn:subalg}
\CA \ \supset \ \CB_\bm \ \cong \ \bigotimes_{h=1}^g U_q(\dot{\fgl}_{l_h})
\end{equation}
where the natural numbers $g,l_1,...,l_g$ depending on $\bm$ will be constructed in Section \ref{sec:arcs}. In order to avoid double hats on quantum toroidal algebras, we will always use dots instead: hence $U_q(\dot{\fgl}_{l_h})$ is the quantum affine group more commonly denoted by $U_q(\widehat{\fgl}_{l_h})$. The embeddings \eqref{eqn:subalg} induce a factorization:
\begin{equation}
\label{eqn:kraftwerk}
\CA^\pm \ = \ \bigotimes_{r \in \BQ}^{\rightarrow} \CB^\pm_{\bm + r \bth} %\ \otimes  \ \CB^\pm_\infty \ \otimes \ \bigotimes_{r \in \BQ_+}^{\leftarrow} \CB^{\mp}_{\bm \pm r \bth}
\end{equation}
of the positive/negative halves of the algebra $\CA$, for any fixed $\bm \in \qq$. In other words, the subalgebras $\CB_{\bm+r\bth}$ are the building blocks of the quantum toroidal algebra $\CA$, as the \b{slope} $r$ varies over the rational numbers. The product \eqref{eqn:kraftwerk} may be infinite, but only finitely many of the factors have non-trivial matrix coefficients in all representations studied in this thesis. The factorization \eqref{eqn:kraftwerk} has a very important consequence for the universal $R-$matrix of $\CA$:
\begin{equation}
\label{eqn:defr0}
\CR \ \in \ \CA \ \widehat{\otimes} \ \CA
\end{equation}
which is characterized by the property:
\begin{equation}
\label{eqn:defr1}
\CR \cdot \Delta(a) \ = \ \Delta^{\op}(a)\cdot \CR \qquad \qquad \forall \ a \in \CA
\end{equation}
Specifically, we will show in Section \ref{sec:factorizing} that the universal $R-$matrix of the quantum toroidal algebra $\CA$ factors in terms of $R-$matrices of the quantum groups $\CB_{\bm+r\bth}$:
\begin{equation}
\label{eqn:rfac1}
\CR \ = \ \left( \prod_{r \in \BQ}^{\rightarrow}  \CR_{\CB_{\bm+ r \bth}} \right) \cdot \CR_{\CB_{\infty\cdot \bth}} %\cdot \prod_{r \in \BQ_-}^{\leftarrow}  \CR_{\CB_{\bm + r \bth}}
\end{equation}
for any fixed $\bm \in \qq$. This factorization is a toroidal analogue of the constructions of \citep{KT} for finite-dimensional and affine quantum groups. As in the philosophy of \emph{loc. cit.}, the idea is to break up the complicated quantum toroidal algebra $\CA$ into simpler pieces. In the toroidal case, these pieces are precisely $\CB_\bm$, as $\bm$ ranges over $\qq$. By analogy with \emph{loc. cit.}, we will refer to $\CB_\bm$ as \b{root subalgebras} and their $R-$matrices $\CR_{\CB_\bm}$ will be called \b{root $R$-matrices}.

\tab 
The object \eqref{eqn:defr0} may seem purely formal, but its action is well-defined on tensor products of Verma modules:
\begin{equation}
\label{eqn:defverma}
\CA \ \curvearrowright \ M(\bw) 
\end{equation}
\begin{equation}
\label{eqn:r}
R \ : \ M(\bw) \otimes M(\bw') \longrightarrow M(\bw) \otimes^{\op} M(\bw')
\end{equation}
intertwining the $\CA-$module structures induced by $\Delta$ and $\Delta^\op$. Verma modules for the shuffle algebra will be reviewed in Section \ref{sec:verma}, where we will construct their Shapovalov forms. The maximal quotient of $M(\bw)$ on which the form is non-degenerate:
\begin{equation}
\label{eqn:robbstark}
L(\bw) \ = \ \frac {M(\bw)}{\text{kernel of Shapovalov form}}
\end{equation}
is the toroidal analogue of irreducible modules in category $\CO$. The above constructions will be defined purely algebraically in Chapters \ref{chap:shuffle} and \ref{chap:subalgebras}, but they are strongly motivated by geometry. As discovered by \citep{SV} for the Jordan quiver, shuffle algebras such as $\CS$ can be interpreted as (parts) of the $K-$theory of $T^*(\text{a certain stack})$, as we will recall in Section \ref{sec:k}. Then both the algebra structure of $\CS$, as well as the action $\CS \curvearrowright L(\bw)$, can be interpreted as: 
\begin{equation}
\label{eqn:jeyne}
T^*(\text{stack}) \ \text{ acts on itself and on Nakajima quiver varieties}
\end{equation}
The above construction is called the $K-$\b{theoretic Hall algebra} by \emph{loc. cit.} It is based on the evolution from the usual Hall algebra of polynomials, to be recalled in Section \ref{sec:k}, to the study of similar algebra structures in cohomology and $K-$theory. This builds on the work of numerous mathematicians, such as \citep{KL}, \citep{GV} and \citep{Gr}.

\tab 
While we will not take this path in the present thesis, let us mention the cohomological Hall algebra of \citep{KS}, which has philosophical similarities with the above construction. For this connection and an application of the above ideas to more general formal cohomological theories, see \citep{YZ}. We will review the principle \eqref{eqn:jeyne} in Section \ref{sec:k}, and although we do not have a complete framework of the geometry of the stacks involved, we will prove the identification between the actions of the shuffle algebra and the quantum toroidal algebra on the $K-$theory groups of Nakajima cyclic quiver varieties: \\

\begin{theorem}
\label{thm:act0}
For any $\ebw \in \nn$, the following actions are compatible:
\begin{equation}
\label{eqn:act0}
\xymatrix{
\UU \ar[d]^\cong & \curvearrowright & K(\ebw) \ar@{_{(}->}[d] \\
\CS & \curvearrowright & L(\ebw)} 
\end{equation}
where the isomorphism on the left is the content of Theorem \ref{thm:iso0}, the top action is \eqref{eqn:vv} and the bottom action is \eqref{eqn:jeyne}.

\tab 
The map on the right is an isomorphism after localization. We construct it by identifying the Shapovalov form on $L(\ebw)$ with the Euler characteristic pairing on $K(\ebw)$.

\end{theorem}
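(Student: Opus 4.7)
The plan is to bridge the algebraic side (shuffle algebra acting on Verma modules) and the geometric side (quantum toroidal algebra acting on $K$-theory) by building a map $L(\bw) \to K(\bw)$ from a natural cyclic vector, then matching two bilinear pairings to cut out the kernel. Concretely, I would start from the geometric viewpoint in \eqref{eqn:jeyne}: the $K$-theoretic Hall algebra construction of Section \ref{sec:k} gives an action $\CS \curvearrowright K(\bw)$ whose vacuum vector is the fundamental class $1 \in K_T(\CN^\bth_{\textbf{0},\bw})$. This produces a canonical $\CS$-equivariant map $\Phi: M(\bw) \twoheadrightarrow \CS \cdot 1 \subset K(\bw)$, since $M(\bw)$ is by definition the free module on a highest-weight vector modulo the defining highest-weight relations, which are readily verified geometrically on the empty stratum.

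Next I would establish that $\Phi$ intertwines not only the $\CS$-action but also the $\UU$-action of \citep{VV} once we transport it via the isomorphism $\UU \cong \CS$ of Theorem \ref{thm:iso0}. Because both actions are generated by the Drinfeld currents, and because the isomorphism of Theorem \ref{thm:iso0} is characterized on the generators, it is enough to match the action of the root subalgebras $\CB_\bm$ on a few generating classes in $K(\bw)$; the two constructions agree on the subalgebra corresponding to each vertex of the quiver by the standard comparison between correspondences on quiver varieties and Chevalley generators, and then propagate to all of $\CS$ via the factorization \eqref{eqn:kraftwerk}. This simultaneously identifies the left square of \eqref{eqn:act0} and shows $\Phi$ is $\CA$-linear.

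To identify the kernel of $\Phi$ with the kernel of the Shapovalov form, I would compare two pairings on $M(\bw)$: the Shapovalov form defined algebraically in Section \ref{sec:verma}, and the pullback along $\Phi$ of the Euler characteristic pairing $\chi(a,b) = \chi(\CN^\bth_{\bv,\bw}, a \otimes b^\vee \otimes \omega)$ on $K(\bw)$. Both pairings are contravariant with respect to the Drinfeld antiautomorphism, are determined by their values on products of generators acting on the highest-weight vector, and satisfy the same normalization on the vacuum. The heart of the argument is therefore a direct computation, essentially via equivariant localization, showing that these values coincide; this is where the explicit shuffle-algebra formulas from Chapter \ref{chap:shuffle} become essential. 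Once the two pairings are identified, $\Phi$ descends to the injection $L(\bw) \hookrightarrow K(\bw)$ announced in the theorem.

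Finally, for surjectivity after localization, I would invoke the Thomason localization theorem to reduce $K(\bw)_{\text{loc}}$ to classes of $T$-fixed points on $\CN^\bth_{\bv,\bw}$, which are indexed by $\bw$-tuples of partitions, and show that the image of $\Phi$ contains enough classes to span each fixed-point basis. The cleanest route is to exhibit, for each tuple of partitions $\bla$, a product of shuffle algebra generators whose image under $\Phi$ equals the fixed-point class $[\bla]$ up to an invertible factor in the localized ring, using a triangularity argument with respect to the dominance order on partitions. The main obstacle, and the step on which the entire theorem rests, is the pairing identification in the third paragraph: both pairings are defined by very different mechanisms, and equating them amounts to an explicit residue-versus-Euler-characteristic computation that must be performed carefully for all $\bv \in \nn$ and all weights $\bw$.
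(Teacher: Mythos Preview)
Your outline has the arrow backward: the theorem asserts $K(\bw) \hookrightarrow L(\bw)$, not the reverse. The paper's map is induced by the inclusion $\Lambda(\bw) \hookrightarrow M(\bw)$ of symmetric Laurent polynomials into shuffle elements; your cyclic-vector map $\Phi: M(\bw) \to K(\bw)$ would send shuffle elements with denominators $(qz_{ia} - q^{-1}z_{ib})$ to $K$-theory classes, and it is not clear these are integral without already knowing the result.

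More importantly, you are overestimating the work at the crux. You describe the pairing comparison as ``an explicit residue-versus-Euler-characteristic computation that must be performed carefully for all $\bv$''; in fact there is nothing to compute, because the integral formula \eqref{eqn:integral} for the Euler characteristic pairing and the formula \eqref{eqn:shapshuffle} for the Shapovalov form are \emph{literally the same expression}. The only genuine issue is that $\Lambda(\bw)$ is strictly smaller than $M(\bw)$, so one must check that a Laurent polynomial $f$ pairing trivially with all of $\Lambda(\bw)$ also pairs trivially with all shuffle elements. The paper shows both conditions are equivalent to the vanishing of the specialization $f(\chi_\bla)$ at every $\bw$-partition $\bla$, via the residue analysis of Proposition \ref{prop:integral}; the wheel conditions are precisely what make $F(\chi_\bla)$ well-defined for shuffle elements despite the denominators. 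Similarly, your detour through the root subalgebras $\CB_\bm$ and the factorization \eqref{eqn:kraftwerk} to match the two actions is unnecessary: since Theorem \ref{thm:iso0} shows $\CS$ is generated in degree one, it suffices to match $e_{i,d}^\pm$ from \eqref{eqn:simpleop} with $z_i^d/[q^{-2}] \in \CS^\pm$ directly, which is a short computation in the fixed-point basis using \eqref{eqn:form1}--\eqref{eqn:form2} against \eqref{eqn:act1} and \eqref{eqn:act4}.
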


%\tab
%The statement about the action \eqref{eqn:act0} has already been proved by \citep{VV} when $\CA$ is replaced with $\UU$. Our proof of Theorem \ref{thm:act0} will use the incarnation of $\CA$ as the shuffle algebra $\CS$, and it will be the same as the action obtained in \loccit under the isomorphism \eqref{eqn:iso0}. Since shuffle algebras can be interpreted as $K-$theories if quotient stacks, as explained in the previous paragraph, our proof will be quite different from that of \loccit

\tab 
As Theorem \ref{thm:act0} establishes a connection between the geometry of $K(\bw)$ and the representation theory of Verma modules $M(\bw) \twoheadrightarrow L(\bw)$, one may ask about the representation theoretic interpretation of the $R-$matrices \eqref{eqn:r}. This construction is closely related to the Bethe ansatz for quantum toroidal algebras, as studied recently by Feigin, Jimbo, Miwa and Mukhin in \citep{FJMM} and \citep{FJMM2}. 

\tab 
The geometric incarnation of trigonometric $R-$matrices was developed in great generality by \citep{MO2}, and their work serves as the motivation for much of this thesis. In a sense, they carry the above framework in reverse order: first they define the $R-$matrices \eqref{eqn:r} for every pair of degree vectors $\bw,\bw' \in \nn$, then they use a general procedure to define a quasi-triangular Hopf algebra $\CA^{\text{MO}}$ which acts on the $K-$theory of Nakajima quiver varieties. While proving this goes beyond our scope, one expects:
\begin{equation}
\label{eqn:expect1}
\CA \ \cong \ \CA^{\text{MO}} \qquad \text{and their actions on }K(\bw) \text{ coincide}
\end{equation}
The main idea of \loccit is to construct the $R-$matrix \eqref{eqn:r} as a composition:
\begin{equation}
\label{eqn:ned}
R_\bm^{+ , -} \ : \ K(\bw) \otimes K(\bw') \stackrel{\stab^-_\bm}\longrightarrow K(\bw + \bw') \stackrel{\left(\stab^+_\bm\right)^{-1}}\longrightarrow K(\bw) \otimes K(\bw')
\end{equation}
where the maps $\stab^\pm_\bm$ are called \b{the stable basis}, and are defined with respect to a one dimensional flow on Nakajima quiver varieties, in the positive ($+$) or negative $(-)$ direction. The above transformations are quite difficult to compute in general, but \loccit breaks them down into more elementary transformations denoted by:
\begin{equation}
\label{eqn:defr}
R^\pm_{\bm , \bm'} \ : \  \ K(\bw) \otimes K(\bw') \stackrel{\stab^\pm_\bm}\longrightarrow K(\bw + \bw') \stackrel{\left(\stab^\pm_{\bm'}\right)^{-1}}\longrightarrow K(\bw) \otimes K(\bw')
\end{equation}
and called root $R-$matrices. Note that, as opposed from \eqref{eqn:ned}, both stable maps in \eqref{eqn:defr} are taken with respect to the same flow: either positive or negative. On general grounds, the $R-$matrices \eqref{eqn:defr} correspond to sub-quasitriangular Hopf algebras:
$$
\CB_\bm^{\text{MO}} \ \subset \ \CA^{\text{MO}} 
$$
which will be called root subalgebras. The reason for the choice of terminology is the following factorization, which we will review in Lemma \ref{lem:stablefac} in greater generality:
\begin{equation}
\label{eqn:rfac2}
R_\bm^{+ , -} \ = \ \prod_{r \in \BQ_+}^{\rightarrow}  R^+_{\bm + r \bth , \bm + (r+\e)\bth} \cdot \text{diagonal matrix} \cdot \prod_{r \in \BQ_-}^{\leftarrow}  R^-_{\bm + (r+\e) \bth , \bm+ r \bth}
\end{equation}
In other words, inverting the direction of the flow from $-$ to $+$ can be achieved by infinitely many small increases of the index $\bm \in \nn$. Looking back at the expectation \eqref{eqn:expect1}, we see that the left hand side of \eqref{eqn:rfac1} coincides with the limit case $\bm \rightarrow  - \infty \cdot \bth$ of \eqref{eqn:rfac2}, when evaluated in the representation $K(\bw) \otimes K(\bw')$. Then one expects that the root $R-$matrices in the right hand sides of \eqref{eqn:rfac1} and \eqref{eqn:rfac2} also coincide, and in fact, this arises from an isomorphism of the corresponding root subalgebras:
\begin{equation}
\label{eqn:expect2}
\CB_\bm \ \cong \ \CB_\bm^{\text{MO}} 
\end{equation}
Stable bases for the $K-$theory of cotangent bundles to partial flag varieties were also constructed in \citep{RTV}, where the authors use them to study Bethe subalgebras. Though the geometric setup of the present thesis is philosophically very close to that of \emph{loc. cit.}, we prefer to recall the construction of \citep{MO2} in full generality in Chapter \ref{chap:stable}, for the reference of the interested reader.

\tab
As we said, proving \eqref{eqn:expect1} and \eqref{eqn:expect2} falls outside the scope of this thesis. However, we will now describe certain explicit formulas which partially confirm \eqref{eqn:expect2}. The stable basis construction produces not only maps as in \eqref{eqn:defr}, but also elements:
\begin{equation}
\label{eqn:defstable}
s_\bla^{\pm,\bm} \ \in \ K(\bw) 
\end{equation}
These elements form a basis of the group of $K-$theory classes supported on certain stable Lagrangians, as we will recall in Chapter \ref{chap:stable}, hence the name ``stable basis". The indexing set is over all $\bw-$tuples of partitions $\bla$, the combinatorics of which will be recalled in Section \ref{sec:basicpar}. Moreover, the subalgebras $\CB_\bm$ of \eqref{eqn:subalg} are generated by the root generators of quantum groups $\uu$, as defined in Section \ref{sec:quantum}:
\begin{equation}
\label{eqn:genp}
P_{\pm [i;j)}^\bm \quad \text{and} \quad Q_{\pm [i;j)}^\bm \quad \in \quad \CB_\bm
\end{equation}
In the above, $i < j \in \BN$ go over $\bm-$integral arcs in the cyclic quiver \eqref{fig:quiver}, i.e. arcs such that $m_i+...+m_{j-1} \in \BZ$. The relations between these generators are very meaningful, e.g. for any pair of minimal $\bm-$integral arcs $[i;j)$ and $[i';j')$ we have:
\begin{equation}
\label{eqn:commutator}
\left[P_{[i;j)}^\bm, Q_{-[i';j')}^\bm \right] \ = \ \delta_{[i';j')}^{[i;j)} (q^2-1)\left( \ph_{[i;j)} - \ph_{[i;j)}^{-1} \right)
\end{equation}
where $\ph_{[i;j)} = \ph_i...\ph_{j-1}$ are Cartan generators. The above is the key relation within $U_q(\dot{\fsl}_{n})$, and the complete set of relations that hold in the quantum groups $U_q(\dot{\fgl}_{n})$ will be recalled in Section \ref{sec:quantum}. Note that $P_{-[i;j)}^\bm$ and $Q_{[i;j)}^\bm$ are the antipodes of $Q_{-[i;j)}^\bm$ and $P_{[i;j)}^\bm$, and so they also satisfy relation \eqref{eqn:commutator}. We will realize the generators \eqref{eqn:genp} in terms of the shuffle algebra in Section \ref{sec:explicit}, and use them to prove Theorem \ref{thm:iso0}. According to Theorem \ref{thm:act0}, the elements $P_{\pm [i;j)}^\bm$ and $Q_{\pm [i;j)}^\bm$ of $\CA$ act on the group $K(\bw)$ for any $\bw \in \nn$. We identify this action with the so-called \b{eccentric correspondences} on Nakajima varieties, that we define in Section \ref{sec:act}. The main purpose of Chapter \ref{chap:pieri} is to obtain formulas for $P_{\pm [i;j)}^\bm, Q_{\pm [i;j)}^\bm \curvearrowright K(\bw)$ in the stable basis \eqref{eqn:defstable}: \\

%The main purpose of Chapter \ref{chap:elliptic} is to develop the relations between the generators \eqref{eqn:genp} and to use this to prove the following result: \\

%\begin{theorem}
%\label{thm:ell0}

%The quantum toroidal algebra $\CA$ is isomorphic to the Drinfeld double of the Hall algebra of the category of \rouge{CERTAIN} coherent sheaves on a cycle of projective lines $C$ over a finite field. 

%\end{theorem}

%\tab
%The above Theorem was proved for $n=1$ in \cite{BS}, in the related case when the cycle of projective lines is replaced with a smooth elliptic curve. Our construction and methods are inspired by \loccit, and provide a certain generalization for it. We identify the generators \eqref{eqn:genp} with certain explicit coherent sheaves on the singular curve $C$. \rouge{Do we? This seems a tad ambitious}  

\begin{theorem}
\label{thm:pieri}

For any $\ebm \in \qq$ and any $\ebm-$integral arc $[i;j)$, we have:
\begin{equation}
\label{eqn:pieri1}
P_{[i;j)}^\ebm \cdot \frac {s_\bmu^{+, \ebm}}{o_\bmu^\ebm} \ = \ \sum^{\blamu \ = \ C \text{ is a type }[i;j)}_{\text{cavalcade of }\ebm-\text{ribbons }} \frac {s_\bla^{+, \ebm}}{o_\bla^\ebm} \cdot (1 - q^2)^{\#_C} (-q)^{N_C^+} q^{\ehigh C + \eind_{C}^{\ebm}} %\chi_C^\ebm
\end{equation}
\begin{equation}
\label{eqn:pieri2}
Q_{-[i;j)}^{\ebm} \cdot \frac {s_\bla^{+, \ebm}}{o_\bla^\ebm} \ = \ \sum^{\blamu \ = \ S \text{ is a type }[i;j)}_{\text{stampede of }\ebm-\text{ribbons }} \frac {s_\bmu^{+, \ebm}}{o_\bmu^\ebm} \cdot (1 - q^2)^{\#_S} (-q)^{N_S^-} q^{\ewide S - \eind_{S}^{\ebm} - j+i} %\chi_C^\ebm
\end{equation}
By Exercise \ref{ex:opposite}, the above also give formulas for $P_{-[i;j)}^\ebm$ and $Q_{[i;j)}^\ebm$ acting on $s_\bla^{-,\ebm}$.

%\begin{equation}
%\label{eqn:pieri3}
%P_{-[i;j)}^{\ebm} \cdot s_\bla^{-, \ebm} \ = \ \sum^{\blamu \ = \ C \text{ is a}}_{\text{cavalcade of }\ebm-\text{ribbons }} s_\bmu^{-, \ebm} \cdot (1 - q^2)^{\#_C} (-q)^{N_{C|\bla}^+} q^{\ewide C + 2\eind_{C,\ebm}^+}  \chi_C^\ebm
%\end{equation}
%\begin{equation}
%\label{eqn:pieri4}
%Q_{[i;j)}^\ebm \cdot s_\bmu^{-, \ebm} \ = \ \sum^{\blamu \ = \ C \text{ is a}}_{\text{cavalcade of }\ebm-\text{ribbons }} s_\bla^{-, \ebm} \cdot (1 - q^2)^{\#_C} (-q)^{N_{C|\bla}^-} q^{\ehigh C + 2\eind_{C,\ebm}^-}  \chi_C^\ebm
%\end{equation}

\end{theorem}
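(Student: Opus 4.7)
The plan is to transfer the problem to an explicit shuffle algebra computation and then match the resulting matrix coefficients against the stable basis via equivariant localization on Nakajima cyclic quiver varieties. By Theorem \ref{thm:act0}, the action of the root generators $P_{[i;j)}^\bm$ and $Q_{-[i;j)}^\bm$ on $K(\bw)$ is computed, after transporting via the isomorphism of Theorem \ref{thm:iso0}, by the explicit shuffle elements produced in Section \ref{sec:explicit}. I would then identify this action with convolution against a tautological $K$-theory class on an eccentric correspondence, as defined in Section \ref{sec:act}, living inside a nested product $\CN^\bth_{\bv,\bw} \times \CN^\bth_{\bv',\bw}$ with $\bv'-\bv$ prescribed by the arc $[i;j)$. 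This reduces the theorem to a matrix-coefficient computation for a known geometric kernel.

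The next step is to pass to the stable basis. The fixed points of the torus $T$ acting on $\CN^\bth_{\bv,\bw}$ are indexed by tuples of partitions, and the stable basis elements $s_\bla^{\pm,\bm}$ are characterized, up to the normalizing factor $o_\bla^\bm$, by triangularity with respect to the fixed-point basis together with a support condition depending on the slope $\bm$. Applying equivariant localization, the matrix coefficient of $P_{[i;j)}^\bm$ between $s_\bmu^{+,\bm}/o_\bmu^\bm$ and $s_\bla^{+,\bm}/o_\bla^\bm$ becomes a sum over chains of $T$-fixed points inside the eccentric correspondence that travel from $\bmu$ to $\bla$. The geometric support of this correspondence, combined with the integrality constraint $m_i+\cdots+m_{j-1} \in \BZ$, forces every surviving chain to correspond to a skew shape $\blamu$ that decomposes into $\bm$-ribbons of type $[i;j)$ added in a very particular order dictated by the cyclic quiver structure; this is precisely the notion of a cavalcade $C$. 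The dual computation, which replaces ribbon addition by removal in the reverse order, produces the stampedes that appear in (\ref{eqn:pieri2}).

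The principal obstacle is to show that, once the tangent-weight contributions from localization are grouped with the normalizing factors $o_\bla^\bm$ and $o_\bmu^\bm$, the contribution of a single cavalcade $C$ collapses to the compact monomial $(1-q^2)^{\#_C}(-q)^{N_C^+} q^{\high C + \ind_C^\bm}$. The factor $(1-q^2)^{\#_C}$ records the elementary residues picked up when extracting each ribbon from the shuffle element; the exponent $\high C$ comes from the weights of the tautological bundles entering the eccentric kernel; and the slope-dependent shift $\ind_C^\bm$ encodes the polarization used to define $\stab^+_\bm$, which must be propagated step by step along the cavalcade. The sign $(-q)^{N_C^+}$ is perhaps the most delicate input, as it reflects the orientation of the attracting set at each intermediate fixed point; the corresponding $N_S^-$ in the stampede case appears with the opposite sign because a stampede runs against the flow used to build $\stab^+_\bm$, which is also the origin of the $-j+i$ shift in (\ref{eqn:pieri2}). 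Once these combinatorial invariants are matched, the formulas (\ref{eqn:pieri1}) and (\ref{eqn:pieri2}) follow, and the statements for $P_{-[i;j)}^\bm$ and $Q_{[i;j)}^\bm$ acting on $s_\bla^{-,\bm}$ are then immediate from Exercise \ref{ex:opposite} by applying the antipode and reversing the sign of the flow.
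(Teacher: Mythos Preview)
Your proposal has a genuine gap at the central step. You propose to compute the matrix coefficients of $P_{[i;j)}^\bm$ and $Q_{-[i;j)}^\bm$ \emph{directly in the stable basis} via equivariant localization, summing over chains of fixed points in the eccentric correspondence. But the restrictions $s^{+,\bm}_{\bla|\bmu}$ of stable basis elements to fixed points are only characterized implicitly, by triangularity together with the Newton polytope condition \eqref{eqn:small}; there is no closed formula for them, so the localization sum you describe cannot be evaluated.

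The paper's mechanism is different and bypasses this obstacle entirely. The key input is Lemma~\ref{lem:compute}: if an operator $F$ is Lagrangian (so its action in the stable basis has integral coefficients $\gamma^\bla_\bmu$, which for $P_{[i;j)}^\bm$ and $Q_{-[i;j)}^\bm$ is Exercise~\ref{ex:lagragian}) and its \emph{fixed-point} matrix coefficients $F^\bla_\bmu$ satisfy the degree bounds \eqref{eqn:giovanni1}--\eqref{eqn:giovanni2}, then $\gamma^\bla_\bmu$ is forced to equal the lowest-degree term of $F^\bla_\bmu$ times $\kappa_\bmu/\kappa_\bla$. This reduces the theorem to two concrete computations: establishing the degree estimates \eqref{eqn:stat1}--\eqref{eqn:stat4} on the shuffle-element evaluations $P_{[i;j)}^\bm(\blamu)$ and $Q_{-[i;j)}^\bm(\blamu)$, and extracting their lowest-degree terms \eqref{eqn:stat5}--\eqref{eqn:stat6}. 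Your claim that ``the geometric support\ldots forces every surviving chain to correspond to\ldots $\bm$-ribbons'' is also off: the eccentric correspondence itself does not depend on $\bm$, and its fixed points are indexed by arbitrary almost-standard tableaux of shape $\blamu$. The cavalcade and stampede structures emerge only at the last step, as exactly those tableaux for which the degree inequality (governed by Exercise~\ref{ex:final}) becomes an equality; all other tableaux contribute to higher-degree terms and are killed by the $\ld$ in Lemma~\ref{lem:compute}. The explicit coefficients $(1-q^2)^{\#_C}(-q)^{N_C^+}q^{\high C+\ind_C^\bm}$ then come from matching the lowest-degree term of the shuffle evaluation (Section~\ref{sec:final}) against the ratio $\rho^\pm_\blamu$ computed in Exercise~\ref{ex:ohboy}.
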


% Indeed, the coefficients of \eqref{eqn:pieri1} and \eqref{eqn:pieri2} involve the integers $N_{\bsq|\bla}^\pm$, which count the signed number of corners ($\pm$ inner $\mp$ outer) of $\bla$ of the same color as the box $\bsq$ and with content $>$ or $<$ than that of $\bsq$, depending on whether the sign is $+$ or $-$. We define $N_{C|\bla}^\pm = \sum_{\bsq \in C} N_{\bsq|\bla}^\pm$ for a cavalcade $C$ (the analogous construction for a stampede $S$ is similar, but a bit more involved, so we leave its definition for Section \ref{sec:basicpar}). Indeed, the height ``ht" and width ``wd" of a cavalcade/stampede is the sum of the heights and widths of the constituent ribbons.

%Because the coefficients are given in terms of ribbons, we may think of the Theorem as a generalization of the Murnaghan-Nakayama rule. 

\tab 
The combinatorial notions that appear in the right hand sides of \eqref{eqn:pieri1} and \eqref{eqn:pieri2} will be defined properly in Section \ref{sec:basicpar}. In short, a ``cavalcade" $C$ is a disjoint union of $\#_C$ non-adjacent ribbons, strung together according to the $\bm-$integral arc $[i;j)$. A ``stampede" $S$ is what happens when a cavalcade goes wrong, in that ribbons are now going backwards and they are allowed to touch. However, when the $\bm-$integral arc $[i;j)$ is minimal, both a cavalcade and a stampede consist of a single ribbon, and formulas \eqref{eqn:pieri1} and \eqref{eqn:pieri2} look quite similar. In the particular case $\bm = (0,0,...,0)$ and $\bw = (1,0,...,0)$, such ribbons consist of a single box and we obtain the action:  
$$
\su \ \curvearrowright \ \Lambda \qquad \text{where} \qquad \Lambda \ = \ \text{ring of symmetric functions}
$$ 
constructed by \citep{Hay} and \citep{MM} (for $q \mapsto - \frac 1q$). We will show the connection between this particular case and the general formulation of Theorem \ref{thm:pieri} in Section \ref{sec:maya}, by using Maya diagrams. If we consider all arcs $[i;j)$, not just the minimal ones, then Theorem \ref{thm:pieri} gives rise to an action $\uu \curvearrowright \Lambda$. This action is a different presentation of the one constructed by \citep{LLT}.  

\tab 
For general $\bm \in \qq$ and $\bw \in \nn$, we define the integer $\ind_{C}^\bm$ in \eqref{eqn:ind} below, and Theorem \ref{thm:pieri} provides an action of quantum affine groups on tensor products of Fock spaces. When $n = 1$ and $\bw = (1)$, the above procedure yields an operator:
$$
P_{\pm k}^m \ : \ \Lambda \ \longrightarrow \ \Lambda
$$
for any $k\in \BN$ and $m\in \BQ$. In \citep{Npieri}, we proved formulas for a certain plethystic modification of the above operators in the stable basis $s_\lambda^{+,m}$. Since in the particular case of $m=0$, the stable basis $s_\lambda^{+,0}$ consists of Schur functions and the given operators are multiplication by elementary symmetric functions, we called the main Theorem of \loccit the ``$m-$Pieri rule", for any $m \in \BQ$. %generalizes the Pieri rules. Note that the proper geometric setting is to plethystically modify both Schur functions and elementary symmetric functions according to $p_k \mapsto \frac {p_k}{1-q^k}$.

\tab 
The structure of this thesis is the following. In the remainder of this Chapter, we recall generalities and notations pertaining to $K-$theory, partitions, quantum algebras, symplectic varieties and geometric invariant theory. In Chapter \ref{chap:nakajima}, we recall the construction of Nakajima quiver varieties and their relations with moduli spaces of sheaves, and introduce well-known geometric constructions. In Chapter \ref{chap:stable}, we recall the definition of the $K-$theoretic stable basis of \citep{MO2} and their construction of geometric $R-$matrices, and recall the proof of the factorization result \eqref{eqn:rfac2}. In Chapter \ref{chap:shuffle}, we recall the double shuffle algebra and its interpretation as a Hall algebra. We then construct Verma modules for the shuffle algebra, and naturally identify them with the $K-$theory groups of Nakajima quiver varieties. In Chapter \ref{chap:subalgebras}, we define the subalgebras $\CB_\bm$ that will allow us to prove the isomorphism of Theorem \ref{thm:iso0} and to construct the factorization of $R-$matrices \eqref{eqn:rfac1}. In Chapter \ref{chap:pieri}, we prove Theorem \ref{thm:pieri} concerning the root generators $P_{[i;j)}^\bm$ and $Q_{-[i;j)}^\bm$ acting on the stable basis $s^{+,\bm}_\bla$. Since the main goal of this thesis is to present a mathematical landscape to a wide audience, we leave certain technical and straightforward results as Exercises to the interested reader. These are strewn across all Chapters, and those which are not well-known results will be proved in the final Chapter \ref{chap:proofs}. 

%\begin{itemize}
	
%\item the stable basis $s_\lambda^{+,0}$ consists of Schur functions 
	
%\item the operators $P^0_{\pm k}$ are multiplication by elementary symmetric functions \footnote{Note that the proper geometric setting is to plethystically modify both Schur functions and elementary symmetric functions according to $p_k \mapsto \frac {p_k}{1-q^k}$} 
	
%\item formula \eqref{eqn:pieri1} specializes to the usual Pieri rule for symmetric functions
	
%\end{itemize} 

\section{Basic notions on $K-$theory}
\label{sec:basick}

\noindent This section contains a basic treatment of torus equivariant algebraic $K-$theory, which can be skipped by the more experienced reader. We suggest having a look at formula \eqref{eqn:modified}, where we write down the renormalized Euler characteristics that will be studied throughout this thesis. The main reference for this Section is \citep{CG}. Given an algebraic variety with a torus action $T \curvearrowright X$, one defines its $T-$\b{equivariant }$K-$\b{theory ring}:
$$
K_T(X)
$$
as the Grothendieck group of the category of $T-$equivariant coherent sheaves on $X$. When $X$ is smooth, this group is generated by isomorphism classes of $T-$equivariant vector bundles $\CV$ on $X$. One imposes the relation:
$$
\CV = \CW_1 + \CW_2
$$
for any $T-$equivariant short exact sequence $0 \rightarrow \CW_1 \rightarrow \CV \rightarrow \CW_2 \rightarrow 0$. The multiplication on $K_T(X)$ is given by tensor product. Note that $K_T(X)$ is a module over:
$$
K_T(\pt) \ = \ \text{Rep}(T) \ = \ \BZ[\chi]_{\chi:T\rightarrow \BC^*}
$$
since one can always ``twist" any vector bundle by tensoring it with $\CO_{\chi} = $ the trivial line bundle endowed with a $T-$equivariant structure via the character $\chi:T \rightarrow \BC^*$. \\

\begin{example}

One of the basic non-trivial examples of equivariant $K-$theory is:
\begin{equation}
\label{eqn:constantin}
T \ = \ \BC^* \times \BC^* \ \curvearrowright \ \BC^2 \qquad \qquad (t_1,t_2)\cdot (x,y) = (t_1x,t_2y)
\end{equation}
which induces an action $T \curvearrowright \BP^1$. There are two fixed points with respect to this action, namely $0 = [0:1]$ and $\infty = [1:0]$. The ring $K_{T}(\BP^1)$ is generated by the line bundle $\CO(1)$, with fibers given by: 
$$
\CO(1)|_{[x:y]} \ = \ \Big\{ \text{dual of the line } \{xb = ya\} \subset \BC^2 \Big\}
$$
The fibers of $\CO(1)$ at the fixed points are one-dimensional vector space endowed with a $T-$action, so they are identified with characters:
\begin{equation}
\label{eqn:jean}
\CO(1)|_\infty = q_1, \qquad \qquad \CO(1)|_0 = q_2
\end{equation}
where $q_1,q_2$ denote those characters of $T$ which are dual to the basis $t_1,t_2$ of \eqref{eqn:constantin}. The well-known Euler sequence is defined as:
\begin{equation}
\label{eqn:ses}
0 \longrightarrow q_1 q_2 \CO \stackrel{\alpha}\longrightarrow q_1\CO(1) \oplus q_2 \CO(1) \stackrel{\beta}\longrightarrow \CO(2) \longrightarrow 0
\end{equation}
$$
\alpha(v) = (yv,xv), \qquad \beta(v_1,v_2) = x v_1 - y v_2
$$
The factors of $q_1,q_2$ need to be inserted in the above short exact sequence in order to make the maps $T-$equivariant. One can check that the factors are correct by restricting the above sequence to the fixed points, as in \eqref{eqn:jean}. Relation \eqref{eqn:ses} implies:
$$
(q_1+q_2)l \ = \ q_1q_2 + l^{2}
$$
where we write $l = \CO(1)$. In fact, this is the only relation in the $K-$theory group: 
\begin{equation}
\label{eqn:kp1}
K_T(\BP^1) = \frac {\BZ\left[q_1^{\pm 1}, q_2^{\pm 1}, l^{\pm 1}\right]}{(l - q_1)(l - q_2)}
\end{equation}
Besides the class $l$ of the ample line bundle, one may also consider the skyscraper sheaves at the torus fixed points:
$$
0 \longrightarrow q_1^{-1} \CO(-1) \longrightarrow \CO \longrightarrow I_0 \longrightarrow 0 \qquad \quad \Longrightarrow \quad \qquad I_0 = 1 - \frac 1{lq_1}
$$
$$
0 \longrightarrow q_2^{-1} \CO(-1) \longrightarrow \CO \longrightarrow I_\infty \longrightarrow 0 \qquad \quad \Longrightarrow \quad \qquad I_\infty = 1 - \frac 1{lq_2}
$$
The reason why one needs to twist $\CO(-1)$ differently in the above short exact sequences is that the sheaf of functions which vanish at $0$ (respectively $\infty$) is generated by the function $\frac xy$ (respectively $\frac yx$) and these have different equivariant weights. 

\end{example}

\tab 
The above example allows us to observe a very important phenomenon. The localization of \eqref{eqn:kp1} over the field of equivariant constants:
$$
K_T(\BP^1)_\loc \ := \ K_T(\BP^1) \bigotimes_{\BZ\left[q_1^{\pm 1}, q_2^{\pm 1}\right]} \BQ(q_1,q_2)
$$
has dimension two, as it is generated by the classes $1$ and $l$. On the other hand, it is equally well generated by the classes $I_0$ and $I_\infty$ of skyscraper sheaves at the fixed points. This principle is called \b{equivariant localization}, and roughly states that equivariant $K-$theory is concentrated at the fixed points. More precisely and generally, suppose we are given a variety with a torus action $T \curvearrowright X$. Assume that the fixed point set breaks up into proper connected components as:
$$
X^T \ = \ F_1 \sqcup ... \sqcup F_t
$$
and let $\iota_s : F_s \hookrightarrow X$ denote the various inclusion maps. The Thomason equivariant localization theorem, inspired by Atiyah-Bott localization in cohomology, says that:
\begin{equation}
\label{eqn:loc0}
\alpha = \sum_{s=1}^t \iota_{s*} \left(\frac {\alpha|_{F_s}}{\Lambda^*(N_{F_s \subset X}^\vee)}\right) \ \in \ K_T(X)_\loc: = K_T(X) \bigotimes_{K_T(\pt)} \text{Frac}(K_T(\pt)) 
\end{equation}
Note that the above equality only makes sense in localized equivariant $K-$theory, because of the presence of denominators. We will use the above formula to compute equivariant Euler characteristics of various sheaves on $X$, which are defined as:
$$
\chi_T(X,\alpha) \ = \ \sum_{i=0}^{\infty} (-1)^i \Big(T-\text{character of } H^i(X,\alpha) \Big) \ \in \ \text{Rep}(T) = K_T(\pt)
$$
The above sum makes sense even if there are infinitely many terms, as long as we obtain a convergent geometric series. Since the equivariant Euler characteristic is the same as the push-forward map to a point, we can use \eqref{eqn:loc0} to obtain:
$$
\chi_T(X,\alpha) \ = \ \sum_{s=1}^t \chi_T\left(F_s, \ \frac {\alpha|_{F_s}}{\Lambda^*(N_{F_s \subset X}^\vee)} \right)
$$
In other words, the equivariant Euler characteristic of a class can be computed simply by studying the restriction of that class to the torus fixed locus. In the particular case when $X^T$ consists of discretely many points, the above becomes:
$$
\chi_T(X,\alpha) \ = \ \sum_{p\in X^T} \frac {\alpha|_p}{\Lambda^*(T^\vee_p X)} \ = \ \sum_{p\in X^T} \frac {\alpha|_p}{(1 - w^{-1}_1)...(1-w_d^{-1})} 
$$
where in the second term, $w_1+...+w_d$ is the $T-$character in the tangent space $T_p X$. We will encounter many such localization formulas, and we will prefer to slightly change the denominators in order to make them more symmetric:
$$
\chi_T\left(X,\alpha \cdot K_X^{-1/2}\right) \ = \ \sum_{p\in X^T} \frac {\alpha|_p \cdot (w_1...w_d)^{-1/2}}{(1 - w^{-1}_1)...(1-w_d^{-1})} \ = \ \sum_{p\in X^T} \frac {\alpha|_p}{[T_pX]}
$$
where we write $K_X = \Lambda^dT_X$ and:
\begin{equation}
\label{eqn:quantum}
[v] \ := \ v^{\frac 12} - v^{- \frac 12} \quad \text{and generalize it to} \quad [V] \ := \ \frac {[v_1]...[v_d]}{[v_1']...[v'_{d'}]} 
\end{equation}
for a virtual $T-$representation $V = v_1+...+v_d - v_1' - ... - v_{d'}'$. Here and throughout the paper, the square root of the canonical bundle is simply a formal device meant to make our formulas look better. It is not an essential part of our argument, and in fact it could be eliminated by resorting to polarizations, as defined in \citep{MO}. For the sake of keeping the presentation clear and concise, we will gloss over this imprecision and only work with the renormalized Euler characteristic:
\begin{equation}
\label{eqn:modified}
\ochi_T(X,\alpha) \ := \ \chi_T\left(X, \alpha \cdot K_X^{-1/2}\right) %(-1)^{\frac {\dim X}2}  
\end{equation}
whose effect on our formulas will simply be to ``center" the denominators, i.e. replacing the ``quantum numbers" $1-w^{-1}$ by their more symmetric form $w^{\frac 12} - w^{-\frac 12}$. Formula \eqref{eqn:modified} establishes an important feature behind our notation throughout this paper: we always introduce formal square roots of all $T-$characters, and even more so, we introduce formal square roots of line bundles. We will abuse notation and not always mention this explicitly, especially since all our formulas will be of the form:
$$
l^{\frac 12}\cdot (\text{actual }K-\text{theory class})
$$
Taking the artifice one step further, one can define modified push-forward maps as:
\begin{equation}
\label{eqn:modpush}
\widetilde{\pi}_* \ : \ K_T(X) \longrightarrow K_T(Y)
\end{equation}
$$
\widetilde{\pi}_*(\alpha) \ = \ \pi_*\left(\alpha \cdot \text{det}\left[\text{Cone}(TX \stackrel{d\pi}\rightarrow \pi^*TY)\right]^{-\frac 12} \right)
$$
for l.c.i. morphisms $\pi:X \rightarrow Y$. We stress the fact that the above does \b{not} pretend to be a theory of square roots of line bundles on algebraic varieties, but instead is merely a way for us to make our formulas more symmetric. We could have done away without the modification $\pi \mapsto \widetilde{\pi}$, but only at the expense of simplicity and conciseness, which are important attributes of an expository text.

\tab 
The equivariant Euler characteristic is simply the case $Y = \pt$ of \eqref{eqn:modpush}. Then the equivariant localization theorem \eqref{eqn:loc0} can be written as:
\begin{equation}
\label{eqn:loc}
\alpha \ = \ \sum_{s=1}^t \widetilde{\iota}_{s*} \left(\frac {\alpha|_{F_s}}{[N_{F_s \subset X}]} \right)
\end{equation}
for any class $\alpha \in K_T(X)$, where $\iota_s:F_s \hookrightarrow X$ denote the inclusions of the fixed loci and $N_{F_s \subset X}$ denotes the normal bundles. Here we may define the denominator by: 
\begin{equation}
\label{eqn:quantumvectorbundle}
[\CV] \ := \ \frac {[\CL_1] \cdot ... \cdot [\CL_d]}{[\CL_1'] \cdot ... \cdot [\CL'_{d'}]} \ = \ \frac {\left(\CL^{\frac 12}_1 - \CL^{-\frac 12}_1 \right) \cdot ... \cdot \left(\CL^{\frac 12}_d - \CL^{-\frac 12}_d \right)}{\left(\CL^{'\frac 12}_1 - \CL^{'-\frac 12}_1 \right) \cdot ... \cdot \left(\CL^{'\frac 12}_{d'} - \CL^{'-\frac 12}_{d'} \right)}
\end{equation}
for any alternating sum of line bundle classes $\CV = \sum_{i=1}^d \CL_i - \sum_{i'=1}^{d'} \CL'_{i'} \in K_T(X)$. The fact that any vector bundle can be written as a linear combination of line bundles in the $K-$group is a consequence of the splitting principle, though in the present thesis, most vector bundles we will be concerned with will be naturally expressed in this form. \\

%for some line bundle $l$. In other words, square roots of line bundles are simply a matter of normalization, and an artifice to make quantities such as \eqref{eqn:modified} more symmetric. 

\section{Basic notions on partitions}
\label{sec:basicpar}

\noindent A \b{partition} of $v$ is an unordered sequence of natural numbers which sum up to $v$:
$$
\lambda \ \vdash \ v \qquad \text{if }\ \lambda = (\lambda_1 \geq \lambda_2 \geq...) \quad \text{such that} \quad \lambda_1+\lambda_2+... = v
$$
For example, $(4,3,1)$ is a partition of the natural number 8. There is a one-to-one correspondence between partitions and \b{Young diagrams}, the latter being simply stacks of $1\times 1$ boxes placed in the corner of the first quadrant of the plane:
\begin{figure}[H]
	
\begin{picture}(110,130)(-135,0)

\put(17,17){$1$}
\put(14,57){$q_2$}
\put(10,97){$q_2^2$}
\put(57,17){$q_1$}
\put(54,57){$q_1q_2$}
\put(94,17){$q_1^2$}
\put(92,57){$q_1^2q_2$}
\put(134,17){$q_1^3$}

\put(0,0){\line(1,0){160}}
\put(0,40){\line(1,0){157}}
\put(0,80){\line(1,0){117}}
\put(0,120){\line(1,0){37}}

\put(0,0){\line(0,1){120}}
\put(40,0){\line(0,1){117}}
\put(80,0){\line(0,1){80}}
\put(120,0){\line(0,1){77}}
\put(160,0){\line(0,1){37}}

\put(40,120){\circle{5}}
\put(120,80){\circle{5}}
\put(160,40){\circle{5}}

\put(160,0){\circle*{5}}
\put(120,40){\circle*{5}}
\put(40,80){\circle*{5}}
\put(0,120){\circle*{5}}

%\put(55,-20){\mbox{Figure 1.2}}

\end{picture}

\caption[A Young diagram with weights and inner/outer corners represented]

\label{fig:boxes}

\end{figure}

\noindent The above Young diagram represents the partition $(4,3,1)$, because it has 4 boxes on the first row, 3 boxes on the second row, and 1 box on the third row. The circled points in Figure 1.2 denote the corners of the partition: the full circles will be called \b{inner corners}, while the hollow circles will be called \b{outer corners}. The monomials displayed in Figure 1.2 are called the \textbf{weights} of the boxes they are located in:
\begin{equation}
\label{eqn:weight0}
\chi_\square \ = \ q_1^x q_2^y \ = \ q^{x+y} t^{x-y}
\end{equation}
where $q_1 = qt$, $q_2 = \frac qt$, and $(x,y)$ are the coordinates of the southwest corner of the box $\sq$. The coefficient of $t$ in the above expression, namely $x-y$, will be called the \textbf{content} of the box, and note that it is constant across diagonals. In this paper, the word ``diagonal" will only refer to the ones in southwest-northeast direction. Recall the \b{dominance partial order} on partitions $\lambda$ and $\mu$ of the same size $|\la| = |\mu|$:
$$
\la \ \unrhd \ \mu \quad \Leftrightarrow \quad \la_1+...+\la_j \geq \mu_1+...+\mu_j \qquad \forall \ j
$$
At the level of the corresponding Young diagrams, the above condition is equivalent to the fact that we can obtain $\la$ from $\mu$ by rolling boxes from northwest to southeast. We will now define another partial order between partitions. If the Young diagram of $\mu$ is a subset of the Young diagram of $\la$, we will indicate this as:
$$
\la \ \geq \ \mu \qquad \Leftrightarrow \qquad \la_j \ \geq \ \mu_j \quad \forall \ j
$$
The difference $\lamu$ is called a \b{skew} partition, and the corresponding set of boxes is called a skew diagram. If a skew diagram is connected and contains no $2\times 2$ boxes, then we call it a \b{ribbon}. The boxes of a ribbon are labeled from northwest to southeast by their contents:
\begin{figure}[H]

\begin{picture}(100,80)(-40,5)

\put(78,67){\footnotesize{$i$}}
\put(255,8){\footnotesize{$j$}}

\put(70,60){\line(0,1){20}}
\put(70,80){\line(1,0){40}}
\put(110,80){\line(0,-1){60}}
\put(130,40){\line(0,-1){20}}
\put(150,40){\line(0,-1){20}}
\put(170,20){\line(0,-1){20}}
\put(190,20){\line(0,-1){20}}
\put(210,20){\line(0,-1){20}}
\put(230,20){\line(0,-1){20}}
\put(250,20){\line(0,-1){20}}
\put(90,40){\line(1,0){80}}
\put(170,40){\line(0,-1){20}}
\put(150,20){\line(1,0){100}}
\put(250,20){\line(0,-1){20}}
\put(250,0){\line(-1,0){100}}
\put(150,0){\line(0,1){20}}
\put(150,20){\line(-1,0){60}}
\put(90,20){\line(0,1){60}}
\put(110,60){\line(-1,0){40}}

\put(40,25){$R \ :$}

\end{picture}

\caption[A ribbon of type $[i;j)$]

\label{fig:ribbon}

\end{figure}

%\begin{equation}
%\label{eqn:ind1}
%\ind^+_{R,\bm} \ = \ \sigma_{[i;j)}^\bm + 2\sum_{k=1}^{\high R} \Big(\left \lfloor m_i+...+m_{a_k-1} \right \rfloor - m_i - ... - m_{a_k-1} \Big) %\sum_{k=1}^{\wide R} \Big(m_i+...+m_{a_k-1} - \left \lceil m_i+...+m_{a_k-1} \right \rceil \Big)
%\end{equation}
%where $a_{k-1},...,a_k-1$ are the contents of the boxes on the $k-$th row of $R$ from top to bottom, and the number:
%\begin{equation}
%\label{eqn:ind2}
%\ind^-_{R,\bm} \ = \ \sigma_{[i;j)}^\bm + 2\sum_{k=1}^{\wide R} \Big(\left \lfloor m_i+...+m_{b_k-1} \right \rfloor - m_i - ... - m_{b_k-1} \Big)
%\end{equation}
%where $b_{k-1},...,b_k-1$ are the contents of the boxes on the $k-$th column of $R$ from left to right, and we set $\sigma_{[i;j)}^\bm = \sum_{a=i}^{j-1} a(\lfloor m_i + ... + m_a \rfloor - \lfloor m_i + ... + m_{a-1} \rfloor - m_a)$. 

\noindent The box labeled $i$ is the head, and the box labeled $j$ is the tail of the ribbon. Since the contents of the boxes of a ribbon must be consecutive integers, we refer to $[i;j)$ modulo $\BZ(n,n)$ as the \b{type} of the ribbon. In other words, a $[i;j)-$ribbon and a $[i+n;j+n)-$ribbon are the same thing. The \b{height} (respectively \b{width}) of a ribbon is defined as the difference in vertical (respectively horizontal) coordinate between its first and last box, so the ribbon $R$ in Figure \ref{fig:ribbon} has height 3 and width 8:
\begin{equation}
\label{eqn:height}
\high R = 3 \qquad \qquad \wide R = 8
\end{equation}
Note that the height and width of an $[i;j)-$ribbon always add up to $j-i-1$. Given a vector of rational numbers $\bm = (m_1,...,m_n)\in \qq$, we call $R$ an $\bm-$\b{integral} ribbon if $m_i+...+m_{j-1} \in \BZ$. The indices of $m$ will always be taken modulo $n$, so $m_a := m_{a\text{ mod } n}$ for all $a\in \BZ$. For a $\bm-$integral ribbon $R$ of type $[i;j)$, we define:
\begin{equation}
\label{eqn:ind}
\ind_{R}^\bm \ = \ \sum_{a=i}^{j-2} \pm \Big(m_i + ... + m_{a} - \left \lfloor m_i+...+m_{a} \right \rfloor \Big) 
\end{equation}
where the sign is $+$ or $-$ depending on whether the $(a+1)$-st box in the ribbon is to the right or below the $a$-th box, respectively. A \b{cavalcade} of $\bm-$integral ribbons is a skew diagram that consists of disjoint and non-adjacent ribbons:
\begin{equation}
\label{eqn:little}
R_a \quad \text{of type} \quad [i_{a-1};i_{a}) \qquad \qquad m_{i_{a-1}}+...+m_{i_a-1} \in \BZ
\end{equation}
for $a \in \{1,...,k\}$, strung together in order from the northwest to the southeast, as in Figure \ref{fig:cavalcade} below. The arc $[i_0;i_k) = [i_0;i_1) + ... + [i_{k-1};i_k)$ mod $\BZ(n,n)$ will be called the \b{type} of the cavalcade.

%\begin{equation}
%\label{eqn:finger}
%i = i_0 < i_1 < ... < i_k = j
%\end{equation}
%and $[i;j)$ is called the type of the cavalcade.

\begin{figure}[H]
	
\begin{picture}(100,110)(-40,-5)

\put(71,82){\footnotesize{$i_{a-1}$}}
\put(138,62){\footnotesize{$i_a$}}

\put(70,75){\line(0,1){20}}
\put(70,95){\line(1,0){40}}
\put(110,95){\line(0,-1){20}}
\put(130,75){\line(-1,0){20}}
\put(90,75){\line(-1,0){20}}
\put(130,75){\line(0,-1){20}}
\put(130,55){\line(-1,0){40}}
\put(90,55){\line(0,1){20}}

\put(176,23){\footnotesize{$i_{a}$}}
\put(272,2){\footnotesize{$i_{a+1}$}}

\put(170,35){\line(1,0){60}}
\put(230,35){\line(0,-1){20}}
\put(270,15){\line(-1,0){40}}
\put(210,15){\line(-1,0){40}}
\put(270,-5){\line(-1,0){60}}
\put(210,-5){\line(0,1){20}}
\put(170,15){\line(0,1){20}}
\put(270,15){\line(0,-1){20}}
	
\end{picture}
	
\caption[A cavalcade of ribbons]
	
\label{fig:cavalcade}
	
\end{figure}

\noindent  Recall that the type $[i_a;i_{a+1})$ of a ribbon is taken modulo $\BZ(n,n)$, and that is why there is no contradiction in having two boxes labeled $i_a$ in the above picture. 

\tab 
Note that a skew diagram can be presented as a cavalcade in at most one way. On the other hand, a \b{stampede} of $\bm-$integral ribbons is a collection of disjoint ribbons $R_1,...,R_k$, positioned as in Figure \ref{fig:stampede}. The defining condition on these ribbons is:
\begin{equation}
\label{eqn:ribbonstampede}
\la = \nu_0 \geq \nu_1 \geq ... \geq \nu_k = \mu \quad \text{are partitions such that} \quad R_a = \nu_{a-1} / \nu_{a}
\end{equation}
and the head of the ribbon $R_{a}$ has content $<$ than the tail of the ribbon $R_{a-1}$:

\begin{figure}[H]
	
\begin{picture}(100,150)(-40,-50)

\put(72,82){\footnotesize{$i_{a+1}$}}
\put(118,62){\footnotesize{$i_{a+2}$}}

\put(70,75){\line(0,1){20}}
\put(90,75){\line(0,-1){20}}
\put(90,55){\line(1,0){20}}
\put(70,75){\line(1,0){20}}
\put(70,95){\line(1,0){40}}
\put(110,95){\line(0,-1){40}}

\put(152,22){\footnotesize{$i_{a-1}$}}
\put(232,-18){\footnotesize{$i_{a}$}}

\put(150,35){\line(1,0){60}}
\put(210,35){\line(0,-1){20}}
\put(210,15){\line(1,0){20}}
\put(230,15){\line(0,-1){40}}
\put(230,-25){\line(-1,0){20}}
\put(210,-25){\line(0,1){20}}
\put(210,-5){\line(-1,0){20}}
\put(190,-5){\line(0,1){20}}
\put(190,15){\line(-1,0){40}}
\put(150,15){\line(0,1){20}}

\put(174,2){\footnotesize{$i_{a}$}}
\put(272,-38){\footnotesize{$i_{a+1}$}}

\put(170,15){\line(0,-1){40}}
\put(170,-25){\line(1,0){20}}
\put(190,-25){\line(0,-1){20}}
\put(190,-45){\line(1,0){80}}
\put(270,-45){\line(0,1){20}}
\put(270,-25){\line(-1,0){40}}
	
\end{picture}
	
\caption[A stampede of ribbons]
	
\label{fig:stampede}
	
\end{figure}

\noindent Note that there can be several stampedes of ribbons on any skew diagram $\lamu$, but there is a unique stampede if we fix the types of the ribbons involved:
$$
R_a \quad \text{of type} \quad [i_{a-1};i_{a}) \qquad \qquad m_{i_{a-1}}+...+m_{i_a-1} \in \BZ 
$$
The arc $[i_0;i_k) = [i_0;i_1) + ... + [i_{k-1};i_k)$ mod $\BZ(n,n)$ will be called the \b{type} of the stampede. If $C$ is either a cavalcade or stampede, the number of constituent ribbons will be denoted by $\#_C$, and we set:
$$
\high C = \sum^{\text{ribbon}}_{R\ \in \ C} \high R \qquad \ \ \wide C = \sum^{\text{ribbon}}_{R \ \in \ C} \wide R \qquad \ \ \ind_{C}^\bm = \sum^{\text{ribbon}}_{R \ \in \ C} \ind_{R}^\bm % \quad \ \ \chi_C^\bm = \prod^{\text{ribbon}}_{R \ \in \ C} \chi_R^\bm
$$
Let us now fix any natural number $w$. A collection of partitions:
\begin{equation}
\label{eqn:wpartition}
\bla = (\lambda^1,...\lambda^w)
\end{equation}
will be called a $w-$\b{partition}. The weight of a box $\sq \in \lambda^i \subset \bla$ is defined as the following generalization of \eqref{eqn:weight0}:
\begin{equation}
\label{eqn:weight}
\chi_{\sq} \ = \ (qu_i) \cdot q_1^xq_2^y \ = \ u_i q^{x+y+1} t^{x-y}
\end{equation}
where $(x,y)$ are the coordinates of $\sq$, and $u_1,...,u_w$ are formal variables that keep track of which constituent partition of $\bla$ the box $\sq$ lies in. The content of a box $\sq \in \lambda^i \subset \bla$ is defined as:
\begin{equation}
\label{eqn:content}
c_{\sq} = a_i + x - y
\end{equation}
where $a_1,...,a_w$ are also formal variables. We will represent $w-$partitions on a single picture, very far away from each other, with $\lambda^i$ northwest of $\lambda^{j}$ for any $i<j$. The situation of $w=2$ is represented in the following picture:

\begin{figure}[H] 
	
\begin{picture}(100,220)(-30,-70)

\put(70,135){\line(0,-1){70}}
\put(70,95){\line(1,0){20}}
\put(90,95){\line(0,-1){10}}
\put(90,85){\line(1,0){20}}
\put(110,85){\line(0,-1){20}}
\put(190,65){\line(-1,0){120}}
\put(190,65){\line(0,-1){120}}
\put(190,-15){\line(1,0){10}}
\put(200,-15){\line(0,-1){10}}
\put(200,-25){\line(1,0){20}}
\put(220,-25){\line(0,-1){30}}
\put(280,-55){\line(-1,0){90}}

\put(80,73){$\lambda^1$}
\put(198,-45){$\lambda^2$}

%\put(150,-80){\text{Figure 1.5}}

\end{picture}

\caption[Representing a $w-$partition as far apart Young diagrams] 

\label{fig:partition}

\end{figure} 

\noindent The dominance ordering can be defined for $w-$partitions, i.e. we set $\bla \unrhd \bmu$ if we can obtain $\bla$ from $\bmu$ by rolling boxes from the northwest to the southeast in Figure \ref{fig:partition}. We can also define the ordering $\bla \geq \bmu$ for $w-$partitions, which simply means that: 
$$
\la^j \ \geq \ \mu^j \qquad \forall \ j\in \{1,...,w\}
$$
Going one step further, we can talk about $w-$skew diagrams, which are simply collections of $w$ skew diagrams. Since a ribbon is \b{connected}, it can occupy boxes only in a single constituent partition of a $w-$skew diagram. However, a cavalcade or stampede of ribbons is allowed to occupy boxes in more than one constituent partitions of a $w-$skew diagram.

%The statement is false for vertical strips of broken ribbons, since there is no connectivity requirement for these. 

\tab
The final step is to consider the above notions modulo some fixed $n > 1$. The \b{color} of a box is defined as its content modulo $n$, and the intuition behind this is that we paint all the southwest-northeast diagonals periodically in $n$ colors. Given a vector:
\begin{equation}
\label{eqn:framing}
\bw = (w_1,...,w_n) \ \in \ \nn
\end{equation}
of total size $|\bw| = w_1 + ... + w_n$, define a $\bw-$partition as:
\begin{equation}
\label{eqn:defwpart}
\bla = (\lambda^1,...,\lambda^{\bw})
\end{equation}
where each constituent partition $\lambda^i$ is assigned a color $\sigma_i \in \BZ/n\BZ$ in such a way that $\# \{\sigma_i = k\} = w_k$. These colors should be interpreted as shifts modulo $n$, where the shift $\sigma_i$ of $\lambda^i$ is remembered by demanding that it be congruent to $a_i$ modulo $n$, where $a_i$ is the formal variable of \eqref{eqn:content}. We will not hesitate to write $\lambda^1,...,\lambda^{\bw}$ instead of the more appropriate $\lambda^1,...,\lambda^{|\bw|}$ in \eqref{eqn:defwpart}, since this will remind us that a $\bw-$partition also keeps track of the color shift of each of its constituent partitions. Finally, we define:
\begin{equation}
\label{eqn:ind3}
o_\blamu \ = \ \prod_{\sq \in \blamu} \chi_\sq \qquad \text{and} \qquad o_\blamu^\bm \ = \ \prod_{\sq \in \blamu} \chi^{m_{\text{color of }\sq}}_\sq
\end{equation}
for any $\bm \in \qq$. When $\bmu = \emptyset$, these are the constants which appear in Theorem \ref{thm:pieri}. Finally, to any box $\bsq$ and $\bw-$partition $\bla$, we define the numbers:
\begin{equation}
\label{eqn:defnumbers1}
N_{\bsq|\bla}^+ = \sum^{\sq \text{ corner of }\bla}_{\text{with }c_\sq > c_\bsq} \delta^\sq_{\text{inner}} - \delta^\sq_{\text{outer}}
\end{equation}
\begin{equation}
\label{eqn:defnumbers2}
N_{\bsq|\bla}^- = \sum^{\sq \text{ corner of }\bla}_{\text{with }c_\sq < c_\bsq} \delta^\sq_{\text{outer}} - \delta^\sq_{\text{inner}}
\end{equation}
Given a cavalcade of ribbons $C$, we set $N_{C}^+ = \sum_{\bsq \in C} N_{\bsq|\bla}^+$. For a stampede of ribbons $S$, the corresponding notion is a bit more subtle. We recall that a stampede of ribbons comes with a specified flag of Young diagrams as in \eqref{eqn:ribbonstampede}, and we set:
\begin{equation}
\label{eqn:slightly}
N_{S}^- = \sum_{i=1}^k \sum_{\sq \in R_i} N^-_{\bsq|\bnu_i}
\end{equation}

\section{Basic notions on quantum algebras}
\label{sec:basicquantum}

%A \b{Hopf algebra} is a bialgebra endowed with an antipode map:
%$$
%A \ \stackrel{S}\longrightarrow \ A \qquad \text{such that} \qquad S(a_1)a_2 = a_1S(a_2) = \e(a)\cdot 1 
%$$
%for all $a\in A$, where $\e : A \rightarrow \BC$ is the counit. 

\noindent In this paper, we will consider \b{bialgebras} that are associative, coassociative, unital, counital, and the product and coproduct will always be:
$$
A \otimes A \ \stackrel{*}\longrightarrow \ A, \qquad A \ \stackrel{\Delta}\longrightarrow \ A \otimes A \qquad \text{such that} \qquad \Delta(a*b) = \Delta(a) * \Delta(b)
$$
We will often use Sweedler notation $\Delta(a) = a_1 \otimes a_2$ for the coproduct, where a sum over tensors is implied, but not explicitly mentioned. A bialgebra is called a \b{Hopf algebra} if it comes endowed with an antipode map:
\begin{equation}
\label{eqn:antipode}
S \ : \ A \longrightarrow A \qquad \text{such that} \qquad S(a_1)a_2 = a_1S(a_2) = \e(a)\cdot 1
\end{equation}
where $\e \in A^\vee$ is the counit. The antipode is an anti-homomorphism of algebras and of coalgebras, and in all examples of the present thesis, it is completely determined by condition \eqref{eqn:antipode}. While not necessary for most of our definitions, one reason why it is good to have the antipode around is the fact that it allows to write down the action of $A$ on Verma modules. Given two bialgebras $A^-$ and $A^+$, a bilinear form:
$$
A^- \otimes A^+ \ \longrightarrow \ \text{base field}
$$
will be called a \textbf{bialgebra pairing} if it satisfies:
\begin{equation}
\label{eqn:bialg}
\langle aa',b \rangle = \langle a\otimes a', \Delta(b) \rangle  \qquad \qquad \langle a, bb' \rangle = \langle \Delta^{\op}(a), b \otimes b'\rangle  
\end{equation}
for all $a,a' \in A^-$ and $b,b' \in A^+$. All our pairings will be Hopf, meaning that they are compatible with the antipode via $\langle Sa ,b \rangle = \langle a, S^{-1}b \rangle$. Given such a bialgebra pairing, we may construct the \textbf{Drinfeld double} of the two bialgebras in question:
\begin{equation}
\label{eqn:dd}
A \ = \ A^- \otimes A^+
\end{equation}
as a vector space, and then require that the two tensor factors be sub-bialgebras which satisfy the extra relation:
\begin{equation}
\label{eqn:reldrinfeld}
\langle a_1,b_1 \rangle a_2 \cdot b_2  \ = \ b_1 \cdot a_1 \langle a_2,b_2 \rangle \qquad \qquad \forall \ a \in A^-, \ b\in A^+
\end{equation}
If we consider the antipode, then the above relation is equivalent to:
\begin{equation}
\label{eqn:reldrinfeld2}
a \cdot b \ = \ \langle S a_1,b_1 \rangle \ b_2 \cdot a_2 \ \langle a_3, b_3 \rangle \qquad \qquad \forall \ a \in A^-, \ b\in A^+
\end{equation}
Formula \eqref{eqn:reldrinfeld2} prescribes how to normally order $A^+$ and $A^-$.

%the \textbf{Euler form}:
%\begin{equation}
%\label{eqn:euler}
%\langle \bk,\bl \rangle = \sum_{i=1}^n k_il_i - k_il_{i+1}
%\end{equation}
%and 

\tab 
Our basic example of a Drinfeld double is the quantum toroidal algebra $\UU$, which we now define. Let us note that our definition has one of the central elements of the quantum toroidal algebra set equal to 1. \footnote{See for example \citep{FT2} for the definition of the quantum toroidal algebra with this extra central element, as well as a check of the fact that $\UU$ is a Drinfeld double} Moreover, our $q$ is equal to the usual $-q$ in the theory, so as to match our conventions from geometry. Let us fix a natural number $n > 1$ and define the following symmetric bilinear forms on elements $\bk = (k_1,...,k_n)$ of either $\nn$, $\zz$ or $\qq$. These are the \b{scalar product}:
\begin{equation}
\label{eqn:scalar}
\bk \cdot \bl = \sum_{i=1}^n k_il_i
\end{equation}
and the \textbf{Killing form}:
\begin{equation}
\label{eqn:kill}
(\bk,\bl) = \sum_{i=1}^n 2k_il_i - k_il_{i+1} - k_i l_{i-1}
\end{equation}
The terminology is supported by the fact that \eqref{eqn:kill} is the Killing form of the root system for the cyclic quiver of Figure \ref{fig:quiver}. Explicitly, the \textbf{positive roots} in our setup are defined for all integers $i < j$ as:
\begin{equation}
\label{eqn:root}
[i;j) = \bs^i + ... + \bs^{j-1} \in \nn
\end{equation}
where $\bs^i \in \nn$ is the \textbf{simple root} $(0,...,0,1,0,...,0)$ with a single 1 at the $i-$th position. Note that the kernel of \eqref{eqn:kill} is spanned by the \b{imaginary root}:
\begin{equation}
\label{eqn:imaginary}
\bth = (1,...,1) \ \in \ \nn
\end{equation}
Many variables that will appear in this thesis will be assigned a certain \b{color} $\in \BZ/n\BZ$. Then we will often encounter the following color-dependent rational function:
\begin{equation}
\label{eqn:defzeta}
\zeta\left( \frac {x_i}{x_j} \right) \ = \ \frac {\left[\frac {x_j}{qtx_i} \right]^{\delta_{j-1}^i} \left[\frac {tx_j}{qx_i} \right]^{\delta_{j+1}^i} }{\left[\frac {x_j}{x_i} \right]^{\delta_j^i} \left[\frac {x_j}{q^2x_i} \right]^{\delta_j^i}}
\end{equation}
where $x_i$ and $x_j$ are variables of color $i$ and $j$, respectively, and the quantum numbers are $[x] = x^{\frac 12} - x^{-\frac 12}$. Since the colors $i,j$ in the above formula are only defined modulo $n$, so are the Kronecker delta functions $\delta_j^i$. With this in mind, we can define the quantum toroidal algebra $\UU$ (following, for example, \citep{FJMM}\footnote{The parameters of \loccit are connected with ours via:
$$
q_1^{\text{their}} = -\frac 1{qt}, \quad q_2^{\text{their}} = q^2, \quad q_3^{\text{their}} = -\frac tq, \quad d^{\text{their}} = t^{-1}
$$
Moreover, our algebra has one less central element than that of \loccit}) as:

$$
\UU \ = \ \Big \langle \{e^\pm_{i,d}\}_{1\leq i \leq n}^{d\in \BZ}, \ \{ \ph^\pm_{i,d} \}_{1\leq i \leq n}^{d\in \BN_0} \Big \rangle
$$
The relations between the above generators are best expressed if we package the generators into currents: 
$$
e^\pm_{i}(z) = \sum_{d \in \BZ} e^\pm_{i,d} z^{-d} \qquad \qquad \ph^\pm_{i}(z) = \sum_{d=0}^\infty \ph^\pm_{i,d} z^{\mp d}
$$
and require that the $\ph^\pm_{i,d}$ commute among themselves, as well as:
$$
e_i^\pm(z) \ph_j^{\pm'}(w) \cdot \zeta \left(\frac {w^{\pm 1}}{z^{\pm 1}} \right) \ = \ \ph_j^{\pm'}(w) e^\pm_i(z) \cdot \zeta \left(\frac {z^{\pm 1}}{w^{\pm 1}} \right)
$$
$$
e_i^\pm(z) e_j^\pm(w) \cdot \zeta \left(\frac {w^{\pm 1}}{z^{\pm 1}} \right) \ \ = \ \ e_j^\pm(w) e^\pm_i(z)  \cdot \zeta \left(\frac {z^{\pm 1}}{w^{\pm 1}} \right)
$$
and:
\begin{equation}
\label{eqn:comm}
[e_i^+(z), e_{j}^-(w)] = \delta_i^j \delta \left(\frac zw \right) \cdot \frac {\ph^+_i(z) - \ph_{i}^-(w)}{q-q^{-1}} %(1-q^2)  \delta_i^j \delta \left(\frac zw \right) \cdot \Big(\ph^+_i(z) - \ph_{i}^-(w)\Big)
\end{equation}
for all signs $\pm,\pm'$ and all $i,j \in \{1,...,n\}$. In the above, $z$ and $w$ are variables of color $i$ and $j$, respectively. We also impose the Serre relation: 
\begin{equation}
\label{eqn:serre}
e_{i}^\pm (z_1) e_{i}^\pm(z_2) e^\pm_{i \pm' 1}(w)  + \left( q+q^{-1} \right) e^\pm_{i}(z_1) e^\pm_{i \pm' 1}(w) e^\pm_{i}(z_2) + e^\pm_{i \pm' 1}(w) e^\pm_{i}(z_1) e^\pm_{i}(z_2) + 
\end{equation}
$$
+ e_{i}^\pm (z_2) e_{i}^\pm(z_1) e^\pm_{i \pm' 1}(w) + \left( q+q^{-1} \right) e^\pm_{i}(z_2) e^\pm_{i \pm' 1}(w) e^\pm_{i}(z_1) + e^\pm_{i \pm' 1}(w) e^\pm_{i}(z_2) e^\pm_{i}(z_1)  = 0
$$
for all signs $\pm,\pm'$ and all $i\in \{1,...,n\}$. The above slightly differs from the usual Serre relation because our parameter $q$ equals the usual $-q$. We impose the extra relation:
$$
\ph_{i,0}^+ \ = \ \left(\ph_{i,0}\right)^{-1}
$$
and denote the above Cartan element by $\ph_i$. As we mentioned, the usual quantum toroidal algebra is defined as a central extension of $\UU$, where the central extension governs the failure of $\ph_{i,d}^+$ and $\ph_{i,d}^-$ to commute. Introducing this extra extension would complicate our computations significantly, and would not shed any further light on our geometric constructions. We therefore choose to ignore it in this thesis.

\tab
Note that the quantum group $\su$ is the subalgebra of $\UU$ generated by the constant terms of the currents: $e_i^\pm := e_{i,0}^\pm$ and the Cartan generators $\ph_{i}$. These satisfy the following relations for all signs $\pm,\pm'$ and all indices $i,j \in \{1,...,n\}$:
$$
\ph_i \ph_j = \ph_j \ph_i \quad \text{and} \quad \ph_j e_i^{\pm}  = (-q)^{\mp (\bs_i,\bs_j)} e_i^{\pm} \ph_j 
$$
$$
[e^\pm_i, e^\pm_j] \ = \ 0 \quad \text{unless} \ j\equiv i\pm 1
$$
$$
[e_i^+, e_{j}^-] \ = \ \delta_i^j \cdot \frac {\ph_i - \ph_{i}^{-1}}{q-q^{-1}} %(1-q^2)  \delta_i^j \Big(\ph_i - \ph_{i}^{-1}\Big)
$$
$$
e^{\pm}_{i} e^{\pm}_{i} e^\pm_{i \pm' 1} + \left( q+q^{-1} \right) e^\pm_{i} e^\pm_{i \pm' 1} e^\pm_{i} + e^\pm_{i \pm' 1} e^\pm_{i} e^\pm_{i} = 0
$$
We will consider the following subalgebras of $\UU$:
$$
\UUp \ = \ \left \langle e^+_{i,d} \right \rangle_{1\leq i \leq n}^{d\in \BZ} \qquad \qquad \UUm \ = \ \left \langle e^-_{i,d} \right \rangle_{1\leq i \leq n}^{d\in \BZ}
$$
$$
\UUg \ = \ \left \langle e^+_{i,d}, \ph^+_{i,d'} \right \rangle_{1\leq i \leq n}^{d\in \BZ, d' \in \BN_0} \qquad  \qquad \UUl \ = \ \left \langle e^-_{i,d}, \ph^-_{i,d'} \right \rangle_{1\leq i \leq n}^{d\in \BZ, d'\in \BN_0}
$$
The latter two subalgebras are actually bialgebras with respect to the coproduct: 
\begin{equation}
\label{eqn:copquant}
\Delta \ : \ \UU \ \longrightarrow \ \UU \ \widehat{\otimes} \ \UU
\end{equation}
$$
\Delta \left(e_{i}^+(z) \right) = \ph^+_i(z) \otimes e_{i}^+(z) + e_{i}^+(z) \otimes 1 \qquad \qquad \Delta\left(\ph^+_i(z) \right) = \ph^+_i(z) \otimes \ph^+_i(z)
$$
$$
\Delta \left(e_{i}^-(z) \right) = 1 \otimes e_{i}^-(z) + e_{i}^-(z) \otimes \ph^-_i(z) \qquad \quad \ \Delta\left(\ph^-_i(z)\right) = \ph^-_i(z) \otimes \ph^-_i(z)
$$
for all $i\in \{1,...,n\}$. Moreover, there exists a bialgebra pairing:
\begin{equation}
\label{eqn:pairingtor}
\UUl \ \otimes \ \UUg \longrightarrow \BQ(q,t)
\end{equation}
completely determined by:
$$
\langle \ph^-_{i}(z), \ph^+_j(w) \rangle = \frac {\zeta(w/z)}{\zeta(z/w)} \qquad \qquad \langle e_{i,d}^-, e_{j,d'}^+ \rangle = \frac {\delta_i^j \delta_{d+d'}^0}{q^{-1}-q} %(1-q^2) \delta_i^j \delta_{d+d'}^0
$$
and the properties \eqref{eqn:bialg}. In the first formula above, we think of the variables $z$ and $w$ as having colors $i$ and $j$ respectively. We leave it as an exercise to the interested reader to show that $\UU$ is the Drinfeld double of its positive and negative halves, as in \eqref{eqn:dd}. Finally, let us note that we have isomorphisms of algebras:
\begin{equation}
\label{eqn:isos}
\UU \cong \UU^{\op} \qquad \qquad \UU \cong \UU|_{t \rightarrow \frac 1t}
\end{equation}
under the maps $e_{i,d}^\pm \rightarrow e_{i,d}^\mp$, $\ph_{i,d}^\pm \rightarrow \ph_{i,d}^\pm$ and $e_{i,d}^\pm \rightarrow e_{i,-d}^\mp$, $\ph_{i,d}^\pm \rightarrow \ph_{i,d}^\mp$, respectively. The second map of \eqref{eqn:isos} is also an anti-isomorphism of bialgebras. \\

\section{Basic notions on symplectic varieties and GIT}
\label{sec:symplectic}

\noindent A smooth algebraic variety $X$ is called \b{symplectic} if it comes endowed with a closed non-denegerate $2-$form:
$$
\omega \ \in \ \Gamma(X,T^*X \wedge T^*X) \qquad \Leftrightarrow \qquad \omega \ : \ TX \wedge TX \longrightarrow \CO_X
$$
called the symplectic form. A smooth subvariety $L \subset X$ is called \b{Lagrangian} if it is middle-dimensional and $\omega|_{TL} = 0$. 

\tab The non-degeneracy of the symplectic form is the statement that the induced map $\overline{\omega}:T^*X \longrightarrow TX$ is an isomorphism, so we could alternatively interpret $\overline{\omega}$ as a bilinear form $\widetilde{\omega}$ on $T^*X$. This gives rise to a \b{Poisson structure} on $X$, namely a Lie bracket:
\begin{equation}
\label{eqn:poisson}
\{\cdot, \cdot \}: \CO_X \wedge \CO_X \longrightarrow \CO_X, \qquad \qquad \{f,g\} = \widetilde{\omega}(df,dg)
\end{equation}
which satisfies the Leibniz rule $\{fg,h\} = f \{g,h\} + g \{f,h\}$. 

\tab
We will consider group actions $G \curvearrowright X$ which preserve the symplectic form, i.e.:
$$
g^*\omega \ = \ \omega, \qquad \qquad \forall g\in G
$$
The above is equivalent to the fact that the Lie derivative of $\omega$ is 0 in the direction of any $\xi \in \text{Im} \left(\fg \mapsto \text{Vect}(X) \right)$. Another equivalent condition is that the 1-form:
$$
\omega(\xi, \cdot) \ \in \ \Gamma(X,T^*X), \qquad \qquad \forall \ \xi \in \text{Im} \left(\fg \mapsto \text{Vect}(X) \right)
$$ 
is closed. We will assume a stronger condition, namely that the above 1-form is exact for all $\xi$. More specifically, we assume that the group action is \b{Hamiltonian}, which means that for any $\xi \in \fg$ there exists a function:
\begin{equation}
\label{eqn:hamiltonianfunction}
H_\xi \ \in \ \Gamma(X,\CO_X)
\end{equation}
such that $\omega(\xi,\cdot) = d H_\xi$, and the assignment $\xi \mapsto H_\xi$ is a Lie algebra homomorphism with respect to the Poisson bracket \eqref{eqn:poisson}. If we are in this situation, we can define a \b{moment map} by setting:
\begin{equation}
\label{eqn:defmoment}
\mu \ : \ X \longrightarrow \fg^\vee \qquad \qquad \mu(x)(\xi) \ = \ H_\xi(x), \qquad \forall \ \xi \in \fg 
\end{equation}

\begin{example}
\label{ex:cotangent}
	
The basic example of symplectic varieties are cotangent bundles to smooth varieties, namely $X = T^*M$, since we can present their tangent bundle as:
$$
TX \ \cong \ TM \oplus T^*M
$$
and define the natural symplectic form:
$$
\omega(\xi \wedge \xi') = \omega(\lambda \wedge \lambda') = 0 \qquad \qquad \omega(\xi \wedge \lambda) = \lambda(\xi)
$$
for any $\xi,\xi' \in TM$ and $\lambda,\lambda' \in T^*M$. Any action $G \curvearrowright M$ extends to the cotangent bundle, and it is easy to see that the resulting action $G \curvearrowright T^*M$ is symplectic. In fact, it is naturally Hamiltonian, with:
$$
\mu : T^*M \longrightarrow \fg^\vee \qquad \text{defined as the dual of} \qquad \mu^\vee: \fg \longrightarrow TM
$$
where the latter is the infinitesimal action of $\fg$ on $M$ that is induced by the $G-$action.
	
\end{example}

\tab
Let us return to the general setup of \eqref{eqn:defmoment} and note that the moment map $\mu$ is $G-$equivariant, with respect to the coadjoint action of $G$ on $\fg^\vee$. Then we see that:
$$
G \ \curvearrowright \ \mu^{-1}(0)
$$
Moreover, it is a simple exercise to show that the symplectic form $\omega$ descends to:
\begin{equation}
\label{eqn:hamiltonianreduction}
Y \ = \ \mu^{-1}(0) / G
\end{equation}
if the quotient is smooth and $G$ is reductive. The reason why \eqref{eqn:hamiltonianreduction} is symplectic is that the normal directions to $\mu^{-1}(0)$ (in other words, tangent directions that lie in the kernel of $d\mu$) are precisely dual to the tangent directions to $G-$orbits. The symplectic variety $Y$ is called the \b{Hamiltonian reduction} of $X$ with respect to $G$.

\tab 
However, in algebraic geometry we will be faced with many ways of taking the quotient \eqref{eqn:hamiltonianreduction}, and not all of them will be algebraic varieties, much less smooth. We will now recall \b{geometric invariant theory} (GIT, see \citep{M}), which will allow us to define quotients with good properties. Let us assume we have a reductive group action on a variety $G \curvearrowright X$, and we wish to make sense of the quotient space $X/G$. The categorically-minded reader might first think about the \b{quotient stack}:
$$
X \ \longrightarrow \ [X/G]
$$
which parametrizes $G-$bundles with a $G-$equivariant map to $X$. The category of coherent sheaves on $[X/G]$ is defined as the category of $G-$equivariant coherent sheaves on $X$, and the same can be said about their $K-$theory groups:
\begin{equation}
\label{eqn:kstack}
K\left([X/G]\right) \ := \ K_G(X)
\end{equation}
However, the quotient stack is rarely a variety, so we will need to look for other quotients. An easy solution is the \b{affine quotient}:
\begin{equation}
\label{eqn:affinequotient}
X \ \longrightarrow \ X / G \  := \ \text{Spec} \left(\Gamma(X,\CO_X)^G \right)
\end{equation}
In other words, functions on the affine quotient correspond to $G-$invariant functions on the prequotient. This seems reasonable at first glance, but one runs into problems when trying to glue these affine varieties. Another issue is that points of the affine quotient do not parametrize $G-$orbits in $X$, as one can see from one of the simplest possible examples of reductive group actions:
$$
\BC^* \ \curvearrowright \ \BC^2, \qquad t\cdot(x_1,x_2) = (tx_1,tx_2)
$$ 
It is easy to see that:
$$
\BC^2 / \BC^* = \text{Spec} \left(\BC[x,y]^{\BC^*} \right) = \text{Spec} \left(\BC \right) = \pt
$$
so the affine quotient does not convey any information on the various $\BC^*-$orbits on the plane. Indeed, the problem quickly appears to be the point $0 \in \BC^2$, which lies in the closure of any orbit. Therefore, the solution appears to be to remove the point 0, and work instead with:
\begin{equation}
\label{eqn:projectivespace}
(\BC^2 \backslash 0) \ \stackrel{\pi}\longrightarrow \ (\BC^2 \backslash 0) \sslash \BC^* \ =: \ \BP^1
\end{equation}
where the above quotient is now \b{geometric}: fibers of $\pi$ consist of entire $\BC^*$ orbits. Geometric invariant theory seeks to generalize the setup \eqref{eqn:projectivespace} to arbitrary actions $G \curvearrowright X$ of a reductive group on a projective over affine variety $X$. The construction depends on a \b{linearization} of this action, i.e. a lift of the $G-$action to an ample line bundle $L$ on $X$. Then we define:
$$
X = X^{\un} \sqcup X^{\ss}
$$
where $X^\un$ is the closed subset of \b{unstable} points $x$, whose defining property is that:
$$
\overline{G\cdot x_*} \cap \left(\text{zero section of }L \right) \neq \emptyset
$$
for some non-zero lift $x_*$  of $x$ in the total space of $L$. Points of the open complement $X^{\ss}$ are called \b{semistable}, and GIT claims the existence of a quotient:
$$
X^{\ss} \ \stackrel{\pi}\longrightarrow \ X \sslash_L G
$$
which is good, in the sense that locally on $X \sslash_L G$, the ring of functions coincides with the ring of $G-$invariant functions on $X^\ss$.  Furthermore, let:
$$
X^\s \ \subset \ X^\ss
$$
denote the subset of \b{stable} points, i.e. points with finite stabilizer whose $G-$orbits are closed in $X^\ss$. Then the restriction of the map $\pi$ to the open set $X^\s$ is a geometric quotient, meaning that points on the variety $X^\s \sslash_L G$ parametrize $G-$orbits on $X$. %Note that when $\theta$ is the trivial line bundle, then any point is semistable, and hence this construction merely gives back \eqref{eqn:affinequotient}.

\tab 
The construction of unstable, semistable and stable points is geometric, but it can be presented algebraically in quite simple terms. Our choice of linearization means that we may construct the graded ring:
$$
R_L \ = \ \bigoplus_{k=0}^\infty \Gamma\left(X,L^{\otimes k}\right)^G
$$
Then we have:
$$
X \sslash_L G \ = \ \text{Proj}(R_L)
$$
which implicitly uses the following well-known description of semistable points: \\

\begin{lemma}
\label{lem:ss}
	
A point $x \in X$ is semistable for $G \curvearrowright L$ if and only if there exists a $G-$invariant section of some power of $L$ which does not vanish at $x$. 
	
\end{lemma}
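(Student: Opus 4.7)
The plan is to translate the entire statement into the affine cone
\[
\hat{X} \ = \ \spec \left( \bigoplus_{k \geq 0} \Gamma(X, L^{\otimes k}) \right),
\]
on which $G$ acts via the given linearization compatibly with the grading. Under this dictionary, the zero section of $L$ corresponds to the closed subvariety $Z \subset \hat X$ cut out by the irrelevant ideal $\bigoplus_{k \geq 1} \Gamma(X, L^{\otimes k})$; a non-zero lift $x_*$ of $x$ corresponds to a point $\hat{x} \in \hat X \setminus Z$ projecting to $x$; and the $G$-invariant sections of $L^{\otimes k}$ are exactly the $G$-invariant regular functions on $\hat X$ that are homogeneous of degree $k$. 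The $\BC^*$-action rescaling fibres commutes with $G$, so whether $\overline{G\cdot \hat{x}}$ meets $Z$ is independent of the choice of nonzero lift, reconciling the ``some'' in the definition of unstability with the ``every'' I will use below.

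For the easy direction, given $s \in \Gamma(X, L^{\otimes k})^G$ of degree $k \geq 1$ with $s(x) \neq 0$, the corresponding function $\hat s$ on $\hat X$ is $G$-invariant, vanishes on $Z$ (being homogeneous of positive degree), and is non-zero at any lift $\hat x$ of $x$. As a regular $G$-invariant function it is constant on $G\cdot\hat x$, hence on $\overline{G\cdot\hat x}$; being non-zero there, this closure is disjoint from $Z$, i.e. $x$ is semistable.

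For the nontrivial direction, suppose $x$ is semistable and pick a non-zero lift $\hat x\in \hat X$. Then $Y := \overline{G\cdot \hat x}$ and $Z$ are disjoint closed $G$-invariant subsets of the \emph{affine} variety $\hat X$. The main input is the classical separation property for reductive group actions on affine varieties (equivalent to the reductivity of $G$ via Mumford--Nagata): there exists $f \in \BC[\hat X]^G$ with $f|_Z = 0$ and $f(\hat x) \neq 0$. Decomposing $f = \sum_{k \geq 0} f_k$ into homogeneous components, each $f_k$ lies in $\Gamma(X, L^{\otimes k})^G$; the condition $f|_Z = 0$ forces $f_0 = 0$, so some $f_k$ with $k \geq 1$ satisfies $f_k(x) \neq 0$, yielding the desired invariant section.

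The only substantive ingredient is the reductivity of $G$, used through the separation of disjoint closed invariant subsets on $\hat X$; everything else is formal translation between the linear-algebraic language of sections of $L^{\otimes k}$ and the geometric language of orbit closures on the affine cone. The main bookkeeping obstacle is verifying that the linearization data really does induce a $G$-action on $\hat X$ commuting with the grading $\BC^*$-action, so that the homogeneous decomposition of $f$ preserves $G$-invariance and so that $Z$ is $G$-stable; but this is immediate from the definition of a $G$-linearization of $L$.
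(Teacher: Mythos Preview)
The paper does not actually prove this lemma: immediately after the statement it writes ``See \citep{P} for a proof of the above Lemma, as well as an overview of GIT in line with our approach in this thesis.'' So there is nothing in the paper to compare against beyond the definitions it sets up.

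Your argument is the standard one and is correct in substance: pass to the affine cone, separate the orbit closure from the vertex using reductivity of $G$, and extract a homogeneous invariant of positive degree. One point worth tightening is the opening dictionary. The paper's definition of instability is phrased in terms of the total space of $L$ itself, whereas your $\hat X$ is the affine cone, i.e.\ (after possibly replacing $L$ by a power to make it very ample) the image of $\text{Tot}(L^{-1})$ under the map that contracts the zero section to the vertex. In particular, the zero section of $L$ is a copy of $X$, not literally the vertex $Z$; your sentence ``the zero section of $L$ corresponds to the closed subvariety $Z$'' is a correspondence of \emph{conditions}, not of subvarieties. What makes the dictionary work is that $\text{Tot}(L)\setminus 0$ and $\text{Tot}(L^{-1})\setminus 0$ are $G$-equivariantly isomorphic via fibrewise inversion, and under the contraction $\text{Tot}(L^{-1}) \to \hat X$ the zero section goes to the vertex, so ``$\overline{G\cdot x_*}$ meets the zero section in $\text{Tot}(L)$'' is equivalent to ``$\overline{G\cdot \hat x}$ meets the vertex in $\hat X$''. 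You gesture at this with the $\BC^*$-equivariance remark, but making the dualization explicit would close the only gap between the paper's phrasing and the affine setting where your separation argument lives.
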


\tab See \citep{P} for a proof of the above Lemma, as well as an overview of GIT in line with our approach in this thesis. Since $\Gamma(X,\CO_X)^G$ is the degree zero piece of $R_L$, we always have a proper map:
\begin{equation}
\label{eqn:goody}
X \sslash_L G \ \longrightarrow \ X / G
\end{equation}
which is an isomorphism for the trivial linearization $G \curvearrowright L$. Examples of quotients obtained by the above procedure include not only projective spaces such as \eqref{eqn:projectivespace}, but also toric varieties and moduli spaces of curves. In the next Chapter, we will apply this machinery to give rise to moduli spaces of quiver representations.

\chapter{Nakajima Quiver Varieties}
\label{chap:nakajima}
%\noindent We will now define Nakajima quiver varieties in the only setting we will be studying them, namely for the quiver ... However, it will be clear how to define and generalize the constructions presented in this Chapter for arbitrary quivers. We will recall tautological classes on quiver varieties, and compute the action of the algebra $\UU$ on $K-$theory in terms of these classes. \\

%\subsection{} 
%\label{sub:klein}

%Fix a natural number $n$. Let $G = \BZ/n\BZ$ denote the finite cyclic group of order $n$, and consider the Kleinian singularity:
%$$
%S_0 = \BC^2/G
%$$
%where the action of $G$ on $\BC^2$ is given by scaling both coordinates by $n-$th roots of unity. There exists a minimal resolution of singularities:
%$$
%S \longrightarrow S_0
%$$
%with $S$ a smooth surface. In this paper, we will consider the Hilbert scheme of $d-$points on $S$, and denote this by:
%$$
%\hilb^{n,\infty}_d = \hilb(S,d \cdot \pt)
%$$
%This is a smooth quasiprojective variety of dimension $2d$. It has an action of the two-dimensional torus $T = \BC^* \times \BC^*$, so we can talk about the equivariant $K-$theory of the above Hilbert schemes:
%\begin{equation}
%\label{eqn:k}
%K^\infty = \bigoplus_{d\geq 0} K^*_T(\hilb^{n,\infty}_d) 
%\end{equation}
%The above is a module over $K^*_T(\pt) = \BC[q_1^{\pm 1}, q_2^{\pm 1}]$, and we will often consider the above $K-$theory to be localized, i.e. tensored with the field of fractions of $K^*_T(\pt)$. \\

\section{The moduli space of framed sheaves}
\label{sec:gieseker}

\noindent Let us start by defining Nakajima quiver varieties in one of the most basic cases, namely the framed Jordan quiver:
\begin{figure}[H]

\begin{picture}(200,110)(-10,-30)

\put(200,31){\circle*{10}}
\put(200,55){\circle{60}}
\put(194,36){\vector(4,-1){5}}
\put(200,-10){\vector(0,1){36}}
\put(195,-20){\line(1,0){10}}
\put(195,-10){\line(1,0){10}}
\put(195,-20){\line(0,1){10}}
\put(205,-20){\line(0,1){10}}

%\put(175,-50){\text{Figure \ref{fig:jordan}}}
\put(208,-20){$W$}
\put(208,25){$V$}
\put(188,6){$A$}
\put(169,40){$X$}

\end{picture} 

\caption[The Jordan quiver and its moduli of framed representations]

\label{fig:jordan}

\end{figure}

\noindent The black circle denotes a ``quiver vertex", while the white square denotes a ``framing vertex", and the two will play different roles in the following construction. We fix vector spaces $V,W$ of dimensions $v,w \in \BN$, and we consider pairs of linear maps corresponding to the arrows in the above quiver:
\begin{equation}
\label{eqn:vectorspace}
(X,A) \ \in \ \Hom(V,V) \oplus \Hom(W,V)
\end{equation}
We consider the action of $G_v := GL(V)$ on \eqref{eqn:vectorspace}, via $g\cdot (X,A) = (gXg^{-1},gA)$. This represents the main difference between quiver and framing vertices, in that we only consider the general linear group action that corresponds to the former, not the latter. The cotangent bundle to the affine space \eqref{eqn:vectorspace} is itself an affine space:
$$
N_{v,w} \ = \ \Hom(V,V) \oplus \Hom(V,V) \oplus \Hom(W,V) \oplus \Hom(V,W)
$$
We will denote elements of this vector space as quadruples $(X,Y,A,B)$. The $G_v$ action extends to the cotangent bundle as:
$$
g\cdot (X,Y,A,B) \ = \ \left(gXg^{-1}, gYg^{-1},gA,Bg^{-1} \right)
$$
and the moment map \eqref{eqn:defmoment} can be written explicitly as:
$$
\mu \ : \ N_{v,w} \ \longrightarrow \ \fg_v^\vee \ = \ \Hom(V,V)
$$
$$
\mu \left(X,Y,A,B\right) \ \ = \ \  [X,Y]+AB
$$
$$$$
\begin{definition}

The Nakajima quiver variety is the Hamiltonian reduction:
\begin{equation}
\label{eqn:nak0}
\CN^\theta_{v,w} = \mu^{-1}(0) \sslash_{\det^{-\theta}} G_v
\end{equation}
for any $\theta \in \BZ$. 

\end{definition}

\tab 
Indeed,  since $\mu^{-1}(0)$ is an affine variety, the linearization of the $G_v$ action will be topologically trivial. The notation $\sslash_{\det^{-\theta}}$ means that we take the trivial line bundle with $G_v-$action given by the power $(-\theta)$ of the determinant character. The following Exercise is a well-known characterization of the set of semistable points, which must be taken into account when defining the quotient $\CN_{v,w}^\th$ via Section \ref{sec:symplectic}. \\

\begin{exercise}
\label{ex:unstable} 
	
A quadruple $(X,Y,A,B)$ is semistable:
$$
\text{when }\theta > 0, \qquad \text{iff } \ \not \exists \ V' \subsetneq V \quad \text{such that} \quad X,Y : V' \rightarrow V' \quad \text{and} \quad A:W \rightarrow V'
$$
$$
\text{when }\theta < 0, \qquad \text{iff } \ \not \exists \ V \stackrel{\neq}\twoheadrightarrow V' \quad \text{such that} \quad X,Y : V' \rightarrow V' \quad \text{and} \quad B:V' \rightarrow W
$$
Moreover, the action of $G_v$ on the semistable locus is free.
\end{exercise}

%Since GIT does not distinguish between a choice of $\theta$ and its multiples, we see that there are essentially two choices for the above resolution of singularities: $\theta = +$ and $\theta = - $

\tab
Since $G_v$ acts freely on the semistable locus, we conclude that all semistable points are stable. Thus \eqref{eqn:nak0} is a geometric quotient, and according to \eqref{eqn:goody} we have a proper resolution of singularities:
$$
\rho \ : \ \CN_{v,w}^1 \longrightarrow \CN_{v,w}^0
$$
An important and well-known result in mathematical physics, the ADHM construction (\citep{ADHM} and \citep{Nakajima}), states that: 
$$
\CN_{v,w}^0 \ \cong \ \text{Uhlenbeck compactification of the moduli space of framed instantons}
$$
With this in mind, it should come as no surprise that $\CN^1_{v,w}$ is isomorphic to the moduli space of framed, degree $v$ and rank $w$ torsion-free sheaves on $\BP^2$:
\begin{equation}
\label{eqn:sheaf}
\CN^1_{v,w} \ \cong \ \{\CF \text{ torsion-free sheaf on } \BP^2 \text{ s. t. } c_2(\CF) = v, \ \CF|_\infty \cong \CO_\infty^{\oplus w}\}
\end{equation}
where $\infty \subset \BP^2$ denotes a fixed line. For a detailed construction of the above isomorphisms, see for example \citep{Nak}. When $w=1$, the framing determines an embedding of the sheaf $\CF$ into $\CO$, so it simply becomes an ideal sheaf of finite colength: the moduli space for $w=1$ is thus simply the Hilbert scheme of points of $\BC^2$. \\

\begin{remark}
\label{rem:quiver}
In general, a quiver is an oriented graph with vertex set denoted by $I$. Then the Nakajima quiver variety was defined in \citep{Nak0} as the Hamiltonian reduction of the cotangent bundle to the vector space:
\begin{equation}
\label{eqn:vectorspace2}
\bigoplus_{e = \oij} \Hom(V_i,V_j) \bigoplus_{i\in I} \Hom(W_i,V_i)
\end{equation}
where $\{V_i\}_{i\in I}$, $\{W_i\}_{i\in I}$ are vector spaces of dimensions $\bv = \{v_i\}_{i\in I}$, $\bw = \{w_i\}_{i\in I} \in \BN^I$. We write $G_\bv = \prod_{i\in I} GL(V_i)$ and let it act on the vector space \eqref{eqn:vectorspace2} by conjugation. The Nakajima quiver variety with stability condition $\bth = (\th_i)_{i\in I} \in \BZ^I$ is denoted by:
$$
\CN_{\bv,\bw}^\bth = \mu^{-1} (0) \sslash_{\det^{-\bth}} G_\bv
$$
where $\det^{-\bth}:G_\bv \rightarrow \BC^*$ is the character $(g_i)_{i\in I} \mapsto \prod_{i\in I} \det(g_i)^{-\th_i}$. For the Jordan quiver in Figure \ref{fig:jordan}, it is clear that we obtain precisely the varieties \eqref{eqn:nak0}.
\end{remark}

%$$
%U = \left( \begin{array}{cccc}
%u_1 & 0 & ... & 0 \\
%0 & u_2 & ...& 0 \\
%... & ... & ... & ... \\
%0 & 0 & ... & u_w \end{array} \right)
%$$

\tab
Let us return to the Jordan quiver. Since we will henceforth only work with $\th = 1$, we will denote Nakajima quiver varieties by $\CN_{v,w}$. The torus $T = T_w = \BC^* \times \BC^* \times (\BC^*)^w$ acts on the variety $\CN_{v,w}$: the first two factors act by scaling the plane $\BP^2$ in such a way that keeps the line $\infty$ invariant, and the last $w$ factors act on the trivialization at $\infty$. In terms of quadruples of linear maps, the action is explicitly given by:
\begin{equation}
\label{eqn:act}
(q,t,u_1,...,u_w) \cdot (X,Y,A,B) = (qtX,qt^{-1}Y,qAU, qU^{-1}B)
\end{equation}
where $U = \text{diag}(u_1,...,u_w)$. We will consider the $T-$equivariant $K-$theory groups of moduli spaces of framed sheaves. As in early work in cohomology by Nakajima and Grojnowski for Hilbert schemes, it makes sense to package these groups as:
$$
K(w) \ = \ \bigoplus_{v\in \BN} K_T(\CN_{v,w})
$$
which is a module over the ring: 
$$
\BF_w \ := \ K_T(\text{pt}) \ = \ \BZ[q^{\pm 1}, t^{\pm 1},u_1^{\pm 1},...,u_w^{\pm 1}]
$$
When $w=1$ and $u_1 = 1$, a well-known construction of Bridgeland-King-Reid implies:
\begin{equation}
\label{eqn:haiman}
K(1) \ \cong \ \text{Fock space} \ = \ \BZ[q^{\pm 1}, t^{\pm 1}][x_1,x_2,...]^{\sym}
\end{equation}
The work of \citep{Ha} establishes that the above correspondence sends:
\begin{equation}
\label{eqn:haimanesque}
K_T(\CN_{v,1}) \ \ni \ I_\lambda \ \leftrightarrow \ P^{q,t}_\lambda(x_1,x_2,...)
\end{equation}
for any partition $\lambda \vdash v$. By a slight abuse, the notation $I_\lambda$ refers to the skyscraper sheaf at the torus fixed point denoted by the same letter:
$$
I_\lambda \ = \ (x^{\lambda_1}, x^{\lambda_2}y,x^{\lambda_3}y^2,...) \ \subset \ \BC[x,y]
$$ 
while in the right hand side of \eqref{eqn:haimanesque} we have the well-known Macdonald polynomial $P_\lambda$ depending on the parameters \footnote{To be precise, the parameters that usually appear in the definition of Macdonald polynomials would be equal to $qt$ and $qt^{-1}$ in our notation. Statement \eqref{eqn:haimanesque} requires Macdonald polynomials to be modified as in \citep{GH}} $q$ and $t$. For general $w$, the constructions in Chapter \ref{chap:stable} imply that:
\begin{equation}
\label{eqn:upgrade}
K(w) \ \cong \ K(1)^{\otimes w} \ = \ \text{Fock space}^{\otimes w}
\end{equation}
These constructions are due to \citep{MO2}, who produce as many geometric isomorphisms \eqref{eqn:upgrade} as there are coproduct structures, and one has such a structure for every rational number $m \in \text{Pic}(\CN_{v,w}) \otimes \BQ = \BQ$. But as mere vector spaces, the isomorphism \eqref{eqn:upgrade} is easy to see. For one thing, one could construct it by observing that fixed points of $\CN_{v,w}$ are indexed by $w-$partitions as in \eqref{eqn:wpartition}: 
$$
\bla \ = \ (\lambda^1,...,\lambda^w)
$$
and are given by the direct sum of the $w$ monomial ideals:
\begin{equation}
\label{eqn:fixedpoint}
I_\bla \ = \ I_{\lambda^1} \oplus ... \oplus I_{\lambda^w} \ \in \ \text{Coh}_T(\CN_{v,w})
\end{equation}
As we have seen in Section \ref{sec:basick}, fixed points are important because they allow us to express $K-$theory classes by equivariant localization \eqref{eqn:loc}. A very important feature of localization formulas is the presence of the torus characters in the tangent spaces at the fixed points, so we will now compute these. \\ % with the hint that it should be proved by using the description \eqref{eqn:nak0} of the moduli spaces $\CN^\th_{v,w}$. \\ %It is a special case of a general result concerning the tangent spaces to quiver varieties, proved in ... : \\

\begin{exercise}
\label{ex:tangent}
	
As $T-$characters, the tangent spaces to the fixed points of $\CN_{v,w}$ are:
\begin{equation}
\label{eqn:tangies0}
T_{\bla} \CN_{v,w} \ = \ \sum_{i=1}^w \sum_{\square \in \bla} \left( \frac {\chi_\square}{q u_i} + \frac {u_i}{q\chi_\square} \right) + \left(\frac 1{qt} + \frac tq - 1 - \frac 1{q^2} \right) \sum_{\sq, \square'\in \bla} \frac {\chi_{\square'}}{\chi_\square} 
\end{equation}
where $\chi_\sq$ denotes the weight of a box in a $w-$partition, as in \eqref{eqn:weight}. 
	
\end{exercise}

%\begin{equation}
%\label{eqn:tangies0}
%T_{\bla} \CN_{v,w} \ = \ \sum_{i=1}^w \sum_{\square \in \bla} \left( \frac {q \chi_\square}{u_i} + \frac {qu_i}{\chi_\square} \right) + \left(qt + qt^{-1} - 1 - q^2 \right) \sum_{\sq, \square'\in \bla} \frac {\chi_{\square'}}{\chi_\square} 
%\end{equation}

\tab In fact, formula \eqref{eqn:tangies0} can also be deduced from the fact that $\CN_{v,w}$ is a moduli space of sheaves, since the Kodaira-Spencer isomorphism implies that:
\begin{equation}
\label{eqn:ks0}
T_\CF \CN_{v,w} \ \cong \ \Ext^1(\CF,\CF(-\infty)) \ \cong \ - \chi(\CF,\CF(-\infty))
\end{equation}
The second equality holds because the corresponding $\Hom$ and $\Ext^2$ groups vanish. The former group vanishes because of the twist by $\CO(-\infty)$, while the latter vanishes because of Serre duality, which claims that $\Ext^2(\CF, \CF(-\infty)) = \Hom(\CF, \CF(-2\infty))^\vee$. \\

%As a corollary of \eqref{eqn:tangies0}, one proves (for example, by induction on $v$) the following formula, which only has plus signs:
%\begin{equation}
%\label{eqn:tangies1}
%T_\bla \CN_{v,w} = \sum_{i,j=1}^w \frac {u_i}{qu_j} \cdot \left(  \sum_{\square \in \lambda^i} q^{\e^{ij}_\sq} t^{h^{ij}_\sq} +  \sum_{\square \in \lambda^j} q^{- \e^{ji}_\sq} t^{- h^{ji}_\sq} \right) 
%\end{equation}
%where the eccentricity and hook length of a box are defined in \eqref{eqn:hook} and \eqref{eqn:eccentricity}. The above formula exhibits the fact that $\CN_{v,w}$ is smooth of dimension $2vw$, and that it is also symplectic. Indeed, for any weight $q^{-1} \chi$ that appears in the first summand of \eqref{eqn:tanmod1}, the weight $q^{-1}\chi^{-1}$ appears in the second summand. \\

%\begin{equation}
%\label{eqn:tangies1}
%T_\bla \CN_{v,w} = \sum_{i,j=1}^w \sum_{\square \in \lambda^i} q \left( \frac {u_i}{u_j} \cdot q^{\e^{ij}_\sq} t^{h^{ij}_\sq} + \frac {u_j}{u_i} \cdot q^{- \e^{ij}_\sq} t^{- h^{ij}_\sq} \right) 
%\end{equation}

\section{Nakajima varieties for the cyclic quiver}
\label{sec:quiver}

%\begin{equation}
%\label{eqn:cyclic}
%\xymatrix{
%	& & W_1 \ar@/^/[d]^{A_1} & & \\
%	& & V_1 \ar@/^/[u]^{B_1} \ar@/^/[rd]^{X_1} \ar@/^/[ld]^{Y_n} & &  \\
%	W_n \ar@/^/[r]^{A_n} & V_n \ar@/^/[l]^{B_n} \ar@/^/[ru]^{X_n} \ar@/^/[dd]^{Y_{n-1}} & & V_2 \ar@/^/[r]^{B_2} \ar@/^/[dd]^{X_2} \ar@/^/[lu]^{Y_1} & W_2 \ar@/^/[l]^{A_2} \\
%	& & & & \\
%	& ... \ar@/^/[uu]^{X_{n-1}} & & ... \ar@/^/[uu]^{Y_2} &}
%end{equation}

\noindent Let us now fix a natural number $n>1$ and consider the finite group $H = \BZ/n\BZ$. Consider the action $H \curvearrowright \BP^2$ by:
\begin{equation}
\label{eqn:satin}
\xi \cdot [x:y:z] = [\xi^{-1} x: \xi y : z], \qquad \forall \ \xi^n=1
\end{equation}
where the coordinates are chosen such that $\infty = \{[x:y:0]\}$. Fix a homomorphism:
\begin{equation}
\label{eqn:sigma}
\sigma \ : \ H \longrightarrow (\BC^*)^w, \qquad \sigma(\xi) = (\xi^{-\sigma_1},...,\xi^{-\sigma_w}) %, \qquad \forall \ \xi^n=1
\end{equation}
which induces a decomposition:
\begin{equation}
\label{eqn:decomposition}
W \ = \ W_1 \oplus ... \oplus W_n
\end{equation}
where $W_i$ is spanned by those basis vectors $\omega_j \in W$ such that $\sigma_j \equiv i$ modulo $n$. We will write $w_i = \dim W_i$ and record these numbers in the vector $\bw = (w_1,...,w_n) \in \nn$. Relations \eqref{eqn:satin} and \eqref{eqn:sigma} give rise to an action of $H$ on the moduli of sheaves $\CN_{v,w}$, which preserves the symplectic structure. Therefore, the fixed point locus $\CN^H_{v,w}$ is also symplectic, and we will now describe it. In terms of quadruples, the $H-$action sends:
$$
\xi \cdot (X,Y,A,B) \ = \ (\xi X, \xi^{-1} Y, A \sigma(\xi)^{-1}, \sigma(\xi) B) %, \qquad \forall \ \xi^n=1
$$
Such a quadruple is $H-$fixed if and only if there exists $g \in G_v$ such that:

\begin{equation}
\label{eqn:property}
(\xi X, \xi^{-1} Y, A \sigma^{-1}(\xi), \sigma(\xi) B) = (gXg^{-1}, gYg^{-1}, gA, Bg^{-1})
\end{equation}
where $\xi = e^{\frac {2\pi i}n}$. Let us consider the decomposition of the space $V = \bigoplus_c V(c)$ into the generalized eigenspaces of $g$. Property \eqref{eqn:property} implies that:
$$
V(c) \ \mathop{\rightleftarrows}^X_Y \ V(\xi c) \qquad \qquad W_i \ \mathop{\rightleftarrows}^A_B \ V(\xi^{i})
$$
for all eigenvalues $c$ and all residues $i$ modulo $n$. The semistability property of Exercise \ref{ex:unstable} forces $V$ to be generated by the image of $A$ acted on by $X$ and $Y$, so this means that the only non-zero eigenspaces are $V_i := V(\xi^i)$ for $i \in \{1,...,n\}$. This gives us a decomposition of the vector space as:
$$
V = V_1 \oplus ... \oplus V_{n}
$$
and we will write $v_i = \dim V_i$ and $\bv = (v_1,...,v_n) \in \nn$. To summarize the above, an $H-$fixed point is given by a cycle of maps that matches the quiver of Figure \ref{fig:quiver}:
\begin{equation}
\label{eqn:cyclic}
\xymatrix{
& & W_1 \ar@/^/[d]^{A_1} & & \\
& & V_1 \ar@/^/[u]^{B_1} \ar@/^/[rd]^{X_1} \ar@/^/[ld]^{Y_n} & &  \\
W_n \ar@/^/[r]^{A_n} & V_n \ar@/^/[l]^{B_n} \ar@/^/[ru]^{X_n} \ar@/^/[dd]^{Y_{n-1}} & & V_2 \ar@/^/[r]^{B_2} \ar@/^/[dd]^{X_2} \ar@/^/[lu]^{Y_1} & W_2 \ar@/^/[l]^{A_2} \\
& & & & \\
W_{n-1} \ar@/^/[r]^{A_{n-1}} & V_{n-1} \ar@/^/[l]^{B_{n-1}} \ar@/^/[uu]^{X_{n-1}} \ar@/^/[rd] &  & V_3 \ar@/^/[r]^{B_3} \ar@/^/[uu]^{Y_2} \ar@/^/[ld] & W_3 \ar@/^/[l]^{A_3} \\
& & V_{...} \ar@/^/[ru] \ar@/^/[lu] \ar@/^/[d]  & & \\
& & W_{...} \ar@/^/[u] & &} \qquad
\end{equation}
Rigorously, a collection of maps as above is an element of the vector space:
\begin{equation}
\label{eqn:quad}
N_{\bv,\bw} = \bigoplus_{i=1}^n \Hom(V_i,V_{i+1}) \bigoplus_{i=1}^n \Hom(V_{i+1},V_{i}) \bigoplus_{i=1}^n \Hom(W_i,V_{i})\bigoplus_{i=1}^n \Hom(V_{i},W_{i})
\end{equation}
Such quadruples are required to lie in the kernel of the moment map:
\begin{equation}
\label{eqn:moment}
\mu:N_{\bv,\bw} \ \longrightarrow \ \fg_\bv^\vee \ = \ \bigoplus_{i=1}^n \Hom(V_i,V_i) \qquad 
\end{equation}
$$
\mu \Big( (X_i,Y_i,A_i,B_i)_{1\leq i \leq n} \Big) \ = \ \bigoplus_{i=1}^n \Big(X_{i-1}Y_{i-1} - Y_{i}X_{i}+A_iB_i)
$$
with the indices taken modulo $n$. According to Remark \ref{rem:quiver}, any such semistable quadruple is a point on the Nakajima variety for the cyclic quiver, hence:
\begin{equation}
\label{eqn:fixedgies}
\CN^H_{v,w} \ = \ \bigsqcup_{|\bv| = v} \CN^\bth_{\bv,\bw} \qquad \qquad \text{where} \qquad \CN^\bth_{\bv,\bw} = \mu^{-1}(0) \sslash_{\det^{-\bth}} G_\bv
\end{equation}
and the vector $\bw \in \nn$ of size $|\bw| = w$ keeps track of the decomposition \eqref{eqn:decomposition}. The stability condition that we will work with throughout this thesis is:
\begin{equation}
\label{eqn:stabquiver}
G_\bv = \prod_{i=1}^n GL(V_i) \ \longrightarrow \ \BC^* \qquad (g_1,...,g_n) \ \longrightarrow (\det g_1)^{-1} ... (\det g_n)^{-1}
\end{equation}
which is associated to the vector $\bth = (1,...,1) \in \nn$. We will henceforth drop $\bth$ from the notation for Nakajima quiver varieties. By analogy with Exercise \ref{ex:unstable}, it is easy to see that a collection of maps $(X_i,Y_i,A_i,B_i)_{1\leq i \leq n}$ is $\bth-$semistable if and only if the vector spaces $V_i$ are generated by the image of the $A_i$ acted on by the $X_i$ and the $Y_i$. In this thesis, we are interested in the $K-$theory group:
$$
K(\bw) \ = \ \bigoplus_{\bv \in \nn} K_{\bv,\bw} \qquad \text{where} \qquad K_{\bv,\bw} := K_T(\CN_{\bv,\bw})
$$
is a module over the ring of constants $K_T(\text{pt}) = \BF_\bw := \BZ[q^{\pm 1}, t^{\pm 1}, u_1^{\pm 1},...,u_{\bw}^{\pm 1}]$. Here we slightly abuse notation, since $\bw$ is not just a number, but a vector of natural numbers which keeps track of the dimensions of the framing vector spaces $\{W_i\}_{i\in \{1,...,n\}}$. If the $j-$th coordinate vector $\omega_j$ lies in $W_i \subset W$, then we think of the equivariant parameter $u_{j}$ as having color $i$ modulo $n$, as in Section \ref{sec:basicpar}. \\

\begin{remark}
For a different choice of stability condition, we would have obtained moduli of sheaves on the minimal resolution of the $A_{n-1}$ singularity. See \citep{Nagao} for a review, and for the explicit isomorphism between the $K-$theories involved. 
\end{remark}

\tab
Since Nakajima varieties for the cyclic quiver are fixed loci of the moduli of framed sheaves $N_{v,w}$, the two spaces have the same collection of torus fixed points. Comparing this with \eqref{eqn:fixedpoint}, we see that fixed points of $\CN_{\bv,\bw}$ are indexed by $\bw-$partitions:
$$
\bla \ = \ (\lambda^1,...,\lambda^{\bw}) 
$$
in the terminology introduced at the end of Section \ref{sec:basicpar}. The above notation means that we now we think of each constituent partition $\lambda^j$ as having a color $\in \{1,...,n\}$ associated to it, precisely the same color which was associated to the equivariant parameter $u_j$. The content $c_\sq$ of any box $\sq \in \bla$ is defined as in \eqref{eqn:content}, and the color of the box is simply $c_\sq$ modulo $n$. The explicit torus fixed collection $(X_i,Y_i,A_i,B_i)_{1\leq i \leq n} \in \CN_{\bv,\bw}$ which is associated to the $\bw-$partition $\bla$ is given by:
\begin{equation}
\label{eqn:fixed}
W_i = \bigoplus^{u_j \equiv i}_{j \in \{1,...,\bw\}} \BC \cdot \omega_j \qquad \qquad V_i = \bigoplus^{c_\square \equiv i}_{\square \in \bla} \BC \cdot \upsilon_{\square}
\end{equation}
where we set:
$$
X_i(\upsilon_\square) = \upsilon_{\square_\rightarrow} \ \qquad \qquad \qquad Y_{i-1}(\upsilon_\square) = \upsilon_{\square_\uparrow}
$$ 
$$
A_i(\omega_j) = \delta_{j\equiv i} \cdot \upsilon_{\text{root of partition }\lambda^j} \qquad \quad B_i = 0 
$$
for all $i\in \{1,...,n\}$, where $\square_\rightarrow$ and $\square_\uparrow$ denote the boxes immediately to the right and above $\square$, respectively. If these boxes do not appear in the constituent partition of $\bla$ that contains $\sq$, then the corresponding matrix coefficients of $X_i, Y_{i-1}$ are defined to be 0. The root of a partition refers to the box in the bottom left corner of Figure \ref{fig:boxes}. 

%Instead of \eqref{eqn:tanmod0} below, we could also have:
%\begin{equation}
%\label{eqn:tanmod0}
%T_\bla \CM_{\bv,\bw} = \sum_{i,j=1}^w \frac {u_i}{u_j} \left( \sum_{\square \in \lambda^j}^{h^{ij}_\sq \equiv j-i} q_1^{a^{\lambda^i}_\square+1} q_2^{-l^{\lambda^j}_\square} + \sum_{\square \in \lambda^i}^{h^{ji}_\sq \equiv i-j} q_1^{-a^{\lambda^j}_\square} q_2^{l^{\lambda^i}_\square+1} \right) \qquad 
%\end{equation}

\tab
The $T-$character in the tangent space to a fixed point $\bla$ can be computed as the $\BZ/n\BZ$ fixed part of \eqref{eqn:tangies0}. In other words, since $\col q = 0$ and $\col t = 1$, then we only keep the monomials in \eqref{eqn:tangies0} which have overall color 0. This has the effect of only keeping monomials which involve $t^{nk}$ for some $k\in \BZ$:
\begin{equation}
\label{eqn:tanmod0}
T_{\bla} \CN_{\bv,\bw} = \sum_{i=1}^{\bw} \sum_{\square \in \bla}^{c_\square \equiv i} \left( \frac {\chi_\square}{qu_i}  +  \frac {u_i}{q \chi_\square} \right) +
\end{equation}
$$
+ \sum_{\sq,\sq' \in \bla} \left(  \delta_{c_{\sq}+1}^{c_{\sq'}} \cdot \frac {\chi_{\square'}}{qt \chi_\square} +  \delta_{c_{\sq}-1}^{c_{\sq'}} \cdot \frac {t\chi_{\square'}}{q\chi_\square} - \delta_{c_{\sq}}^{c_{\sq'}} \cdot \frac {\chi_{\square'}}{\chi_\square} - \delta_{c_{\sq}}^{c_{\sq'}} \cdot \frac {\chi_{\square'}}{q^2\chi_\square} \right)
$$
where we write $\delta_o^{o'}$ for the Kronecker delta function of $o-o'$ modulo $n$. Formula \eqref{eqn:tanmod0} is also the $\BZ/n\BZ$ invariant part of \eqref{eqn:ks0}:
\begin{equation}
\label{eqn:ks1}
T_\CF \CN_{\bv,\bw} \ \cong \ \Ext^1(\CF,\CF(-\infty))^{\BZ/n\BZ} \ \cong \ - \chi(\CF,\CF(-\infty))^{\BZ/n\BZ}
\end{equation}
where in the LHS we represent points of the Nakajima cyclic quiver variety as $\BZ/n\BZ$ fixed sheaves, according to \eqref{eqn:sheaf} and the action defined in \eqref{eqn:satin}. \\

%Doing the same for \eqref{eqn:tangies1}, we obtain the equivalent formula:
%\begin{equation}
%\label{eqn:tanmod1}
%T_\bla \CN_{\bv,\bw} = \sum_{i,j=1}^\bw  \frac {u_i}{qu_j} \cdot \left( \sum_{\square \in \lambda^i}^{h^{ij}_\sq \equiv j-i}  q^{\e^{ij}_\sq} t^{h^{ij}_\sq} + \sum_{\square \in \lambda^j}^{h^{ji}_\sq \equiv i-j}  q^{- \e^{ji}_\sq} t^{- h^{ji}_\sq} \right) 
%\end{equation}
%where the hook length is defined in \eqref{eqn:hook} and the congruence is taken modulo $n$. \\

%\begin{equation}
%\label{eqn:tanmod0}
%T_{\bla} \CN_{\bv,\bw} = \sum_{j=1}^{\bw} \sum_{\square \in \bla}^{o_\square \equiv j} \left( \frac {qu_j}{\chi_\square}  +  \frac {q \chi_\square}{u_j} \right) - 
%\end{equation}
%$$
%- \sum_{\sq,\sq' \in \bla} \left(  \delta^{o_{\sq'}}_{o_{\sq}} \cdot \frac {\chi_{\square'}}{\chi_\square} -  \delta^{o_{\sq'}}_{o_{\sq} - 1} \cdot \frac {q t\chi_{\square'}}{\chi_\square} - \delta^{o_{\sq'}}_{o_{\sq} + 1} \cdot \frac {q\chi_{\square'}}{t\chi_\square} + \delta^{o_{\sq'}}_{o_{\sq}} \cdot \frac {q^2\chi_{\square'}}{\chi_\square} \right)
%$$
%where we write $\delta_o^{o'}$ for the Kronecker delta function of $o-o'$ modulo $n$. Doing the same for \eqref{eqn:tangies1}, we obtain the equivalent formula:
%\begin{equation}
%\label{eqn:tanmod1}
%T_\bla \CN_{\bv,\bw} = \sum_{i,j=1}^\bw \sum_{\square \in \lambda^i}^{h^{ij}_\sq \equiv j-i} q \left( \frac {u_i}{u_j} \cdot q^{\e^{ij}_\sq} t^{h^{ij}_\sq} + \frac {u_j}{u_i} \cdot q^{- \e^{ij}_\sq} t^{- h^{ij}_\sq} \right) 
%\end{equation}

\section{Tautological bundles}
\label{sec:taut}

%For example, formula \eqref{eqn:tanmod1} can be written in terms of the tautological classes:
%\begin{equation}
%\label{eqn:tanmod2}
%[T \CM_{\bv,\bw}] = \sum_{j=1}^{|\bw|} \left( \frac {u_j}{\CV_j}  +  \frac {q \CV_j}{u_j} \right) - \sum_{i=1}^n \left( \frac {[\CV_i]}{[\CV_i]} - \frac {q_1[\CV_{i-1}]}{[\CV_i]} - \frac {q_2[\CV_{i+1}]}{[\CV_i]}+\frac {q[\CV_i]}{[\CV_i]}\right) \qquad
%\end{equation}
%where $\frac {[\CV']}{[\CV]}$ is another way to write $[\CV' \otimes \CV^\vee]$ for any vector bundles $\CV,\CV'$. \\

\noindent We have already seen how the classes of skyscraper sheaves at the torus fixed points form an important basis of $K-$theory, but the problem with them is that they usually involve working with localization. If we want to work with integral (non-localized) $K-$theory classes, we will need to look to the \b{tautological vector bundles}:
$$
\CV_i \quad  \text{of rank }v_i \text{ on} \quad \CN_{\bv,\bw}
$$
whose fibers are precisely the vector spaces $V_i$ from \eqref{eqn:cyclic}. Since Nakajima quiver varieties arise as quotients of the group $G_\bv = \prod_{i=1}^n GL(V_i)$, the tautological bundles will be topologically non-trivial. The top exterior powers:
\begin{equation}
\label{eqn:picard}
\CO_i(1) \ = \ \Lambda^{v_i}\CV_i
\end{equation}
are called tautological line bundles, and their restrictions to fixed points \eqref{eqn:fixed} are:
$$
\CV_i|_\bla \ = \ \sum_{\square \in \bla}^{c_\square \equiv i} \chi_\square \qquad \Rightarrow \qquad \CO_i(1)|_\bla \ = \ o_\bla^{\bs^i} \ = \ \prod_{\square \in \bla}^{c_\square \equiv i} \chi_\square
$$
The line bundles \eqref{eqn:picard} generate the Picard group of $\CN_{\bv,\bw}$. More generally, any polynomial in the classes:
$$
[\CV_i], [\Lambda^2\CV_i],..., [\Lambda^{v_i}\CV_i] \ \in \ K_T(\CN_{\bv,\bw})
$$
will be called a \textbf{tautological class}. A more structured way to think about tautological classes is to consider the map:
\begin{equation}
\label{eqn:kirwan}
\xymatrix{
\Lambda_{\bv,\bw} \ar[r]^-{\text{constants}} & K_{T \times G_\bv}(N_{\bv,\bw}) \ar[d]^{\text{restriction}} & \\
& K_{T \times G_\bv}\left(\mu^{-1}(0)^{\bth-\text{ss}}\right) \ar@{=}[r] & K_{\bv,\bw}}
\end{equation}
where the top left corner is the ring of symmetric polynomials in variables of $n$ colors:
$$
\Lambda_{\bv,\bw} \ = \ K_{T \times G_\bv}(\pt) \ = \ \BF_\bw[...,x_{i1}^{\pm 1},...,x_{iv_i}^{\pm 1},...]^\sym_{1\leq i \leq n}
$$ 
namely the representation ring of $G_\bv$, with coefficients in the ring of equivariant parameters $\BF_\bw = \BZ[q^{\pm 1}, t^{\pm 1}, u_1^{\pm 1},..., u_\bw^{\pm 1}]$. We will write the map \eqref{eqn:kirwan} as:
\begin{equation}
\label{eqn:taut}
\Lambda_{\bv,\bw} \ \ni \ f \ \longrightarrow \ \ov{f} \ \in \ K_{\bv,\bw}
\end{equation}
Explicitly, the class $\ov{f}$ is given by taking Chern roots $\{x_{ia}\}^{1\leq i \leq n}_{1\leq a \leq v_i}$ of the tautological bundles $\{\CV_i\}_{1\leq i \leq n}$ and plugging them into the symmetric polynomial $f$. Tautological classes have restriction given by:
\begin{equation}
\label{eqn:local}
\ov{f}|_\bla = f(\chi_\bla) \quad \Longrightarrow \quad \ov{f} = \sum_{\bla} |\bla \rangle \cdot f(\chi_\bla)
\end{equation}
where we denote $\chi_\bla = \{\chi_{\square}\}_{\square \in \bla}$ and renormalize the classes of fixed points as:
\begin{equation}
\label{eqn:renormalized}
|\bla \rangle = \frac {I_\bla}{[T_\bla \CN_{\bv,\bw}]} \ \in \ K_T(\CN_{\bv,\bw})_\loc = K_{\bv,\bw} \bigotimes_{\BF_\bw} \tBF_\bw
\end{equation}
where $\tBF_\bw = \BQ(q,t,u_1,...,u_\bw)$. In \eqref{eqn:local}, the notation $f(\chi_\bla)$ presupposes that for all boxes $\sq \in \bla$, we plug the weight $\chi_\square$ into an input of $f$ of the same color as $\square$. The following result is a particular case of \b{Kirwan surjectivity} in symplectic geometry: \\

%For example, when $f = p_{i,k}$ is the power sum function of color $i$, we have:
%$$
%\langle p_{i,k} \rangle = [\CV_i^k], \quad \qquad \text{e.g. } [\CV_i^1] = [\CV_i] \ \text{and} \ [\CV_i^2] = [\CV_i]\cdot [\CV_i] - 2[\Lambda^2\CV_i]
%$$
%et cetera, and note that this also makes sense for negative $k$. 

\begin{conjecture}
\label{conj:kirwan}

The map \eqref{eqn:kirwan} is surjective, i.e. the classes $\ov{f}$ span $K_{\ebv,\ebw}$.

%Moreover, \eqref{eqn:local} implies:
%$$
%\langle f \rangle = 0 \quad \Leftrightarrow \quad f(\chi_\bla) = 0
%$$
%as $\bla$ ranges over all $\ebw-$partitions, so we conclude that:
%$$
%K_\ebv = \Lambda^n \Big / \left(\bigcup_{\bla \vdash \ebv} \emph{Ker } f \rightarrow f(\chi_\bla) \right)
%$$

\end{conjecture}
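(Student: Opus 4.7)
The plan is to prove this in two stages: first establish surjectivity after inverting equivariant parameters, then upgrade to an integral statement via a stratification argument.

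For the first stage, I would use equivariant localization as in \eqref{eqn:loc}. The localized module $K_{\bv,\bw} \otimes_{\BF_\bw} \tBF_\bw$ has a $\tBF_\bw$-basis given by the renormalized fixed-point classes $|\bla\rangle$, and the restriction formula \eqref{eqn:local} identifies a tautological class $\ov{f}$ with the tuple $(f(\chi_\bla))_{\bla}$. Surjectivity over $\tBF_\bw$ thus reduces to Lagrange interpolation in the ring $\Lambda_{\bv,\bw}$: for each fixed point $\bmu$, one must produce a multi-colored symmetric Laurent polynomial $f_\bmu$ with $f_\bmu(\chi_\bla) = \delta_\bla^\bmu$. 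This is possible because distinct $\bw$-partitions of the same total dimension vector carry distinct multisets of colored weights; the weight $\chi_\sq = u_i q^{x+y+1} t^{x-y}$ remembers both the framing color $i$ and the coordinates $(x,y)$ of the box, so two $\bw$-partitions cannot produce identical multisets $\{\chi_\sq\}_{\sq\in\bla}$ without coinciding. A Vandermonde calculation then delivers the interpolating polynomials, and hence rational surjectivity.

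For the second stage, the natural tool is Halpern--Leistner's theory of window categories for GIT quotients, in the spirit of McGerty--Nevins who established Kirwan surjectivity in cohomology for Nakajima quiver varieties. Concretely, the unstable locus $\mu^{-1}(0) \setminus \mu^{-1}(0)^{\bth\text{-ss}}$ is stratified by Kempf--Ness--Hesselink strata indexed by one-parameter subgroups of $G_\bv$, each stratum being equivariantly a vector bundle over a smaller Nakajima-type variety. The window theorem then produces an explicit full subcategory $\mathsf{W} \subset D^b_{G_\bv}(\mu^{-1}(0))$ generated by line bundles of the form $\det(V_1)^{a_1} \cdots \det(V_n)^{a_n}$ with weights $(a_1,\dots,a_n)$ in a prescribed box determined by the Kempf--Ness weights of each stratum, and asserts that the restriction functor $\mathsf{W} \to D^b(\CN_{\bv,\bw})$ is an equivalence. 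Passing to $K$-theory, every class in $K_{\bv,\bw}$ lifts to an $\BF_\bw$-combination of the generators of $\mathsf{W}$, which are tautological in the sense of \eqref{eqn:taut}.

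The main obstacle is the second stage: one must catalogue the unstable strata of $\mu^{-1}(0)$ for the cyclic quiver with stability $\bth = (1,\dots,1)$ and verify that their Kempf--Ness weights force the window generators to be restrictions of genuine tautological bundles rather than merely virtual classes. The combinatorics of the stratification is intricate for quiver varieties of higher rank, and the presence of the moment map relation $\mu = 0$ (rather than the full affine prequotient) means one cannot simply quote Halpern--Leistner verbatim, but must combine the window theorem with a Koszul-resolution argument along $\mu$. As a sanity check, in the Hilbert scheme case $n=1$, $\bw=(1)$, the result is classical via \eqref{eqn:haimanesque}, and one might hope to exhibit an explicit integral Macdonald-type tautological basis of $K_{\bv,\bw}$ indexed by $\bw$-partitions whose restriction matrix is triangular in a suitable partial order, giving a second, more elementary, route to integrality.
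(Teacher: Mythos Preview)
The statement you are attempting to prove is labeled \textbf{Conjecture} in the paper, and the paper does \emph{not} supply a proof. Immediately after stating it, the author writes that \citep{HP} proved the analogous cohomological statement after localizing with respect to $q$, that ``one expects that the $K$-theoretic version of their statement also holds,'' and that the thesis can proceed without it since the stable basis constructions are unaffected by $q$-localization. So there is no ``paper's own proof'' to compare against; you are sketching an approach to an open problem.

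On the substance of your sketch: Stage~1 is unproblematic and is essentially what the paper uses implicitly whenever it passes to the fixed-point basis $|\bla\rangle$. Stage~2 is a plausible research outline along McGerty--Nevins/Halpern-Leistner lines, but as you yourself flag, it is not a proof: the window argument has to be run on the singular zero locus $\mu^{-1}(0)$ rather than on a smooth prequotient, and you have not verified that the Kempf--Ness strata and window widths cooperate for the cyclic quiver with $\bth=(1,\dots,1)$. The Koszul-resolution step you gesture at is exactly where the difficulty lies, and your proposal does not resolve it. So your write-up is a reasonable description of how one might attack the conjecture, but it is not a proof, and the paper did not claim one either.
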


\tab 
In cohomology, \citep{HP} proved that the above conjecture is true after localizing with respect to the equivariant parameter $q$, i.e. working with the ring $K_{\bv,\bw} \otimes_{\BZ[q^{\pm 1}]} \BQ(q)$ instead of $K_{\bv,\bw}$. One expects that the $K-$theoretic version of their statement also holds. Note that the stable basis constructions in this thesis are unaffected by localization with respect to $q$, so for our purposes, we could do without Conjecture \ref{conj:kirwan}. However, we chose to present it here for completeness.

%We will use the same notation \eqref{eqn:pairingmoduli} for the pairing induced on $K(\bw) \otimes K(\bw)$ as the direct sum of the above pairings over all $\bv\in \nn$. 

\tab 
Philosophically, Conjecture \ref{conj:kirwan} implies that one can describe $K_{\bv,\bw}$ by describing the kernel of the composition \eqref{eqn:kirwan}. In other words, we need to find those symmetric Laurent polynomials $f$ for which $\ov{f} = 0$. Equivariant localization \eqref{eqn:local} implies: 
$$
\ov{f} \ = \ 0 \qquad \Leftrightarrow \qquad f(\chi_\bla) \ = \ 0 \qquad \forall \ \bw-\text{partition }\bla
$$
This is a finite collection of linear conditions on the coefficients of the Laurent polynomial $f$. We can present this in a more qualitative way by appealing to the  pairing in $K-$theory produced by the modified Euler characteristic \eqref{eqn:modified}: 
\begin{equation}
\label{eqn:pairingmoduli}
(\cdot,\cdot) \ : \ K_{\bv,\bw} \otimes K_{\bv,\bw} \ \longrightarrow \ \BF_\bw \qquad  \qquad (\gamma,\gamma') = \ochi\left(\CN_{\bv,\bw}, \gamma\cdot \gamma' \right)
\end{equation}
Since $\CN_{\bv,\bw}$ is smooth with proper fixed point sets, the above pairing is non-degenerate. Therefore, we conclude that:
\begin{equation}
\label{eqn:kmoduli}
K_{\bv,\bw} \ = \ \frac {\Lambda_{\bv,\bw}}{\text{kernel of }(\cdot,\cdot)}
\end{equation}
so we need to obtain an understanding of the kernel of the pairing. Assume that the equivariant parameters are given by complex numbers with $|q|<1$, $|t|=1$ and $|u_1|=...=|u_\bw| = 1$. Let us write $\bv! = v_1!...v_n!$ and formally set: 
\begin{equation}
\label{eqn:placeholder}
X = X_1 + ... + X_n \qquad \qquad X_i = \sum_{a=1}^{v_i} x_{ia}
\end{equation}
for the alphabet of variables of a symmetric Laurent polynomial $f$.  This means that the summands of $X$ represent the inputs of $f$, and so we use the shorthand notation 
\begin{equation}
\label{eqn:shorthand}
f(X) \ = \ f(...,x_{ia},...)^{1\leq i \leq n}_{1\leq a \leq v_i}
\end{equation}
As the colors $i$, the indices of $X_i$ will always be taken modulo $n$, hence $X_i = X_{i+n}$. \\
 
%\begin{equation}
%\label{eqn:deftau}
%\tau_{\bw}^\pm(x) = \prod^{1\leq i \leq  \bw}_{i\equiv \text{color } x} \left[ \frac {u_i}{xq^{\pm 1}}  \right]
%\end{equation}
%for any variable $x$.

\begin{proposition}
\label{prop:integral}

For any symmetric Laurent polynomial $f\in \Lambda_{\ebv,\ebw}$,  we have:
\begin{equation}
\label{eqn:integral}
\ochi\left(\CN_{\ebv,\ebw}, \ov{f}\right) \ = \ \frac 1{\ebv!} \left( \int_{|X| = 1} -  \int_{|X| \ll 1} \right) \frac {f(X) \cdot DX}{\prod_{i,j = 1}^n \zeta\left( \frac {X_i}{X_j} \right) \prod_{i=1}^\ebw \Big[ \frac {X_i}{qu_i}  \Big] \Big[ \frac {u_i}{qX_i}  \Big]} %\frac 1{\ebv!}\int_{|X|=1}^{\emph{no }\infty} \frac {f(X) \cdot DX}{\zeta\left( \frac XX \right) \prod_{i=1}^\ebw \Big[ \frac {X}{qu_i}  \Big] \Big[ \frac {u_i}{qX}  \Big]} 
\end{equation}
where we use multiplicative notation: 
\begin{equation}
\label{eqn:multiplicative}
\zeta\left( \frac {X_i}{X_j} \right) = \prod_{a=1}^{v_i} \prod_{b=1}^{v_j} \zeta\left( \frac {x_{ia}}{x_{jb}} \right) \qquad \qquad  \left[ \frac {X_i^{\pm 1}}{qu^{\pm 1}_i}  \right] = \prod_{a=1}^{v_i} \left[ \frac {x^{\pm 1}_{ia}}{qu^{\pm 1}_i}  \right]
\end{equation}
The integral in \eqref{eqn:integral} goes over all variables in the alphabet $X = \sum x_{ia}$, each running independently of the others over a contour composed of the unit circle minus a small circle around $0$.\footnote{One should think of the choice of contours as throwing out the poles at 0. Following Nekrasov, this could alternatively be formalized as:
$$
\left(\int_{|x| = 1} - \int_{|x| \ll 1} \right) F(x) Dx \ := \ \lim_{\kappa \rightarrow 0} \int_{|x|=1}^{\text{Re }\kappa \gg 0} F(x) x^{\kappa} Dx
$$
for any rational function $F(x)$. In other words, we evaluate the integral for $\text{Re }\kappa \gg 0$, and then analytically continue it to $\kappa = 0$. For a general stability condition, the proper regularization is $x^{\kappa \th}$} We write $D X = \prod^{1\leq i \leq n}_{1\leq a \leq v_i} Dx_{ia}$, where $Dz = \frac {dz}{2\pi i z}$. 

\end{proposition}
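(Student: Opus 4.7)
The plan is to evaluate the contour integral on the right-hand side of \eqref{eqn:integral} by iterated residues and to match the resulting sum with the equivariant localization expansion of the left-hand side. Applying the Thomason localization theorem \eqref{eqn:loc} together with \eqref{eqn:local},
$$
\ochi\!\left(\CN_{\bv,\bw}, \ov{f}\right) \ = \ \sum_{\bla} \frac{f(\chi_\bla)}{[T_\bla \CN_{\bv,\bw}]},
$$
where the sum runs over $\bw-$partitions $\bla$ with $v_i$ boxes of color $i$ for each $i$, and $[T_\bla \CN_{\bv,\bw}]$ is the symmetrized Euler class read off from \eqref{eqn:tanmod0}. The task is therefore to identify the iterated residues of the integrand with these fixed-point terms, one by one.

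With $|q|<1$ and $|t|=|u_i|=1$, the contour difference $\int_{|X|=1}-\int_{|X|\ll 1}$ picks out residues in the region $0 < |x_{ia}| < 1$. Poles in this region come from two sources. Each factor $[X_i/(qu_i)]$ in the denominator yields a pole at $x_{ia} = qu_i$ (the companion factor $[u_i/(qX_i)]$ contributes poles only at $|x| = 1/|q| > 1$, outside the contour). The numerator factors $[X_j/(qtX_i)]$ and $[tX_j/(qX_i)]$ of $\zeta$, which sit in the integrand denominator, yield poles at $x_{j,a} = qt\cdot x_{i,b}$ when $j \equiv i+1$ (an \emph{east move} in color) and at $x_{j,a} = (q/t)\cdot x_{i,b}$ when $j \equiv i-1$ (a \emph{north move}). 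Simultaneously, the denominator factors of $\zeta$ contribute numerator factors $[x_{ib}/x_{ia}][x_{ib}/(q^2 x_{ia})]$ to the integrand, producing zeros whenever two same-color variables coincide or differ by a factor of $q^2$.

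Evaluating residues iteratively, starting from roots $x = qu_i$ and adding new variables by multiplication with $qt$ or $q/t$, the non-vanishing towers of specializations are precisely those for which the resulting set $\{x_{ia}\}$ equals $\{\chi_\sq : \sq \in \bla\}$ for some $\bw-$partition $\bla$ with $|\bla|_i = v_i$. Any attempt at an illegal configuration, where the variables are arranged in a pattern that is not a Young diagram rooted at some $qu_i$, is killed by one of the same-color zeros $[x/x']$ or $[x/(q^2 x')]$. For each fixed $\bla$, the $\prod v_i! = \bv!$ orderings of the variables within each color class yield the same configuration, cancelling the $1/\bv!$ prefactor. After specialization, $f(X)$ reduces to $f(\chi_\bla)$, and the remaining non-residue factors must assemble into $1/[T_\bla\CN_{\bv,\bw}]$: the four families of summands in \eqref{eqn:tanmod0} — framing $\chi_\sq/(qu_i)$ and $u_i/(q\chi_\sq)$, east/north neighbor $\chi_{\sq'}/(qt\chi_\sq)$ and $t\chi_{\sq'}/(q\chi_\sq)$, and subtracted same-color $\chi_{\sq'}/\chi_\sq$ and $\chi_{\sq'}/(q^2\chi_\sq)$ — match, respectively, the factors $[X_i/(qu_i)]$, $[u_i/(qX_i)]$, the two numerator factors of $\zeta$, and the two denominator factors of $\zeta$ in the integrand.

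The principal obstacle is the iterated-residue combinatorics of the previous paragraph: one must argue rigorously that only Young-diagram specializations contribute a non-vanishing residue, that the compensating same-color zeros cancel every spurious configuration, and that the combinatorial multiplicities are as claimed. This is a $K-$theoretic Nekrasov-style calculation, closely paralleling the shuffle-algebra residue computations of \citep{SV} for the Jordan quiver, with the adjustment that one must keep track of color in the cyclic quiver case. The bookkeeping is essentially combinatorial but delicate; the key structural fact is that for any specialization order of the variables, the local residues together with the vanishing numerator factors encountered along the way collapse to precisely the fixed-point contribution read off from \eqref{eqn:tanmod0}.
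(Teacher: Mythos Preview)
Your approach is essentially identical to the paper's: both sides reduce via localization to the fixed-point sum $\sum_\bla f(\chi_\bla)/[T_\bla\CN_{\bv,\bw}]$, and the integral is evaluated by iterated residues that build up $\bw$-partitions from roots $qu_i$ via multiplication by $qt$ and $qt^{-1}$, with the $\bv!$ prefactor absorbed by relabeling. The ``principal obstacle'' you flag --- ruling out non-partition configurations and verifying simplicity of the surviving poles --- is exactly what the paper makes precise via a bullet-counting inequality \eqref{eqn:ciccone}: at each box one compares the number of available pole factors (from bullets directly south and west, plus a root contribution) against the number of numerator zeros (from same-box and southwest bullets), and this inequality forces single occupancy, confinement to the first quadrant, and the partition shape, with equality gap exactly $1$ ensuring simple poles.
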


\tab Note that $\zeta(x_{ia}/x_{ia})$ in \eqref{eqn:multiplicative} contains a factor of $1-1 = 0$ in the denominator, which we implicitly eliminate from the above products. This will be the case in all similar formulas in this thesis. \\
\begin{remark}

It is natural to conjecture that \eqref{eqn:integral} holds for an arbitrary Nakajima quiver variety, though the argument below only works for the case of isolated fixed points. As we will see in the proof of the above Proposition, the denominator of the fraction in \eqref{eqn:integral} is simply the $[\cdot]$ class of the tangent bundle to $\CN_{\bv,\bw}$ in terms of tautological classes. For arbitrary quiver varieties, \eqref{eqn:integral} is a consequence of a result known as Martin's theorem (see for example \citep{HP}) by the following argument: it is straightforward to prove \eqref{eqn:integral} for abelian Nakajima quiver varieties, i.e. those whose gauge group $G_\bv$ is a torus. Martin's theorem relates Euler characteristics on arbitrary Hamiltonian reductions to those of their abelianizations, and it would imply formula \eqref{eqn:integral}. Martin's theorem in $K-$theory follows from the situation in \loccit by taking the Chern character and applying Riemann-Roch. \\ 

\end{remark}

\begin{proof} Since $\ochi(I_\bla) = 1$, we obtain $\ochi(|\bla\rangle) = [T_\bla \CN_{\bv,\bw}]^{-1}$ and hence \eqref{eqn:local} implies:
$$
\ochi\left(\CN_{\bv,\bw}, \ov{f}\right) \ = \ \sum_\bla \frac {f(\chi_\bla)}{[T_\bla\CN_{\bv,\bw}]} %\cdot \ochi\left(\CN_{\bv,\bw}^\bth, |\bla\rangle \right)
$$
By \eqref{eqn:tanmod0}, the denominator $[T_\bla\CN_{\bv,\bw}]$ is given by:
$$
\prod_{i=1}^\bw \prod^{c_\square \equiv i}_{\square \in \bla} \left[\frac {\chi_\square}{qu_i} \right]\left[\frac {u_i}{q\chi_\square} \right] \prod_{\sq,\sq' \in \bla} \frac {\left[\frac {\chi_{\square'}}{qt\chi_\square} \right]^{\delta^{c_{\sq'}}_{c_{\sq}+1}} \left[\frac {t\chi_{\square'}}{q\chi_\square } \right]^{\delta^{c_{\sq'}}_{c_{\sq}-1}}}{\left[\frac {\chi_{\square'}}{\chi_\square} \right]^{\delta^{c_{\sq'}}_{c_{\sq}}} \left[\frac {\chi_{\square'}}{q^2\chi_\square} \right]^{\delta^{c_{\sq'}}_{c_{\sq}}}} = \zeta\left(\frac {\chi_\bla}{\chi_\bla} \right)\prod_{i=1}^\bw \left[ \frac {\chi_\bla}{qu_i}  \right] \left[ \frac {u_i}{q\chi_\bla}  \right] 
$$
where $\chi_\bla$ denotes the set of contents of the boxes in the $\bw-$partition $\bla$, and $\zeta\left(\frac {\chi_\bla}{\chi_\bla}\right)$ is multiplicative notation as in \eqref{eqn:multiplicative}. It implies that we apply $\zeta$ to all pairs of weights of two boxes in the Young diagram $\bla$. Therefore, in order to finish the proof, we need to establish the following equality:
$$
\frac 1{\bv!}\left( \int_{|X|=1} - \int_{|X|\ll 1} \right) \frac {f(X) \cdot DX}{\prod_{i,j=1}^n \zeta\left(\frac {X_i}{X_j} \right) \prod_{i=1}^\bw \Big[ \frac {X_i}{qu_i}  \Big] \Big[ \frac {u_i}{qX_i}  \Big]} =
$$
\begin{equation}
\label{eqn:residuecount}
 = \sum_\bla \frac {f(\chi_\bla)}{\zeta\left(\frac {\chi_\bla}{\chi_\bla} \right)\prod_{i=1}^\bw \left[ \frac {\chi_\bla}{qu_i}  \right] \left[ \frac {u_i}{q\chi_\bla}  \right]}
\end{equation}
In order for the denominator of the right hand side to be precise, we define $[x] = x^{\frac 12} - x^{-\frac 12}$ only if the color of $x$ is 0, otherwise we set $[x]=1$ throughout this thesis.

\tab
Since the denominator of the left hand side of \eqref{eqn:residuecount} consists of linear factors $x_{ia} - qt x_{jb}$ or $x_{ia} - qt^{-1} x_{jb}$, the residues one picks out are when $x_{ia} = \chi_{ia}$ for certain monomials $\chi_{ia}$ in $q,t,u_1,...,u_\bw$. Formula \eqref{eqn:residuecount} will be proved once we show that the only monomials which appear are the ones that correspond to the set of boxes in a $\bw-$diagram $\{\chi_{ia}\}^{1\leq i \leq n}_{1\leq a \leq v_i} = \chi_\bla$, and that such a residue comes from a simple pole in \eqref{eqn:residuecount} (so evaluating it will be well-defined regardless of the order we integrate out the variables).

\tab For a given residue, let us place as many bullets $\bullet$ at the box $\sq \in \bla$ as there are variables such that $x_{ia} = \chi_\sq$. The assumption $|q|<1$ and the choice of contours means that when we integrate over $x_{ia}$, we can only pick up poles of the form: 
$$
x_{ia} - qt x_{jb} = 0 \qquad \text{or} \qquad x_{ia} - qt^{-1} x_{jb} = 0
$$
There are as many such linear factors in the denominator of \eqref{eqn:residuecount} as there are bullets in the boxes directly south and west of the box $\sq$ of weight $\chi_{ia}$, plus one factor if $\chi_{ia}$ is the weight of the root of a partition. Meanwhile, there are as many factors:
$$
x_{ia} - x_{ib} \qquad \text{or} \qquad x_{ia} - q^2  x_{ib}
$$ 
in the numerator of \eqref{eqn:residuecount} as twice the number of bullets sharing a box with $\chi_{ia}$, plus the number of bullets in the box directly southwest of it. Therefore, in order to have a pole in the variable $x_{ia}$ at $\chi_\sq$, we must have:
$$
\delta_{\sq}^{\text{root}} + \# \text{ bullets directly south of }\sq + \# \text{ bullets directly west of }\sq \geq 
$$ 
\begin{equation}
\label{eqn:ciccone}
\geq 2 \# \text{ bullets at the box }\sq - 2 + \# \text{ bullets directly southwest of }\sq
\end{equation}
In particular, there can be no bullets outside the first quadrant, and there can be no multiple bullets in a single box. Indeed, if this were the case, one would contradict inequality \eqref{eqn:ciccone} by taking $\sq$ to be the southwestern most box which has multiple bullets. Finally, if we have bullets in three boxes of weight $\chi, \chi qt$ and $\chi q^2$, the inequality forces us to all have a bullet in the box of weight $\chi qt^{-1}$. This precisely establishes the fact that the bullets trace out a partition, and hence this contributes the residue at the pole $\{x_{ia}\} = \{\text{set of bullets}\}$ to \eqref{eqn:residuecount}. The pole is simple because the difference between the sides of the inequality \eqref{eqn:ciccone} is 1 for all boxes in a $\bw-$partition. Finally, the factor of $\bv!$ in \eqref{eqn:residuecount} arises since we can permute the indices $(i,a)$ arbitrarily. \\
\end{proof}

\section{Simple correspondences}
\label{sec:simple}

\noindent Among all geometric operators that act on $K(\bw)$, the most fundamental ones come from Nakajima's simple correspondences. To define these, consider any $i\in \{1,...,n\}$ and any pair of degrees such that $\bv^+ = \bv^- + \bs^i$, where $\bs^i \in \nn$ is the degree vector with $1$ on position $i$ and zero everywhere else. Then the simple correspondence: 
\begin{equation}
\label{eqn:simplecorr}
\fZ_{\bv^+,\bv^-,\bw} \ \hookrightarrow \ \CN_{\bv^+,\bw} \times \CN_{\bv^-,\bw}
\end{equation}
parametrizes pairs of quadruples $(X^\pm,Y^\pm,A^\pm,B^\pm)$ that respect a fixed collection of quotients $(V^+ \twoheadrightarrow V^-) = \{V_j^+ \twoheadrightarrow V_j^-\}_{j\in \{1,...,n\}}$ of codimension $\delta_j^i$:

\begin{equation}
\label{eqn:bigdiag}
\xymatrix{ W_{i-2} \ar[dd]_{A_{i-2}} & & W_{i-1} \ar[dd]_{A_{i-1}} & & W_i\ar[d]^{A_{i}} & & W_{i+1} \ar[dd]^{A_{i+1}} & & W_{i+2} \ar[dd]^{A_{i+2}}\\
& & & & \ar@/^/[dll]^{Y_{i-1}^+} V_i^+ \ar@{->>}[dd] \ar@/^/[drr]^{X_i^+} & & & & \\
... \ V_{i-2}^\pm \ar[dd]_{B_{i-2}} \ar@/^/[rr]^{X^\pm_{i-2}} & & \ar@/^/[ll]^{Y^\pm_{i-2}} V_{i-1}^\pm  \ar[dd]_{B_{i-1}} \ar@/^/[rru]^{X_{i-1}^+} \ar@/_/[rrd]_{X_{i-1}^-} & & & & V_{i+1}^\pm \ar[dd]^{B_{i+1}} \ar@/^/[rr]^{X^\pm_{i+1}} \ar@/^/[llu]^{Y_i^+} \ar@/_/[lld]_{Y_{i}^-} & & \ar[dd]^{B_{i+2}} \ar@/^/[ll]^{Y^\pm_{i+1}} V_{i+2}^\pm  \ ... \\
& & & & \ar@/_/[llu]_{Y_{i-1}^-} V_i^- \ar[d]^{B_{i}} \ar@/_/[rru]_{X_{i}^-} & & & & \\
W_{i-2} & & W_{i-1} & & W_i & & W_{i+1} & & W_{i+2} } \qquad
\end{equation}	
We only consider quadruples which are semistable and zeroes for the map $\mu$ of \eqref{eqn:moment}, and take them modulo the subgroup of $G_{\bv^+} \times G_{\bv^-}$ that preserves the fixed collection of quotients $V^+ \twoheadrightarrow V^-$. The variety $\fZ_{\bv^+,\bv^-,\bw}$ comes with the tautological line bundle:
$$
\CL|_{V^+ \twoheadrightarrow V^-} = \text{Ker}(V^+_i \twoheadrightarrow V^-_i)
$$
as well as projection maps:
\begin{equation}
\label{eqn:diagram}
\xymatrix{& \fZ_{\bv^+,\bv^-,\bw} \ar[dl]_{\pi_+} \ar[dr]^{\pi_-} \\
\CN_{\bv^+,\bw} & & \CN_{\bv^-,\bw} }
\end{equation}
With this data in mind, we may consider the following operators on $K-$theory:
\begin{equation}
\label{eqn:simpleop}
e_{i,d}^\pm \ : \ K_{\bv^\mp,\bw} \longrightarrow K_{\bv^\pm,\bw}
\end{equation}
$$
e_{i,d}^\pm(\alpha) \ = \ \widetilde{\pi}_{\pm *} \left( \CL^d \cdot \pi^*_{\mp}(\alpha ) \right)
$$
When the degrees $\bv,\bw$ will not be relevant, we will abbreviate the simple correspondence by $\fZ_i$, and interpret \eqref{eqn:simpleop} as operators $e_{i,d}^\pm : K(\bw) \rightarrow K(\bw)$ which have degree $\pm \bs^i$ in the grading $\bv$. We also define the following operators of degree 0:
\begin{equation}
\label{eqn:cartan0}
\ph_{i,d}^\pm \ : \ K(\bw) \longrightarrow K(\bw) \qquad \qquad \ph_i^\pm(z) = \sum_{d=0}^\infty \ph_{i,d}^\pm z^{\mp d}
\end{equation}
$$
\ph_i^\pm(z) \ = \ \text{ multiplication by the tautological class} \quad \overline{\frac {\zeta\left( \frac zX \right)}{\zeta\left(\frac Xz \right)}} \prod_{1\leq j \leq \bw}^{u_j \equiv i} \frac {\left[ \frac {u_j}{qz} \right]}{\left[ \frac {z}{qu_{j}} \right]}
$$
where the RHS must be expanded in negative or positive powers of the variable $z$ of color $i$, depending on whether the sign is $+$ or $-$. Recall that $\zeta\left(\frac {z^{\pm 1}}{X^{\pm 1}}\right)$ refers to multiplicative notation in the alphabet of variables $X = \sum x_{ia}$, as in \eqref{eqn:multiplicative}. Then the main result of \citep{VV}, as well as \citep{Nak1} for general quivers without loops, is: \\

\begin{theorem}
\label{thm:actvv}

For all $\ebw \in \nn$, the operators $e_{i,d}^\pm$ and $\ph_{i,d}^\pm$ give rise to an action:
$$
\UU \ \curvearrowright \ K(\ebw)
$$

\end{theorem}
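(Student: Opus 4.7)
The plan is to verify the defining relations of $\UU$ by computing the matrix coefficients of $e_{i,d}^\pm$ and $\ph_{i,d}^\pm$ in the renormalized fixed point basis $\{|\bla\rangle\}$ of localized equivariant $K$-theory \eqref{eqn:renormalized}. The $T$-fixed locus of the simple correspondence $\fZ_i$ consists of pairs $(\bla, \bmu)$ of $\bw$-partitions with $\bla = \bmu \sqcup \{\bsq\}$ for a single box $\bsq$ of color $i$, and $\CL$ restricts to the character $\chi_\bsq$ at such a point. Applying equivariant localization \eqref{eqn:loc} to the definition \eqref{eqn:simpleop}, one obtains explicit formulas
\begin{equation*}
e_{i,d}^+ |\bmu\rangle \ = \ \sum^{c_\bsq \equiv i}_{\bla = \bmu + \bsq} \chi_\bsq^d \cdot r^+_{\bla,\bmu} \cdot |\bla\rangle, \qquad e_{i,d}^- |\bla\rangle \ = \ \sum^{c_\bsq \equiv i}_{\bmu = \bla - \bsq} \chi_\bsq^d \cdot r^-_{\bla,\bmu} \cdot |\bmu\rangle,
\end{equation*}
where $r^\pm_{\bla,\bmu}$ is an explicit product of $[\cdot]$-factors over inner and outer corners of $\bmu$ (resp. $\bla$) of color $i$, coming from the ratio of the tangent classes on $\fZ_i$ and $\CN_{\bv^\pm,\bw}$ at the fixed point, together with the canonical-class correction built into the modified push-forward \eqref{eqn:modpush}.

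Given these matrix coefficients, most relations are routine to check. The $\ph$-$\ph$ commutativity is automatic since they act as multiplication by tautological classes. The $e$-$\ph$ commutation reduces to the identity
\begin{equation*}
\frac{\ph^\pm_j(w)|_\bla}{\ph^\pm_j(w)|_\bmu} \ = \ \frac{\zeta(w/\chi_\bsq)}{\zeta(\chi_\bsq/w)}
\end{equation*}
for $\bla = \bmu + \bsq$, which falls out of \eqref{eqn:cartan0} after cancelling the factors coming from boxes other than $\bsq$. The $\zeta$-symmetry relations $e_i^\pm(z)e_j^\pm(w)\zeta(w^{\pm 1}/z^{\pm 1}) = e_j^\pm(w)e_i^\pm(z)\zeta(z^{\pm 1}/w^{\pm 1})$ follow by comparing the two orderings in which a pair $(\bsq,\bsq')$ can be added to, or removed from, a fixed point: the ratio of the corresponding $r^\pm$ factors is exactly the $\zeta$ ratio above. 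The key commutator $[e_i^+(z), e_j^-(w)]$ vanishes off the diagonal (for $i = j$, the non-diagonal boxes cancel between $e^+e^-$ and $e^-e^+$), and the sum over corners of $\bla$ reproduces the partial-fraction expansion of the RHS of \eqref{eqn:comm}, a manipulation which is a standard computation in equivariant $K$-theory of Hilbert schemes, easily adapted to the cyclic quiver.

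The main obstacle is the Serre relations \eqref{eqn:serre}, which involve a six-term combination of triple compositions of simple correspondences. Two routes are available. The first, geometric and due in spirit to \citep{VV}, is to expand the six-term sum on the fixed point basis and show by an explicit combinatorial identity among the weights $\chi_\bsq$ of the three added boxes (two of color $i$, one of color $i \pm 1$) that all matrix coefficients cancel in pairs. The second, more conceptual, route is to observe that the verifications above already show the maps $e^\pm_{i,d}$ assemble into actions of the shuffle halves $\CS^\pm$ on $K(\bw)$, since the quadratic $\zeta$-symmetry relations we have verified coincide with the defining relations of $\CS^\pm$ as a subalgebra of symmetric Laurent polynomials under the shuffle product (to be recalled in Chapter \ref{chap:shuffle}). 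One then invokes Theorem \ref{thm:iso0}: the kernel of $\UUp \twoheadrightarrow \CS^+$ is precisely the Serre ideal, so transporting the $\CS$-action along $\UU \cong \CS$ automatically yields an $\UU$-action on $K(\bw)$ in which the Serre relations hold for free. Either route completes the proof, and the second is the natural one from the perspective of the remainder of this thesis.
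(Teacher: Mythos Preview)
The paper does not prove this theorem at all: it is quoted as ``the main result of \citep{VV}, as well as \citep{Nak1} for general quivers without loops'', and no argument is given. So there is nothing to compare against except those references. Your first route --- explicit cancellation of the six Serre terms in the fixed-point basis --- is indeed the approach of those papers, and your sketch of the remaining relations (Cartan, quadratic, commutator) is correct in outline and matches the formulas \eqref{eqn:fixedpoints1}--\eqref{eqn:fixedpoints2} that the paper derives immediately after stating the theorem.

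Your second route, however, has a real gap. You write that ``the quadratic $\zeta$-symmetry relations we have verified coincide with the defining relations of $\CS^\pm$'', but this is false: $\CS^+$ is \emph{not} presented by generators $z_i^d$ and quadratic relations. It is defined in \eqref{eqn:defshuf}--\eqref{eqn:shufelem} as a subspace of symmetric rational functions cut out by the wheel conditions, and the fact that it is generated by degree-one elements is exactly the nontrivial surjectivity statement in Theorem~\ref{thm:iso0}. Knowing only that the $e_{i,d}^\pm$ satisfy the quadratic $\zeta$-relation does not produce a homomorphism out of $\CS^\pm$; it produces a homomorphism out of the quotient of the free algebra by those quadratic relations, which a priori is larger. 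The correct version of your second route is what the paper actually does in Chapter~\ref{chap:shuffle}: define the action of an \emph{arbitrary} shuffle element on $K(\bw)$ directly via the closed formulas \eqref{eqn:fixedplus}--\eqref{eqn:fixedminus} (or equivalently \eqref{eqn:act1}--\eqref{eqn:act4} on the Verma module), verify by hand that this assignment respects the shuffle product (the proof of Proposition~\ref{prop:act}), check that the degree-one piece recovers $e_{i,d}^\pm$, and only then transport along $\Upsilon^{-1}$. That argument is self-contained and does give an independent proof of Theorem~\ref{thm:actvv}, but the work lies in Proposition~\ref{prop:act}, not in the quadratic relation.
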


\noindent The correspondence $\fZ_i$ is well-known to be smooth, as well as proper with respect to either projection map $\pi_\pm$. Hence the operators $e_{i,d}^\pm$ are well-defined in integral $K-$theory. For the remainder of this section, we will forgo this integrality and seek to compute them via equivariant localization, i.e. in the basis of torus fixed points. At the end of the Section, we will recover integrality from our formulas. Since torus fixed points of Nakajima cyclic quiver varieties are parametrized combinatorially by:
$$
\CN_{\bv^\pm, \bw}^{\text{fixed}} = \{I_{\bla^\pm}, \text{ where }\bla^\pm \text{ is a } \bw-\text{partition of size }\bv^\pm\}
$$
we conclude that fixed points of the correspondence $\fZ_i$ are parametrized by:
$$
\fZ_{\bv^+, \bv^-, \bw}^{\text{fixed}} = \{(I_{\bla^+} \subset I_{\bla^-}), \ \text{where }\bla^+ \geq_i \bla^-\}
$$
In the above, recall that we write $\bla^+ \geq \bla^-$ if the Young diagram of the former partition completely contains that of the latter. In the case of $\fZ_i$, their difference automatically consists of a single box of color $i$, and we will denote this by $\bla^+ \geq_i \bla^-$. \\

\begin{exercise}
\label{ex:tangent2}
The $T-$character in the tangent spaces to $\fZ_{\bv^+,\bv^-,\bw}$ is given by:
\begin{equation}
\label{eqn:tansim}
T_{\bla^+ \geq_i \bla^-} \fZ_{\bv^+,\bv^-,\bw} = \sum_{j = 1}^{\bw} \left( \sum_{\square \in \bla^+}^{c_\square \equiv j} \frac {\chi_\square}{qu_j} + \sum_{\square' \in \bla^-}^{c_{\square'} \equiv j} \frac {u_j}{q\chi_{\square'}} \right) +
\end{equation}
$$
+ \sum^{\sq \in \bla^+}_{\sq' \in \bla^-} \left(\delta_{c_{\sq'}}^{c_{\sq}-1} \cdot \frac {\chi_{\square}}{qt\chi_{\square'}} + \delta_{c_{\sq'}}^{c_{\sq}+1} \cdot \frac {t\chi_{\square}}{q\chi_{\square'}} - \delta_{c_{\sq'}}^{c_{\sq}} \cdot \frac {\chi_{\square}}{\chi_{\square'}} - \delta_{c_{\sq'}}^{c_{\sq}} \cdot \frac {\chi_{\square}}{q^2\chi_{\square'}} \right) - 1 
$$
for any $\bla^+ \geq_i \bla^-$. 
\end{exercise}

\tab
Let us now explain how to use formulas such as \eqref{eqn:tansim} to compute the matrix coefficients of operators such as \eqref{eqn:simpleop} in the basis of fixed points $|\bla\rangle$, renormalized as in \eqref{eqn:renormalized}. This principle will be used again in Section \ref{sec:act} for the more complicated eccentric correspondences. Given a correspondence:
$$
\fZ \ \subset \ X_+ \times X_-
$$
endowed with projection maps $\pi_\pm : \fZ \rightarrow X_\pm$, we wish to compute the operators:
$$
K(X_+) \ \mathop{\leftrightarrows}^{e^+}_{e^-} \ K(X_-) \qquad \qquad e^\pm(\alpha) = \widetilde{\pi}_{\pm *}\left(\pi^*_\mp(\alpha) \right)
$$ 
in the basis of fixed points: 
$$
|p^\pm\rangle \ := \ \frac {I_{p^\pm}}{[T_{p^\pm}X_\pm]} \ \in \ K(X_\pm)_\loc \qquad \text{where} \qquad \{ p^{\pm} \} \ = \ X_\pm^{\text{fixed}}
$$
Let us write $\fZ^{\text{fixed}} \subset \{(p^+,p^-), \ p^\pm \in X_\pm^{\text{fixed}}\}$ for the fixed locus of $\fZ$. Then we have:
$$
\pi_\mp^*\left(|p^\mp\rangle\right) \ = \ \sum_{(p^+,p^-) \in \fZ^{\text{fixed}}} |(p^+,p^-)\rangle
$$
Using the formula for the push-forward:
$$
\widetilde{\pi}_{\pm *} \left( |(p^+,p^-)\rangle \right) \ = \ |p^\pm \rangle \cdot [\text{Cone }d\pi_{\pm}]
$$
we obtain:
\begin{equation}
\label{eqn:formula}
e^\pm\left(|p^\mp \rangle \right) = \sum_{(p^+,p^-) \in \fZ^{\text{fixed}}} |p^\pm\rangle \cdot \frac {[T_{p^\pm}X_\pm]}{[T_{(p^+,p^-)}\fZ]}
\end{equation}
This formula establishes the fact that matrix coefficients $\langle p^\pm|e^\pm |p^\mp \rangle$ of correspondences in the basis of fixed points are given by the $[\cdot]$ class of the tangent bundle to $\fZ$ and to $X^\pm$. More precisely, we need to compute the $[\cdot]$ class of the difference between the tangent bundle of the base space and the tangent bundle of the correspondence. For the simple Nakajima correspondences of \eqref{eqn:simpleop}, this information is provided by \eqref{eqn:tanmod0} and \eqref{eqn:tansim}. Specifically, let us consider a fixed point $(\bla^+ \geq_i \bla^-)$ of $\fZ_i$, and let $\blacksquare$ denote the only box in $\bla^+ \backslash \bla^-$, which by definition has color $i$. Then we may combine \eqref{eqn:tanmod0} and \eqref{eqn:tansim} to obtain:
$$
[T_{\bla^+} \CN_{\bv^+,\bw}] - [T_{\bla^+ \geq_i \bla^-} \fZ_{\bv^+,\bv^-,\bw}] = 1 + \sum_{1\leq j \leq \bw}^{u_j\equiv i} \frac {u_j}{q\chi_\bsq} + 
$$
$$
+  \sum_{\square\in \bla^+}^{c_{\sq} \equiv i+1} \frac {\chi_\sq}{qt\chi_\bsq} + 
\sum_{\square\in \bla^+}^{c_{\sq} \equiv i-1} \frac {t\chi_\square}{q\chi_\bsq} -
\sum_{\square\in \bla^+}^{c_{\sq} \equiv i} \frac {\chi_{\sq}}{\chi_\bsq} -
\sum_{\square\in \bla^+}^{c_{\sq} \equiv i} \frac {\chi_\sq}{q^2\chi_\bsq}
$$
or:
$$
[T_{\bla^-} \CN_{\bv^-,\bw}] - [T_{\bla^+ \geq_i \bla^-} \fZ_{\bv^+,\bv^-,\bw}] = 1 - \sum_{1\leq j \leq \bw}^{u_j\equiv i} \frac {\chi_\bsq}{qu_j} -
$$
$$
-  \sum_{\square\in \bla^-}^{c_{\sq} \equiv i-1} \frac {\chi_\bsq}{qt\chi_\sq} - \sum_{\square\in \bla^-}^{c_{\sq} \equiv i+1} \frac {t\chi_\bsq}{q\chi_\sq} +
\sum_{\square\in \bla^-}^{c_{\sq} \equiv i} \frac {\chi_\bsq}{\chi_\sq} + \sum_{\square\in \bla^-}^{c_{\sq} \equiv i} \frac {\chi_\bsq}{q^2\chi_\sq}
$$
Then \eqref{eqn:formula} gives us the following formulas for the matrix coefficients of the operator \eqref{eqn:simpleop}, which are non-zero only if $\bla^+ \geq_i \bla^-$:
$$
\langle \bla^\pm | e_{i,d}^\pm |\bla^\mp \rangle = \chi_\bsq^d \cdot (1-1) \cdot \left( \frac {\prod_{\sq \in \bla^\pm}^{c_\sq \equiv i\pm 1} \Big[\frac {\chi_\sq^{\pm 1}}{qt\chi_\bsq^{\pm 1}} \Big] \prod_{\sq \in \bla^\pm}^{c_\sq \equiv i\mp 1} \Big[ \frac {t\chi_\sq^{\pm 1}}{q\chi_\bsq^{\pm 1}} \Big]}{\prod_{\sq \in \bla^\pm}^{c_\sq \equiv i} \Big[ \frac {\chi_\sq^{\pm 1}}{\chi_\bsq^{\pm 1}} \Big] \prod_{\sq \in \bla^\pm}^{c_\sq \equiv i}\Big[\frac {\chi_\sq^{\pm 1}}{q^2\chi_\bsq^{\pm 1}} \Big]} \right)^{\pm 1}  \prod^{u_j\equiv i}_{1\leq j \leq \bw} \left[\frac {u_j^{\pm 1}}{q\chi_\bsq^{\pm 1}} \right]^{\pm 1}
$$ 
The factor $1-1$ is meant to cancel a single factor of $1-1$ which appears in the denominator of the above expression. Let us rewrite the above formula without this rather strange implicit cancellation, by changing the products over $\sq \in \bla^\pm$ to products over $\sq \in \bla^\mp$:
$$
\langle \bla^\pm | e_{i,d}^\pm |\bla^\mp \rangle = \frac {\chi_\bsq^d}{[q^{-2}]} \cdot \left( \frac {\prod_{\sq \in \bla^\mp}^{c_\sq \equiv i\pm 1} \Big[\frac {\chi_\sq^{\pm 1}}{qt\chi_\bsq^{\pm 1}} \Big] \prod_{\sq \in \bla^\mp}^{c_\sq \equiv i\mp 1} \Big[ \frac {t\chi_\sq^{\pm 1}}{q\chi_\bsq^{\pm 1}} \Big]}{\prod_{\sq \in \bla^\mp}^{c_\sq \equiv i} \Big[ \frac {\chi_\sq^{\pm 1}}{\chi_\bsq^{\pm 1}} \Big] \prod_{\sq \in \bla^\mp}^{c_\sq \equiv i}\Big[\frac {\chi_\sq^{\pm 1}}{q^2\chi_\bsq^{\pm 1}} \Big]} \right)^{\pm 1}  \prod^{u_j\equiv i}_{1\leq j \leq \bw} \left[\frac {u_j^{\pm 1}}{q\chi^{\pm 1}_\bsq} \right]^{\pm 1} 
$$ 
Comparing the above with the definition of $\zeta$ in \eqref{eqn:defzeta}, we obtain:
\begin{equation}
\label{eqn:fixedpoints1}
\langle \bla^+ | e_{i,d}^+ |\bla^- \rangle \ = \ \frac {\chi_\bsq^d}{[q^{-2}]} \cdot \zeta\left(\frac {\chi_\bsq}{\chi_{\bla^-}} \right) \prod^{u_j\equiv i}_{1\leq j \leq \bw} \left[\frac {u_j}{q\chi_\bsq} \right] 
\end{equation}
\begin{equation}
\label{eqn:fixedpoints2}
\langle \bla^- | e_{i,d}^- |\bla^+ \rangle \ = \ \frac {\chi_\bsq^d}{[q^{-2}]} \cdot \zeta \left(\frac {\chi_{\bla^+}}{\chi_\bsq} \right)^{-1} \prod^{u_j\equiv i}_{1\leq j \leq \bw} \left[\frac {\chi_\bsq}{qu_j} \right]^{-1} 
\end{equation}
For an arbitrary symmetric Laurent polynomial $f \in \Lambda_{\bv,\bw}$, localization \eqref{eqn:local} yields:
\begin{equation}
\label{eqn:form1}
e_{i,d}^+ (\ov{f}) = \sum_{\bla^-} e_{i,d}^+ \left( |\bla^-\rangle \right) \cdot f(\chi_{\bla^-}) = 
\end{equation}
$$
= \sum_{\bla^+ \geq_i \bla^-}^{\bsq = \bla^+/\bla^-} | \bla^+ \rangle \cdot \frac {\chi_\bsq^d}{[q^{-2}]} \cdot f(\chi_{\bla^-}) \cdot \zeta \left( \frac {\chi_\bsq}{\chi_{\bla^-}} \right)  \prod^{u_j\equiv i}_{1\leq j \leq \bw} \Big[\frac {u_j}{q\chi_\bsq} \Big] 
$$
and:
\begin{equation}
\label{eqn:form2}
e_{i,d}^- (\ov{f}) = \sum_{\bla^+} e_{i,d}^+ \left( |\bla^+\rangle \right) \cdot f(\chi_{\bla^+}) = 
\end{equation}
$$
= \sum_{\bla^+ \geq_i \bla^-}^{\bsq = \bla^+/\bla^-} | \bla^- \rangle \cdot \frac {\chi_\bsq^d}{[q^{-2}]} \cdot f(\chi_{\bla^+}) \cdot \zeta \left(\frac {\chi_{\bla^+}}{\chi_\bsq} \right)^{-1} \prod^{u_j\equiv i}_{1\leq j \leq \bw} \Big[\frac {\chi_\bsq}{qu_j} \Big]^{-1}  
$$
Since $e_{i,d}^\pm$ is defined on integral $K-$theory, we know that the right hand sides of the above expressions are integral $K-$theory classes. However, this is not immediately apparent from the above formulas, which involve many denominators and the localized fixed point classes $|\bla^\pm \rangle$. We will soon see that the right hand sides of \eqref{eqn:form1} and \eqref{eqn:form2} arise as residue computations of a certain rational function, as in the following elementary formula pertaining to polynomials $p(x)$ in a single variable:
$$
\BZ[a_1,...,a_k] \ni \text{Res}_{x=\infty} \frac {p(x)}{(x-a_1)...(x-a_k)} = \sum_{i=1}^k \frac {p(a_i)}{(a_i-a_1)...(a_i-a_k)} \in \BQ(a_1,...,a_k)
$$
Indeed, while the right hand side looks like a rational function in the variables $a_1,...,a_k$, only when we interpret it as a residue around $\infty$ does it become apparent that it is actually a polynomial. Let $X$ be a placeholder for the infinite alphabet of variables $\{x_{i1},x_{i2},...\}_{1\leq i \leq n}$, in other words $X = \sum^{1\leq i \leq n}_{1\leq a < \infty} x_{ia}$. \\

\begin{exercise}
\label{ex:intcorr}
For any $f = f(X) \in \Lambda_{\bv,\bw}$, in the notation of \eqref{eqn:shorthand}, we have:
\begin{equation}
\label{eqn:formula1}
e_{i,d}^+ \left( \ov{f} \right) = \int z^d \cdot \ov{f(X - z) \zeta\Big(\frac zX \Big)} \cdot \prod_{1\leq j \leq \bw}^{u_j \equiv i} \Big[\frac {u_{i}}{qz} \Big] Dz
\end{equation}
\begin{equation}
\label{eqn:formula2}
e_{i,d}^- \left( \ov{f} \right) = \int z^d \cdot \ov{f(X + z) \zeta\Big( \frac Xz \Big)^{-1}} \cdot \prod_{1\leq j \leq \bw}^{u_j \equiv i} \Big[\frac {z}{qu_i} \Big]^{-1} Dz 
\end{equation}
where the integrals are taken over small contours around $0$ and $\infty$. Here, $X+z$ (respectively $X-z$) denotes the alphabet $X$ to which we adjoin (respectively remove) the variable $z$, according to plethystic notation for symmetric functions.

\end{exercise}

\tab
One could prove Exercise \ref{ex:intcorr} just like the case $k=1$ of Theorems 3.9 and 4.10 of \citep{Nflag}. The main idea therein is to exhibit $\fZ_{\bv^+,\bv^-,\bw}$ as a projective bundle over $\CN_{\bv^\pm,\bw}$. The setup in \loccit is for the Jordan quiver, so one needs to restrict attention to $\BZ/n\BZ$ fixed points to obtain Exercise \ref{ex:intcorr}. Because formulas \eqref{eqn:formula1} and \eqref{eqn:formula2} only sum the residues at 0 and $\infty$, they take values in \b{integral} $K-$theory, a fact which was not apparent from \eqref{eqn:form1} and \eqref{eqn:form2}. Indeed, all denominators of \eqref{eqn:formula1} and \eqref{eqn:formula2} which involve factors such as $(X-z\cdot \text{const})$ will change to either $X$ or $(-z\cdot \text{const})$, as $z$ approaches either $0$ or $\infty$.

\chapter{Stable Bases in $K-$theory}
\label{chap:stable}
%\noindent In this section, we will give the general definition of $K-$theoretic stable bases, following \citep{MO2}. We will show how this allows one to define $R-$matrices:
%$$
%R \ : \ K^\bth(\bw) \otimes K^\bth(\bw') \longrightarrow K^\bth(\bw) \otimes K^\bth(\bw')
%$$
%for Nakajima quiver varieties corresponding to any stability parameter $\bth$ and any framing vertices $\bw,\bw'$. Understanding such $R-$matrices is one of our main interest in studying stable bases, but we will also be interested in them to state and prove the Murnaghan-Nakayama rules... 

%We will later apply this notion to prove an abelianization result for stable bases, which is a $K-$theoretic analogue of McBreen's interpretation of Shenfeld's abelianization result in cohomology. 

\section{Torus actions and Newton polytopes}
\label{sec:torusaction}

%The union of all leaves will be called the \b{attracting subvariety} with respect to $\sigma$:
%$$
%\attr^\sigma = \bigcup_{F \subset X^A} \leaf^\sigma_F
%$$

\noindent Let us assume that we are given a torus $A$ that acts on a symplectic variety:
$$
A \ \curvearrowright \ X
$$
and preserves the symplectic form. Then we may consider the torus fixed point locus $X^A \subset X$, which inherits a symplectic structure from $\omega$. For generic directions $\sigma : \BC^* \rightarrow A$, we have $X^\sigma = X^A$, and we may consider the one dimensional flow on $X$ induced by $\sigma$. Define the \b{attracting correspondence} with respect to $\sigma$ as:
\begin{equation}
\label{eqn:attcorr}
Z^\sigma \ := \ \{(x,y) \text{ s.t.}\lim_{t \rightarrow 0} \ \sigma(t)\cdot x  = y\} \ \hookrightarrow \ X \times X^A
\end{equation}
which keeps track of which points of $X$ flow into which points of $X^A$ in the direction prescribed by $\sigma$. We think of $Z^\sigma$ as a correspondence via the projection maps:
$$
\xymatrix{
& Z^\sigma \ar[ld]_{\pi_1} \ar[rd]^{\pi_2} & \\ 
X & & X^A} \qquad \quad \pi_1(x,y) = x \qquad \quad \pi_2(x,y) = y
$$
When the fixed locus is disconnected, for any connected component $F \subset X^A$ we may consider the restriction $Z^\sigma_F = Z^\sigma \cap \left( X \times F \right)$. We define the \b{leaf} of $F$ as:
$$
\leaf^\sigma_F = \pi_1(Z^\sigma_F)
$$
to consist of all points of $X$ which flow into $F$ under $\sigma$. Since $X$ and $X^A$ are symplectic varieties, our choice of cocharacter $\sigma:\BC^* \rightarrow A$ gives rise to a splitting of the normal bundle to any fixed component $F \subset X^A$:
$$
N_{F \subset X} \ = \ N_{F \subset X}^+ \oplus N_{F \subset X}^-
$$
into attracting and repelling directions. The symplectic form $\omega$ is a perfect pairing between $N^+_{F \subset X}$ with $N^-_{F \subset X}$. Moreover, as shown in Lemma 3.2.4. of \citep{MO}, the subvariety $\leaf^\sigma_F$ is nothing but the total space of the affine bundle $N_{F \subset X}^+$ on $F$. Similar considerations apply to the correspondence \eqref{eqn:attcorr}:
$$
Z^\sigma_F \ \subset \ X \times F
$$
whose tangent space is $T_F \oplus N^+_{F \subset X}$. As such, we observe that $Z^\sigma_F$ is a Lagrangian correspondence, i.e. for any $\Gamma \in K_A(Z^\sigma_F)$, the operator:
$$
K_A(F) \ \longrightarrow \ K_A(X) \qquad \qquad \alpha \longrightarrow \pi_{1*} \Big(\Gamma\cdot \pi_2^*(\alpha) \Big)
$$ 
takes Lagrangian classes to Lagrangian classes. Here, the phrase ``Lagrangian class" refers to a $K-$theory class supported on a Lagrangian subvariety.

%\begin{equation}
%\label{eqn:situation}
%P^\circ_\sigma(\beta) = \left[-\frac {\gamma_\sigma(\beta)}2, \frac {\gamma_\sigma(\beta)}2 \right) \qquad \text{or} \qquad \left(-\frac {\gamma_\sigma(\beta)}2, \frac {\gamma_\sigma(\beta)}2 \right]
%\end{equation}
%for some integer $\gamma_\sigma(\beta)$, where the above interval is open to the left or right depending on how the projection to $\BZ$ is directed with respect to the vertex that is forgotton by the $\circ$ construction \footnote{We will only apply the above setup to classes $\beta$ which are ``centered", such as the quantum numbers in \eqref{eqn:quantum}, and thus their Newton polytopes will be symmetric about the origin}. Note that because of square roots of equivariant parameters and line bundles in formulas such as \eqref{eqn:modified} and \eqref{eqn:modpush}, many of the endpoints of our intervals will be half-integral rather than integral, but this will not affect any of our logic. With the above notation, \eqref{eqn:inside} becomes equivalent to proving the following inequalities for all rank 1 subtori $\sigma:\BC^* \subset A$.:

\tab
Let us now assume that we have a torus $T \curvearrowright X$, and only a certain subtorus $A \subset T$ preserves the symplectic form on $X$. We will now discuss Newton polytopes of $K-$theory classes, and we will begin with the situation of equivariant constants: 
$$
\alpha \in K_T(\pt) = \BZ[t_1^{\pm 1},...,t_n^{\pm 1}], \qquad \qquad \alpha = \sum_{c_i \in \BZ \backslash 0} c_i \cdot t_1^{x^{(i)}_1}...t_n^{x^{(i)}_n} 
$$
Then the \b{Newton polytope} of $\alpha$ is defined as:
$$
P_T(\alpha) \ = \ \{\text{convex hull of } (x^{(i)}_1,...,x^{(i)}_n)\} \ \subset \ \ft^\vee_\BR
$$
We will often write $P^\circ_T(\alpha)$ for the Newton polytope with the ``outermost" vertex removed, where the notion of outermost will be defined with respect to a direction in $\ft_\BR$ that will be spelled out explicitly in the next Section. Let us now mimic the above notation for a class $\alpha \in K_T(F)$, where the variety $F$ is fixed pointwise by the subtorus $A \subset T$. We have:
$$
K_T(F) \ \cong \ K_{T/A}(F) \bigotimes_{\BZ[T/A]} \BZ[T] \ \cong \ K_{T/A}(F) \bigotimes_{\BZ} \BZ[A]
$$
Note that the second isomorphism is not canonical, as it involves choosing a splitting $\BZ[T] \cong \BZ[T/A] \otimes \BZ[A]$. However, the Newton polytope of $\alpha \in K_T(F)$ is well-defined after projecting $\ft^\vee_\BR \twoheadrightarrow \fa^\vee_\BR$:
$$
P_{A}(\alpha), \ P_{A}^\circ(\alpha) \ \subset \ \fa^\vee_\BR
$$
because the projection is not altered by changing the splitting. Throughout this section, we will be faced with the question of when the Newton polytope of a class $\alpha$ lies inside (a translate of) the Newton polytope of a class $\beta$:
\begin{equation}
\label{eqn:inside}
P_A(\alpha) \ \subset \ P^\circ_A(\beta) + l_A
\end{equation}
for some $l_A\in \fa^\vee_\BQ$. We could ask the opposite question, namely when does the above inclusion fail? The answer is: when there exists a lattice point $p \in P_A(\alpha)$ which fails to land inside the polytope $P^\circ_A(\beta)+l_A$. This is equivalent to the existence of a one dimensional projection $\pi:\fa^\vee_\BZ \rightarrow \BZ^\vee$ such that $\pi(p)$ does not lie inside $\pi\left(P^\circ_A(\beta)+l_A\right)$. We conclude that \eqref{eqn:inside} holds if and only if:
\begin{equation}
\label{eqn:inside2}
P_\sigma(\alpha) \ \subset \ P^\circ_\sigma(\beta) + l_\sigma 
\end{equation}
for all rank one subtori $\sigma:\BC^* \subset A$, where $l_\sigma = \pi(l_A) \in \BQ^\vee$ is a rational number. Moreover, it is enough to only consider those $\sigma$ which have constant sign on $P^\circ_A(\beta)$, which corresponds to considering all $\sigma$ in a certain chamber inside $\fa^\vee_\BR$. The advantage of reducing to the rank one viewpoint is that with respect to the cocharacter $\sigma$, $K-$theory classes specialize to Laurent polynomials in a single variable $t$:
$$
\alpha|_\sigma \ = \ c' \cdot t^{\mindeg \alpha} + ... + c'' \cdot t^{\maxdeg \alpha}
$$
for various coefficients $c',...,c'' \in \BZ$, and their Newton polytopes are simply intervals:
\begin{equation}
\label{eqn:minmax}
P_\sigma(\alpha) = \Big[ \mindeg \alpha, \maxdeg \alpha \Big]
\end{equation}
The notions $\mindeg$ and $\maxdeg$ are defined with respect to the one dimensional torus $\sigma$, i.e. restricting equivariant weights according to the morphism $A^\vee \stackrel{\sigma^\vee}\longrightarrow \BZ$. The inclusion of intervals \eqref{eqn:inside2} reduces to the following properties:
\begin{equation}
\label{eqn:simpleinclusion1}
\mindeg \alpha > \mindeg \beta + l_\sigma \quad \text{or} \quad \mindeg \alpha \geq \mindeg \beta + l_\sigma
\end{equation}
\begin{equation}
\label{eqn:simpleinclusion2}
\maxdeg \alpha \leq \maxdeg \beta + l_\sigma \quad \text{or} \quad \maxdeg \alpha < \maxdeg \beta + l_\sigma
\end{equation}
where ``or" depends on whether the vertex we choose to exclude when defining $P_\sigma^\circ(\beta)$ is the leftmost or the rightmost endpoint of the interval \eqref{eqn:minmax}. Throughout this paper, we will make the former choice. Formulas such as \eqref{eqn:simpleinclusion1} and \eqref{eqn:simpleinclusion2} will be easier to prove than the inclusion of polytopes \eqref{eqn:inside}, essentially since the notions min deg and max deg satisfy the following additivity properties:
$$
\mindeg \alpha+\alpha' \geq \min\Big( \mindeg \alpha, \mindeg \alpha' \Big)
$$
$$
\maxdeg \alpha+\alpha' \leq \max\Big( \maxdeg \alpha, \maxdeg \alpha' \Big)  
$$
and multiplicativity properties:
$$
\mindeg \alpha \cdot \left(\alpha'\right)^{\pm 1} = \mindeg \alpha \pm \mindeg \alpha'
$$
$$
\maxdeg \alpha \cdot \left( \alpha' \right)^{\pm 1} = \maxdeg \alpha \pm \maxdeg \alpha'
$$
The quantity $\alpha / \alpha'$ is the kind of ratio which appears in localized $K-$theory, and we will work with such formulas in Chapter \ref{chap:pieri}. \\
 
%$$
%\mindeg \alpha / \alpha' = \mindeg \alpha - \mindeg \alpha' \qquad \maxdeg \alpha / \alpha' = \maxdeg \alpha - \maxdeg \alpha'
%$$ 
 
\section{Definition of the stable basis}
\label{sec:stable}

\noindent Stable bases may be defined for all symplectic varieties $X$ which are acted on by a torus $T$. In this context, we consider a subtorus of $T$:
$$
A \ \subset \ T \ \curvearrowright \ X
$$
which preserves the symplectic form $\omega$. Let us consider a generic cocharacter:
$$
\sigma \ : \ \BC^* \longrightarrow A
$$
and recall the leaves of $X$ under the flow induced by $\sigma$, as in Section \ref{sec:torusaction}. We obtain an ordering on the connected components of the fixed locus $X^A$ by setting:
\begin{equation}
\label{eqn:flow}
F' \unlhd F \qquad \text{if} \qquad F' \cap \overline{\leaf^\sigma_F} \neq \emptyset
\end{equation}
In other words, $F' \unlhd F$ if there is a projective flow line going from a point in $F'$ to a point in $F$. We take the transitive closure of this ordering:
$$
F'' \unlhd F' \quad \text{and} \quad F' \unlhd F \quad \Longrightarrow \quad F'' \unlhd F
$$
and note that Section 3.2.3. of \citep{MO} shows that $F \unlhd F'$ and $F' \unlhd F$ implies $F = F'$. Then \eqref{eqn:flow} extends to a well-defined partial ordering on the connected components of the fixed locus. The ordering allows us to extend the attracting correspondence $Z^\sigma$ of \eqref{eqn:attcorr} by adding to it contributions from ``downstream" fixed components:
\begin{equation}
\label{eqn:attcorr2}
\oZ^\sigma \ := \ \{(x,y) \text{ s.t. } \exists \text{ chain }  x\rightarrow z_1 \rightarrow ... \rightarrow z_k = y \} \ \hookrightarrow \ X \times X^A
\end{equation}
Here, $x \rightarrow z_1$ means that $\lim_{t\rightarrow 0} \sigma(t)\cdot x  = z_1$. Furthermore, since $z_1,...,z_k$ are torus fixed points, the notion $z_i \rightarrow z_{i+1}$ means that there exists a projective line joining $z_i$ and $z_{i+1}$, flowing from the former to the latter under $\sigma$. Consider any rational line bundle $\CL \in \text{Pic}_T(X) \otimes \BQ$. \\

\begin{definition} 
	
To such $\sigma$ and $\CL$, \citep{MO2} associate a map:
\begin{equation}
\label{eqn:stab}
\stab^\sigma_\CL \ : \ K_T(X^A) \longrightarrow K_T(X),
\end{equation}
given by a $K-$theory class on the correspondence $\oZ^\sigma$, subject to the condition: \footnote{Note that our convention on the class of a subvariety, such as $Z^\sigma \hookrightarrow X \times X^A$, is defined with respect to the modified direct image \eqref{eqn:modpush}, and so differs from the actual direct image by the square root of the determinant of the normal bundle}
\begin{equation}
\label{eqn:leading}
\stab^\sigma_\CL \Big|_{F \times F}  =  \CO_{Z^\sigma}  \Big|_{F \times F} %= \left[ N_{F \subset X}^-  \otimes \CO_{\Delta : F \hookrightarrow F \times F}\right]
\end{equation}
and the following condition for all $F' \lhd F$:
\begin{equation}
\label{eqn:small}
P_{A}\left(\stab^\sigma_\CL \Big |_{F' \times F}\right) \ \subset \ P^\circ_{A}\left( \CO_{Z^\sigma}  \Big|_{F' \times F'} \right) + \wt \CL|_{F'} - \wt \CL|_F \quad \subset \quad \fa^\vee_\BR
% P^\circ_{A}\left(\left[ N_{F'\subset X}^{-} \otimes \CO_{\Delta : F' \hookrightarrow F' \times F'} \right] \right) + \CL|_{F'} - \CL|_F
\end{equation}
where the Newton polytope $P_{A}^\circ \subset \fa^\vee_\BR$ in the right hand side is formed by excluding the vertex on which the cocharacter $\sigma$ is minimal. This means that one dimensional projections of $P_A^\circ$ will be intervals open on the left and closed on the right, or equivalently, that we make the choices $>$ and $\leq$ in \eqref{eqn:simpleinclusion1} and \eqref{eqn:simpleinclusion2}, respectively.

\end{definition} 

\tab
In most examples, there will be a preferred  ample line bundle $\bth \in \text{Pic}(X)$, which has the property that the flow ordering induced by $\sigma$ on fixed components coincides with the following ``ample partial ordering" defined by pairing $\bth|_{F} \in T^\vee$ with the cocharacter $\sigma \in T$: 
\begin{equation}
\label{eqn:compatibility}
F' \ \unlhd \ F \quad \Leftrightarrow \quad \langle \sigma, \bth|_{F'} \rangle \ \leq \ \langle \sigma, \bth|_{F} \rangle
\end{equation}
See \citep{MO} for the general framework. In the case of Nakajima quiver varieties, this ample line bundle coincides with the stability condition \eqref{eqn:stabquiver}. Then our choice to exclude the ``left endpoint" of $P_A^\circ$ in \eqref{eqn:small} implies that the stable basis is unchanged by slightly moving $\CL$ in the negative direction of $\bth$:
\begin{equation}
\label{eqn:epsilon}
\stab^\sigma_\CL \ = \ \stab^\sigma_{\CL - \e \bth} \qquad \text{for small enough } \e = \e(\CL) >0
\end{equation}
While uniqueness is rather straightforward, it is not clear that a collection of maps satisfying properties \eqref{eqn:leading} and \eqref{eqn:small} exists, and in fact, the construction is not yet known to hold in full generality. The situation of quiver varieties we are concerned with in this thesis is a particular case of Hamiltonian reductions of vector spaces by reductive groups, in which case the existence of stable bases was proved by \citep{MO2} by abelianization, using techniques of \citep{Sh}.

\tab
One of the hallmarks of the stable basis is its \b{integrality}, i.e. the fact that it is defined on the actual $K-$theory ring, as opposed from a localization. If we had relaxed this requirement, it would be very easy to construct a multitude of maps $\stab$ satisfying \eqref{eqn:leading} and \eqref{eqn:small} in localized $K-$theory. One of these is:
\begin{equation}
\label{eqn:stabinfinity}
\stab^\sigma_\infty(\alpha) \ := \ \widetilde{\iota}^F_* \left( \frac {\alpha}{[N^+_{F \subset X}]}  \right) \ = \ (\iota^{F*})^{-1}(\alpha \cdot [N^-_{F \subset X}])
\end{equation}
for all $\alpha \in K_T(F)$, for all fixed components $\iota^F : F \hookrightarrow X^\sigma$. The map \eqref{eqn:stabinfinity} has the property that its restriction to $F' \times F$ is zero for all $F'\neq F$, which morally corresponds to condition \eqref{eqn:small} for $\CL = \infty \cdot \bth$, where $\bth$ is the ample line bundle of \eqref{eqn:compatibility}. \\

\begin{remark}
\label{rem:stablocal}
The construction \eqref{eqn:stabinfinity} shows that working in localized $K-$theory destroys the uniqueness of the stable basis construction, but there is a way to partially salvage this. If we only localize with respect to the one-dimensional torus that scales the symplectic form, i.e.:
$$
\text{replace} \qquad K_T(X) \qquad \text{by} \qquad K_T(X) \bigotimes_{\BZ[q^{\pm 1}]} \BQ(q)
$$
then the stable basis is still well-defined and unique. The reason is that condition \eqref{eqn:small} is an inclusion of Newton polytopes in the torus which preserves the symplectic form, and so it is unaffected by tensoring with arbitrary rational functions of $q$.
\end{remark}

%\begin{example} 
	
%When $\alpha = 1$, expression \eqref{eqn:leading} claims that the stable basis is a normalization of the attracting set in the vicinity of $F$. As such, the Newton polygon of the restriction of the stable basis to $F$ coincides with a translate of the Newton polytope of the exterior class of the repelling normal bundle at $F$. At downslope fixed points $F' \lhd F$, expression \eqref{eqn:small} says that the Newton polytope of the restriction should lie strictly inside a translate of the Newton polytope of the exterior class of the repelling normal bundle at $F'$. The translate is dictated by the weight of the chosen line bundle $\CL$ to the fixed components, which must be constant along components.
	
%\end{example}

%However, the uniqueness of a correspondence with properties \eqref{eqn:leading} and \eqref{eqn:small} is quite straightforward: \\

%\begin{proposition}
%\label{prop:unique}

%The map \eqref{eqn:stab}, if it exists, is unique. \\

%\end{proposition}

%\begin{proof} Suppose we have two correspondences $\stab$ and $\stab'$ which satisfy conditions \eqref{eqn:leading} and \eqref{eqn:small}. Then their difference $\stab-\stab'$ is trivial on $F \times F$, and its Newton polytope satisfies \eqref{eqn:small} for all fixed components $F' \lhd F$... \rouge{Finish proof!}
%\end{proof}

\noindent Definition \eqref{eqn:stab} was given with respect to a generic $\sigma$, by which we mean those cocharacters such that $X^\sigma = X^A$. Those cocharacters for which this property fails determine a family of hyperplanes in $\fa_\BR$, and the complement of these hyperplanes partitions $\fa$ into \b{chambers}. As we vary $\sigma$ within a given chamber, the map \eqref{eqn:stab} does not change. However, as we move from one chamber to the next, we see that many things change: some attracting directions become repelling and vice-versa, and so the ordering $F' \unlhd F$ of certain components changes. Therefore, we observe that the stable basis fundamentally depends on the chamber containing $\sigma$:
$$
\fa_\BR \ \supset \ \fC \ \ni \ \sigma
$$
Similarly, the set of line bundles such that $\wt \CL_{F'} - \wt \CL_F \in \fa^\vee_\BZ$ for any two fixed components $F \neq F'$ determines a discrete collection of affine hyperplanes in $\text{Pic}(X) \otimes \BR$. The complement of these affine hyperplanes partitions $\text{Pic}(X) \otimes \BR$ into \b{alcoves}, and it is easy to see that condition \eqref{eqn:small} only depends on the alcove containing $\CL$:
$$
\pic(X) \ \supset \ \fA \ \ni \ \CL
$$
We conclude that the stable basis map actually depends on the discrete data of a chamber $\fC \subset \fa_\BR$ and an alcove $\fA \subset \pic(X) \otimes \BR$. The following Exercise explains the relation between the stable basis for two opposite chambers: \\

%We will often use additive notation for the opposite chamber $-\sigma \in -\fC$ and the opposite line bundle $-\CL \in -\fA$, instead of the more appropriate multiplicative notation $\sigma^{-1}$ and $\CL^{-1}$. 

\begin{exercise}
\label{ex:opposite}

The maps $\stab_\CL^\sigma$ and $\stab_{\CL^{-1}}^{\sigma^{-1}}$ are inverse transposes of each other:
\begin{equation}
\label{eqn:pairing}
\left(\stab_\CL^\sigma(\alpha), \stab_{\CL^{-1}}^{\sigma^{-1}}(\beta) \right)_X = \left(\alpha, \beta\right)_{X^A}
\end{equation}
for all $\alpha,\beta \in K_T(X^A)$. In particular, if $X^A$ consists of finitely many points, the images of these points under $\stab_\CL^\sigma$ and $\stab_{\CL^{-1}}^{\sigma^{-1}}$ determine dual bases of $K_T(X)$. 
	
\end{exercise}

\section{From stable bases to $R-$matrices}
\label{sec:defr}

\noindent Let us now recall the Nakajima quiver varieties $\CN_{\bv,\bw}$ that were introduced in the previous Chapter. While the construction in the present Section applies to more general symplectic resolutions, our notation will be specific to cyclic quiver varieties. Recall that the torus which acts on $\CN_{\bv,\bw}$ is:
$$
T_\bw \ = \ \BC^*_q \times \BC^*_t \times \prod_{i=1}^\bw \BC^*_{u_i}
$$
where the indices denote the equivariant character corresponding to each factor. The factor $\BC^*_q$ scales the symplectic form, while all the other factors preserve it. Let us pick any collection of framing vectors $\bw^1,...,\bw^k \in \nn$ and set $\bw = \bw^1+...+\bw^k$. Consider the one dimensional subtorus:
\begin{equation}
\label{eqn:torus}
A \ \cong \ \BC^* \ \stackrel{\sigma}\longrightarrow \ T_\bw, \qquad \qquad a \ \rightarrow (1,1,\underbrace{a^{N_1},...,a^{N_1}}_{\bw^1 \text{ factors}},...,\underbrace{a^{N_k},...,a^{N_k}}_{\bw^k \text{ factors}})
\end{equation}
where $N_1 \ll N_2 \ll ... \ll N_k$ are integers. The fixed locus of $\sigma$ has been described in \citep{MO}:
\begin{equation}
\label{eqn:fix0}
\CN_{\bv,\bw} \ \stackrel{\iota}\hookleftarrow \ \CN_{\bv,\bw}^A \ \cong \ \bigsqcup_{\bv = \bv^1 + ... + \bv^k} \CN_{\bv^1,\bw^1} \times ... \times \CN_{\bv^k,\bw^k}
\end{equation}
and therefore the stable basis construction for $\sigma$ and $-\sigma$ gives rise to maps \eqref{eqn:stab}:
$$
K_T\left(\CN_{\bv^1,\bw^1} \times ... \times \CN_{\bv^k,\bw^k}\right) \  \stackrel{\stab^{\sigma^{-1}}_\CL}\longrightarrow \  K_T\left(\CN_{\bv,\bw}\right) \ \stackrel{\stab^{\sigma}_\CL}\longleftarrow K_T\left(\CN_{\bv^1,\bw^1} \times ... \times \CN_{\bv^k,\bw^k}\right)
$$
for any rational line bundle $\CL$. Since the Picard group of Nakajima quiver varieties is freely generated by the tautological line bundles $\CO_1(1),..., \CO_n(1)$, the rational line bundle will be of the form $\CL = \CO(\bm) := \prod_{i=1}^n \CO_i(m_i)$, and can be identified with a vector $\bm = (m_1,...,m_n) \in \qq$. Since $A$ is one dimensional, there are only two chambers for cocharacters, positive and negative, and therefore we will use the signs $+$ and $-$ instead of $\sigma$ and $\sigma^{-1}$. By taking the direct sum of the above maps over all vectors $\bv = \bv^1 + ... + \bv^k$, we obtain:
\begin{equation}
\label{eqn:stabs}
K(\bw^1) \otimes ... \otimes K(\bw^k) \ \stackrel{\stab^-_\bm}\longrightarrow \ K(\bw) \ \stackrel{\stab^+_\bm}\longleftarrow \ K(\bw^1) \otimes ... \otimes K(\bw^k)
\end{equation}
When $k=2$, the composition of the above maps is called a \b{geometric} $R-$\b{matrix}:
$$
R_{\bm}^{+,-} \ : \ K(\bw^1) \otimes K(\bw^2) \ \longrightarrow \ K(\bw^1) \otimes K(\bw^2)
$$
\begin{equation}
\label{eqn:geomr}
R_\bm^{+,-} \ = \ \left(\stab^+_\bm\right)^{-1} \circ \stab^-_\bm
\end{equation}
by \citep{MO2}, who first introduced it. Because of the presence of the inverse map in \eqref{eqn:geomr}, the map $R^{+,-}_\bm$ will have poles corresponding to half the normal weights of the inclusion \eqref{eqn:fix0}. The usual quantum parameters of $R-$matrices are the equivariant parameters $\{u_i\}$ that arise in the matrix coefficients of the operator \eqref{eqn:geomr}, or more precisely, ratios $\frac {u_i}{u_j}$. The term ``$R-$matrix" is justified by the fact that the above endomorphisms \eqref{eqn:geomr} satisfy the quantum Yang-Baxter equation, which is nothing but saying that two specific triple products of $R_{\bm}^{+,-}$'s are both equal to the composition \eqref{eqn:stabs} for $k=3$. The proof of this statement is quite straightforward from the uniqueness of the stable basis construction, as explained in Example 4.1.9. of \citep{MO} in the cohomological case. As explained in \loccit (see also \citep{Mc} for a survey), taking arbitrary matrix coefficients of \eqref{eqn:stabs} in the last tensor factor gives rise to a family of endomorphisms:
$$
\CA^{\text{MO}} \ \hookrightarrow \ \text{End} \Big( K(\bw^1) \otimes ... \otimes K(\bw^k) \Big) 
$$
for all $k$ and all framing vectors $\bw^1,...,\bw^k \in \nn$. This family of endomorphisms can be thought of as a quasi-triangular Hopf algebra, with coproduct denoted by $\Delta_\bm$, whose category of representations has objects $\{K(\bw)\}_{\bw \in \nn}$, and whose $R-$matrices are precisely \eqref{eqn:geomr}. We will not dwell upon this algebra any further, since its properties were described in great detail in \citep{MO}, and we will mostly be concerned with an alternative construction in the next Chapter. 

\tab
We will, however, focus on a factorization property of the geometric $R-$matrices \eqref{eqn:geomr} which is particular to the $K-$theoretic case. Let us work in the more general case of a symplectic flow:
$$
\sigma \ \curvearrowright \ X \qquad \text{with fixed point set} \qquad X^\sigma \ \stackrel{\iota}\hookrightarrow \ X
$$
and let us study the corresponding stable basis maps in $T-$equivariant $K-$theory. For any line bundle $\bm \in \text{Pic}(X) \otimes \BQ$, we construct the \b{change of stable basis} map:
\begin{equation}
\label{eqn:geomr2}
R_{\bm}^{+,-} : K_T(X^\sigma) \ \longrightarrow \ K_T(X), \qquad \qquad R_\bm^{+,-} = \left(\stab^+_\bm\right)^{-1} \circ \stab^-_\bm
\end{equation}
We could call the above a ``geometric $R-$matrix" by analogy with \eqref{eqn:geomr}, but this would be rather misleading, since $R-$matrices usually act between various tensor products representations of the same algebra, while \eqref{eqn:geomr2} is a general map. This map represents the change in stable basis as we replace the negative direction $\sigma^{-1}$ with the positive direction $\sigma$. However, we can achieve the same result by changing the line bundle $\bm$ in a prescribed direction $\bth \in \text{Pic}(X)$, which is required to be compatible with the flow $\sigma$ in the sense of \eqref{eqn:compatibility}. To be precise, we consider the composition from left to right:
\begin{equation}
\label{eqn:geomroot}
R^+_{\bm,\bm+\e\bth} \ : \ K_T(X^\sigma) \ \stackrel{\stab^+_{\bm+\e \bth}}\longrightarrow \ K_T(X) \ \stackrel{\stab^+_{\bm}}\longleftarrow \ K_T(X^\sigma)
\end{equation}
where $\e$ is a very small positive rational number. We interpret $\bm+\e\bth$ as the next alcove in $\text{Pic}(X) \otimes \BQ$ after the one containing $\bm$, as we move in the direction of the vector $\bth$. The map \eqref{eqn:geomroot} will be called an \b{infinitesimal change of stable basis}. We can consider the following infinite product of maps \eqref{eqn:geomroot}:
$$
\left(\stab^+_\bm\right)^{-1} \circ \stab^+_\infty \ = \ \prod_{r \in \BQ_+}^{\rightarrow} R^+_{\bm+r\bth,\bm+(r+\e)\bth} \ : \ K_T(X^\sigma) \ \longrightarrow \ K_T(X^\sigma)
$$ 
Intuitively, the infinite product goes over the positive half-line of slope $\bth$ starting at $\bm$, and picks up a factor every time we encounter a wall between two alcoves $\fA$ and $\fA'$ inside $\text{Pic}(X) \otimes \BR$. The corresponding factor is simply the change of stable basis between the two alcoves $\fA$ and $\fA'$. In particular, we encounter finitely many walls and so there will be finitely many non-trivial factors in the above product. Then we may use formula \eqref{eqn:stabinfinity} to obtain:
\begin{equation}
\label{eqn:bzzz1}
\left(\stab^+_\bm\right)^{-1} = \prod_{r \in \BQ_+}^{\rightarrow} R^+_{\bm+r\bth,\bm+(r+\e)\bth} \circ \left( \frac {\iota^*}{[N^-_{X^\sigma \subset X}]} \right)
\end{equation}
We can do the same constructions for the negative stable basis, i.e. define:
\begin{equation}
\label{eqn:geomroot2}
R^-_{\bm+\e\bth,\bm} \ : \ K_T(X^\sigma) \ \stackrel{\stab^-_{\bm}}\longrightarrow \ K_T(X) \ \stackrel{\stab^-_{\bm+r\bth}}\longleftarrow \ K_T(X^\sigma)
\end{equation}
which implies:
$$
\left(\stab^-_\infty\right)^{-1} \circ \stab^-_\bm \ = \ \prod_{r \in \BQ_-}^{\rightarrow} R^-_{\bm+(r+\e)\bth,\bm + r\bth} \ : \ K_T(X^\sigma) \ \longrightarrow \ K_T(X^\sigma)
$$ 
Using formula \eqref{eqn:stabinfinity} for the negative $\sigma^{-1}$ direction, we obtain:
\begin{equation}
\label{eqn:bzzz2}
\stab^-_\bm = (\iota^*)^{-1}\left([N^+_{X^\sigma \subset X}] \cdot \prod_{r \in \BQ_-}^{\rightarrow} R^-_{\bm+(r+\e)\bth,\bm + r\bth} \right)
\end{equation}
Multiplying \eqref{eqn:bzzz1} and \eqref{eqn:bzzz2} together we obtain the factorization of $R-$matrices constructed by \citep{MO2}: \\

\begin{lemma}
\label{lem:stablefac}
The change of stable basis \eqref{eqn:geomr2} factors in terms of the infinitesimal change maps \eqref{eqn:geomroot} and \eqref{eqn:geomroot2}:
\begin{equation}
\label{eqn:stablefac}
R_\ebm^{+,-} = \prod_{r \in \BQ_+}^{\rightarrow} R^+_{\ebm+r\bth,\ebm+(r+\e)\bth} \circ \frac {[N^+_{X^\sigma \subset X}]}{[N^-_{X^\sigma \subset X}]} \circ \prod_{r \in \BQ_-}^{\rightarrow} R^-_{\ebm+(r+\e)\bth,\ebm + r\bth} 
\end{equation}
where the middle term in the composition is the operator of multiplication by $\frac {[N^+_{X^\sigma \subset X}]}{[N^-_{X^\sigma \subset X}]}$. \\

\end{lemma}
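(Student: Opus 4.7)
My plan is to deduce \eqref{eqn:stablefac} directly by composing the two identities \eqref{eqn:bzzz1} and \eqref{eqn:bzzz2} that were already established in the paragraphs preceding the lemma. Starting from the definition $R_\bm^{+,-} = (\stab^+_\bm)^{-1} \circ \stab^-_\bm$, I would substitute the right-hand side of \eqref{eqn:bzzz1} for the first factor and the right-hand side of \eqref{eqn:bzzz2} for the second. In the resulting composition the operator $\iota^*$ at the right end of \eqref{eqn:bzzz1} is immediately followed by $(\iota^*)^{-1}$ at the left end of \eqref{eqn:bzzz2}, and these formally cancel on the image of $(\iota^*)^{-1}$. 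What remains is the product of positive-direction infinitesimal $R$-matrices, then the two multiplication operators $1/[N^-_{X^\sigma \subset X}]$ and $[N^+_{X^\sigma \subset X}]$ consolidated into the single operator of multiplication by $[N^+_{X^\sigma \subset X}]/[N^-_{X^\sigma \subset X}]$, then the product of negative-direction infinitesimal $R$-matrices --- which is exactly the right-hand side of \eqref{eqn:stablefac}.

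The bulk of the work is therefore packaged into \eqref{eqn:bzzz1}--\eqref{eqn:bzzz2} themselves, whose derivations rest on the telescoping identity $(\stab^\pm_\bm)^{-1} \circ \stab^\pm_\infty = \prod_{r \in \BQ_\pm}^{\rightarrow} R^\pm_{\bm+r\bth, \bm+(r+\e)\bth}$ together with the two equivalent descriptions of $\stab^\pm_\infty$ given in \eqref{eqn:stabinfinity}, namely $\iota_*(\alpha/[N^\pm])$ and $(\iota^*)^{-1}(\alpha \cdot [N^\mp])$. The telescoping is a formal consequence of stacking successive infinitesimal changes of alcove along the ray $r \mapsto \bm + r\bth$, and uses the fact, noted in \eqref{eqn:epsilon}, that the stable basis depends only on the alcove containing its line bundle argument, so that each factor $R^\pm_{\bm+r\bth, \bm+(r+\e)\bth}$ is the identity unless $\bm + r\bth$ sits on a wall of the alcove decomposition of $\pic(X) \otimes \BR$.

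The step I expect to be the main technical obstacle is justifying that the apparently infinite products in \eqref{eqn:stablefac} are well-defined when applied to concrete classes. This reduces to showing that for any matrix coefficient between two fixed components $F, F' \subset X^\sigma$, only finitely many walls of the alcove decomposition lying between $\bm$ and $\bm \pm \infty \cdot \bth$ can contribute a nontrivial factor separating the weights $\bth|_F$ and $\bth|_{F'}$. Once this finiteness is established the infinite products truncate to finite ones on each fixed matrix coefficient, the telescoping identity becomes a bona fide statement in $K_T(X^\sigma)$, and the lemma follows by the one-line composition described in the first paragraph.
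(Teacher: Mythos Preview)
Your proposal is correct and is exactly the paper's approach: the lemma is stated immediately after \eqref{eqn:bzzz1} and \eqref{eqn:bzzz2} with the remark that one simply multiplies them together, and the finiteness of the products over walls is the observation made in the paragraph preceding \eqref{eqn:bzzz1}. Your write-up just spells out in a bit more detail the cancellation of $\iota^* \circ (\iota^*)^{-1}$ and the consolidation of the normal-bundle factors.
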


\section{Fixed loci and the isomorphism}
\label{sec:iso}

\noindent We will now apply some of the considerations of the previous Section to the case when $X = \CN_{\bv,\bw}$ is a cyclic quiver variety and the fixed locus was seen in \eqref{eqn:fix0} to be $X^\sigma = \bigsqcup \CN_{\bv^1,\bw^1} \times ... \times \CN_{\bv^k,\bw^k}$. The middle term of formula \eqref{eqn:stablefac} has to do with the normal bundles to the fixed locus, and we will now compute these explicitly. Let us start from the Kodaira-Spencer presentation \eqref{eqn:ks1} of the tangent space to $X$, which was described in Section \ref{sec:gieseker}:
$$
T_{\CF}\CN_{\bv,\bw} = - \chi \left(\CF,\CF(-\infty) \right)
$$
If we restrict the above to the fixed locus $X^\sigma$, it means that we are considering sheaves of the form $\CF = \CF_1 \oplus ... \oplus \CF_k$, where each $\CF_i$ has rank prescribed by the framing vector $\bw^i$. Since the Euler characteristic is additive, we see that:
$$
T_{\CF_1 \oplus ... \oplus \CF_k}\CN_{\bv,\bw} = - \bigoplus_{a,b=1}^k \chi\left(\CF_a,\CF_b(-\infty) \right)
$$
The tangent space to the fixed locus consists of those summands with $a=b$, while:
\begin{equation}
\label{eqn:floc1}
N^+_{X^\sigma \subset X} |_{\CF_1 \oplus ... \oplus \CF_k} = - \bigoplus_{1\leq a < b \leq k} \chi\left(\CF_a,\CF_b(-\infty) \right)
\end{equation}
\begin{equation}
\label{eqn:floc2}
N^-_{X^\sigma \subset X} |_{\CF_1 \oplus ... \oplus \CF_k} = - \bigoplus_{1\leq b < a \leq k} \chi\left(\CF_a,\CF_b(-\infty) \right)
\end{equation}
The reason for this is that all characters $\chi(\CF_a,\CF_b(-\infty))$ contain a factor of $\frac {u_b}{u_a}$, which is attracting or repelling with respect to the torus \eqref{eqn:torus} depending on whether $a<b$ or $b<a$. Therefore, we are led to consider the sheaf:
\begin{equation}
\label{eqn:defe}
\xymatrix{\CE \ar@{.>}[d] \\ \CN_{\bv^1,\bw^1} \times  \CN_{\bv^2,\bw^2}} \quad \text{with fibers \quad} \CE|_{\CF^1, \CF^2} = \text{Ext}^1(\CF^1, \CF^2(-\infty))^{\BZ/n\BZ}
\end{equation}
which is a vector bundle because the corresponding $\Hom$ and $\Ext^2$ spaces vanish. Note that the Kodaira-Spencer isomorphism \eqref{eqn:ks1} implies that $\CE|_\Delta \cong T\CN_{\bv,\bw}$. Then formulas \eqref{eqn:floc1} and \eqref{eqn:floc2} can be interpreted as:
\begin{equation}
\label{eqn:milky}
N^+ = \sum_{1\leq a < b \leq k} \pi_{a,b}^*(\CE) \qquad \qquad N^- = \sum_{1\leq a < b \leq k} \pi_{b,a}^*(\CE)
\end{equation}
where $\pi_{a,b} : \CN_{\bv^1,\bw^1} \times ... \times \CN_{\bv^k,\bw^k} \rightarrow \CN_{\bv^a,\bw^a} \times \CN_{\bv^b,\bw^b}$ is the standard projection, and we use the abbreviated notation $N^+$ and $N^-$ for the attracting and repelling parts of the normal bundle to the fixed locus $X^\sigma \hookrightarrow X$. Instead of using the definition \eqref{eqn:defe}, we will prefer to use the following formula for the $K-$theory class of $\CE$, which is proved by analogy with \eqref{eqn:tanmod0}:
\begin{equation}
\label{eqn:league}
\CE \ = \ \sum_{j=1}^{\bw^1} \frac {\CV^2_j}{qu_j}  + \sum_{j=1}^{\bw^2} \frac {u_j}{q \CV^1_j} + \sum_{i=1}^n \left(\frac {\CV^2_{i+1}}{qt \CV^1_i} + \frac {t\CV^2_{i-1}}{q\CV^1_i} - \frac {\CV^2_i}{\CV^1_i} - \frac {\CV^2_i}{q^2\CV^1_i} \right)
\end{equation}
where $\{\CV^1_i\}_{1\leq i \leq n}$ and $\{\CV^2_i\}_{1\leq i \leq n}$ denote the tautological vector bundles pulled back from the two factors of $\CN_{\bv^1,\bw^1} \times \CN_{\bv^2,\bw^2}$. We will now use the above formulas to analyze the slope $\infty$ stable basis map \eqref{eqn:stabinfinity} with respect to the cocharacters $\sigma$ or $\sigma^{-1}$ defined in \eqref{eqn:torus}, for $k=2$:
\begin{equation}
\label{eqn:cretu}
K(\bw^1) \otimes K(\bw^2) \ \stackrel{\stab_\infty^\pm}\longrightarrow \ K(\bw) \qquad \qquad \stab^\pm_\infty(\alpha) = (\iota^*)^{-1}\left(\alpha \cdot [N^\mp]\right)
\end{equation}
where $\iota:\CN_{\bv^1,\bw^1} \times \CN_{\bv^2,\bw^2} \hookrightarrow \CN_{\bv,\bw}$ is the inclusion of the torus fixed locus. The two maps \eqref{eqn:cretu} each represent half of the geometric $R-$matrix \eqref{eqn:geomr} at slope $\infty$. They iteratively allow us to break the $K-$theory of Nakajima quiver varieties into tensor products of $K-$theories of quiver varieties with smaller framing vectors, until we reach those with framing given by the simple roots $\bs^i = (0,...,0,1,0,...,0)$. When $n=1$, this is precisely the decomposition hinted at in \eqref{eqn:upgrade}. However, we will now show that \eqref{eqn:cretu} can be promoted from a map of vector spaces to a map of representations. This result is implicitly contained in \citep{Ts1}, when $n=1$. \\

\begin{proposition}
\label{prop:sasha}

For any choice of framing vertices $\ebw = \ebw^1 + \ebw^2$, the map $\estab^\pm_\infty$ of \eqref{eqn:cretu} is a $\UU-$intertwiner, with respect to the action: 
$$
K(\ebw^1) \otimes K(\ebw^2) \ \curvearrowleft \ \UU \ \curvearrowright \ K(\ebw)
$$
of Theorem \ref{thm:act0}. The tensor product is made into a $\UU$ module with respect to the coproduct $\Delta^{\emph{op}}$ or $\Delta$ of \eqref{eqn:copquant}, depending on whether the sign is $+$ or $-$. \\

\end{proposition}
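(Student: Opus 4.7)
The plan is to reduce the intertwining property to a check on a generating set of $\UU$. Since the algebra is generated by the currents $e^\pm_i(z)$ and $\ph^\pm_i(z)$, it suffices to verify that $\stab^\pm_\infty$ commutes with the action of each generator, where the source is viewed as a $\UU$-module via $\Delta^{\op}$ (for the sign $+$) or $\Delta$ (for the sign $-$). The Serre and commutation relations then come for free, since both sides of the desired identity define module maps, and two module maps that agree on generators agree on the whole algebra.

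I would first handle the Cartan currents, for which $\Delta(\ph^\pm_i(z)) = \ph^\pm_i(z) \otimes \ph^\pm_i(z) = \Delta^{\op}(\ph^\pm_i(z))$. These act by multiplication by the explicit tautological class in \eqref{eqn:cartan0}, and on the fixed locus $\CV_i$ decomposes as $\CV_i^1 \oplus \CV_i^2$, so this class factors multiplicatively into pieces attached to each factor, modulo cross-terms mixing $\CV^1$ and $\CV^2$. The content of this step is to observe that these cross-terms are exactly compensated by the normal bundle contribution $[N^\mp]$ arising in the definition of $\stab^\pm_\infty$. This is a direct bookkeeping exercise using formula \eqref{eqn:league} for $\CE$, which expresses $N^\pm$ in terms of the tautological classes on the two factors.

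The main content lies in the operators $e^\pm_{i,d}$. The key geometric input is that the simple Nakajima correspondence $\fZ_i \subset \CN_{\bv^+,\bw} \times \CN_{\bv^-,\bw}$ decomposes upon restriction to $X^\sigma$: since a surjection between quiver representations respects the $A$-weight decomposition, the new box $\bsq = \bla^+ \backslash \bla^-$ lies in exactly one of the two factors. This yields a disjoint decomposition
$$
\fZ_i \ \cap \ \bigl(X^\sigma \times X^\sigma\bigr) \ = \ \fZ_i^{(1)} \ \sqcup \ \fZ_i^{(2)},
$$
where in $\fZ_i^{(a)}$ the new box lies in the $a$-th factor, while the other factor is fixed along the diagonal. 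Combining this decomposition with the explicit matrix coefficients \eqref{eqn:fixedpoints1}-\eqref{eqn:fixedpoints2}, one compares the two sides of the required identity in the basis of torus fixed points.

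The main obstacle, and the only nontrivial computation, is to verify that the excess normal bundle factor matches the Cartan prefactor in the coproduct $\Delta(e^+_i(z)) = \ph^+_i(z) \otimes e^+_i(z) + e^+_i(z) \otimes 1$ (and its variants for $e^-$ and $\Delta^{\op}$). Concretely, when the new box is added in the first factor, the ratio of $[N^\mp]$ before and after adding the box must equal the evaluation of $\ph^\pm_i$ on the second factor at the weight $\chi_\bsq$. Using \eqref{eqn:league}, this reduces to recognizing that the $\zeta$-function and framing factors that appear there are precisely those defining $\ph^\pm_i(z)$ in \eqref{eqn:cartan0}. Finally, the sign $\pm$ in $\stab^\pm_\infty$, together with the corresponding choice of $[N^\mp]$, forces the Cartan factor to sit on the appropriate tensor slot, matching $\Delta$ versus $\Delta^{\op}$ as required.
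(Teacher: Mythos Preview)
Your overall strategy is sound and reaches the result, but it diverges from the paper's route in the $e^\pm_i$ step, and contains a small misstatement in the Cartan step.

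\textbf{Cartan generators.} There are no cross-terms. The operator $\ph^\pm_i(z)$ acts by the tautological class $\gamma^i_\bw(z) = \overline{\zeta(z/X)/\zeta(X/z)}\cdot(\text{framing})$, which is a \emph{product} over the tautological roots $x_{ja}$; when $X = X^1+X^2$ this factors exactly as $\gamma^i_{\bw^1}(z)\boxtimes\gamma^i_{\bw^2}(z)$. The normal class $[N^\mp]$ in $\stab^\pm_\infty$ commutes with multiplication by any tautological class and plays no compensating role here. The paper's argument is nothing more than $\iota^*(\gamma^i_\bw) = \gamma^i_{\bw^1}\boxtimes\gamma^i_{\bw^2}$.

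\textbf{Raising/lowering generators.} Here the two arguments genuinely differ. The paper never decomposes $\fZ_i$ on the fixed locus and never passes to fixed-point matrix coefficients. Instead it works entirely in tautological classes: it first writes $\stab^+_\infty$ as an explicit symmetrization sending $\overline{f(X^1)g(X^2)}$ to the symmetrized class $\overline{\sym\bigl(f(X^1)g(X^2)\,\zeta(X^2/X^1)\cdot(\text{framing})\bigr)}$ (this is formula \eqref{eqn:melc}, derived from the expression \eqref{eqn:repel} for $[N^-]$), and then feeds this into the closed formulas \eqref{eqn:formula1}--\eqref{eqn:formula2} for $e^\pm_i(z)$ acting on tautological classes. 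The two compositions are compared symbolically, and the match drops out of manipulating products of $\zeta$'s.

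Your route---decompose $\fZ_i^\sigma = \fZ_i^{(1)}\sqcup\fZ_i^{(2)}$ and check in the fixed-point basis that the excess ratio $[N^\mp]_{\text{after}}/[N^\mp]_{\text{before}}$, combined with the ratio of the matrix coefficients \eqref{eqn:fixedpoints1}--\eqref{eqn:fixedpoints2} on $K(\bw)$ versus $K(\bw^a)$, reproduces the Cartan factor in the coproduct---is equally valid and arguably more geometric: it makes the appearance of $\Delta$ versus $\Delta^{\op}$ visible as the choice of which factor carries the attracting vs.\ repelling normal directions. The paper's approach, by contrast, is more direct computationally and avoids any normalization subtleties coming from the renormalized fixed-point classes $|\bla\rangle$.
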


\begin{proof} We will prove the case of $\stab^+_\infty$, and leave the analogous case of $\stab^-_\infty$ to the interested reader. We need to prove that the following diagrams are commutative:
\begin{equation}
\label{eqn:covet}
\xymatrix{K(\bw^1) \otimes K(\bw^2) \ar[r]^-{\stab^+_\infty} \ar[d]_{\Delta^\op(\ph^\pm_i(z))} &  K(\bw) \ar[d]^{\ph^\pm_i(z)} \\
K(\bw^1) \otimes K(\bw^2) \ar[r]^-{\stab^+_\infty} & K(\bw)} \qquad \qquad 
\xymatrix{K(\bw^1) \otimes K(\bw^2) \ar[r]^-{\stab^+_\infty} \ar[d]_{\Delta^\op(e^\pm_i(z))} &  K(\bw) \ar[d]^{e^\pm_i(z)} \\
K(\bw^1) \otimes K(\bw^2) \ar[r]^-{\stab^+_\infty} & K(\bw)} 
\end{equation}
The first diagram is almost immediate, and it follows from the fact that $\ph_i^\pm(z)$ are operators of multiplication by the tautological class:
$$
\gamma_\bw^i(z) \ := \ \overline{\frac {\zeta\left( \frac zX \right)}{\zeta\left( \frac Xz \right)}} \prod_{1\leq j \leq \bw}^{u_j \equiv i} \frac {\left[ \frac {u_j}{qz} \right]}{\left[ \frac {z}{qu_{j}} \right]} \ \in \ K(\bw)[[z^{\mp 1}]]
$$
Since the restriction to the fixed locus $\iota : \CN_{\bv^1,\bw^1} \times \CN_{\bv^2,\bw^2} \hookrightarrow \CN_{\bv,\bw}$ respects tautological classes, we see that $\iota^*(\gamma_\bw^i(z)) =  \gamma_{\bw^1}^i(z) \boxtimes \gamma_{\bw^2}^i(z)$. Therefore, the first diagram of \eqref{eqn:covet} follows from the identities:
$$
\ph^\pm_i(z)\left(\stab^+_\infty(\alpha) \right) = \ph^\pm_i(z)\Big( (\iota^*)^{-1}\left(\alpha \cdot [N^-]\right) \Big) = \gamma_\bw^i(z) \cdot (\iota^*)^{-1}\left(\alpha \cdot [N^-]\right) = 
$$
$$
= (\iota^*)^{-1}\Big(\iota^* \left( \gamma_\bw^i(z) \right) \cdot \alpha \cdot [N^-] \Big) = (\iota^*)^{-1}\Big(\gamma_{\bw^1}^i(z) \cdot \gamma_{\bw^2}^i(z) \cdot \alpha \cdot [N^-] \Big) = 
$$
$$
= (\iota^*)^{-1}\Big(\ph^\pm_i(z) \otimes \ph^\pm_i(z) \left(\alpha \right) \cdot [N^-] \Big) = \stab^+_\infty \left( \Delta(\ph^\pm_i(z))(\alpha) \right)
$$
for any $\alpha \in K(\bw^1) \otimes K(\bw^2)$. Let us now turn to proving the second diagram of \eqref{eqn:covet}, and we will use the following explicit formula for the coproduct:
$$
\Delta^\op \left(e^+_i(z)\right) = e_i^+(z) \otimes \ph_i^+(z) + 1 \otimes e_i^+(z) \quad \qquad \Delta^\op\left(e^-_i(z)\right) = e_i^-(z) \otimes 1 + \ph_i^-(z) \otimes e_i^-(z)
$$
We will use formulas \eqref{eqn:formula1} and \eqref{eqn:formula2} for the action of the series $e_i^\pm(z)$	on $K(\bw)$:
\begin{equation}
\label{eqn:italian}
e_{i}^+(z) \left( \ov{f(X)} \right) = \ov{f( X - z ) \zeta\left( \frac zX \right)} \cdot \prod_{1\leq j \leq \bw}^{u_j \equiv i} \Big[\frac {u_{j}}{qz} \Big]
\end{equation}
\begin{equation}
\label{eqn:spiderman}
e_{i}^-(z) \left( \ov{f(X)} \right) = \ov{f( X + z ) \zeta\left( \frac Xz \right)^{-1}} \cdot \prod_{1\leq j \leq \bw}^{u_j \equiv i} \Big[\frac {z}{qu_j} \Big]^{-1} 
\end{equation}
where the integral is replaced by a series in $z$ because we replace the single operators $e_{i,d}^\pm$ by their generating series $e_i^\pm(z) = \sum_{d\in \BZ} e_{i,d}^\pm z^{-d}$. We will henceforth denote tautological classes on $K(\bw)$, $K(\bw^1)$, $K(\bw^2)$ by $\ov{f(X)}$, $\ov{f(X^1)}$, $\ov{f(X^2)}$. Here $X$, $X^1$, $X^2$ are three alphabets of variables such that $X = X^1 + X^2$. The repelling normal bundle can be written in terms of tautological classes by using \eqref{eqn:milky} and \eqref{eqn:league}:
$$
N^- \ = \ \prod_{j=1}^{\bw^2} \frac {\CV^1_j}{qu_j}  + \prod_{j=1}^{\bw^1} \frac {u_j}{q \CV^2_j} + \sum_{k=1}^n \left(\frac {\CV^1_{k+1}}{qt \CV^2_k} + \frac {t\CV^1_{k-1}}{q\CV^2_k} - \frac {\CV^1_k}{\CV^2_k} - \frac {\CV^1_k}{q^2\CV^2_k} \right) \qquad \Longrightarrow
$$
\begin{equation}
\label{eqn:repel}
\Longrightarrow \qquad [N^-] \ = \ \zeta\left(\frac {X^2}{X^1}\right) \prod_{1\leq j \leq \bw^2} \Big[\frac {X^1}{qu_j} \Big] \prod_{1\leq j \leq \bw^1} \Big[ \frac {u_j}{q X^2} \Big] 
\end{equation}
Moreover, in terms of such tautological classes, the map \eqref{eqn:stabinfinity} takes the form:
\begin{equation}
\label{eqn:melc}
\stab^+_\infty\left(\ov{f(X^1) g(X^2)}\right) = \ov{\sym \left( f(X^1)g(X^2)  \zeta\left( \frac {X^2}{X^1} \right) \prod_{i=1}^{\bw^2} \Big[\frac {X^1}{qu_i}\Big] \prod_{i=1}^{\bw^1} \Big[\frac {u_i}{qX^2}\Big] \right)} \qquad
\end{equation}
where the notation $\sym$ involves symmetrizing the variables $X^1$ and $X^2$ in all possible ways. Therefore, the right hand side of \eqref{eqn:melc} is a symmetric function in $X = X^1 + X^2$. To prove the equality \eqref{eqn:melc}, one can observe that $\iota^*(\text{RHS}) \cdot [N^-]^{-1}$ gives precisely $\ov{f(X^1) g(X^2)}$. Combining \eqref{eqn:melc} with \eqref{eqn:italian}, we obtain $e_i^+(z) \left( \stab^+_\infty\left(\ov{f(X^1) g(X^2)} \right) \right) = $
$$
\ov{\sym \ f(X^1-z)g(X^2) \zeta\left( \frac {X^2}{X^1-z} \right) \prod_{i=1}^{\bw^2} \Big[\frac {X^1-z}{qu_i}\Big] \prod_{i=1}^{\bw^1} \Big[\frac {u_i}{qX^2}\Big]  \zeta \left( \frac z{X^1} \right)\zeta \left(\frac z{X^2} \right)} \prod_{i=1}^\bw \Big[\frac {u_{i}}{qz} \Big] +
$$
$$
\ov{\sym \ f(X^1)g(X^2-z) \zeta\left(\frac {X^2-z}{X^1} \right) \prod_{i=1}^{\bw^2} \Big[\frac {X^1}{qu_i}\Big] \prod_{i=1}^{\bw^1} \Big[\frac {u_i}{q(X^2-z)}\Big] \zeta \left( \frac z{X^1} \right)\zeta \left(\frac z{X^2} \right)} \prod_{i=1}^\bw \Big[\frac {u_{i}}{qz} \Big]
$$	
Meanwhile, $\stab^+_\infty\left(\Delta^{\op}(e_i^+(z)) \left(\ov{f(X^1)} \cdot \ov{g(X^2)} \right) \right)$ equals:
$$
\stab^+_\infty\Big(e^+_i(z)\ov{f(X^1)} \cdot \ph^+_i(z) \ov{g(X^2)} + \ov{f(X^1)} \cdot e_i^+(z) \ov{g(X^2)} \Big) = \stab^+_\infty (\tau)
$$	
where $\quad \tau = $
$$
\ov{f( X^1 - z ) \zeta\left( \frac z{X^1} \right) g(X^2) \frac {\zeta\left( \frac z{X^2} \right)}{\zeta\left( \frac {X^2}z \right)}} \prod_{i=1}^{\bw^1} \Big[\frac {u_{i}}{qz} \Big] \prod_{i=1}^{\bw^2} \frac {\Big[\frac {u_{i}}{qz} \Big]}{\Big[\frac z{qu_{i}} \Big]} + \overline{f(X^1) g( X^2 - z ) \zeta\left( \frac z{X^2} \right)} \prod_{i=1}^{\bw^2} \Big[\frac {u_{i}}{qz} \Big] 
$$
Once again using \eqref{eqn:melc}, we obtain precisely $e_i^+(z) \left( \stab^+_\infty\left( \ov{f(X^1) g(X^2)} \right) \right)$, as required. The corresponding formulas for $\ph_i^-(z)$ and $e_i^-(z)$ are proved analogously, so we will not review them here in the interest of space. \\
\end{proof}

\section{Lagrangian bases and correspondences}
\label{sec:lagrangian}

\noindent One of the strengths of the stable basis construction is that it can be considered with respect to any symplectic flow $\sigma \curvearrowright X$, and various $\sigma$ will give rise to various stable basis maps. In the case of the cyclic quiver, we have already seen that the choice of \eqref{eqn:torus} gives rise to products of quiver varieties as fixed points of a bigger quiver variety $\CN_{\bv,\bw}$. Let us consider instead the following one-dimensional subtorus:
\begin{equation}
\label{eqn:snow}
\BC^* \ \stackrel{\sigma}\longrightarrow \ T_\bw \qquad \qquad \qquad  a \rightarrow (1,a,a^{N_1},...,a^{N_{\bw}}) 
\end{equation}
where we assume $N_1 \ll ... \ll N_\bw$ are integers. We will encounter this flow in Chapter \ref{chap:pieri}. The fixed point set of $\CN_{\bv,\bw}$ with respect to this torus is minimal, in the sense that it only consists of the fixed points $I_\bla$, as $\bla$ ranges over $\bw-$partitions. Then the stable basis construction gives rise to elements $s_\bla^{\sigma^{\pm 1} ,\CL} \ \in \ K_{\bv,\bw}$. Since $\CL = \CO(\bm)$ for some $\bm = (m_1,...,m_n) \in \qq$, we will write the above as:
\begin{equation}
\label{eqn:stable}
s_\bla^{\pm,\bm} \ \in \ K_{\bv,\bw} 
\end{equation}
and call these the \b{positive} and \b{negative} stable bases, respectively. They are interesting inasmuch as $\bm$ is not integral, since:
\begin{equation}
\label{eqn:linebundle}
s_{\bla}^{\pm, \bm+\bk} \ = \ s_\bla^{\pm, \bm} \cdot \CO(\bk) \qquad \forall \ \bk \in \zz
\end{equation}
The name ``stable basis" reflects the fact that as $\bla$ ranges over all $\bw-$partitions, each of the two collections \eqref{eqn:stable} determines a basis of $K(\bw)_\loc$. Even more so, they give an integral basis of the $\BF_\bw-$module of $K-$theory classes supported on the attracting subvariety of $X$. The basis elements \eqref{eqn:stable} are precisely the classes which appear in Theorem \ref{thm:pieri}. In the very particular case:
$$
n = 1 \quad \text{and} \quad \bw = (1), \qquad \text{when} \qquad K(1) \ \cong \ \text{Fock space} 
$$
the basis elements $s_\la^{\pm,0}$ correspond to modified Schur functions $s_\lambda$ and their duals with respect to the Macdonald inner product. By \eqref{eqn:linebundle}, we have $s_\la^{+,k} = \nabla^k(s_\lambda)$, where the operator $\nabla$ of tensoring by $\CO(1)$ was first discovered in combinatorics by Bergeron and Garsia. Finally, the analogue of \eqref{eqn:stabinfinity} says that $s_\la^{+,\infty}$ should be interpreted as the class of the fixed point $I_\lambda$, renormalized appropriately. Following Haiman, this class corresponds to modified Macdonald polynomials in combinatorics, so we conclude that in the case $n=1$ and $\bw = (1)$, the basis $s_\la^{+,m}$ interpolates between modified Schur functions and modified Macdonald polynomials. 

\tab 
Beside the stable basis, Theorem \ref{thm:pieri} also deals with certain operators denoted therein by $P_{\pm[i;j)}^\bm$ and $Q_{\pm[i;j)}^\bm$. We will construct these operators in the next Chapter as arising from certain correspondences between Nakajima cyclic quiver varieties. Let us say a few words about such operators in the more general setup of two symplectic varieties $X$ and $Y$, both with actions of a torus $T$, in which we fix a one-dimensional symplectic flow $\sigma$. We will consider Lagrangian correspondences:
\begin{equation}
\label{eqn:lag}
\Gamma \in K_T(W) \qquad \qquad \text{where} \qquad \qquad \xymatrix{
& W \ar[ld]_{\pi_1} \ar[rd]^{\pi_2} & \\ 
Y & & X} 
\end{equation}
We always assume that $\pi_1$ is proper with respect to the $T$ fixed locus, and that $\pi_2$ is a lci morphism, so the above correspondence induces an operator:
$$
f \ : \ K_T(X) \longrightarrow K_T(Y) \qquad \qquad f(\alpha) \ = \ \pi_{1*} \left(\Gamma \cdot \pi_2^*(\alpha) \right)
$$ 
We would like to ask how this operator interacts with the stable basis, i.e. whether we can complete the following diagram by a horizontal map that makes it commutative:
\begin{equation}
\label{eqn:diag}
\xymatrix{
& & W \ar[ld] \ar[rd] & & \\ 
& Y & & \ar@{.>}[ll]_f X & \\
\oZ^\sigma \ar[ur] \ar[dr] & & & & \oZ^\sigma \ar[ul] \ar[dl] \\
& Y^\sigma \ar@{.>}[uu]_{\stab_\CL^\sigma} & & X^\sigma \ar@{.>}[uu]^{\stab_\CL^\sigma} \ar@{.>}[ll]^{f^\sigma} & \\ 
& & W^\sigma \ar[lu] \ar[ru] & &} 
\end{equation}
In diagram \eqref{eqn:diag}, the dotted arrows represent maps between $K-$theory that are induced by the correspondences in the corners, and $W^\sigma$ is a correspondence induced by $W$ on the fixed locus. Computing $W^\sigma$, or equivalently the map $f^\sigma$, is one of the main tasks in working with stable bases (in fact, our main Theorem \ref{thm:pieri} is precisely one such computation). In localized $K-$theory, the above diagram requires:
$$
f^\sigma \ = \ \left(\stab^\sigma_\CL\right)^{-1} \circ f \circ \stab^\sigma_\CL 
$$
which determines $f^\sigma$ uniquely. Recall that the attracting set is the locus of points which have a well-defined limit in the direction of $\sigma$. In order for $f^\sigma$ to be well-defined in integral $K-$theory, one needs to decide whether the correspondence $f$ takes classes supported on the attracting set to classes supported on the attracting set. A reasonable geometric condition for this to happen is to have:
\begin{equation}
\label{eqn:reasonable}
\pi_1\left(\pi_2^{-1}(\text{Attracting set})\right) \ \subset \ \text{Attracting set} \end{equation}
Therefore, we need to understand whether the correspondences we write down respect property \eqref{eqn:reasonable} with respect to a certain one dimensional flow on our varieties. Let us describe these loci for Nakajima quiver varieties. When $X = \CN_{v,1}$ is the Hilbert scheme of $v$ points in the plane and $\sigma$ is the one dimensional symplectic flow in the direction of the equivariant parameter $t$, it is well-known that:
$$
\text{Attracting set of }\sigma \ = \ \Big \{\text{ideals } I \subset \BC[x,y] \text{ supported on the line } x = 0 \Big \}
$$
$$
\text{Repelling set of }\sigma \ = \ \Big \{\text{ideals } I \subset \BC[x,y] \text{ supported on the line } y = 0 \Big \}
$$
where the coordinates of the plane are acted on by $(q,t) \curvearrowright (x,y) = (qtx,qt^{-1}y)$. In terms of quadruples of matrices $(X,Y,A,B)$, the condition that an ideal be supported on the line $x=0$ (respectively $y=0$) corresponds to the matrix $X$ (respectively $Y$) being nilpotent. In the higher rank case of moduli spaces of framed sheaves $\CN_{v,w}$, we work with respect to the torus \eqref{eqn:snow}, and so we have:
\begin{equation}
\label{eqn:att0}
\text{Attracting set of }\sigma \ = \ \Big\{ \text{rank }w\text{ sheaves }\CF_w \text{ endowed with a filtration}
\end{equation}
$$
\CF_1 \subset ... \subset \CF_w \ \text{ s.t. } \ \CI_k = \CF_k/\CF_{k-1} \ \text{ has } \ \CI_k|_\infty \cong \omega_k \CO_\infty \ \text{ and supp } \CI_k = \{x=0\} \Big \}
$$
\begin{equation}
\label{eqn:rep0}
\text{Repelling set of }\sigma \ = \ \Big\{ \text{rank }w\text{ sheaves }\CF_w \text{ endowed with a filtration}
\end{equation}
$$
\CF_1 \subset ... \subset \CF_w \ \text{ s.t. } \ \CI_k = \CF_{k}/\CF_{k-1} \ \text{ has } \ \CI_k|_\infty \cong \omega_{w-k+1}  \CO_\infty \ \text{ and supp } \CI_k = \{y=0\} \Big \}
$$
where we write $W = \BC \cdot \omega_1 \oplus ... \oplus \BC \cdot \omega_w$ for the basis of the framing space. In terms of quadruples of matrices $(X,Y,A,B)$, the above conditions can be rephrased as:
\begin{equation}
\label{eqn:att}
\text{Attracting set of }\CN_{v,w} \text{ w.r.t. }\sigma \ = \ \Big\{ (X,Y,A,B) \Big\}
\end{equation}
such that there exists a filtration $V^{1} \subset ... \subset V^{w}$ preserved by $(X,Y,A,B)$, with $X$ nilpotent and $A\cdot \omega_k$ generating $V^{k}/V^{k-1}$, and:
\begin{equation}
\label{eqn:rep}
\text{Repelling set of }\CN_{v,w} \text{ w.r.t. }\sigma \ = \ \Big\{ (X,Y,A,B) \Big\}
\end{equation}
such that there exists a filtration $V^{1} \subset ... \subset V^{w}$ preserved by $(X,Y,A,B)$, with $Y$ nilpotent and $A \cdot \omega_{w-k+1}$ generating $V^{k}/V^{k-1}$. Finally, Nakajima cyclic quiver varieties $\CN_{\bv,\bw}$ are $\BZ/n\BZ$ fixed loci of Gieseker moduli spaces. Therefore, the attracting and repelling sets are simply obtained by taking the $\BZ/n\BZ$ fixed points of \eqref{eqn:att} and \eqref{eqn:rep}, respectively, and replacing $(X,Y,A,B)$ by $(X_i,Y_i,A_i,B_i)_{1 \leq i \leq n}$. In the case of the cyclic quiver, the condition that $X$ is nilpotent must be replaced by the condition that any vector $v \in V_i$ be annihilated by the composition $X_{N} \circ X_{N-1} \circ ... \circ X_i$ for large enough $N$. \\ 

%This amounts to replacing the same description, with $(X,Y,A,B)$ replaced by $(X_i,Y_i,A_i,B_i)_{1 \leq i \leq n}$, and $X$ being nilpotent means that successive applications of the maps $X_i$ in increasing order of $i$ eventually annihilate any given vector. \\

\begin{exercise}
\label{ex:correspondence}

The correspondence $\fZ_i$ of Section \ref{sec:simple} satisfies property \eqref{eqn:reasonable} with respect to either \eqref{eqn:att} or \eqref{eqn:rep}. Hence the operators $e_{i,d}^\pm$ of \eqref{eqn:simpleop} are Lagrangian: they take positive/negative stable bases to integral combinations of stable bases. 

\end{exercise}

%\tab 
%As an application of these constructions, let us observe that the correspondences $\fZ_i$ of Section \ref{sec:simple} satisfies \eqref{eqn:reasonable}. For example, given a quadruple of matrices $(X^-_i,Y^-_i,A^-_i,B^-_i)$ on a collection of vector spaces $V_i^-$ which lie in the locus \eqref{eqn:att}, we claim that any quadruple of matrices $(X^+_i,Y^+_i,A^+_i,B^+_i)$ on a collection of vector spaces $V_i^+$ such as in \eqref{eqn:bigdiag} also lies in the locus \eqref{eqn:att}. Indeed, the decomposition $V^- = V^{(1)} \oplus ... \oplus V^{(\bm)}$ induces a similar decomposition on $V^+$

%Therefore, we introduce the following Definition: \\

%\begin{definition}
%\label{def:lag}

%If the correspondence \eqref{eqn:lag} is such that:
%$$
%\pi_{1*}(\pi_2^{-1}(\text{attracting set})) \ \subset \ \text{attracting set}
%$$
%then we call it \b{admissible} \rouge{Finish this stuff!}

%\end{definition}

\chapter{The Shuffle Algebra}
\label{chap:shuffle}
%\section{The commuting stack}
%\label{sec:stack}

%\noindent We consider the ring:
%$$
%\Lan = \BZ[q^{\pm 1}, t^{\pm 1}][x_{i1},x_{i2},...]^{\sym}_{1\leq i\leq n}
%$$
%Let us construct a Hall algebra like product on $K(0)$... \\

\section{Definition of the shuffle algebra}
\label{sec:shuffle}

%The antipode is given by the map $S(v_1 \ ... \ v_k) = (-1)^k v_k \ ... \ v_1$, which is a constant multiple of the identity map due to our working only with symmetric tensors. 

\noindent The purpose of this Section is to review the basic construction of shuffle algebras. Consider a vector space $V$ and define the space of symmetric tensors as:
\begin{equation}
\label{eqn:isovect}
T^{\otimes} V \ \supset \ \sym \ V \ \stackrel{\cong}\longrightarrow \ \frac {T^{\otimes} V}{(v_1 \otimes v_2 - v_2 \otimes v_1)}
\end{equation}
where $T^{\otimes} V$ denotes the tensor algebra of $V$, graded according to the number of tensor factors, and $\sym \ V$ denotes the subalgebra of symmetric tensors. The isomorphism in \eqref{eqn:isovect} is normalized so that its inverse is:
$$
v_1 \otimes ... \otimes v_k \ \longrightarrow \ \sym \left[ v_1 \otimes ... \otimes v_k \right] \ := \ \sum_{\sigma \in S(k)} v_{\sigma(1)} \ ... \ v_{\sigma(k)}
$$
When dealing with symmetric tensors, we will forgo the $\otimes$ sign between them.
The isomorphism \eqref{eqn:isovect} is only one of vector spaces, since the usual tensor product of two symmetric tensors need not be symmetric. Instead, we define the following \textbf{shuffle product} as the algebra structure on $\sym \ V$ which corresponds to the usual multiplication of tensors under the isomorphism \eqref{eqn:isovect}:
$$
\sym \ V \otimes \sym \ V \ \stackrel{\sh}\longrightarrow \ \sym \ V
$$
where:
\begin{equation}
\label{eqn:shufprod}
\left( v_1 \ ... \ v_k \right) \sh \left( v_{k+1} \ ... \ v_{k+l} \right) = \sym \left( \frac {v_1 \ ... \ v_k v_{k+1} \ ... \ v_{k+l}}{k! \cdot l!} \right)
\end{equation}
Note that we can take the sum in the RHS over all permutations $\sigma \in S(k+l)$ because the tensors $v_1 ...  v_k$ and $v_{k+1}  ... v_{k+l}$ are assumed symmetric to begin with. Because of the factorials in the denominator, there are ${k+l \choose k}$ summands $v_{\sigma(1)}...v_{\sigma(k+l)}$ in \eqref{eqn:shufprod}. Note that certain definitions of shuffle algebras do not require the original tensors to be symmetric, as opposed from ours. We will henceforth refer to $\sym \ V$ as the \textbf{shuffle algebra} associated to $V$. It is a bialgebra with coproduct given by:
\begin{equation}
\label{eqn:shufcop}
\Delta(v_1 \ ... \ v_k) = \sum_{i=0}^k \left( v_1 \ ... \ v_i \right) \otimes \left( v_{i+1} \ ... \ v_k \right)
\end{equation}
as well as unit and counit corresponding to the empty tensor. The map $S(v_1...v_k) = (-1)^k v_k...v_1$ is an antipode, although it is rather trivial in our case since our tensors are all symmetric. The tautological pairing between $V$ and $V^*$ gives rise to the following pairing between the corresponding shuffle algebras:
\begin{equation}
\label{eqn:pairshuf0}
\sym \ V \otimes \sym \ V^* \ \longrightarrow \ \BC
\end{equation}
$$
\left( v_1 \ ... \ v_k, \lambda_1 \ ... \ \lambda_k \right) = \lambda_1(v_1)...\lambda_k(v_k)
$$
It is straightforward to check that \eqref{eqn:pairshuf0} satisfies \eqref{eqn:bialg}, and is therefore a bialgebra pairing. We may then construct the \textbf{double shuffle algebra} as its Drinfeld double:
\begin{equation}
\label{eqn:shufdoub}
\sym \ V \otimes \sym \ V^*
\end{equation}
according to the relations imposed in \eqref{eqn:dd}. The shuffle algebra that features in this paper is based on the above construction for $V = \{z_i^d\}_{1\leq i \leq n}^{d \in \BZ}$ and hence:
$$
\sym \ V \ ``=" \ \bigoplus_{\bk = (k_1,...,k_n) \in \nn} \BF[...,z^{\pm 1}_{i1},...,z^{\pm 1}_{ik_i},... ]^\sym_{1\leq i \leq n}
$$
where $\BF = \BZ[q^{\pm 1},t^{\pm 1}]$. The variable $z_{ia}$ will be thought to have \b{color} $i$ modulo $n$, and we will only require our Laurent polynomials to be symmetric with respect to variables of a given color. We enlarge the vector space of symmetric polynomials to include rational functions:
$$
\widetilde{\sym} \ V \ := \ \bigoplus_{\bk = (k_1,...,k_n) \in \nn} \tBF(...,z_{i1},...,z_{ik_i},...)^\sym_{1\leq i \leq n}
$$
where $\tBF = \BQ(q,t)$, and endow it with the following \b{shuffle product}:
\begin{equation}
\label{eqn:shuffle}
F * F' = \textrm{Sym} \left[ \frac {F(...,z_{ia},...) F'(...,z_{jb},...)}{\bk! \cdot \bl!} \prod^{1\leq i \leq n}_{1\leq a\leq k_i} \ \prod^{1\leq j \leq n}_{k_j+1 \leq b \leq k_j+l_j} \zeta \left( \frac {z_{ia}}{z_{jb}} \right) \right]
\end{equation}
where $\bk! = \prod_{i=1}^n k_i!$. Note that the direct analogue of \eqref{eqn:shufprod} would have involved setting $1$ instead of twisting by the rational function $\zeta$ in \eqref{eqn:shuffle}. This twist will be explained in Section \ref{sec:k}, where we recall how \eqref{eqn:shuffle} is related to the $K-$theoretic Hall product as constructed by \citep{SV}. 

\tab 
The reason why we must work with rational functions $F$ is that the function $\zeta$ will produce poles in \eqref{eqn:shuffle}. However, the poles of the form $z_{ia} - z_{ib}$ will be removed by the symmetrization, and effectively the only poles we will have to contend with are of the form $qz_{ia} - q^{-1}z_{ib}$. Therefore, we will work with the following subspaces of $\widetilde{\sym} \ V$:
\begin{equation}
\label{eqn:defshuf}
\CS^+_\bk \ := \ \left\{F(...,z_{ia},...) = \frac {r(...,z_{ia},...)}{\prod^{1\leq i \leq n}_{1\leq a \neq b \leq k_i} (qz_{ia} - q^{-1}z_{ib})} \right\}
\end{equation}
where $r(...,z_{i1},...,z_{ik_i},...)^{1 \leq i \leq n}_{1 \leq a \leq k_i}$ is any symmetric Laurent polynomial that satisfies:
\begin{equation}
\label{eqn:shufelem}
r(...,q^{-1},t^{\pm 1},q,...) \ \equiv \ 0
\end{equation}
The above are called the \b{wheel conditions}, where we are allowed to set any three variables of colors $i,i\pm 1, i$ equal to $q^{-1},t^{\pm 1},q$. They are a natural generalization of the construction of \citep{FO} in the elliptic finite-dimensional case. Imposing the wheel conditions only for variables of a specific color is natural, due to the parameters $q$ and $t$ having color $0$ and $1$, respectively. Then the \b{positive shuffle algebra} is defined as:
\begin{equation}
\label{eqn:shufplus}
\CS^+ \ := \ \bigoplus_{\bk\in \nn} \CS_{\bk}^+
\end{equation}
and its elements, namely symmetric rational functions $F$ of the form \eqref{eqn:defshuf}, will be called \b{positive shuffle elements}. Just like in Proposition 2.7. of \citep{Nshuf}, one sees that $\CS^+ \subset \widetilde{\sym} \ V$ is a subalgebra with respect to the shuffle product. The algebra we wish to study is actually the double of \eqref{eqn:shufplus}, in the sense that we should also consider the \b{negative shuffle algebra}:
\begin{equation}
\label{eqn:shufminus}
\CS^- \ := \ \bigoplus_{\bk\in \nn} \CS_{-\bk}^-
\end{equation}
defined as the same vector space \eqref{eqn:shufplus}, but endowed with the opposite shuffle product $\CS^- = \left(\CS^+\right)^\op$. The elements of $\CS^-$ will be referred to as \b{negative shuffle elements}. We would like to realize $\CS^+$ and $\CS^-$ as bialgebras with a pairing between them, just like we did with the symmetric power $\sym \ V$ in \eqref{eqn:shufdoub}. However, the direct generalization of \eqref{eqn:shufcop} will not give a correct coproduct on either $\CS^+$ or $\CS^-$, because we have twisted the shuffle product by the rational function $\zeta$ in \eqref{eqn:shuffle}. The correct thing to do is to slightly enlarge our shuffle algebras:

$$
\CS^{\geq} = \left\langle \CS^{+}, \ \{\ph^+_{i,d}\}^{d\in \BN_0}_{1\leq i \leq n}\right\rangle \qquad \qquad \qquad \CS^\leq = \left\langle \CS^{-}, \ \{\ph^-_{i,d}\}^{d\in \BN_0}_{1\leq i \leq n}\right\rangle
$$
where the symbols $\ph^\pm_{i,d}$ commute among themselves, and interact with $\CS^\pm$ via:
\begin{equation}
\label{eqn:colombia}
\ph^+_i(w) \cdot F = F \cdot \ph^+_i(w) \prod^{1\leq j \leq n}_{1\leq a \leq k_j} \frac {\zeta(w/z_{ja})}{\zeta(z_{ja}/w)} \qquad \text{where} \qquad \ph^+_i(w) = \sum_{d=0}^\infty \ph^+_{i,d} w^{-d} 
\end{equation}
\begin{equation}
\label{eqn:venezuela}
\ph^-_i(w) \cdot G = G \cdot \ph^-_i(w) \prod^{1\leq j \leq n}_{1\leq a \leq k_j} \frac {\zeta(z_{ja}/w)}{\zeta(w/z_{ja})} \qquad \text{where} \qquad \ph^-_i(w) = \sum_{d=0}^\infty \ph^-_{i,d} w^d 
\end{equation}
for any $i \in \{1,...,n\}$, $F(...,z_{ja},...) \in \CS^+$ and $G(...,z_{ja},...) \in \CS^-$. To make sense of the above formula, we think of $w$ as a variable of color $i$. Then there exist coproducts:
$$
\Delta: \CS^\geq \longrightarrow \CS^\geq \ \widehat{\otimes} \ \CS^\geq \qquad \qquad \qquad \Delta: \CS^\leq \longrightarrow \CS^\leq \ \widehat{\otimes} \ \CS^\leq 
$$
given by $\Delta(\ph^\pm_i(w)) = \ph^\pm_i(w) \otimes \ph^\pm_i(w)$ for all $i\in \{1,...,n\}$, and:
\begin{equation}
\label{eqn:cop1}
\Delta(F) \ = \ \sum_{0 \leq \bl \leq \bk} \frac {\left[ \prod^{b > l_j}_{1\leq j \leq n} \ph^+_j(z_{jb}) \right] F(...,z_{i1},...,z_{il_i} \otimes z_{i,l_i+1},...,z_{ik_i},...)}{\prod^{a \leq l_i}_{1\leq i \leq n} \prod_{1\leq j \leq n}^{b > l_j} \zeta\left( z_{jb} / z_{ia}  \right)}
\end{equation}
\begin{equation}
\label{eqn:cop2}
\Delta(G) \ = \ \sum_{0 \leq \bl \leq \bk} \frac {G(...,z_{i1},...,z_{il_i} \otimes z_{i,l_i+1},...,z_{ik_i},...)\left[ \prod^{a \leq l_i}_{1\leq i \leq n} \ph^-_i(z_{ia}) \right] }{\prod^{a \leq l_i}_{1\leq i \leq n} \prod_{1\leq j \leq n}^{b > l_j} \zeta\left( z_{ia} / z_{jb}  \right)}
\end{equation}
for all $F\in \CS^+$ and $G \in \CS^-$. To make sense of \eqref{eqn:cop1} and \eqref{eqn:cop2}, we expand the right hand sides in the limit $|z_{ia}| \ll |z_{jb}|$ for all $a\leq l_i$ and $b>l_j$, and then place all monomials in $\{z_{ia}\}_{a\leq l_i}$ to the left of the $\otimes$ symbol and all monomials in $\{z_{jb}\}_{b > l_j}$ to the right of the $\otimes$ symbol. Because of the expansion, the right hand sides of \eqref{eqn:cop1} and \eqref{eqn:cop2} will have infinitely many terms, though finitely many in each homogeneous degree. Therefore, the coproduct $\Delta$ naturally takes values in a certain completion. With respect to the bialgebra structure defined above, the following maps $S:\CS^\geq \rightarrow \CS^\geq$ and $S:\CS^\leq \rightarrow \CS^\leq$ are anti-homomorphisms of both algebras and coalgebras:
\begin{equation}
\label{eqn:antshuf1}
S(\ph_i^+(z)) = \left(\ph^+_i(z) \right)^{-1} \qquad \qquad S(F) = \left[\prod^{1\leq i \leq n}_{1\leq a \leq k_i} \left(-\ph^+_i(z_{ia}) \right)^{-1} \right] * F
\end{equation}
\begin{equation}
\label{eqn:antshuf2}
S(\ph_i^-(z)) = \left(\ph^-_i(z) \right)^{-1} \qquad \qquad S(G) = G * \left[\prod^{1\leq i \leq n}_{1\leq a \leq k_i} \left(-\ph^-_i(z_{ia}) \right)^{-1} \right]
\end{equation}
for any $F(...,z_{ia},...)^{1\leq i \leq n}_{1\leq a \leq k_i} \in \CS^+$ and $G(...,z_{ia},...)^{1\leq i \leq n}_{1\leq a \leq k_i} \in \CS^-$. Because the currents $\ph^\pm_i(z)$ have infinitely many terms, the above maps also take values in a completion of $\CS^\pm$. The following Exercise is straightforward, but it is a good toy example for the computations in the next Section: \\

\begin{exercise}
\label{ex:antipode}

The maps $S$ defined above are antipode maps, i.e.
$$
S(F_1) * F_2 \ = \ F_1 * S(F_2) \ = \ S(G_1) * G_2 \ = \ G_1 * S(G_2) \ = \ 0
$$
for all $F \in \CS^+$ and $G \in \CS^-$. 

\end{exercise}

\tab 
Finally, the following Exercise is very important, as it will allow us to construct the double shuffle algebra as a Drinfeld double. \\

\begin{exercise}
\label{ex:pairing}
	
There exists a bialgebra pairing:
\begin{equation}
\label{eqn:pairshuf}
\langle \cdot,\cdot \rangle :\CS^\leq \otimes \CS^\geq \longrightarrow \tBF \qquad \text{given by} \qquad \langle \ph^-_{i}(z), \ph^+_j(w) \rangle = \frac {\zeta(w/z)}{\zeta(z/w)}
\end{equation}
for variables $z,w$ of color $i,j$, as well as the following formula: 
\begin{equation}
\label{eqn:daddypair}
\langle G,F \rangle = \frac {1}{\bk!} \int^{|q| < 1 < |p|}_{|z_{ia}|=1} \frac {G(...,z_{ia},...) F(...,z_{ia},...)}{\prod^{1\leq i,j \leq n}_{a\leq k_i, b\leq k_j} \zeta_{p} (z_{ia}/z_{jb})}\prod^{1\leq i \leq n}_{1\leq a \leq k_i} Dz_{ia} \ \Big |_{p \mapsto q}	
\end{equation}
for all $F\in \CS^+$ and $G\in \CS^-$, where $\zeta_{p}$ is defined as the following cousin of \eqref{eqn:defzeta}: 
\begin{equation}
\label{eqn:defzetap}
\zeta_{p} \left( \frac {x_i}{x_j} \right) \ = \ \zeta\left( \frac {x_i}{x_j} \right) \frac {\left[\frac {x_j}{p^2x_i} \right]^{\delta_j^i}}{\left[\frac {x_j}{q^2x_i} \right]^{\delta_j^i}}
\end{equation}
The reason we use $\zeta_{p}$ instead of $\zeta$, and evaluate the integral \eqref{eqn:daddypair} assuming $|q| < 1 < |p|$ only to set $p \mapsto q$ afterwards, is to avoid having double poles at $qz_{ia}-q^{-1}z_{ib}$. \\	
\end{exercise}

%\begin{equation}
%\label{eqn:daddypair}
%\langle f,g \rangle = \frac {1}{\bk!} \int^{|q| > 1 > |p|}_{|z_{ia}|=1} \frac {f(...,z_{ia},...) g(...,z_{ia},...)}{\prod^{1\leq i,j \leq n}_{a\leq k_i, b\leq k_j} \zeta(z_{ia}/z_{jb})} \prod^{1\leq i\leq n}_{1\leq a \neq b \leq k_i} \frac {z_{ia}q-z_{ib}q^{-1}}{z_{ia}p-z_{ib}p^{-1}} \prod^{1\leq i \leq n}_{1\leq a \leq k_i} Dz_{ia} \quad \Big |_{p=q}	
%\end{equation}

%\tab 
%In the integral \eqref{eqn:daddypair}, we are suppose to assume $|p|<1<|q|$ while computing the integral, and then analytically continue the result to $p=q$. This is only because both $f$ and $g$ have poles of the form $z_{ia}q-z_{ib}q^{-1}$, and we need to avoid double poles. 

%\begin{proof} \emph{Proposition \ref{prop:pairing} \quad } Let us define:
%$$
%\langle R^-,R^+ \rangle = \frac {1}{\bk!} \int^{|q| > 1 > |p|}_{|z_{ia}|=1} \frac {R^-(...,z_{ia},...) R^+(...,z_{ia},...)}{\prod^{1\leq i,j \leq n}_{a\leq k_i, b\leq k_j} \zeta(z_{ia}/z_{jb})}
%$$
%\begin{equation}
%\label{eqn:daddypair}
%\prod^{1\leq i\leq n}_{1\leq a \neq b \leq k_i} \frac {z_{ia}-q^2z_{ib}}{z_{ia}-p^2 z_{ib}} \prod^{1\leq i \leq n}_{1\leq a \leq k_i} Dz_{ia} \quad \Big |_{p=q}	\end{equation}
%for any two shuffle elements $R^\pm \in \CS^\pm_{\pm \bk}$. The meaning of the above is that we make the appropriate assumptions on the sizes of the parameters $q$ and $p$ for the purposes of computing the integral, and then set $p=q$ at the end (which is only meaningful because we have $|q|>1>|p|$ when computing the integral)...
%\end{proof}

\section{Verma modules for shuffle algebras}
\label{sec:verma}

\noindent The pairing defined in Exercise \ref{ex:pairing} allows us to define the Drinfeld double:
\begin{equation}
\label{eqn:doubleshuf}
\CS \ = \ \frac {\CS^\leq \otimes \CS^\geq}{\ph_{i,0}^+ \ph_{i,0}^- - 1}
\end{equation}
and call it the \b{double shuffle algebra}. The algebra $\CS$ is graded, with: 
$$
\deg \ F(...,z_{i1},...,z_{ik_i},...) = \pm \bk \qquad \text{where} \qquad \bk = (k_1,...,k_n) \in \nn
$$ 
for a positive/negative shuffle element $F \in \CS^\pm$. We also assign $\deg \ph_{i,d}^\pm = 0$. However, there is a second grading we will call the \b{homogenous degree}, given by:
$$
d = \homdeg F \ = \ \text{homogenous degree of the function }F(...,z_{i1},...,z_{ik_i},...) 
$$
for all $F \in \CS^\pm$, and $\homdeg \ph_{i,d}^\pm = \pm d$. The \b{bidegree} of shuffle elements will be denoted by $(\bk,d) \in \zz \times \BZ$. We will denote the graded and bigraded pieces by:
$$
\CS \ = \ \bigoplus_{\bk \in \zz} \CS_\bk \qquad \qquad \qquad \CS_\bk = \bigoplus_{d\in \BZ} \CS_{\bk,d}
$$
respectively. We are now in position to properly state Theorem \ref{thm:iso0}: \\

\begin{theorem}
\label{thm:iso}
There exists a bigraded isomorphism of bialgebras:
\begin{equation}
\label{eqn:upsilon}
\Upsilon \ : \ \UU \longrightarrow \ \CS
\end{equation}
generated by:
$$
\Upsilon(\ph_{i,d}^\pm) \ = \ \ph_{i,d}^\pm \qquad \text{ and }\qquad \Upsilon(e^\pm_{i,d}) \ = \  \frac {z_{i1}^{d}}{[q^{-2}]} \ \in \ \CS^\pm
$$
for all $i\in \{1,...,d\}$ and all $d\in \BZ$.
	
\end{theorem}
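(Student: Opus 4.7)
The plan is to establish the isomorphism in stages, exploiting the fact that both $\UU$ and $\CS$ are Drinfeld doubles of their $\geq$ and $\leq$ halves.

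First, I would check that $\Upsilon$ extends from generators to a well-defined algebra map $\UUp \to \CS^+$. The quadratic relation $e_i^+(z) e_j^+(w)\, \zeta(w/z) = e_j^+(w) e_i^+(z)\, \zeta(z/w)$ translates, under $\Upsilon$, into the identity $z^d * w^{d'} = $ the symmetrization of $z^d w^{d'} \zeta(z/w)$ in \eqref{eqn:shuffle}, which is tautological. The Serre relation \eqref{eqn:serre} becomes the statement that a certain three-variable shuffle element $z_1^{d_1} * z_2^{d_2} * w^{d'} + \ldots$ vanishes; this vanishing is exactly enforced by the wheel condition \eqref{eqn:shufelem} upon specializing variables to the triple $(q^{-1}, t^{\pm 1}, q)$. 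The analogous story for $\CS^-$ gives an algebra map $\UUm \to \CS^-$.

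Next, I would verify that $\Upsilon$ is a bialgebra map, i.e.\ it intertwines the coproducts \eqref{eqn:copquant} and \eqref{eqn:cop1}--\eqref{eqn:cop2}. For a single generator $F = z_{i1}^d/[q^{-2}]$, the shuffle coproduct \eqref{eqn:cop1} has only two surviving terms corresponding to $\bl = 0$ and $\bl = \bs^i$, matching $\Delta(e_{i,d}^+) = 1 \otimes e_{i,d}^+ + \sum \phi^+ \otimes e_{i,\bullet}^+$ term-by-term; on the Cartan generators $\phi_{i,d}^\pm$ both coproducts are group-like. Bialgebra compatibility of $\Upsilon$ forces it to intertwine the pairings \eqref{eqn:pairingtor} and \eqref{eqn:pairshuf}, since both are determined by the bialgebra axioms \eqref{eqn:bialg} from their values on the Cartan generators (where they manifestly agree). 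Because the toroidal pairing is non-degenerate on $\UUp$, this implies $\Upsilon: \UUp \to \CS^+$ is injective, and likewise on the negative half. Once both halves are isomorphisms, the Drinfeld double relation \eqref{eqn:reldrinfeld} on either side is determined by the same pairing, so the assembled map $\UU \to \CS$ of \eqref{eqn:doubleshuf} is an isomorphism of bialgebras.

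The hard part will be \emph{surjectivity} of $\Upsilon: \UUp \to \CS^+$; this is where the bulk of Chapter \ref{chap:subalgebras} is needed. The strategy is to use the root subalgebras $\CB_\bm \subset \CA$ from \eqref{eqn:subalg}: for each $\bm \in \qq$, one realizes the quantum affine generators $P_{[i;j)}^\bm$ of $\CB_\bm$ as explicit shuffle elements in $\CS^+$ (this will be done in Section \ref{sec:explicit}), and then shows that together, as $\bm$ and $r$ vary in the factorization \eqref{eqn:kraftwerk}, these generators span all of $\CS^+$. Concretely, given an arbitrary $F \in \CS^+_{\bk}$, one picks a slope $\bm$ adapted to the leading asymptotic behavior of $F$ (in the sense of Newton polytopes as in Section \ref{sec:torusaction}), subtracts off a suitable shuffle product of root generators in $\CB_\bm$ to strictly reduce either the bidegree or the size of the Newton polytope, and iterates; the wheel conditions \eqref{eqn:shufelem} are what make this induction terminate, because they bound the degrees of shuffle elements in terms of their number of variables. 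The key technical ingredients along the way will be the slope filtration on $\CS^+$ and the identification of associated graded pieces with tensor products of positive halves of quantum affine $\fgl_{l_h}$.
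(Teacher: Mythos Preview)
Your outline is essentially correct and matches the paper's approach: well-definedness and bialgebra compatibility are routine checks on generators, injectivity follows from non-degeneracy of the toroidal pairing, and surjectivity of $\Upsilon^+$ is the real content, established via the slope subalgebras $\CB_\bm^+$, their identification with $\bigotimes_h U_q^+(\dot{\fgl}_{l_h})$ (Theorem \ref{thm:subalg}), the factorization \eqref{eqn:factor1}, and the fact (Exercise \ref{ex:gen}) that the explicit root generators $P_{[i;j)}^\bm$ already lie in $\text{Im}\,\Upsilon^+$.

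One correction on mechanism: the surjectivity argument is not the iterative ``subtract off root generators to shrink the Newton polytope'' procedure you sketch, and the Newton polytopes of Section \ref{sec:torusaction} (which concern stable-basis restrictions) are not what is used here. Instead, the paper proceeds by a global dimension count. Lemma \ref{lem:bound} uses the wheel conditions to bound $\dim \CS^+_{\leq \bm|\bk,d}$ above by the number of $\BZ$-indexed partitions of $(\bk,d)$ of slope $\leq \bm$; this is done by specializing shuffle elements along $z_a \mapsto y_s q_1^a$ and showing the wheel conditions force enough vanishing. Proposition \ref{prop:factorizing} then shows the multiplication map $\bigotimes_r \CB^+_{\bm+r\bth} \to \CS^+_{\leq \bm+r\bth|\bk,d}$ is injective (via a coproduct argument isolating the maximal-slope factor) and, since the domain has dimension equal to that same partition count by Theorem \ref{thm:subalg}, the map is an isomorphism. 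So the wheel conditions enter by bounding dimensions of filtered pieces rather than by terminating an inductive reduction.
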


%\noindent We will write $\ph_i = \ph_{i,0}^+$, and generalize this notation by setting: 
%$$
%\ph_\bl = \prod_{i=1}^n \ph_i^{l_i}
%$$ 
%for any $\bl \in \zz$, in which case the leading coefficient of \eqref{eqn:colombia} gives us:
%\begin{equation}
%\label{eqn:costarica}
%x \cdot \ph_\bl = (-q)^{(\bk,\bl)}  \ph_\bl \cdot x \qquad \qquad \forall \ x\in \CS_\bk
%\end{equation}
%with respect to the Killing form was defined in \eqref{eqn:kill}. Because $\bde = (1,...,1)$ is in the kernel of this form, we conclude that $c = \ph_{\bde}$ is a central element of $\CS$. \\

\tab
It is easy to see that the map $\Upsilon$ respects the coproduct and pairing of the generators. It is also not hard to show that the map $\Upsilon$ is a monomorphism, on account of the non-degeneracy of the bialgebra pairing on $\UU$ (see \citep{Ntor} for a review of this argument). The main difficulty in proving Theorem \ref{thm:iso} is the surjectivity of $\Upsilon$, which boils down to the fact that the restriction to the positive halves:
\begin{equation}
\label{eqn:upsilonplus}
\Upsilon^+ \ : \ \UUp \longrightarrow \ \CS^+
\end{equation}
is surjective. The analogous claim for the negative halves is proved likewise. The surjectivity of $\Upsilon^+$ is equivalent to the fact that the shuffle algebra is generated by degree one elements $\{z_i^d\}^{d\in \BZ}_{1\leq i \leq n}$, which will be established in the next Chapter.

%$$
%g \ \curvearrowright \ p \ = \ \left( g(...,z_{i1},...,z_{ik_i},...) \prod_{i=1}^\bw \prod_{a=1}^{k_i} \left[\frac {z_{ia}}{qu_i} \right]^{-1} \right) \circledast p \quad \qquad \forall \ g \in \CS^-_{-\bk} \quad \forall \ p \in M_\bw
%$$
%The shuffle contraction operator $g \circledast \CS^+$ is defined as the transpose of the operator of shuffle multiplication $\tau(g) * \CS^+$ with respect to the pairing \eqref{eqn:daddypair}, where:
%$$
%\tau \ : \ \CS^\pm \longrightarrow \ \CS^\mp \qquad \qquad \tau(g) = g
%$$

\tab
In this Section, we will show how to construct Verma modules for the shuffle algebra. The construction is meant to match that of \citep{H} for the quantum toroidal algebra, under the isomorphism $\Upsilon$. To define these, let us consider any $\bw \in \nn$ and fix a collection of colored parameters $u_1,...,u_\bw$, such that there are $w_i$ parameters of color $i$ modulo $n$. As a vector space, the \b{Verma module} coincides with either half of the shuffle algebra itself: %, enlarged to allow certain denominators that depend on $\bw$:  
\begin{equation}
\label{eqn:verma}
M(\bw) \ = \ \CS^\pm \otimes \BZ[u_1^{\pm 1},...,u_\bw^{\pm 1}] %}{\prod_{i=1}^\bw \prod_{a=1}^{v_i} \Big[ \frac {z_{ia}}{qu_i} \Big]}
\end{equation}
$$
M(\bw) \ = \ \bigoplus_{\bv \in \nn} M_{\bv,\bw} \ = \ \bigoplus_{\bv \in \nn}  \Big\{ \text{shuffle element } f(...,z_{i1},...,z_{iv_i},...) \Big\} 
$$
where the shuffle elements $f$ are allowed to have coefficients among the $u_i^{\pm 1}$'s, but otherwise all the formulas in the previous Section apply to them. The positive half $\CS^+$ acts on \eqref{eqn:verma} by the usual shuffle product, twisted by a symmetric polynomial that is constructed from the $u_i$:
\begin{equation}
\label{eqn:act1}
F \ \curvearrowright \ f \ := \ \left( F(z_{i1},...,z_{ik_i}) \prod_{i=1}^\bw \prod_{a=1}^{k_i} \left[\frac {u_i}{qz_{ia}}\right] \right) * f \qquad \forall \ F \in \CS^+_\bk \quad \forall \ f \in M(\bw)
\end{equation}
The Cartan elements are required to act by the following formula for all $i\in \{1,...,n\}$:
\begin{equation}
\label{eqn:act2}
\ph^\pm_i(w) \ \curvearrowright \ f \ := \ \underbrace{\prod_{1\leq j \leq \bw}^{u_j \equiv i} \frac {\Big[ \frac {u_j}{qw} \Big]}{\Big[ \frac {w}{qu_j} \Big]}}_{\text{lowest weight}} \cdot \prod^{1\leq j \leq n}_{1\leq a \leq v_j} \frac {\zeta \Big( \frac w{z_{ja}} \Big)}{\zeta \Big( \frac {z_{ja}}w \Big)} \cdot f \qquad \qquad \qquad \forall \ f \in M_{\bv,\bw}
\end{equation}
which establishes that the ``lowest weight" of $M(\bw)$, interpreted as in the sense of \citep{H}, is the rational function marked by the underbrace. Formulas \eqref{eqn:act1} and \eqref{eqn:act2}, together with relation \eqref{eqn:reldrinfeld2} that holds in any Drinfeld double, force the following action of negative shuffle elements on $M(\bw)$:
\begin{equation}
\label{eqn:act3}
G \ \curvearrowright \ f \ := \ \left \langle S(G_1), \frac {f_1}{\prod_{i=1}^\bw \left[\frac {Z^1_i}{qu_i} \right]} \right \rangle f_2 \left \langle G_2, \frac {f_3}{\prod_{i=1}^\bw \left[\frac {u_i}{qZ^3_i} \right]} \right \rangle %\left \langle g, \frac {p_3 \cdot S^{-1}(p_1)}{\prod_{i=1}^\bw \left[\frac {u_i}{qZ^3_{i}} \right] \prod_{i=1}^\bw \left[\frac {Z^1_i}{qu_i} \right]} \right \rangle \cdot p_2
\end{equation}
for all $G \in \CS^-_{-\bk}$ and $f \in M(\bw)$, where $\Delta^2(f) = f_1 (Z^1) \otimes f_2(Z^2) \otimes f_3(Z^3)$ denotes the coproduct \eqref{eqn:cop1}. We write $Z^k = Z^k_1+...+Z^k_n$, where $Z^k_i = \{z_{i1}^k,z_{i2}^k,...\}$ is the set of variables of the rational functions $f_k$ in each tensor, for all $k\in \{1,2,3\}$ and $i\in \{1,...,n\}$. Because of our conventions for the coproduct of shuffle elements \eqref{eqn:cop1}, the denominator of \eqref{eqn:act3} must be expanded in the range:
\begin{equation}
\label{eqn:limit}
|Z^1| \ \ll \ 1 \qquad \qquad |Z^2| \ = \  1 \qquad \qquad |Z^3| \ \gg \ 1  
\end{equation}
If we use formula \eqref{eqn:daddypair} for the pairing, we can write the action of negative shuffle elements as a bona fide rational function for all $G\in \CS^-$: 
\begin{equation}
\label{eqn:act4}
G \ \curvearrowright \ f \ := \ \sum_{\bv = \bv_1+\bv_2+\bv_3} \frac {(-1)^{|\bv^1|} }{\bv^1! \cdot \bv^3!} \cdot 
\end{equation}
$$
\int^{|q| < 1 < |p|}_{|Z^1| \ll 1} \int^{|q| > 1 > |p|}_{|Z^3| \gg 1} \frac {G(Z^1, Z^3) f(Z^1,Z^2,Z^3)  DZ^1DZ^3}{\zeta_{p}\left(\frac {Z^1+Z^3}{Z^1+Z^3}\right) \zeta\left(\frac {Z^2}{Z^1+Z^3}\right) \prod_{i=1}^\bw \left[\frac {Z^1_i+Z_i^3}{qu_i} \right]}  \ \Big |_{p \mapsto q}	
$$
where $Z^1_i = \{z_{ia}\}_{1\leq a\leq \bv^1_i}$, $Z^2_i = \{z_{ia}\}_{\bv^1_i < a\leq \bv^1_i+\bv^2_i}$, $Z^3_i = \{z_{ia}\}_{\bv^1_i+\bv^2_i < a \leq \bv_i}$. The statement that $q$ is assumed to be either greater than or smaller than 1 depending on which integral we take should be interpreted as follows: factors which involve both $Z^1$ and $Z^3$ are not concerned with the size of $q$, because of the expansion in $|Z^1|\ll|Z^3|$. But in those factors which involve either $Z^1$ or $Z^3$, we must replace $q$ by a parameter $q_\pm$ which is taken either $<1$ or $>1$ depending on the integral, compute the integral via residues, and then set $q_\pm \mapsto q$. To show that formula \eqref{eqn:act4} respects the shuffle product, i.e. that $\left( G \curvearrowright \Big( G' \curvearrowright f \Big) \right) = \Big( G * G' \Big) \curvearrowright f$, one observes that the integrands that arise in \eqref{eqn:act4} are the same, and the contours can be made to coincide by the same argument as in the proof of Exercise \ref{ex:pairing}. Therefore, we obtain the following: \\

%\begin{equation}
%\label{eqn:act3}
%g \ \curvearrowright \ p \ = \ \left \langle \frac {g(...,z_{i1},...,z_{ik_i},...)}{\prod_{i=1}^\bw \prod_{a=1}^{k_i} \left[\frac {z_{ia}}{qu_i} \right]}, p_1 \right \rangle p_2 \qquad \qquad \forall \ g \in \CS^-_{-\bk} \quad \forall \ p \in M(\bw)
%\end{equation}

\begin{proposition}
\label{prop:verma}
	
Formulas \eqref{eqn:act1}, \eqref{eqn:act2}, \eqref{eqn:act4} give rise to a well-defined action:
$$
\CS \ \curvearrowright \ M(\ebw)
$$
	
\end{proposition}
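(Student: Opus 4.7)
The cleanest strategy is to recognize $M(\bw)$ as a Verma (induced) module for the Drinfeld double $\CS$, so that a well-defined action is automatic, and then to match the three explicit formulas (eqn:act1), (eqn:act2), (eqn:act4) against the induced action. Define the one-dimensional lowest-weight character $\chi_\bw:\CS^\leq\to\BZ[u_1^{\pm1},\dots,u_\bw^{\pm1}]((w))$ by declaring that every positive-degree element of $\CS^-$ acts as zero and that
\begin{equation*}
\chi_\bw\bigl(\ph^-_i(w)\bigr) \ = \ \prod_{1\le j\le\bw}^{u_j\equiv i} \frac{[u_j/qw]}{[w/qu_j]}.
\end{equation*}
That $\chi_\bw$ is an algebra homomorphism is straightforward: it respects the Cartan commutation trivially (both sides being multiplicative), and it respects (eqn:venezuela) vacuously because every relation involving a positive-degree $G\in\CS^-$ is killed. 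Set $M(\bw):=\CS\otimes_{\CS^\leq}\BZ_{\chi_\bw}$. By the triangular decomposition coming from the Drinfeld double (eqn:dd), multiplication induces a $\BZ[u_i^{\pm 1}]$-linear isomorphism $\CS^+\otimes\BZ_{\chi_\bw}\stackrel{\sim}{\to}M(\bw)$, which matches the vector-space identification $M(\bw)=\CS^+\otimes\BZ[u_1^{\pm 1},\dots,u_\bw^{\pm 1}]$ of (eqn:verma).

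The three explicit action formulas then arise by unraveling (eqn:reldrinfeld2). Formula (eqn:act2) is tautological from the definition of $\chi_\bw$ and the commutation (eqn:colombia) on the positive side. For (eqn:act1), I would compute $F\cdot(f\otimes 1_{\chi_\bw})$ for $F\in\CS^+$: normal-ordering via (eqn:reldrinfeld2) leaves $F*f$ on the positive side, while the Cartan contribution from the coproduct of $F$ evaluated under $\chi_\bw$ produces exactly the symmetric twist $\prod_{i,a}[u_i/qz_{ia}]$. For (eqn:act4), I would compute $G\cdot(f\otimes 1_{\chi_\bw})$ for $G\in\CS^-$: a double application of (eqn:reldrinfeld2) together with the iterated coproduct (eqn:cop1) of $f$ produces three tensor factors $f_1\otimes f_2\otimes f_3$, paired against $SG_1$ on the left and $G_3$ on the right, with the middle piece $f_2$ surviving and the Cartan remainder being evaluated by $\chi_\bw$; rewriting these evaluations as contour integrals via (eqn:daddypair) and collecting the images of the Cartan character produces precisely the denominator factors $\prod[Z^1_i/qu_i]$ and $\prod[u_i/qZ^3_i]$ in (eqn:act4).

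The main obstacle is analytic rather than algebraic: both the coproduct (eqn:cop1)--(eqn:cop2) and the antipode (eqn:antshuf1)--(eqn:antshuf2) produce infinite series in the Cartan currents $\ph^\pm_i(w)$, so the identity $G\curvearrowright f=\langle SG_1,\dots\rangle\cdots$ lives a priori in a completion. I have to verify that the expansion conventions $|Z^1|\ll 1\ll|Z^3|$ of (eqn:limit) are compatible with the contour conventions $|z_{ia}|=1$ of (eqn:daddypair) after the substitution $p\mapsto q$, and that in each bigraded piece $M_{\bv,\bw}$ only finitely many $\bl$ in (eqn:cop1) contribute. The latter finiteness follows from the grading by $\bv$, since $0\le\bl\le\bk$ is a finite range. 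Associativity of the $\CS^-$-action, i.e.\ $(G*G')\curvearrowright f=G\curvearrowright(G'\curvearrowright f)$, then reduces to the same contour-deformation argument used in the proof of Exercise \ref{ex:pairing}: the iterated integrands coincide, and the contours can be merged after using the coassociativity of (eqn:cop1).
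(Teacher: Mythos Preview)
Your overall strategy---realize $M(\bw)$ as the induced module $\CS\otimes_{\CS^\leq}\BZ_{\chi_\bw}$ and then match formulas---is sound, and your treatment of (eqn:act4) and of associativity via the contour argument of Exercise~\ref{ex:pairing} is exactly the paper's argument (the paper states this in the paragraph immediately preceding the Proposition).

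There is, however, a genuine gap in your derivation of (eqn:act1). For $F\in\CS^+$ acting on $f\otimes 1_{\chi_\bw}$ with $f\in\CS^+$, no normal-ordering is needed: both elements are already in $\CS^+$, so the induced action is simply $(F*f)\otimes 1_{\chi_\bw}$. There is no ``Cartan contribution from the coproduct of $F$'' here---the coproduct of $F$ plays no role in left multiplication. Under the naive identification $f\leftrightarrow f\otimes 1_{\chi_\bw}$, the induced module gives the \emph{untwisted} action $F\curvearrowright f=F*f$, not the twisted one in (eqn:act1). The twist $\prod_{i,a}[u_i/(qz_{ia})]$ does not arise from the Drinfeld double structure; it is a choice of vector-space isomorphism. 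Concretely, since this twist is multiplicative in the variables, the map $\iota:f\mapsto f\cdot\prod_{i,a}[u_i/(qz_{ia})]$ intertwines the twisted action (eqn:act1) with the plain shuffle product, and it is under \emph{this} identification that $M(\bw)$ becomes the induced module. You should make this explicit rather than invoke a nonexistent normal-ordering step. Once this is fixed, you must also recheck that your derivation of (eqn:act3) from (eqn:reldrinfeld2) is carried out relative to the same identification $\iota$, so that the factors $\prod[Z^k_i/(qu_i)]^{-1}$ appearing there are accounted for correctly.
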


%\tab 
%Indeed, proving that \eqref{eqn:act1} and \eqref{eqn:act3} induce actions of the positive and negative shuffle algebras, respectively, is straightforward from the definition. The same can be said about the fact that \eqref{eqn:act2} respects \eqref{eqn:colombia} and \eqref{eqn:venezuela}. The most challenging part of the above Proposition is that the positive and negative shuffle algebras interact via \eqref{eqn:reldrinfeld2}, but this is precisely the property that led us to define condition \eqref{eqn:act3}.

\noindent In order to merit being called a Verma module, we should prove the existence of a \b{Shapovalov form} on $M(\bw)$. By definition, this is a symmetric bilinear form:
$$
(\cdot,\cdot) \ : \ M(\bw) \otimes M(\bw) \ \longrightarrow \ \tBF_\bw 
$$
where $\BF_\bw = \BZ[q^{\pm 1}, t^{\pm 1},u_1^{\pm 1},...,u_\bw^{\pm 1}]$ and $\tBF_\bw = \BQ(q, t, u_1,...,u_\bw)$, such that:
\begin{equation}
\label{eqn:shap}
(G \curvearrowright f, f') \ = \ (f, G^T \curvearrowright f') \qquad \qquad \forall \ G \in \CS \quad \text{and} \quad \forall \ f,f'\in M(\bw)
\end{equation}
Here, $G \mapsto G^T$ denotes the \b{transposition} involution, namely the antiautomorphism induced by the identity map of vector spaces:
$$
\CS^+ \ \ni \ G \ \stackrel{T}\leftrightarrow \ G \ \in \ \CS^-
$$
The defining property \eqref{eqn:shap} of the Shapovalov form implies that the quotient space:
\begin{equation}
\label{eqn:quotient}
L(\bw) \ := \ \frac {M(\bw)}{\text{kernel of }(\cdot,\cdot)}
\end{equation}
is a representation of $\CS$. In the theory of lowest weight representations of affine algebras or quantum groups, the representations $L(\text{lowest weight})$ obtained in this way are the irreducible building blocks of category $\CO$. Let $Z = \sum_{i=1}^n Z_i = \sum_{i=1}^n \sum_{a=1}^{v_i} z_{ia}$ be a placeholder for our set of variables, similar to \eqref{eqn:placeholder}. \\

\begin{exercise}
\label{ex:shap}

The following assignment gives rise to a Shapovalov form on $M(\bw)$:
\begin{equation}
\label{eqn:shapshuffle}
(f,f') \ = \ \frac {1}{\bv!} \left( \int_{|Z| = 1} - \int_{|Z| \ll 1} \right) \frac {f(Z) f'(Z) \cdot DZ}{\prod^{1\leq i \leq n}_{1\leq j \leq n} \zeta \left(\frac {Z_i}{Z_j} \right) \prod_{i=1}^{\bw} \Big[\frac {Z_i}{qu_i} \Big]\Big[\frac {u_i}{qZ_i} \Big]}
\end{equation}

%\begin{equation}
%\label{eqn:shapshuffle}
%(p,p') \ = \ \frac {1}{\bv!} \int^{\text{no }\infty}_{|z_{ia}|=1} \frac {p(...,z_{ia},...) p'(...,z_{ia},...)}{\prod^{1\leq i,j \leq n}_{a\leq v_i, b\leq v_j} \zeta \left(\frac {z_{ia}}{z_{jb}} \right) \prod_{i=1}^{\bw} \prod_{a=1}^{v_i} \Big[\frac {z_{ia}}{qu_i} \Big]\Big[\frac {u_i}{qz_{ia}} \Big]}\prod^{1\leq i \leq n}_{1\leq a \leq v_i} Dz_{ia}	
%\end{equation}

\end{exercise}

\tab  
Using \eqref{eqn:daddypair}, we will show in Chapter \ref{chap:proofs} that formula \eqref{eqn:shapshuffle} equals the following formula, which is written purely in terms of the Hopf algebra structure of the shuffle algebra:
\begin{equation}
\label{eqn:shapshufflehopf}
(f, f') \ = \ \left \langle \frac {\left(f_1 * S (f_2) \right)^T}{\prod_{i=1}^\bw \left[\frac {Z^1_i}{qu_i} \right] \left[\frac {Z^2_i}{qu_i} \right]}, \frac {f'}{\prod_{i=1}^\bw \left[\frac {u_i}{qZ_i} \right]} \right \rangle 
\end{equation}
where the variables of $f_1$, $f_2$ and $f'$ are denoted generically by $\{Z^1\}$, $\{Z^2\}$ and $\{Z\}$ respectively. Each summand of $\Delta(f) = f_1 \otimes f_2$ is expanded in the range $|Z^1| \ll 1$, $|Z| = 1$, $|Z^2| \gg 1$. Formula \eqref{eqn:shapshufflehopf} is forced upon us by property \eqref{eqn:shap} and the fact that the Verma module is generated by the positive shuffle algebra. Let us note that for $\bw = (0,...,0)$, the integral \eqref{eqn:shapshuffle} equals 0 by the same inclusion-exclusion argument from the proof of Exercise \ref{ex:antipode}, so we conclude that:
$$
(M_{\bv,0}, M_{\bv,0}) \ = \ 0 \qquad \forall \ \bv \neq 0
$$
and hence $L(0)$ is one dimensional. In Section \ref{sec:act}, we will identify the representations $L(\bw)$ with the localized $K-$theory groups of Nakajima quiver varieties $K(\bw)$, and the Shapovalov form \eqref{eqn:shapshuffle} with the integral pairing of \eqref{eqn:pairingmoduli} - \eqref{eqn:integral}. \\

\section{The $K-$theoretic Hall algebra}
\label{sec:k}

\noindent We will now recall how $\CS^+$ and the shuffle product \eqref{eqn:shuffle} can be interpreted as the $K-$theoretic Hall algebra of cyclic quiver varieties, as constructed by \citep{SV} in the case $n=1$. To set up the problem, let us recall the usual Hall algebra for the groups $G_k = GL(\BC^k)$. Consider the representation rings:
$$
\text{Rep}(G_k) \ \cong \ \BZ[z_1^{\pm 1},...,z_k^{\pm 1}]^\sym 
$$
for all $k\in \BN$. The above isomorphism is given by sending a representation to its character. Then we may consider the direct sum of these representation rings:
\begin{equation}
\label{eqn:directsum}
\bigoplus_{k=0}^\infty \text{Rep}(G_k) \ = \ \bigoplus_{k=0}^\infty \BZ[z_1^{\pm 1},...,z_k^{\pm 1}]^\sym
\end{equation}
and ask how to construct a natural algebra structure on \eqref{eqn:directsum}, which preserves the grading by $k$. To accomplish this, consider the embedding $G_k \times G_l \longrightarrow G_{k+l}$ as block-diagonal matrices for all natural numbers $k$ and $l$, and the parabolic subgroup:
\begin{equation}
\label{eqn:par}
\xymatrix{& P_{k,l} \ar@{_{(}->}[dl] \ar@{->>}[dr]  \\
	G_{k+l} & & G_k \times G_l} 
\end{equation}
consisting of matrices that preserve the subspace $\BC^k \hookrightarrow \BC^{k+l}$. Then the assignment:
$$
\rep(G_k) \otimes \rep(G_l) \ \stackrel{*}\longrightarrow \ \rep(G_{k+l})
$$
\begin{equation}
\label{eqn:shalom}
f *f' \ = \ \text{Ind}_{P_{k,l}}^{G_{k+l}} \left( f \otimes f' \right)
\end{equation}
gives rise to an associative algebra structure on \eqref{eqn:directsum}. The above product can be computed explicitly, according to the well-known formula:
\begin{equation}
\label{eqn:induction}
(f*f')(z_1,...,z_{k+l}) = \sym \left[ \frac {f(z_1,...,z_k) f'(z_{k+1},...,z_{k+l})}{k! \cdot l!} \prod^{1\leq i \leq k}_{k+1 \leq j \leq k+l} \left(1-\frac {z_j}{z_i}\right)^{-1} \right] \qquad
\end{equation}
for any symmetric Laurent polynomials $f(z_1,...,z_k)$ and $f'(z_{k+1},...,z_{k+l})$. Let us remark that, historically, the appropriate setup for the above Hall algebra are finite groups. However, we chose to present it as above in order to motivated the following construction, although there is quite a rich history of mathematics in between them.

\tab
The connection to geometry starts by observing that $\text{Rep}(G_k) = K\left([\cdot/G_k]\right)$. Moreover, the diagram \eqref{eqn:par} gives rise to the following morphisms of stacks:
\begin{equation}
\label{eqn:parabolic}
\xymatrix{& [\cdot / P_{k+l}] \ar[dl]_{\pi_1} \ar[dr]^{\pi_2}  \\
[\cdot / G_{k+l}] & & [\cdot / G_k] \times [\cdot / G_l]} 
\end{equation}
Therefore, the Hall product \eqref{eqn:shalom} can be thought of as the correspondence:
$$
K\left(\left[\cdot/G_k \right] \right) \otimes K\left(\left[\cdot/G_l \right] \right) \ \stackrel{*}\longrightarrow \ K\left(\left[\cdot/G_{k+l} \right] \right)
$$
\begin{equation}
\label{eqn:meretz}
f * f' \ = \ \pi_{1*} \left(\pi_2^*( f \otimes f' ) \right)
\end{equation}
which induces an algebra structure on $\bigoplus_{k=0}^\infty K\left(\left[\cdot/G_k \right] \right)$. We can generalize this setup by replacing the quotient stack $[\cdot/G_k]$ with the so-called commuting stack:
\begin{equation}
\label{eqn:stack}
\fC_k \ := \ \Big[\{X,Y \in \text{End}(\BC^k) \ \text{ such that } \ [X,Y] = 0 \}/G_k \Big]
\end{equation}
which is simply the stack quotient that corresponds to the ``moduli of sheaves" without any framing $\CN_{k,0}$. We consider the action of the torus $\BC^* \times \BC^*$ on $\fC_k$ given by $(q,t)\cdot(X,Y) = (qt X, qt^{-1}Y)$. The equivariant $K-$theory of the stack $\fC_k$, and its relation to the shuffle algebra, were pursued in \citep{SV}. In this paper, we will only consider those $K-$theory classes on the above stack which come from equivariant constants, so let us define:
$$
K_{\BC^* \times \BC^*}(\fC_k) \ := \ K_{\BC^* \times \BC^* \times G_k}(\pt) \ = \ \BF[z_1^{\pm 1},...,z_k^{\pm 1}]^\sym
$$
where $\BF = \BZ[q^{\pm 1}, t^{\pm 1}]$. We will soon see that we will need to enlarge the above ring by allowing constants in $\tBF = \BQ(q,t)$, as well as rational functions:
\begin{equation}
\label{eqn:shuf0}
\widetilde{K}_{\BC^* \times \BC^*}(\fC_k) \ := \ K_{\BC^* \times \BC^*}(\fC_k)_\loc \ = \ \widetilde{\BF}(z_1,...,z_k)^\sym
\end{equation}
by analogy with Section \ref{sec:shuffle}, where we replaced $\sym \ V$ by $\widetilde{\sym} \ V$. We consider $\fC_k \times \fC_l \rightarrow \fC_{k+l}$ as the stack of commuting block diagonal matrices corresponding to a fixed subspace $\BC^k \subset \BC^{k+l}$. Then the analogue of \eqref{eqn:parabolic} is the diagram:
$$
\xymatrix{& \fP_{k,l} \ar[dl]_{\pi_1} \ar[dr]^{\pi_2}  \\
\fC_{k+l} & & \fC_k \times \fC_l}
$$
where the stack correspondence is given by: 
$$
\fP_{k,l} \ := \ \Big[\{X,Y \in \text{End}(\BC^{k+l}) \ \text{ that preserve }\BC^k \text{ and } \ [X,Y] = 0 \}/P_{k,l} \Big] 
$$
In other words, $\fP_{k+l}$ parametrizes commuting block triangular matrices, which preserve a fixed embedding $\BC^k \subset \BC^{k+l}$, modulo the action of the parabolic group $P_{k,l}$. Then the appropriate shuffle product is also given by a push-pull diagram:
\begin{equation}
\label{eqn:shufprod0}
\widetilde{K}_{\BC^* \times \BC^*}(\fC_k) \ \otimes \ \widetilde{K}_{\BC^* \times \BC^*}(\fC_l) \ \stackrel*\longrightarrow \ \widetilde{K}_{\BC^* \times \BC^*}(\fC_{k+l}) %\widetilde{\BF}(z_1,...,z_k)^\sym \otimes \widetilde{\BF}(z_1,...,z_l)^\sym \ \stackrel{*}\longrightarrow \ \widetilde{\BF}(z_1,...,z_{k+l})^\sym
\end{equation}
$$
F * F' \ = \ \widetilde{\pi}_{1*}\Big(\pi_2^*(F \otimes F') \Big)
$$
where the modified push-forward is defined by \eqref{eqn:modpush}. The following explicit formula will be proved in Exercise \ref{ex:stackhall} below, in the more general setting of the cyclic quiver:
$$
(F * F')(z_1,...,z_{k+l}) = \sym \left[ \frac {F(z_1,...,z_k) F'(z_{k+1},...,z_{k+l})}{k! \cdot l!} \prod^{1\leq i \leq k}_{k+1 \leq j \leq k+l} \frac {\left[\frac {z_j}{qtz_i}\right]\left[\frac {tz_j}{qz_i} \right]}{\left[\frac {z_j}{z_i}\right]\left[\frac {z_j}{q^2z_i} \right]} \right]
$$ 
This is precisely the shuffle product considered in \citep{FT} and \citep{SV}. Let us now recall how to generalize this picture to obtain \b{K-theoretic Hall algebras} for the cyclic quiver, although we note that the construction works for general quiver varieties. Consider the quotient stack version of the Nakajima quiver variety $\CN_{\bk,0}$ of any degree $\bk = (k_1,...,k_n) \in \nn$:
$$
\fC_{\bk} \ := \ \left[\left \{ \xymatrix{\BC^{k_i} \ar@/^/[r]^{X_i} & \BC^{k_{i+1}} \ar@/^/[l]^{Y_i}} \ \text{ such that } \ X_{i-1}Y_{i-1} = Y_iX_i \right\}^{1\leq i \leq n}/ \ G_\bk \right]
$$
where $G_\bk = \prod_{i=1}^n GL(\BC^{k_i})$. We define the $K-$theory of this stack following \eqref{eqn:shuf0}:
$$
K_{\BC^* \times \BC^*}(\fC_\bk) \ := \ K_{\BC^* \times \BC^* \times G_\bk}(\pt) \ = \ \BF[...,z_{i1}^{\pm 1},...,z_{ik_i}^{\pm 1},...]^\sym
$$
\begin{equation}
\label{eqn:johnny}
\widetilde{K}_{\BC^* \times \BC^*}(\fC_\bk) \ := \ K_{\BC^* \times \BC^*}(\fC_\bk)_\loc \ = \ \widetilde{\BF}(...,z_{i1},...,z_{ik_i},...)^\sym
\end{equation}
where the rational functions above are only required to be symmetric in the variables $z_{ia}$ of given color $i$ modulo $n$. We will now define the algebra structure on the sum of the above $K-$theories. For any $\bk,\bl \in \nn$, we fix a subspace $\BC^\bk \subset \BC^{\bk+\bl}$, by which we mean a collection of subspaces $\BC^{k_i} \subset \BC^{k_i+l_i}$ for all $1\leq i \leq n$, and define:
\begin{equation}
\label{eqn:parabolic2}
\xymatrix{& \fP_{\bk+\bl} \ar[dl]_{\pi_1} \ar[dr]^{\pi_2}  \\
\fC_{\bk+\bl} & & \fC_{\bk} \times \fC_{\bl}}
\end{equation}
as $\quad \fP_{\bk+\bl} :=$
$$
\left[ \left\{\xymatrix{\BC^{k_i+l_i} \ar@/^/[r]^{X_i} & \BC^{k_{i+1}+l_{i+1}} \ar@/^/[l]^{Y_i}} \text{ s.t. }X,Y \text{ preserve }\BC^\bk \text{ and } X_{i-1}Y_{i-1} = Y_iX_i \right\}^{1 \leq i \leq n} / P_{\bk,\bl} \right] 
$$
where $P_{\bk,\bl} = \prod_{i=1}^n P_{k_i,l_i}$. The shuffle product is defined by analogy with \eqref{eqn:shufprod0}:
$$
\widetilde{K}_{\BC^* \times \BC^*}(\fC_\bk) \ \otimes \ \widetilde{K}_{\BC^* \times \BC^*}(\fC_\bl) \ \stackrel*\longrightarrow \ \widetilde{K}_{\BC^* \times \BC^*}(\fC_{\bk+\bl}) %\widetilde{\BF}(...,z_{i1},...,z_{ik_i},...)^\sym \otimes \widetilde{\BF}(...,z_{i1},...,z_{il_i},...)^\sym \ \stackrel{*}\longrightarrow \ \widetilde{\BF}(...,z_{i1},...,z_{i,k_i+l_i},...)^\sym
$$
$$
F * F' \ = \ \widetilde{\pi}_{1*}\Big(\pi_2^*(F \otimes F') \Big)
$$
The following Exercise proves that the Hall product defined above matches the shuffle product defined in \eqref{eqn:shuffle}, under the identification \eqref{eqn:johnny}. \\

\begin{exercise} 
\label{ex:stackhall}
	
The Hall product for the cyclic quiver is given by:
\begin{equation}
\label{eqn:mult}
F * F'  = \textrm{Sym} \left[ \frac {F(...,z_{ia},...) F'(...,z_{jb},...)}{\bk! \cdot \bl!} \prod^{1\leq i \leq n}_{1\leq a\leq k_i} \ \prod^{1\leq j \leq n}_{k_j+1 \leq b \leq k_j+l_j} \zeta \left( \frac {z_{ia}}{z_{jb}} \right) \right]
\end{equation}
for any $F \in \widetilde{K}_{\BC^* \times \BC^*}(\fC_\bk)$ and $F' \in \widetilde{K}_{\BC^* \times \BC^*}(\fC_\bl)$. \\
\end{exercise}

%The presence of denominators is the reason why we need to consider rational functions in \eqref{eqn:shuf0}, although we can get away with a much smaller ring. Specifically, \loccit define:
%\begin{equation}
%\label{eqn:shuf1}
%\bigoplus_{k=0}^\infty K_{\BC^* \times \BC^*}(\fC_k)_\loc \ \subset \ \bigoplus_{k=0}^\infty \widetilde{\BF}(z_1,...,z_k)^\sym
%\end{equation}
%as the $\tBF-$algebra generated by the Laurent polynomials in one variable $\{z_1^d\}_{d\in \BZ}$. One of the results of \citep{Nshuf} was to show that this algebra can be described explicitly by the \b{wheel conditions} of \citep{FT}:
%\begin{equation}
%\label{eqn:sasha}
%K_{\BC^* \times \BC^*}(\fC_k) \ = \ \frac {\left\{f \in \widetilde{\BF}[z_1^{\pm 1},...,z_k^{\pm 1}]^\sym \text{ such that } f(q^{-1},t^{\pm 1},q,...) \equiv 0\right\}}{\prod_{1\leq i \neq j \leq k} (z_iq - z_jq^{-1})}
%\end{equation}
%where recall that $\BF = \BZ[q^{\pm 1}, t^{\pm 1}]$ and $\widetilde{\BF} = \text{Frac}(\BF)$. The notion of ``wheel conditions" was inspired by the work of \citep{FO} in an elliptic setting, and refers to the condition that the Laurent polynomials which appear in the right hand side of \eqref{eqn:sasha} are required to vanish when we set any three variables equal to $q^{-1},t^{\pm 1},q$. \\

\section{The $K-$theory action}
\label{sec:act}

\noindent Let us now recall the $K-$theory groups $K(\bw)$ of Nakajima quiver varieties, that were defined in Section \ref{sec:quiver} for any $\bw \in \nn$. If we set:
$$
\Lambda(\bw) \ = \ \bigoplus_{\bv \in \nn} \Lambda_{\bv,\bw} \qquad \text{where} \qquad \Lambda_{\bv,\bw} \ = \ \BF_\bw[..., x_{i1}^\pm,...,x_{iv_i}^\pm,... ]^\sym_{1\leq i \leq n}
$$
then we have seen in \eqref{eqn:kmoduli} that:
$$
K(\bw) \ = \ \frac {\Lambda(\bw)}{\text{kernel of the pairing }(\cdot, \cdot)} %\bigoplus_{\bv\in \nn} \frac {\text{symmetric Laurent polynomials in } \{..., z_{i1},..., z_{iv_i},... \}_{1\leq i \leq n}}{\text{kernel of the pairing }(\cdot, \cdot)}
$$ 
which was defined in \eqref{eqn:pairingmoduli}. Comparing formula \eqref{eqn:integral} with \eqref{eqn:shapshuffle}, we see that the above pairing coincides with the Shapovalov form. Therefore, we may be justified to conclude that $K(\bw) \cong L(\bw)$ as vector spaces. However, there are two subtleties here: one is that $K(\bw)$ was defined starting from Laurent polynomials, while $L(\bw)$ was defined starting from the a priori larger vector space of shuffle elements \eqref{eqn:shufelem}. This will be solved by the fact that we quotient out by the kernels of the pairing $(\cdot,\cdot)$. The second subtlety is that elements of $L(\bw)$ are defined over rational functions in $\BQ(q,t)$, while $K(\bw)$ are defined over Laurent polynomials $\BZ[q^{\pm 1}, t^{\pm 1}]$. Though we will not pursue this here, we believe that one can define an ``integral" form of the shuffle algebra, such that the resulting module $L(\bw)$ become isomorphic to the integral $K-$theory group $K(\bw)$. Instead, we will contend ourselves with an embedding which becomes an isomorphism after localization: \\

%Therefore we set:
%$$
%K(\bw)_\loc \ = \ K(\bw) \bigotimes_{\BZ[q^{\pm 1}, t^{\pm 1}]} \BQ(q,t)
%$$
%and work with this localization instead.

%However, as a consequence of the following Theorem, it will be the case that any shuffle element \eqref{eqn:shufelem} is equal to a Laurent polynomial modulo the kernel of the Shapovalov form. \\

\begin{theorem}
\label{thm:act} 

For any $\ebw \in \nn$, we have an embedding of modules: 
$$
\UU \ \curvearrowright \ K(\ebw) \ \hookrightarrow \ L(\ebw) \ \curvearrowleft \ \CS
$$
which becomes an isomorphism after localization $K(\ebw) \hookrightarrow K(\ebw)_{\emph{loc}}$. The two actions above match via the isomorphism $\Upsilon : \UU \cong \CS$ of Theorem \ref{thm:iso}.	\\

\end{theorem}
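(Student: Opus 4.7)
The proof divides into three stages: constructing the map, verifying equivariance, and promoting it to a localized isomorphism.

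\textbf{Construction of the map.} Every symmetric Laurent polynomial is a shuffle element (it has trivial denominator and automatically satisfies the wheel conditions \eqref{eqn:shufelem}), so there is a natural inclusion $\Lambda(\bw) \hookrightarrow M(\bw)$ of $\BZ[u_1^{\pm 1},\ldots,u_\bw^{\pm 1}]$-modules sending $f(\ldots,x_{ia},\ldots)$ to itself. The crux is that the Euler characteristic pairing on $\Lambda(\bw)$, computed in Proposition \ref{prop:integral} to be the contour integral \eqref{eqn:integral}, coincides verbatim with the Shapovalov formula \eqref{eqn:shapshuffle} restricted to Laurent polynomial inputs. Hence the two pairings agree on $\Lambda(\bw)$, so the inclusion descends to the quotients and yields a well-defined $\BF_\bw$-linear map
$$
\iota \ : \ K(\bw) \ = \ \Lambda(\bw)/\mathrm{rad}(\cdot,\cdot) \ \longrightarrow \ M(\bw)/\mathrm{rad}(\cdot,\cdot) \ = \ L(\bw)
$$
which by construction intertwines the two pairings on its image.

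\textbf{Equivariance.} Under the isomorphism $\Upsilon$ of Theorem \ref{thm:iso}, the Cartan generators $\ph_{i,d}^\pm$ match tautologically, and the constant currents $e_{i,d}^\pm$ correspond to the single-variable shuffle elements $z_{i1}^d/[q^{-2}] \in \CS^\pm$. For the Cartan part, the geometric prescription \eqref{eqn:cartan0} is multiplication by exactly the same rational function in Chern roots as the shuffle formula \eqref{eqn:act2}. For $e_{i,d}^+$, the integral formula \eqref{eqn:formula1} from Exercise \ref{ex:intcorr} unpacks as the symmetrization of $z^d \cdot f(X)$ against the factors $\zeta(z/X)$ and $\prod_j[u_j/(qz)]$, which is precisely the shuffle action \eqref{eqn:act1} applied to $z_{i1}^d/[q^{-2}]$. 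The case $e_{i,d}^-$ can be obtained either directly from \eqref{eqn:formula2} mirroring the positive case, or, more elegantly, from the positive case by adjunction: since $\iota$ preserves the pairings and the Drinfeld double relation \eqref{eqn:reldrinfeld} determines $e_{i,d}^-$ from $e_{i,d}^+$ and the Cartan, equivariance on the positive half forces equivariance on the negative half.

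\textbf{Localized isomorphism.} After tensoring with $\tBF_\bw = \BQ(q,t,u_1,\ldots,u_\bw)$, equivariant localization \eqref{eqn:loc} gives $K(\bw)_{\mathrm{loc}}$ a basis of renormalized fixed-point classes $|\bla\rangle$ indexed by $\bw$-partitions. I would establish the analogous statement for $L(\bw)_{\mathrm{loc}}$ by diagonalizing the commutative Cartan subalgebra acting via \eqref{eqn:act2}: the eigenvalues of $\ph_i^\pm(w)$ read off, from the poles of the resulting rational function, exactly a $\bw$-partition $\bla$, so joint eigenvectors are labeled by $\bw$-partitions, and the map $\iota$ must send $|\bla\rangle$ to the eigenvector with matching eigenvalues. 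The main obstacle will be showing that these eigenvectors exhaust $L(\bw)_{\mathrm{loc}}$, i.e.\ that no extra classes appear beyond those coming from $K(\bw)$. I plan to handle this by a residue-theoretic argument paralleling the proof of Proposition \ref{prop:integral}: the Shapovalov pairing \eqref{eqn:shapshuffle} is supported, modulo its radical, on residues at the loci $Z = \chi_\bla$, because the wheel conditions \eqref{eqn:shufelem} kill all other potential poles and the contour integral picks up only the $\bw$-partition configurations of bullets. This forces $L(\bw)_{\mathrm{loc}}$ to have dimension equal to the number of $\bw$-partitions in each graded piece, matching $K(\bw)_{\mathrm{loc}}$ and giving the isomorphism after localization.
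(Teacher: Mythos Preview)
Your construction step contains a genuine gap. Knowing that the Euler pairing \eqref{eqn:integral} and the Shapovalov pairing \eqref{eqn:shapshuffle} agree as bilinear forms on $\Lambda(\bw) \times \Lambda(\bw)$ does \emph{not} imply that the inclusion $\Lambda(\bw) \hookrightarrow M(\bw)$ descends to the quotients. The quotient $K(\bw)$ kills those $f$ with $(f,g)=0$ for all $g \in \Lambda(\bw)$, whereas for $f$ to die in $L(\bw)$ you need $(f,G)=0$ for all $G \in M(\bw)$, which is a priori a strictly stronger condition since $M(\bw)$ properly contains $\Lambda(\bw)$. You must show that these two radicals, intersected with $\Lambda(\bw)$, coincide. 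The paper addresses this directly: it proves that both radicals consist exactly of those $f$ (resp.\ $F$) with $f(\chi_\bla)=0$ for every $\bw$-partition $\bla$, by running the residue count of Proposition~\ref{prop:integral} for the Shapovalov integral as well. The wheel conditions enter here not for a dimension count but to guarantee that the specialization $F(\chi_\bla)$ is well-defined for arbitrary $F \in M(\bw)$ despite the denominators $qz_{ia}-q^{-1}z_{ib}$; the paper spells out the four-step procedure for this evaluation.

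You do possess this machinery, but you deploy it in the wrong place. Once both radicals are identified with the vanishing ideal at all $\chi_\bla$, the well-definedness of the map, its injectivity, and the localized isomorphism follow simultaneously: after localization both $K(\bw)_{\mathrm{loc}}$ and $L(\bw)_{\mathrm{loc}}$ are identified with the space of functions on $\bw$-partitions of the appropriate degree. Your Cartan-diagonalization detour is then unnecessary. Your equivariance argument is essentially correct and matches the paper's; the paper verifies $e_{i,d}^-$ by a direct comparison of \eqref{eqn:arys} with \eqref{eqn:oakheart}, but your adjunction alternative via the Shapovalov transposition is also legitimate once you have established that $\iota$ is well-defined and pairing-preserving.
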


\begin{proof} The embedding is induced by the natural inclusion $\Lambda(\bw) \hookrightarrow M(\bw)$. To show that it induces a well-defined map on the quotients $K(\bw) \rightarrow L(\bw)$, we need to prove:
$$
(f,g) = 0 \qquad \forall \ g \in \Lambda(\bw) \qquad \Leftrightarrow \qquad (f,G) = 0 \qquad \forall \  G \in M(\bw)
$$
for any Laurent polynomial $f \in \Lambda(\bw)$. We do not yet have a general understanding of this phenomenon (which would probably require one to be more subtle about defining Verma modules for shuffle algebra) so we will instead prove it by localization. Formula \eqref{eqn:residuecount} shows that, with respect to the pairing \eqref{eqn:pairingmoduli}:
$$
(f,g) = 0 \qquad \forall \ g \in \Lambda(\bw) \qquad \Leftrightarrow \qquad f\left(\chi_\bla\right) = 0 \quad \forall \ \bw-\text{partitions }\bla
$$
The analogous formula can be proved for the Shapovalov form, where the analogue of Proposition \ref{prop:integral} allows us to express the residue count for the integral \eqref{eqn:shapshuffle} as a sum over $\bw-$partitions. The poles are made worse by the fact that shuffle elements have denominators of the form $qz_{ia} - q^{-1}z_{ib}$, but the argument (and the day) is saved by the wheel conditions that we imposed on shuffle elements:
\begin{equation}
\label{eqn:showroom}
M(\bw) \ \ni \ F(...,z_{ia},...) \ = \ \frac {r(...,z_{ia},...)}{\prod^{1 \leq i \leq n}_{1\leq a \neq b \leq k_i} (qz_{ia} - q^{-1}z_{ib})}
\end{equation}
where the Laurent polynomial $r$ satisfies $r(...,q^{-1},t^{\pm 1},q,...) \equiv 0$ for any three variables of colors $i,i\pm 1,i$. Therefore, we conclude that a shuffle element $F \in M(\bw)$ vanishes in $L(\bw)$ if and only if $F\left(\chi_\bla\right) = 0$ for all $\bw-$partitions $\bla$, where:
\begin{equation}
\label{eqn:quantity}
F\left(\chi_\bla\right) \ = \ F(...,\chi_\sq,...)_{\sq \in \bla} \ \in \ \tBF
\end{equation}
One must take care when specializing the variables of $F$ at the set of weights of a $\bw-$partition, because of the denominators that appear in \eqref{eqn:showroom}. However, the wheel conditions eliminate all the poles, and this can be seen by the following procedure: 

\begin{itemize}

\item for each box $(x,y) \in \la^j \subset \bla$, insert the input $z_{ia} = s^{(y)}_j q_1^x$ into the function $F$

\item the wheel conditions imply that $r$ of \eqref{eqn:showroom} is divisible by certain linear factors of the form $s^{(y)}_j q_1^x - s^{(y\pm 1)}_jq_1^{x\pm 1}$, as we will see in the proof of Lemma \ref{lem:bound}

\item these linear factors precisely cancel the problematic denominators of \eqref{eqn:showroom}

\item specialize $s^{(y)}_j = qu_j \cdot q_2^y$ and obtain the constant in \eqref{eqn:quantity}.

\end{itemize}

\noindent Therefore, both Laurent polynomials $f\in \La(\bw)$ and shuffle elements $F \in M(\bw)$ vanish in the quotients $K(\bw)$ and $L(\bw)$ when their specializations to the set of weights of any $\bw-$partition vanish. This implies that we have an embedding $K(\bw) \hookrightarrow L(\bw)$.

\tab	
We will now show that this embedding intertwines the actions of $\UU$ and $\CS$. According to Theorem \ref{thm:iso0}, the map $\Upsilon$ is an isomorphism, which implies that the shuffle algebra is generated by degree one elements $z_i^d$. Therefore, it is enough to match the action of $e_{i,d}^\pm$ on $K(\bw)$ from \eqref{eqn:simpleop} with the action of the shuffle elements $F = \frac {z_i^d}{q^{-1}-q} \in \CS^+$ in \eqref{eqn:act1} and $G = \frac {z_i^d}{q^{-1}-q} \in \CS^-$ in \eqref{eqn:act4} on $L(\bw)$. Let us start from formula \eqref{eqn:form1}, which can be written as:
\begin{equation}
\label{eqn:areo}
e_{i,d}^+ (\ov{f}) = \ov{f'} \qquad \text{where} \qquad f'(\chi_{\bla}) = \sum^{\bla = \bmu + \bsq}_{\bsq \text{ of color }i} \frac {\chi_\bsq^d}{[q^{-2}]}  f(\chi_{\bmu}) \zeta \left( \frac {\chi_\bsq}{\chi_{\bmu}} \right)  \prod^{u_j\equiv i}_{j \leq \bw} \Big[\frac {u_j}{q\chi_\bsq} \Big] 
\end{equation}
To match this with \eqref{eqn:act1}, we need to show that $f'$ can be taken to be: \begin{equation}
\label{eqn:hotah}
\left( \frac {z^d}{[q^{-2}]} \prod^{u_j\equiv i}_{j \leq \bw} \Big[\frac {u_j}{q z} \Big] \right) * f  = \sym \left[ \frac {z^d}{[q^{-2}]} f(...,z_{ia},...) \prod_{i,a} \zeta \left(\frac z{z_{ia}} \right) \prod^{u_j\equiv i}_{j \leq \bw}  \Big[\frac {u_j}{q z} \Big] \right] \ 
\end{equation}
This is easy to see from the definition of the shuffle product, and will be explained in detail in more generality in Proposition \ref{prop:act} below. The idea is that when we evaluate the symmetrization \eqref{eqn:hotah} at the set of weights $\{\chi_\sq\}_{\sq \in \bla}$, the variable $z$ must be set equal to the weight of a removable box $\sq \in \bla$ (in other words, $z$ must be set equal to $\frac {\chi_\sq}{q^2}$ for an outer corner $\sq$ of $\bla$). Otherwise, the product of $\zeta\left(\frac {z}{z_{ia}} \right) \Big |_{\{z,...,z_{ia},...\} \mapsto \chi_\bla}$ specializes to 0. Writing $\bmu = \bla - \bsq$ gives us precisely \eqref{eqn:areo}.

\tab
Let us now recall formula \eqref{eqn:form2} for the negative half of $\UU$, and rewrite it as:
\begin{equation}
\label{eqn:arys}
e_{i,d}^- \left( \ov{f} \right) = \left( \int_{|z_i|\gg 1} - \int_{|z_i|\ll 1} \right) z_i^d \cdot \ov{\frac {f(X + z_i)}{\zeta\Big( \frac X{z_i} \Big) \prod^{u_j \equiv i}_{1\leq j \leq \bw} \Big[\frac {z_i}{qu_j} \Big]}} Dz_i
\end{equation}
Meanwhile, when $G = \frac {z_i^d}{q^{-1}-q}$, the only summands which can appear in formula \eqref{eqn:act4} are $Z^1 = \emptyset, Z^2 = X, Z^3 = \{z_i^d\}$ and $\{Z^1\} = z_{i}^d, Z^2 = X, Z^3 = \emptyset$, so we obtain:
\begin{equation}
\label{eqn:oakheart}
\frac {z_i^d}{[q^{-2}]} \curvearrowright f \ = \ \left( \int_{|z_i|\gg 1} - \int_{|z_i|\ll 1} \right) \frac {z_i^d f(X+z_i)  Dz_i}{[q^{-2}] \zeta\left(\frac {z_i}{z_i} \right) \zeta\left(\frac {X}{z_i}\right) \prod^{u_j \equiv i}_{1\leq j \leq \bw}\prod^{u_j \equiv i}_{1\leq j \leq \bw} \left[\frac {z_i}{qu_i} \right]}
\end{equation}
There is no need to consider the various assumptions $|q|<1<|p|$, or the rational function $\zeta_p$ instead of $\zeta$ for that matter, because these only affect the residue computation when there is more than one $z$ variable. Since $\zeta \left(\frac {z_i}{z_i} \right)$ gives rise to a factor of $\frac 1{[q^{-2}]}$, formulas \eqref{eqn:arys} and \eqref{eqn:oakheart} match. 
\end{proof}

\tab 
According to the above Theorem, shuffle elements act on the $K-$theory groups of Nakajima cyclic quiver varieties, something that was expected after Section \ref{sec:k}. The following Proposition shows that shuffle elements precisely encode the matrix coefficients of the corresponding operators in the basis of fixed points $|\bla \rangle$ of $K-$theory. \\

\begin{proposition}
\label{prop:act}

For any $F \in \CS^+_{\ebk}$ and $G \in \CS^-_{-\ebk}$, we have:
\begin{equation}
\label{eqn:fixedplus}
\langle \bla | F |\bmu \rangle = F(\chi_\blamu) \prod_{\bsq \in \blamu} \left[\prod_{\square \in \bmu} \zeta \left( \frac {\chi_\bsq}{\chi_\square}\right)  \prod_{i=1}^\ebw \Big[\frac {u_i}{q\chi_\bsq} \Big]  \right]
\end{equation}
\begin{equation}
\label{eqn:fixedminus}
\langle \bmu | G |\bla \rangle  = G(\chi_\blamu) \prod_{\bsq \in \blamu} \left[\prod_{\square \in \bla} \zeta \left( \frac {\chi_\square}{\chi_\bsq}\right)  \prod_{i=1}^\ebw \Big[\frac {\chi_\bsq}{qu_i} \Big]  \right]^{-1}
\end{equation}
where the sum is over all skew diagrams $\blamu$ of size $\ebk$, and: 
$$
F(\chi_\blamu) \ := \ F \left(\{\chi_\sq\}_{\sq \in \blamu} \right)
$$
is defined for any shuffle elements as in \eqref{eqn:quantity}. \\

%\footnote{Since $f$ has denominators whenever two variables differ by a factor of $q^2$, one must take care in defining quantities such as $f(1,qt,qt^{-1},q^2)$. As explained in Proposition 3.10. of \citep{Npieri}, they are well-defined because of the wheel conditions satisfied by shuffle elements \eqref{eqn:shufelem}} \\ %The RHS of either \eqref{eqn:fixedplus} and \eqref{eqn:fixedminus} have a factor $(1-1)$ in the denominator, which must be excluded from the product in order to make sense. %The $*$ above the products in \eqref{eqn:fixedplus}-\eqref{eqn:fixedminus} represents the fact that there is an extra factor of 0 in the denominator of the RHS, and this factor must be eliminated. \\

\end{proposition}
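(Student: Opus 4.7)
The plan is to compute both matrix coefficients in the renormalized fixed-point basis $\{|\bla\rangle\}$ of $K(\ebw)_{\text{loc}}$, exploiting the basic localization identities $|\bla\rangle|_\bnu = \delta_{\bla,\bnu}$ and $(|\bla\rangle,|\bmu\rangle) = \delta_{\bla,\bmu}/[T_\bla\CN]$ (which follow immediately from the modified Euler characteristic \eqref{eqn:modified} and the pairing \eqref{eqn:pairingmoduli}). For \eqref{eqn:fixedplus}, I will pick a tautological representative $\bar g$ of $|\bmu\rangle$ (so $\bar g|_\bnu = \delta_{\bnu,\bmu}$) and apply \eqref{eqn:act1} to rewrite $F \curvearrowright \bar g$ as the tautological class $\overline{(F \cdot \prod_{i,a}[u_i/qz_{ia}]) * g}$. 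By \eqref{eqn:shuffle}, its value at $\chi_\bla$ is obtained by symmetrizing over permutations (within each color) of the $|\bv|+|\ebk|$ arguments, and upon specialization this collapses to a sum over color-preserving partitions $\chi_\bla = S_z \sqcup S_x$ with $|S_z|=\ebk$ and $|S_x|=\bv$. Since $g(S_x)$ vanishes unless $S_x = \chi_\bmu$ (in particular forcing $\bla \supseteq \bmu$), only the unique decomposition $S_z = \chi_\blamu$, $S_x = \chi_\bmu$ contributes, and reading off the coefficient of $|\bla\rangle$ yields \eqref{eqn:fixedplus}.

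For \eqref{eqn:fixedminus}, I will use the Shapovalov duality \eqref{eqn:shap} together with the fixed-point pairing formula to obtain
$$
\langle \bmu | G | \bla \rangle \;=\; \frac{[T_\bmu \CN]}{[T_\bla \CN]} \cdot \langle \bla | G^T | \bmu \rangle,
$$
where $G^T \in \CS^+$ has the same underlying function as $G$, in particular $G^T(\chi_\blamu) = G(\chi_\blamu)$. Substituting \eqref{eqn:fixedplus} on the right, I will factor the tangent-space quotient using the identity $[T_\bla\CN] = \zeta(\chi_\bla/\chi_\bla) \prod_{\sq\in\bla,i}[\chi_\sq/qu_i][u_i/q\chi_\sq]$ extracted from the proof of Proposition \ref{prop:integral}, combined with the decomposition $\chi_\bla = \chi_\bmu \sqcup \chi_\blamu$. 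After the common factors $\prod_{\bsq \in \blamu,i}[u_i/q\chi_\bsq]$ cancel between the two sides, the remaining $\zeta$-factors reorganize via $\prod_{\square\in\bla}\zeta(\chi_\square/\chi_\bsq) = \zeta(\chi_\bmu/\chi_\bsq)\zeta(\chi_\blamu/\chi_\bsq)$ into exactly the inverse product appearing in \eqref{eqn:fixedminus}.

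The main obstacle will be to justify that $F(\chi_\blamu)$ and $G(\chi_\blamu)$ are well-defined elements of $\tBF$, since shuffle elements carry denominators of the form $(qz_{ia}-q^{-1}z_{ib})$ that formally blow up when $z$-variables are evaluated at weights of adjacent boxes in the skew diagram. This is handled exactly as in the last part of the proof of Theorem \ref{thm:act}: the wheel conditions \eqref{eqn:shufelem} force the numerator of $F$ to vanish at precisely these configurations, so the specialization is well-defined after cancellation. All remaining ingredients in the two formulas (the $\zeta$-factors between boxes of different diagrams and the brackets $[u_i/q\chi_\bsq]$) are manifestly honest rational functions of the equivariant parameters, so no further issues arise.
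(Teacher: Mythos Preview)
Your argument for \eqref{eqn:fixedplus} has a gap at the key step. After expanding $(F\cdot\prod_{i,a}[u_i/qz_{ia}])*g$ via \eqref{eqn:shuffle} and specializing at $\chi_\bla$, you correctly obtain a sum over color-preserving set decompositions $\chi_\bla = S_z \sqcup S_x$, but the assertion ``$g(S_x)$ vanishes unless $S_x = \chi_\bmu$'' is not justified. Your interpolating representative $g$ satisfies $g(\chi_\bnu)=\delta_{\bnu,\bmu}$ only for \emph{partitions} $\bnu$; there is no control over $g(S_x)$ when $S_x$ is an arbitrary subset of $\chi_\bla$ that does not arise as the box-weight set of a sub-Young-diagram. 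The mechanism that actually forces $S_x$ to be of the form $\chi_\bnu$ is the cross-factor $\prod_{\bsq\in S_z,\,\sq\in S_x}\zeta(\chi_\bsq/\chi_\sq)$: since $\zeta(q_1^{-1})=\zeta(q_2^{-1})=0$, this product vanishes whenever $S_x$ contains a box directly to the right of, or directly above, a box of $S_z$, and this is exactly what pins $S_x$ down to a lower order ideal. Only then does your interpolation condition on $g$ select $\bnu=\bmu$. This $\zeta$-vanishing is precisely the observation the paper uses, though packaged differently: instead of a direct computation, the paper reduces to the degree-one case \eqref{eqn:fixedpoints1}--\eqref{eqn:fixedpoints2} (where the formula is already known) and checks that \eqref{eqn:fixedplus} is multiplicative under $*$, the same vanishing being what restricts the sum over $\blamu = D\sqcup D'$ to intermediate partitions $\bmu\subseteq\bnu\subseteq\bla$. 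The inductive route also sidesteps a second subtlety in your term-by-term expansion: for non-skew $S_z$ the quantity $F(S_z)$ can hit a pole of the denominator $\prod(qz_{ia}-q^{-1}z_{ib})$ at the same time that a $\zeta$-factor hits a zero, so making the term-by-term count rigorous would require a regularization (e.g.\ via the auxiliary variables $s_j^{(y)}$ from the proof of Theorem~\ref{thm:act}).

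Your derivation of \eqref{eqn:fixedminus} from \eqref{eqn:fixedplus} via the Shapovalov transpose is correct and is a genuinely different (and cleaner) route than the paper's ``analogously'', which would rerun the multiplicativity argument with \eqref{eqn:fixedpoints2} as the base case. The tangent-space factorization you describe does go through: expanding $[T_\bla]/[T_\bmu]$ via the displayed formula in the proof of Proposition~\ref{prop:integral} and splitting $\chi_\bla = \chi_\bmu \sqcup \chi_\blamu$ converts $\prod_{\sq\in\bmu}\zeta(\chi_\bsq/\chi_\sq)\prod_i[u_i/q\chi_\bsq]$ into exactly $\prod_{\sq\in\bla}\zeta(\chi_\sq/\chi_\bsq)^{-1}\prod_i[\chi_\bsq/qu_i]^{-1}$.
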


\begin{proof} Note that the case when $F$ or $G$ have a single variable (in other words, degree 1) was proved in \eqref{eqn:fixedpoints1} and \eqref{eqn:fixedpoints2}. Since such shuffle elements generate the whole of $\CS$, according to Theorem \ref{thm:iso}, it is enough to show that formulas \eqref{eqn:fixedplus} and \eqref{eqn:fixedminus} respect the shuffle product. Let us take care of the former, since the latter is treated analogously. For any positive shuffle elements $F * F'$ and skew diagram $\blamu$, we have:
$$
(F*F')(\blamu) \prod_{\bsq \in \blamu} \left[\prod_{\square \in \bmu} \zeta \left( \frac {\chi_\bsq}{\chi_\square}\right) \prod_{i=1}^\bw \Big[\frac {u_i}{q\chi_\bsq} \Big]  \right] = 
$$
\begin{equation}
\label{eqn:alfie}
= \sum_{\blamu = D \sqcup D'} F(D) F'(D') \prod^{\sq \in D}_{\sq' \in D'} \zeta \left( \frac {\chi_\sq}{\chi_{\sq'}} \right) \prod_{\bsq \in \blamu} \left[ \prod_{\square \in \bmu} \zeta \left( \frac {\chi_\bsq}{\chi_\square}\right) \prod_{i=1}^\bw \Big[\frac {u_i}{q\chi_\bsq} \Big] \right]
\end{equation}
where the sum is over all partitions of the set of boxes of $\blamu$. Because $\zeta(q_1^{-1}) = \zeta(q_2^{-1}) = 0$, the only summands which are non-zero are those where no box of $D$ is below or to the right of any box from $D'$. In other words, there must be a partition $\bnu$ nestled between $\bmu$ and $\bla$ such that $D = \blanu$ and $D' = \bnumu$. Hence \eqref{eqn:alfie} equals:
$$
\sum_{\bla \geq \bnu \geq \bmu} F(\blanu) F'(\bnumu) \cdot
$$
$$
\prod^{\bsq \in \blanu}_{\bsq' \in \bnumu} \zeta \left( \frac {\chi_\bsq}{\chi_{\bsq'}} \right) \prod^{\bsq \in \blanu}_{\square \in \bmu} \zeta \left( \frac {\chi_\bsq}{\chi_\square}\right)  \prod^{\bsq' \in \bnumu}_{\square \in \bmu} \zeta \left( \frac {\chi_{\bsq'}}{\chi_\square}\right) \prod_{\bsq \in \blanu}^{1\leq i \leq \bw} \Big[\frac {u_i}{q\chi_\bsq} \Big] \prod_{\bsq' \in \bnumu}^{1\leq i \leq \bw} \Big[\frac {u_i}{q\chi_{\bsq'}} \Big] 
$$ 	
The above formula is precisely what one obtains by iterating \eqref{eqn:fixedplus} twice. 
\end{proof}

\tab
Theorem \ref{thm:act} claims that the operators on $K(\bw)$ induced by the line bundle $\CL^{\otimes d}$ on the simple geometric correspondences $\fZ_i$ (see Section \ref{sec:simple}) are realized by the shuffle element $\frac {z_i^d}{q^{-1}-q}$. This begs the question as to how to construct shuffle realizations of other line bundles on geometric correspondences. The first idea one has to construct more complicated correspondences is to iterate several simple correspondences:
$$
\fZ_{i_1}^{d_1} \circ ... \circ \fZ_{i_m}^{d_m} \quad \text{corresponds to} \quad \frac {z_{i_1}^{d_1} * ... * z_{i_m}^{d_m}}{[q^{-2}]^m} \ = \ \sym \left[ \frac {z_{i_1}^{d_1}...z_{i_m}^{d_m}}{[q^{-2}]^m} \prod_{1\leq a < b \leq m} \zeta \left(\frac {z_{i_a}}{z_{i_{b}}} \right) \right]
$$
To be precise, we ought to write the shuffle element in the right hand side by replacing the variables $\{z_{i_a}, i_a \equiv c\}$ by $\{z_{c1},z_{c2},...\}$ for every residue $c$ modulo $n$, but we will gloss over this imprecision in order to maintain the elegance of shuffle formulas. Another set of geometric operators is given by the correspondences:
\begin{equation}
\label{eqn:corr1}
\fV_{\bv^+,\bv^-,\bw}^{q_1} \quad  \subset \quad \CN_{\bv^+,\bw} \times \CN_{\bv^-,\bw}
\end{equation}
defined for all $\bv^+ \geq \bv^-$ to consist of the space of linear maps:
\begin{equation}
\label{eqn:preserve}
\xymatrix{  & W_i \ar[d]^{A_i} & W_{i+1} \ar[d]^{A_{i+1}} &  \\
...  & V_i^+ \ar[l]_{Y_{i-1}} \ar@{->>}[d] & V_{i+1}^+  \ar[l]_{Y_{i}} \ar@{->>}[d]  & \ar[l]_{Y_{i+1}} ... \\
...  \ar[ru]^{X_{i-1}} & V_i^- \ar[l]^{Y_{i-1}} \ar[d]^{B_i} \ar[ru]^{X_{i}} & V_{i+1}^- \ar[l]^{Y_i} \ar[d]^{B_{i+1}} \ar[ru]^{X_{i+1}} &  \ar[l]^{Y_{i+1}} ...  \\
 & W_i  & W_{i+1} & }
\end{equation}
for a fixed collection of quotients  $\{V^+ \twoheadrightarrow V^-\}  = \{V_i^+ \stackrel{p_i}\twoheadrightarrow V_i^-\}_{1\leq i \leq n}$. In other words, both the $X$ and $Y$ maps are required to preserve the quotients, but the $X$ maps are required to act by 0 on $\text{Ker } p_i$ (and thus factor through $V_{i}^- \rightarrow V_{i+1}^+$). One defines $\fV_{\bv^+,\bv^-,\bw}^{q_2}$ similarly, by switching the roles of $X$ and $Y$.

%\begin{equation}
%\label{eqn:preserve}
%\xymatrix{  & W_i \ar[d]^{A_i} & W_{i+1} \ar[d]^{A_{i+1}} &  \\
%	... \ar[r]^{X_{i-1}} & V_i^+ \ar[r]^{X_{i}} \ar@{->>}[d]& V_{i+1}^+ \ar@{->>}[d] \ar[r]^{X_{i+1}} & ... \\
%	... \ar[r]_{X_{i-1}} & V_i^- \ar[r]_{X_i} \ar[d]^{B_i} \ar[lu]^{Y_{i-1}} & V_{i+1}^- \ar[r]_{X_{i+1}} \ar[d]^{B_{i+1}} \ar[lu]^{Y_{i}} & ... \ar[lu]^{Y_{i+1}} \\
%	& W_i  & W_{i+1} & }
%\end{equation}

\tab
Since we lose degrees of freedom by the requirement that $X|_{\text{Ker }p_i} = 0$, the naive guess is that the dimension of $\fV_{\bv^+,\bv^-,\bw}^{q_1}$ is smaller than middle dimension in \eqref{eqn:corr1}. However, we also lose a number of degrees of freedom in the moment map, since:
$$
\mu(X,Y,A,B) = X_{i-1}Y_{i-1} - Y_iX_i + A_iB_i \quad \text{annihilates} \quad \text{Ker }p_i
$$
and thus factors through a map $V_i^- \rightarrow V_i^+$. The correspondence $\fV^{q_1}_{\bv^+,\bv^-,\bw}$ is well-known to be smooth and middle dimensional, and in fact Lagrangian. Therefore, the $T-$character in its tangent spaces at the torus fixed points can be computed by analogy with \eqref{eqn:tansim}. Note that fixed points are parametrized by skew $\bw-$diagrams $\bla^+\backslash \bla^-$, such that no two boxes are on the same row (the latter condition is required by the vanishing on the $X$ maps):
$$
T _{\bla^+ \geq \bla^-} \fV^{q_1} = \sum_{j=1}^\bw \left(\sum_{\sq \in \bla^+}^{c_\sq \equiv j} \frac {\chi_\sq}{qu_j} + \sum_{\sq' \in \bla^-}^{c_{\sq'} \equiv j} \frac {u_j}{q\chi_{\sq'}} \right) + \sum_{\bsq, \bsq' \in \bla^+ \backslash \bla^-} \left( \delta_{c_{\bsq'}}^{c_{\bsq}+1} \cdot \frac {t\chi_\bsq}{q\chi_{\bsq'}} - \delta_{c_{\bsq'}}^{c_{\bsq}} \cdot \frac {\chi_\bsq}{\chi_{\bsq'}}\right) + 
$$
$$
+ \sum^{\sq \in \bla^+}_{\sq' \in \bla^-} \left(\delta_{c_{\sq'}}^{c_{\sq}-1} \cdot \frac {\chi_{\square}}{qt\chi_{\square'}} + \delta_{c_{\sq'}}^{c_{\sq}+1} \cdot \frac {t\chi_{\square}}{q\chi_{\square'}} - \delta_{c_{\sq'}}^{c_{\sq}} \cdot \frac {\chi_{\square}}{\chi_{\square'}} - \delta_{c_{\sq'}}^{c_{\sq}} \cdot \frac {\chi_{\square}}{q^2\chi_{\square'}} \right)
$$
With this in mind, we can use \eqref{eqn:formula} and \eqref{eqn:fixedplus} to obtain the following direct generalization of Proposition 6.7. of \citep{Nflag}. Let $\bk = \bv^+ - \bv^-$. \\
		
%one can compute the $K-$theory class of its tangent bundle by analogy with formula \eqref{eqn:bb} for the simple correspondences:
%$$
%T \fV^{q_1}_{\bv^-,\bv^+,\bw} = \sum_{j=1}^\bw \Big( \frac 1{qu_j} \cdot \CV_{u_j \text{ mod }n}^+ + q^{-1}u_j \cdot \CV_{u_j \text{ mod }n}^{-\vee} \Big) + \sum_{k=1}^n \Big(\frac 1{qt} \cdot \CL_{k+1} \otimes \CL_k^{\vee} -  \CL_k \otimes \CL^{\vee}_k \Big)
%$$
%$$
%+ \sum_{k=1}^n \Big( \frac 1{qt} \cdot \CV_{k+1}^{+} \otimes \CV_k^{-\vee} + \frac tq \cdot \CV_{k}^+ \otimes \CV_{k-1}^{-\vee} - \CV^+_k \otimes \CV^{- \vee}_k - \frac 1{q^2} \cdot \CV^+_k \otimes \CV^{- \vee}_k \Big)
%$$
%where $\CV^\pm_k$ denote the tautological vector bundles pulled back from the factors $\CN_{\bv^\pm,\bw}$, while $\CL_i = \text{Ker} \left( \CV_i^+ \twoheadrightarrow \CV^-_i \right)$ is the tautological kernel bundle, of rank $v^+_i - v^-_i$.		
		
\begin{proposition}
\label{prop:smooth}
The operators induced by $\fV_{\ebv^+,\ebv^-,\ebw}^{q_1}$ and $\fV_{\ebv^+,\ebv^-,\ebw}^{q_2}$ on $K(\ebw)$ correspond to the operators induced by the following shuffle elements, respectively:
$$
B^{q_1}_\ebk \ := \ \prod_{i=1}^n  \frac {\prod_{a=1}^{k_i} \prod_{b=1}^{k_{i+1}}  \Big[ \frac {z_{i+1,b}}{qtz_{ia}} \Big]}{\prod_{a=1}^{k_i} \prod_{b=1}^{k_{i}}  \Big[ \frac {z_{ib}}{q^2z_{ia}} \Big]} \qquad \text{and} \qquad B^{q_2}_\ebk \ := \ \prod_{i=1}^n  \frac {\prod_{a=1}^{k_i} \prod_{b=1}^{k_{i-1}}  \Big[ \frac {tz_{i-1,b}}{qz_{ia}} \Big]}{\prod_{a=1}^{k_i} \prod_{b=1}^{k_{i}}  \Big[ \frac {z_{ib}}{q^2z_{ia}} \Big]}
$$
\end{proposition}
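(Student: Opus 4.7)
The plan is to compare both sides via equivariant localization at the torus fixed points, and show that the matrix coefficients in the basis $|\bla\rangle$ agree. By formula \eqref{eqn:formula} applied to the correspondence $\fV^{q_1}_{\bv^+,\bv^-,\bw}$, we have
\[
\langle \bla^+ | \fV^{q_1} | \bla^- \rangle \;=\; \frac{[T_{\bla^+}\CN_{\bv^+,\bw}]}{[T_{\bla^+\geq\bla^-}\fV^{q_1}]},
\]
which can be written down explicitly from \eqref{eqn:tanmod0} and the tangent character of $\fV^{q_1}$ displayed just before the proposition. On the shuffle side, Proposition \ref{prop:act} computes $\langle\bla^+|B^{q_1}_\bk|\bla^-\rangle$ as $B^{q_1}_\bk(\chi_{\blamu})$ multiplied by the standard product of $\zeta$-factors between $\blamu$ and $\bmu$, together with the $[u_i/q\chi_\bsq]$ framing factors. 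The statement of the proposition is then equivalent to an explicit identity between two rational functions in the weights $\chi_\sq$ for $\sq$ in $\bla^+$ and $\bla^-$; my proof would simply verify this identity.

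First I would check the vanishing on ``bad'' skew shapes: if the skew diagram $\blamu$ contains two boxes in the same row, of consecutive colors $i$ and $i+1$ and weights $\chi,\,qt\chi$, then the numerator factor $[z_{i+1,b}/(qtz_{ia})]$ of $B^{q_1}_\bk$ specializes to $[1]=0$. This matches the fact that fixed points of $\fV^{q_1}$ precisely rule out such configurations, so both sides vanish on these shapes. Next, for an allowed skew shape, I would split the tangent character formula for $\fV^{q_1}$ into three blocks: the framing block $\sum_j(\chi_\sq/qu_j + u_j/q\chi_{\sq'})$, the block involving pairs $(\sq,\sq')$ with $\sq\in\bla^+$ and $\sq'\in\bla^-$, and the block involving pairs inside $\blamu$. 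Taking the ratio with $[T_{\bla^+}\CN_{\bv^+,\bw}]$ from \eqref{eqn:tanmod0} and regrouping, the framing block reproduces the $[u_i/q\chi_\bsq]$ factor of \eqref{eqn:fixedplus}, the mixed block reproduces the $\prod_{\sq\in\bmu}\zeta(\chi_\bsq/\chi_\sq)$ factor (using the identity satisfied by $\zeta$), and the internal block yields exactly the specialization $B^{q_1}_\bk(\chi_\blamu)$.

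The main obstacle will be the bookkeeping in the internal block: one must show that the ratio of tangent contributions from pairs of boxes inside $\blamu$, together with the tangent contributions from pairs of boxes in $\bla^+/\blamu$ that restrict to pairs within $\blamu$, telescopes into the symmetric expression $\prod_i \prod_{a,b}[z_{i+1,b}/(qtz_{ia})] / [z_{ib}/(q^2 z_{ia})]$ after specializing $z_{ia}\mapsto\chi_\sq$. The argument for $B^{q_2}_\bk$ and $\fV^{q_2}$ is entirely parallel, interchanging the roles of $q_1$ and $q_2$ (equivalently, of $X$ and $Y$, and of the factors $[z_{i+1,b}/(qtz_{ia})]$ and $[tz_{i-1,b}/(qz_{ia})]$).
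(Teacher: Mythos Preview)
Your approach is correct and is precisely the one indicated by the paper: the sentence preceding the proposition says that one uses \eqref{eqn:formula} and \eqref{eqn:fixedplus} to obtain the result as a direct generalization of Proposition~6.7 of \citep{Nflag}, and the paper gives no further details. Your outline---computing the fixed-point matrix coefficients of $\fV^{q_1}$ via the tangent character just above the proposition, and matching them against Proposition~\ref{prop:act} applied to $B^{q_1}_\bk$---is exactly this, and is also parallel to the proofs of the closely analogous Exercises~\ref{ex:e} and~\ref{ex:eccentric} given in Chapter~\ref{chap:proofs}.
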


\tab
One can also consider the operator on $K(\bw)$ induced by the vector bundle $\CE \rightarrow \CN_{\bv^+,\bw} \times \CN_{\bv^-,\bw}$ that was defined in \eqref{eqn:defe}:

$$
\CE|_{\CF^+,\CF^-} \ = \ \Ext^1(\CF^+,\CF^-(-\infty))^{\BZ/n\BZ}
$$
This vector bundle has a section:
$$
s|_{\CF^+,\CF^-} \ = \ \psi(\text{tautological homomorphism}) 
$$
where the map $\psi$ is the following composition:
$$
\Hom(\CF^+, \CF^+|_\infty)^{\BZ/n\BZ} \cong \Hom(\CF^+, \CF^-|_\infty)^{\BZ/n\BZ} \longrightarrow \Ext^1(\CF^+,\CF^-(-\infty))^{\BZ/n\BZ} 
$$
The left isomorphism is due to the fact that our sheaves are framed, i.e. trivialized at $\infty$, while the middle map is part of the long exact sequence for $\text{RHom}$. From the long exact sequence, we see that the section $s$ vanishes precisely when $\CF^+ \subset \CF^-$. Therefore, the operator induced by the top exterior power of $\CE$:
\begin{equation}
\label{eqn:co}
K_{\bv^-,\bw} \ \stackrel{\fA}\longrightarrow \ K_{\bv^+,\bw} \qquad \qquad \alpha \longrightarrow \widetilde{\pi}_{+*} \Big([\CE] \cdot \pi^*_-(\alpha) \Big)
\end{equation}
vanishes unless $\bv^+ \geq \bv^-$. The following Exercise is a straightforward generalization of Proposition 6.7. of \citep{Nflag}: \\

\begin{exercise}
\label{ex:e}
The operator $\fA : K_{\bv^-,\bw} \rightarrow K_{\bv^+,\bw}$ corresponds to that induced by the shuffle element $1\in \CS_\bk^+$, where $\bk = \bv^+ - \bv^-$. 
\end{exercise}

\tab 
In the special case $\bk = k\bth$ for a natural number $k$, the shuffle element $1\in \CS_\bk^+$ was studied in \citep{FT2}, where it was denoted by $F_k$. It was shown in \loccit that the shuffle elements $\{F_k\}_{k\in \BN}$ correspond to the central Heisenberg subalgebra of $\uu \subset \UU$ at ``slope 0", as we will recall in Chapter \ref{chap:subalgebras}. 

%\tab
%Note that one can consider a generalization of the operators \eqref{eqn:co}, by considering the generating sum of exterior powers of $\CE$ instead of the alternating sum:
%$$
%K_{\bv^-,\bw} \ \stackrel{\fA(m)}\longrightarrow \ K_{\bv^+,\bw} \qquad \qquad \alpha \longrightarrow \pi_{1*} \Big([\CE \otimes m] \cdot \pi^*_2(\alpha) \Big)
%$$
%where $m$ is a formal symbol, which can be treated like a line bundle. Such operators were first studied in \citep{CO} for the cohomology of Hilbert schemes, where they were interpreted as vertex operators on Fock space. The case of the $K-$theory of Gieseker moduli spaces were studied in \citep{CON}, which corresponds to the $n=1$ case of $\fA(m)$ above. \\

\section{Eccentric correspondences}
\label{sec:eccentric}

\noindent For us, the richest class of geometric operators will be given by the so-called \b{eccentric correspondences}, which we define below. Particular instances of these correspondences will be shown to generate the shuffle algebra, in a sense which will be made clear in the next Chapter, and they are the operators that feature in Theorem \ref{thm:pieri}. Geometrically, these correspondences are defined for all $i<j$ and all degree vectors $\bv^+,\bv^- \in \nn$ such that $\bv^+ = \bv^- + [i;j)$:
\begin{equation}
\label{eqn:fine}
\fZ^{q_1}_{[i;j)} \ \hookrightarrow \ \CN_{\bv^+,\bw} \times \CN_{\bv^-,\bw} 
\end{equation}
To define the locus \eqref{eqn:fine}, consider a surjective map between two $n-$tuples of vector spaces $\{V^+ \twoheadrightarrow V^- \} = \{V^+_k \twoheadrightarrow V^-_k\}_{1 \leq k \leq n}$ and assume we are given a full flag of the kernel of these surjections:
\begin{equation}
\label{eqn:flag}
0 = U^{[j;j)} \subset U^{[j-1;j)} \subset ... \subset U^{[i+1;j)} \subset U^{[i;j)} = \text{Ker} \left( V^+ \twoheadrightarrow V^- \right) %0 = U^{[i;i)} \subset U^{[i;i+1)} \subset ... \subset U^{[i;j-1)} \subset U^{[i;j)} = \text{Ker } \left( V^+ \twoheadrightarrow V^- \right)
\end{equation}
The meaning of this notation is that $U^{[a;j)} = U^{[a;j)}_1 \oplus ... \oplus U^{[a;j)}_n$ is an $n-$tuple of vector spaces, such that $U^{[a;j)}_k$ has dimension equal to the number of integers $\equiv k$ mod $n$ in the interval $[a;j)$. The inclusion $U^{[a;j)}_k \subset U^{[a+1;j)}_k$ is required to have codimension $\delta_k^a$ modulo $n$. Recall the vector space $N_{\bv^+,\bw}$ of \eqref{eqn:quad} and define the subspace:
$$
Z^{q_1}_{[i;j)} \ \subset \ N_{\bv^+,\bw}
$$
of quadruples $(X,Y,A,B)$ of linear maps on the vector spaces $V^+ = \{V^+_k\}_{1\leq k \leq n}$, which preserve the quotient $V^+ \twoheadrightarrow V^-$ and: 

\begin{itemize}

\item the $X$ maps are ``nilpotent" on the flag \eqref{eqn:flag}: $\ X(U^{[a;j)}) \subset U^{[a+1;j)} \qquad \quad \ (*)$ 

\item the $Y$ maps are ``anti-nilpotent" on the flag \eqref{eqn:flag}: $\ Y(U^{[a+1;j)}) \subset U^{[a;j)} \quad (**)$

\end{itemize}

%$$
%= \sum_{k=1}^n \left[\underbrace{v_k^- v_{k+1}^+ + {\min(\delta_k,\delta_{k+1}) + 1 \choose 2}}_X  + \underbrace{v_k^+ v_{k+1}^- + {\max(\delta_k,\delta_{k+1})\choose 2}}_Y + \underbrace{w_k v_k^+}_A + \underbrace{w_k v_k^-}_B\right]
%$$
%where the underbraces represent the contribution of the maps $X,Y,A,B$ to the count. 

%where $\e_k = v^+_k - v^-_k$ is the number of residues congruent to $k$ modulo $n$ in the interval $[i;j)$, which is precisely the dimension of the $k-$th component of the vector space $U^{[i;j)}$. 

\noindent Note that the vector space $Z^{q_1}_{[i;j)}$ is invariant under the conjugation action of the parabolic subgroup $P_{[i;j)} \subset G_{\bv^+}$ which preserves both the quotients $V^+ \twoheadrightarrow V^-$ and the flag \eqref{eqn:flag}. Moreover, the moment map makes the following diagram commute:
\begin{equation}
\label{eqn:moment2}
\xymatrix{
Z^{q_1}_{[i;j)} \ar@{^{(}->}[d] \ar[r]^\nu & \fp_{[i;j)} \ar@{^{(}->}[d] \\
N_{[i;j)} \ar[r]^\mu & \fg_{\bv^+}}
\end{equation}
where $\fp_{[i;j)}$ is the vector space of traceless endomorphisms of $V^+$ which preserve the quotients $V^+ \twoheadrightarrow V^-$ and the flag \eqref{eqn:flag}. \\

\begin{definition}
\label{def:eccentric}
	
Define the \b{eccentric correspondence} as the quotient:
\begin{equation}
\label{eqn:eccentric}
\fZ^{q_1}_{[i;j)} \ = \ \nu^{-1}(0) \sslash_\bth P_{[i;j)} %\nu^{-1}(0)^{\text{s.s.}} / P_{[i;j)} 
%where semistability requires that the vector spaces $V_k^+$ are generated by the various $X$ and $Y$ maps acting on $\oplus_{k=1}^n \text{Im } A_k$. 
\end{equation}

\end{definition}

\tab
We will now give a naive dimension count for $\fZ^{q_1}_{[i;j)}$, by first computing the dimension of the affine space $Z^{q_1}_{[i;j)}$ and then subtracting the dimensions of $P_{[i;j)}$ and $\fp_{[i;j)}$. It is easy to see that:
$$
\dim \Big( X\text{ maps} \Big) \quad = \quad \sum_{k=1}^n v_k^- v_{k+1}^+ + \dim \Big( \text{nilpotent maps }(*) \text{ on the flags \eqref{eqn:flag}} \Big)
$$
$$
\dim \Big( Y\text{ maps}\Big) = \sum_{k=1}^n v_{k+1}^- v_k^+ + \dim \Big( \text{anti-nilpotent maps }(**) \text{ on the flags \eqref{eqn:flag}} \Big)
$$
It is straightforward to compute the dimension of the spaces of nilpotent and anti-nilpotent linear maps between two flags of vector spaces, so we give the answer here:
$$
\dim \Big( \text{nilpotent maps }(*) \Big) \ \quad = \quad \ \sum_{i \leq a \leq b - 1 < j - 1} \delta^{a}_{b-1} %\dim \Big( \text{nilpotent maps }(*) \Big) \ = \ \sum_{i \leq a \leq b + 1 < j + 1} \delta^{a}_{b+1} %\sum_{k=1}^n \frac {\e_{k+1}(\e_k+1)}2 - \frac {\e_k(\delta_k^{j-1}+\delta_k^{i-1})}2
$$
$$
\dim \Big( \text{anti-nilpotent maps }(**) \Big) \ = \sum_{i \leq a \leq b + 1 < j + 1 } \delta^{a}_{b+1} %\dim \Big( \text{anti-nilpotent maps }(**) \Big) \ = \ \sum_{i \leq a \leq b - 1 < j - 1 } \delta^{a}_{b-1} %\sum_{k=1}^n \frac {\e_{k}(\e_{k+1}+1)}2 + \frac {\e_k(\delta_k^{j-1}+\delta_k^{i-1})}2 - \delta_k^{i-1}
$$
where the congruences are taken modulo $n$. We conclude that:
$$
\dim Z^{q_1}_{[i;j)} = \sum_{k=1}^n \Big( v_k^- v_{k+1}^+ + v_{k+1}^- v_k^+ + w_kv_k^+ + w_k v_k^-\Big) + \sum_{i \leq a \leq b - 1 < j - 1} \delta^{a}_{b-1} + \sum_{i \leq a \leq b + 1 < j + 1} \delta^{a}_{b+1} %\sum_{k=1}^n \Big( v_k^- v_{k+1}^+ + v_{k+1}^- v_k^+ + w_kv_k^+ + w_k v_k^- + \e_k\e_{k+1} + \e_k \Big) - 1
$$
Moreover, we have:
$$
\dim P_{[i;j)} \ = \ \dim \fp_{[i;j)}+1 \ = \ \sum_{k=1}^n v^-_kv^+_k + \sum_{i\leq a \leq b < j} \delta^a_b %\sum_{k=1}^n \left[ v^+_kv^-_k + {\delta_k+1 \choose 2} \right]
$$
where we note that the dimension of $\fp_{[i;j)}$ is one smaller than that of $P_{[i;j)}$ because we have imposed the traceless condition. We have the identity:
$$
\sum_{i \leq a \leq b - 1 < j - 1} \delta^a_{b-1} + \sum_{i \leq a \leq b + 1 < j + 1} \delta^{a}_{b+1} - 2 \sum_{i\leq a \leq b < j} \delta^a_b + 1 = \sum_{k=1}^n (v_k^+ - v_k^-)(v_{k+1}^+ - v_{k+1}^-) - (v_k^+ - v_k^-)^2
$$
which can be proved by first for $j\in \{i,...,i+n-1\}$ and then showing that the two sides are invariant under $[i;j)\mapsto [i;j+n)$. We conclude that $\quad \dim \fZ^{q_1}_{[i;j)} \geq$
$$
\sum_{k=1}^n \left(- \frac {(v_k^+-v_{k+1}^+)^2}2 + w_kv_k^+ - \frac {(v_k^--v_{k+1}^-)^2}2 + w_k v_k^- \right) \ = \ \frac {\dim \CN_{\bv^+,\bw} + \dim \CN_{\bv^-,\bw}}2
$$
Equality is attained if and only if the derivative of the moment map $\nu$ of \eqref{eqn:moment2} is surjective on the semistable locus, i.e. the equations $\nu = 0$ cut out a subvariety of minimal possible dimension. We conjecture that this is the case, which implies that $\fZ^{q_1}_{[i;j)}$ is a local complete intersection, and hence we can make sense of its tangent space by subtracting the $\dim P_{[i;j)} + \dim \fp_{[i;j)}$ relations from the $\dim Z^{q_1}_{[i;j)}$ affine coordinates. 

\tab 
Fixed points of $\fZ^{q_1}_{[i;j)}$ consist of two pieces of data, the first of which is a skew $\bw-$diagram $\cray$ of size $[i;j)$. However, the condition that the parabolic subgroup $P_{[i;j)}$ preserve the flag \eqref{eqn:flag} determines that our second piece of data is a labeling:
$$
\psi \ : \ \{\sq \in \cray\} \ \stackrel{\text{bijection}}\longrightarrow \ \{i,...,j-1\} \qquad \text{s.t.} \quad \psi(\sq) \equiv \col \sq, \ \ \forall \ \sq \in \cray
$$
and the labels of the boxes $\sq$ increase as we go up and to the right in the skew $\bw-$diagram $\cray$, with the possible exception that the box with label $a-1$ is allowed to be directly above the box labeled $a$. Without this latter relaxation, the labeling would have given rise to what is known as a standard Young tableau. We therefore call $(\cray,\psi)$ a \b{positive almost standard Young tableau}, and shorten this terminology by $\asyt^+$. The following is proved similarly with Exercise \ref{ex:tangent2}: \\  

\begin{exercise}
\label{ex:tangent3}

The $T-$character in the tangent spaces to $\fZ^{q_1}_{[i;j)}$ are given by:
\begin{equation}
\label{eqn:tywin}
T_{(\bla^+ \geq \bla^-, \psi)} \fZ^{q_1}_{[i;j)} = \sum_{k = 1}^{\bw} \left( \sum_{\square \in \bla^+}^{c_\square \equiv k} \frac {\chi_\square}{qu_k} + \sum_{\square' \in \bla^-}^{c_{\square'} \equiv k} \frac {u_k}{q\chi_{\square'}} \right) + \frac 1{q^2} +
\end{equation}
$$
+ \sum_{i \leq a \leq b - 1 < j - 1}^{a \equiv b-1} \frac {\chi_{b}}{qt\chi_{a}} + \sum_{i \leq a \leq b + 1 < j + 1}^{a \equiv b+1} \frac {t\chi_{b}}{q\chi_{a}} - \sum_{i \leq a \leq b < j}^{a \equiv b} \frac {\chi_{b}}{\chi_{a}} - \sum_{i \leq a \leq b < j}^{a\equiv b} \frac {\chi_{b}}{q^2\chi_{a}}  +
$$
$$
+ \sum^{\sq \in \bla^+}_{\sq' \in \bla^-} \left(\delta_{c_{\sq'}}^{c_{\sq}-1} \cdot \frac {\chi_{\square}}{qt\chi_{\square'}} + \delta_{c_{\sq'}}^{c_{\sq}+1} \cdot \frac {t\chi_{\square}}{q\chi_{\square'}} - \delta_{c_{\sq'}}^{c_{\sq}} \cdot \frac {\chi_{\square}}{\chi_{\square'}} - \delta_{c_{\sq'}}^{c_{\sq}} \cdot \frac {\chi_{\square}}{q^2\chi_{\square'}} \right)
$$
where $\chi_a$ denotes the weight of the box labeled $a$ in the $\asyt^+$ defined by $\psi$. 
\end{exercise}

%\begin{equation}
%\label{eqn:tywin}
%T_{(\bla^+ \geq \bla^-, \psi)} \fZ^{q_1}_{[i;j)} = \sum_{k = 1}^{\bw} \left( \sum_{\square \in \bla^+}^{o_\square \equiv k} \frac {\chi_\square}{qu_k} + \sum_{\square' \in \bla^-}^{o_{\square'} \equiv k} \frac {u_k}{q\chi_{\square'}} \right) +  
%\end{equation}
%$$
%+ \frac 1{q^2} + \sum_{i \leq a \leq b + 1 < j + 1}^{a \equiv b+1} \frac {\chi_{a}}{qt\chi_{b}} + \sum_{i \leq a \leq b - 1 < j - 1}^{a \equiv b-1} \frac {t\chi_{a}}{q\chi_{b}} - \sum_{i \leq a \leq b < j}^{a \equiv b} \frac {\chi_{a}}{\chi_{b}} - \sum_{i \leq a \leq b < j}^{a\equiv b} \frac {\chi_{a}}{q^2\chi_{b}}  +
%$$
%$$
%+ \sum^{\sq \in \bla^+}_{\sq' \in \bla^-} \left(\delta_{o_{\sq'}}^{o_{\sq}-1} \cdot \frac {\chi_{\square}}{qt\chi_{\square'}} + \delta_{o_{\sq'}}^{o_{\sq}+1} \cdot \frac {t\chi_{\square}}{q\chi_{\square'}} - \delta_{o_{\sq'}}^{o_{\sq}} \cdot \frac {\chi_{\square}}{\chi_{\square'}} - \delta_{o_{\sq'}}^{o_{\sq}} \cdot \frac {\chi_{\square}}{q^2\chi_{\square'}} \right)
%$$

%\tab 
%Let us note the second sum on the second line of formula \eqref{eqn:tywin}, which contains summands of the form $\frac {t\chi_{a-1}}{q\chi_a}$. These are due to the fact that the $Y$ maps point northwest, ``against" the chain induced by the projection maps, which points southwest. 

\tab 
Besides the natural projection maps $\pi_\pm:\fZ_{[i;j)}^{q_1} \rightarrow \CN_{\bv^\pm, \bw}$, the fact that eccentric correspondences are $P_{[i;j)}$ quotients implies that they are endowed with line bundles:
$$
\CL_i,..., \CL_{j-1} \ \in \ \text{Pic} \left( \fZ_{[i;j)}^{q_1} \right)
$$
where the fibers of $\CL_a$ consist of the one-dimensional vector spaces $U^{[a;j)}/U^{[a+1;j)}$ corresponding to the flag \eqref{eqn:flag}. Then we may consider the operator
$$
P^{(d)}_{\pm [i;j)} \ : \ K_\bullet(\bw) \longrightarrow K_{\bullet\pm [i;j)}(\bw) \qquad \qquad \alpha \mapsto \widetilde{\pi}_{\pm *} \left( \CL_{i}^{d_{i}} \cdot ... \cdot \CL_{j-1}^{d_{j-1}} \cdot \pi_\mp^{*}(\alpha) \right)
$$
for any vector of integers $(d) = (d_i,...,d_{j-1})$. Then formula \eqref{eqn:tywin} allows us to compute the matrix coefficients of the above operators in the fixed point basis, via \eqref{eqn:formula}. \\

\begin{exercise}
\label{ex:eccentric}
The operators $P^{(d)}_{\pm [i;j)}$ act on $K(\bw)$ according to the shuffle elements:
\begin{equation}
\label{eqn:defp0}
P^{(d)}_{\pm [i;j)} \ = \ \textrm{Sym} \left[\frac {z_i^{d_i} \ ... \ z_{j-1}^{d_{j-1}}}{[q^{-2}] \prod_{a=i+1}^{j-1} \Big[\frac {tz_{a-1}}{q z_{a}} \Big]} \prod_{i \leq a < b < j} \zeta \left( \frac {z_{b}}{z_{a}} \right) \right] \ \in \ \CS^\pm %P^{(d)}_{\pm [i;j)} \ = \ \textrm{Sym} \left[\frac {z_i^{d_i} \ ... \ z_{j-1}^{d_{j-1}}}{\prod_{a=i+1}^{j-1} \Big[\frac {z_{a}}{qt z_{a-1}} \Big]} \prod_{i \leq a < b < j} \zeta \left( \frac {z_{a}}{z_{b}} \right) \right] \ \in \ \CS^\pm
\end{equation}
We abuse notation and denote the geometric operator on $K-$theory and the algebraic shuffle element $\in \CS^\pm$ by the same symbol. 
\end{exercise}

\tab 
If we switch the roles of $X$ and $Y$ in the definition of the eccentric correspondences, we obtain the analogous:
$$
\fZ^{q_2}_{[i;j)} \ \hookrightarrow \ \CN_{\bv^+,\bw} \times \CN_{\bv^-,\bw} 
$$
which parametrize quadruples $(X,Y,A,B)$ such that the $X$ maps are now anti-nilpotent and the $Y$ maps are nilpotent. Fixed points of this correspondence consist of a $\bw-$diagram $\cray$, together with a labeling:
$$
\psi \ : \ \{\sq \in \cray\} \ \stackrel{\text{bijection}}\longrightarrow \ \{i,...,j-1\} \qquad \text{s.t.} \quad \psi(\sq) \equiv \col \sq, \ \ \forall \ \sq \in \cray
$$
and the labels decrease as we go up and to the right in the skew $\bw-$diagram $\cray$, with the possible exception that the box with label $a$ is allowed to be directly to the right of the box labeled $a-1$. Such a labeling will be called a \b{negative almost standard Young tableau}, and we shorten this terminology to $\asyt^-$. The corresponding operators on $K-$theory are induced by the following shuffle elements:
\begin{equation}
\label{eqn:defq0}
Q^{(d)}_{\pm[i;j)} \ = \ \textrm{Sym} \left[\frac {z_i^{d_i} ... z_{j-1}^{d_{j-1}}}{[q^{-2}] \prod_{a=i+1}^{j-1} \Big[\frac {z_{a}}{qtz_{a-1}} \Big]} \prod_{i \leq a < b < j} \zeta \left( \frac {z_{a}}{z_{b}} \right) \right] \ \in \ \CS^\pm
\end{equation}
which is proved by analogy with Exercise \ref{ex:eccentric}. The following is a generalization of Exercise \ref{ex:correspondence}, and proved in a similar fashion. \\

\begin{exercise}
\label{ex:lagragian}

For any vector $d = (d_i,...,d_{j-1})$ of integers, the operators $P_{[i;j)}^{(d)}$ and $Q_{-[i;j)}^{(d)}$ are Lagrangian in the positive stable basis for any $\bm \in \qq$:
$$
P_{[i;j)}^{(d)} \cdot s_\bmu^{+,\bm} = \sum_\bla s_\bla^{+,\bm} \cdot \text{coeffs in }\BF_\bw \qquad \qquad Q_{-[i;j)}^{(d)} \cdot s_\bla^{+,\bm} = \sum_\bmu s_\bmu^{+,\bm} \cdot \text{coeffs in }\BF_\bw 
$$
while their transposes $P_{-[i;j)}^{(d)}$ and $Q_{[i;j)}^{(d)}$ are Lagrangian in the negative stable basis. 

%$$
%P_{-[i;j)}^{(d)} \cdot s_\bla^{-,\bm} = \sum_\bmu s_\bmu^{-,\bm} \cdot \text{coeffs in }\BF_\bw \qquad \qquad Q_{[i;j)}^{(d)} \cdot s_\bla^{-,\bm} = \sum_\bmu s_\bmu^{-,\bm} \cdot \text{coeffs in }\BF_\bw 
%$$

\end{exercise}

\tab 
While we claimed that $P_{\pm [i;j)}^{(d)}$ and $Q_{\pm [i;j)}^{(d)}$ given by formulas \eqref{eqn:defp0} and \eqref{eqn:defq0} are shuffle elements, we have not yet proved that they indeed satisfy the wheel conditions. This will be proved in the following Exercise, together with the fact that these special shuffle elements lie in the image of $\Upsilon : \UU \rightarrow \CS$. We will show in Chapter \ref{chap:subalgebras} that the various $P_{\pm [i;j)}^{(d)}$ and $Q_{\pm [i;j)}^{(d)}$ generate $\CS$, which will prove Theorem \ref{thm:iso}. \\

\begin{exercise}
\label{ex:gen}
For any vector of integers $(d) = (d_i,...,d_{j-1})$, the rational functions: 
$$
P_{\pm[i;j)}^{(d)} \quad \text{and} \quad Q_{\pm[i;j)}^{(d)} \quad \text{lie in} \quad \CS^\pm
$$
i.e. they satisfy the wheel conditions of \eqref{eqn:shufelem}. Moreover, these shuffle elements lie in the subalgebra of $\CS^\pm$ generated by $\{z_i^d\}_{1\leq i \leq n}^{d\in \BZ}$, i.e. in the image of $\Upsilon$ of Theorem \ref{thm:iso}.  	
\end{exercise}

\chapter{Subalgebras and Root $R-$matrices}
\label{chap:subalgebras}
%\tab 
%The main purpose of this Section is to prove Theorem \ref{thm:subalgebras}, by studying certain subalgebras of the shuffle algebra. The blueprint for these subalgebras in the quantum affine group $\uu$, which will be introduced (in the RTT presentation) in the first Section. We will write explicit shuffle elements... A certain result for inductively determining these shuffle elements will be given in the course of Lemma \ref{lem:bound}, which will be crucial to the computation in the next Chapter. 

\section{The quantum group}
\label{sec:quantum}

%\begin{equation}
%\label{eqn:copaffine1}
%\Delta\left(e_{[i;j)} \right) = \ph_{[i;j)} \otimes e_{[i;j)} + e_{[i;j)} \otimes 1 + [q^2]\sum_{i < a < j} \ph_{[a;j)} e_{[i;a)} \otimes e_{[a;j)}
%\end{equation}
%\begin{equation}
%\label{eqn:copaffine2}
%\Delta\left(e_{-[i;j]} \right) = 1 \otimes e_{-[i;j)} + e_{-[i;j)} \otimes \ph_{-[i;j)} + [q^2] \sum_{i < a < j} e_{-[a;j)} \otimes e_{-[i;a)} \ph_{-[a;j)}
%\end{equation}

\noindent The famous $RTT$ presentation of the quantum group $U_q(\fgl_n)$ was introduced by \citep{FRT}. We will be concerned with a certain presentation of the affine quantum group $\uu$, which was developed by \citep{FRT2}, \citep{RS} and \citep{DF}. Note that while our construction is very close to that of \emph{loc. cit.}, there will be differences in notation. Let $E_{ij}$ denote the elementary $n \times n$ matrix with a single entry 1 at row $i$ and column $j$, and zero everywhere else. Consider the following tensor product of matrices:
$$
R\left(\frac zw \right) = \sum_{1\leq i,j \leq n} E_{ii} \otimes E_{jj} \left( \frac {zq-wq^{-1}}{w-z} \right)^{\delta_j^i} + (q-q^{-1}) \sum_{1\leq i \neq j \leq n} E_{ij} \otimes E_{ji} \frac {w^{\delta_{i>j}}z^{\delta_{i<j}}}{w-z} 
$$
which differs by the one in \loccit by a scalar multiple, the transformation $q\mapsto -q^{-1}$, and taking the transpose. By appealing to \loccit we conclude that $R$ satisfies the \b{quantum Yang-Baxter equation}:
\begin{equation}
\label{eqn:qybe}
R_{12}\left(\frac zw \right) R_{13}\left(\frac zy \right) R_{23}\left( \frac wy \right) \ = \ R_{23}\left( \frac wy \right) R_{13}\left(\frac zy \right) R_{12}\left(\frac zw \right)
\end{equation}
where $R_{12} = R \otimes \text{Id} \in \text{Mat} \otimes \text{Mat} \otimes \text{Mat}$ etc. The quantum group $\uu$ is defined as the Hopf algebra generated by symbols: 
\begin{equation}
\label{eqn:quantumgroup}
\uu \ = \ \left \langle e_{\pm [i;j)}, \psi_k, c \right \rangle_{i<j}^{k\in \BZ} 
\end{equation}
where the Cartan elements $\psi_k$ are commutative, $c$ is central, and we have $\psi_{k+n} = c\psi_k$ for all $k\in \BZ$. In order to define the relations between the remaining generators, we identify $e_{\pm [i;j)} = e_{\pm [i+n;j+n)}$ and place them in two matrix valued power series:
\begin{equation}
\label{eqn:t+}
T^+(z) \ \ = \ \ \sum^{i\leq j}_{1\leq i \leq n} e_{[i;j)} \psi_i \cdot E_{j\text{ mod } n,i} z^{\left \lceil \frac jn \right \rceil - 1} %\sum^{i\leq j}_{1\leq i \leq n} e_{[i;j)} \psi_j \cdot E_{i,j\text{ mod } n} z^{\left \lceil \frac jn \right \rceil - 1}
\end{equation}
\begin{equation}
\label{eqn:t-}
T^-(z) = \sum^{i\leq j}_{1\leq i \leq n} e_{-[i;j)} \psi_i^{-1} \cdot E_{i, j\text{ mod }n} z^{- \left \lceil \frac jn \right \rceil + 1} %\sum^{i\leq j}_{1\leq i \leq n} e_{-[i;j)} \psi_j^{-1} \cdot E_{j\text{ mod }n,i} z^{- \left \lceil \frac jn \right \rceil + 1}
\end{equation}
where the residue of $j$ modulo $n$ is taken to lie in $\{1,...,n\}$, in order to match our conventions on indexing elementary matrices. We write $e_{\pm [i;i)} = 1$ and $e_{\pm [i;j)} = 0$ if $i>j$. We impose the following \b{RTT relations}:
\begin{equation}
\label{eqn:rtt1}
R\left(\frac zw \right) T^+_1(z) T^+_2(w) = T^+_2(w) T^+_1(z)  R\left(\frac zw \right)
\end{equation}
\begin{equation}
\label{eqn:rtt2}
R\left(\frac zw \right) T^-_1(z) T^-_2(w) = T^-_2(w) T^-_1(z)  R\left(\frac zw \right)
\end{equation}
and:
\begin{equation}
\label{eqn:rtt3}
R\left(\frac z{wc} \right) T^-_2(w) T^+_1(z)  = T^+_1(z) T^-_2(w)  R\left(\frac {zc}w \right) %R\left(\frac z{wc} \right) T^-_2\left(wc \right) T^+_1(z) = T^+_1\left(\frac zc \right) T^-_2(w) R\left(\frac {zc}w \right)
\end{equation}
between the generators of $\uu$, where we set $X_1 = X \otimes \text{Id}$ and $X_2 = \text{Id} \otimes X$ for any matrix $X$. In particular, we have the relations:
\begin{equation}
\label{eqn:car}
\psi_k \cdot e_{\pm [i;j)}   = (-q)^{\pm(\delta_k^i - \delta_k^j)} e_{\pm [i;j)} \cdot \psi_k \qquad \forall \ \text{arcs } [i;j) \text{ and } k\in \BZ
\end{equation}
For arbitrary $a\leq c$ and $b\leq d$, unwinding relations \eqref{eqn:rtt1} and \eqref{eqn:rtt2} gives us:
$$
\frac {e_{\pm [a;c)} e_{\pm [b,d)}}{(-q)^{\delta^b_a - \delta_b^d + \delta^d_a}} - \frac {e_{\pm [b,d)} e_{\pm [a;c)}}{(-q)^{\delta^b_c - \delta_b^d + \delta^d_c}}  = (q-q^{-1}) \left[ \sum_{a \leq x < c}^{x\equiv d} e_{\pm [b,c+d-x)} e_{\pm [a;x)} - \sum_{a<x\leq c}^{x\equiv b} e_{\pm [x;c)} e_{\pm [a+b-x,d)}  \right]
$$
while unwinding relation \eqref{eqn:rtt3} implies:
$$
\left[ e_{[a;c)}, e_{-[b,d)} \right] = (q-q^{-1}) \left[ \sum_{a \leq x < c}^{x\equiv b} \frac {e_{-[c+b-x,d)} e_{[a;x)}}{(-q)^{\delta^b_c + \delta^a_c - \delta_b^a}} \cdot \frac {\psi_x}{\psi_c} - \sum_{a<x\leq c}^{x\equiv d} \frac {e_{[x;c)} e_{-[b,a+d-x)}}{(-q)^{- \delta^b_a + \delta^d_b - \delta_a^d}} \cdot \frac {\psi_x}{\psi_a} \right]
$$
We can introduce a bialgebra structure on $\uu$ by setting $\Delta(\psi_k) = \psi_k \otimes \psi_k$ and:
$$
\Delta(T^+(z)) \ = \ T^+(z) \otimes T^+(z c_1) \qquad \qquad \Delta(T^-(z)) \ = \ T^-(z c_2) \otimes T^-(z)
$$
where we write $c_1 = c \otimes 1$ and $c_2 = 1 \otimes c$. These formulas imply the relations: 
\begin{equation}
\label{eqn:ray1}
\Delta(e_{[i;j)}) \ = \ \sum_{a=i}^j e_{[a;j)} \frac {\psi_a}{\psi_i} \otimes e_{[i;a)} \qquad \quad \Delta(e_{-[i;j)}) \ = \ \sum_{a=i}^j e_{-[i;a)} \otimes e_{-[a;j)} \frac {\psi_i}{\psi_a}  
\end{equation}for all arcs $[i;j)$. Finally, we can define the antipode map:
$$
S^\pm(z) \ := \ S(T^\pm(z)) \ = \ \left(T^\pm(z) \right)^{-1} 
$$
which satisfies the opposite coproduct relations from $T^\pm(z)$ above. We will consider the following elements $f_{\pm [i;j)} \in \uu$ obtained from the antipode by:
\begin{equation}
\label{eqn:s+}
S^+(z) \ = \ \sum^{i\leq j}_{1\leq i \leq n} (-1)^{j-i} f_{[i;j)} \psi_j \cdot E_{j\text{ mod } n,i} z^{\left \lceil \frac jn \right \rceil - 1} %\sum^{i\leq j}_{1\leq i \leq n} (-1)^{j-i} f_{[i;j)} \psi_i \cdot E_{i,j\text{ mod } n} z^{\left \lceil \frac jn \right \rceil - 1}
\end{equation}
\begin{equation}
\label{eqn:s-}
S^-(z) = \sum^{i\leq j}_{1\leq i \leq n} (-1)^{j-i} f_{-[i;j)} \psi_j^{-1} \cdot E_{i,j\text{ mod }n} z^{- \left \lceil \frac jn \right \rceil + 1} %\sum^{i\leq j}_{1\leq i \leq n} (-1)^{j-i} f_{-[i;j)} \psi_i^{-1} \cdot E_{j\text{ mod }n,i} z^{1 - \left \lceil \frac jn \right \rceil}
\end{equation}
which will satisfy the following coproduct relations:
\begin{equation}
\label{eqn:ray2}
\Delta(f_{[i;j)}) \ = \ \sum_{a=i}^j f_{[i;a)} \frac {\psi_j}{\psi_a} \otimes f_{[a;j)} \qquad \quad \Delta(f_{-[i;j)}) \ = \ \sum_{a=i}^j f_{-[a;j)} \otimes f_{-[i;a)} \frac {\psi_a}{\psi_j}
\end{equation}
Consider the subalgebras $U_q^\pm(\dot{\fgl}_n) \subset \uu$ generated by $\{e_{\pm [i;j)}\}_{i<j}$, or alternatively by the antipodes $\{f_{\pm[i;j)}\}_{i<j}$. If we enlarge these subalgebras to:
\begin{equation}
\label{eqn:bialgebras}
\uug \ = \ \left \langle \uup, \{\psi_k\}_{k\in \BZ}\right \rangle \qquad \uul \ = \ \left \langle \uum, \{\psi_k\}_{k\in \BZ} \right \rangle
\end{equation}
then \eqref{eqn:ray1} and \eqref{eqn:ray2} establish the fact that \eqref{eqn:bialgebras} are sub-Hopf algebras of $\uu$. The following Exercise proves that $\uu$ is the double of the subalgebras \eqref{eqn:bialgebras}. \\

\begin{exercise}
\label{ex:hopf}

There is a Hopf pairing $\uug \otimes \uul \rightarrow \BQ(q)$ generated by:
\begin{equation}
\label{eqn:rpair}
\langle T^+_1(z),  T^-_2(w) \rangle \ = \ R\left(\frac zw \right)
\end{equation}
and \eqref{eqn:bialg}, which makes $\uu$ into a Drinfeld double.

\end{exercise}

%$$
%\langle S^-(w), S^+(z) \rangle = R\left(\frac zw \right) \quad \langle S^-(w), T^+(z) \rangle = R^{-1}\left(\frac {zc}w \right) \quad \langle T^-(w), S^+(z) \rangle = R^{-1}\left(\frac {zc}w \right)
%$$

\tab 
On general grounds, one can use \eqref{eqn:bialg} to obtain $\langle S_1^+(z), S_2^-(w) \rangle = R\left(z/w\right)$ from \eqref{eqn:rpair}. Going back to \eqref{eqn:t+}, \eqref{eqn:t-}, \eqref{eqn:s+}, \eqref{eqn:s-}, we infer the following relations in terms of the generators of the algebra $\uu$:
\begin{equation}
\label{eqn:charles}
\langle \psi_i^{-1}, \psi_j \rangle = (-q)^{-\delta_j^i} \qquad \quad \langle e_{-[i;j)}, e_{[i;j)} \rangle \ = \ \langle f_{-[i;j)}, f_{[i;j)} \rangle \ = \ 1-q^2
\end{equation}
Comparing the structure above with the definition of the quantum group $\su$ defined in Section \ref{sec:basicquantum}, we observe that we have a homomorphism:
$$
\su \ \stackrel{\tau}\hookrightarrow \ \uu \qquad \qquad e_i^+ \mapsto \frac {e_{[i;i+1)}}{1-q^2} \qquad e_i^- \mapsto \frac {e_{-[i;i+1)}}{q^{-1}-q} \qquad \ph_i \mapsto \frac {\psi_{i+1}}{\psi_i}
$$
which preserves the Hopf algebra structure. In fact, we will often work with the slightly smaller subalgebra of $\uu$ generated by the $e_{\pm [i;j)}$ and the ratios $\ph_i := \frac {\psi_{i+1}}{\psi_i}$. Since the Hopf structure structure preserves the smaller subalgebra, we will often neglect the extra information on the individual $\psi_i$, and use the notation $\uu$ to refer to the slightly smaller subalgebra that is generated by $e_{\pm[i;j)}$'s and all $\ph_i$'s. Note that the embedding $\tau$ preserves the $\zz-$grading on the two algebras involved, where:
$$
e_{\pm [i;j)} \text{ and }f_{\pm [i;j)} \ \in \ \uu_{\pm [i;j)} \qquad \text{and} \qquad \psi_k \ \in \ \uu_0
$$
It is possible to complete the embedding $\tau$ to an isomorphism of Hopf algebras:
\begin{equation}
\label{eqn:omega}
\su \otimes U_q(\dot{\fgl}_1) \ \stackrel{\widetilde{\tau}}\cong \ \uu
\end{equation}
The isomorphism $\widetilde{\tau}$ preserves the grading on the two sides, where the Heisenberg generators $p_k$ of $\uu$ are given degree $k\bth = (k,...,k)$. Therefore, we may use \eqref{eqn:omega} to obtain the following estimate on the dimension of the graded pieces of the positive half of the quantum group $\uup := \langle e_{[i;j)} \rangle_{i<j} \subset \uu$:
\begin{equation}
\label{eqn:dimestimate}
\dim \ \uup_\bk \ = \ \# \{\rho \vdash \bk\}
\end{equation}
where a \b{partition} of a degree vector $\bk$ is any unordered sum of arcs inside $\nn$:
\begin{equation}
\label{eqn:partition}
\rho \ : \ \bk = [i_1;j_1)+...+[i_t;j_t)
\end{equation}
If we did not wish to appeal to the existence of the isomorphism $\widetilde{\tau}$, one could obtain the estimate \eqref{eqn:dimestimate} by obtaining a PBW basis of the positive subalgebra $\uup$ (refer to Theorem 2.11 of \citep{GM} for a construction). \\

\section{Subalgebras of the shuffle algebra}
\label{sec:slopefiltration}

\noindent For any vector of rational numbers $\bm = (m_1,...,m_n) \in \BQ^n$, we consider the following subspaces of the positive and negative shuffle algebras, respectively:
\begin{equation}
\label{eqn:limit1}
\CS^+_{\leq \bm} \ = \ \Big \{F \ \Big| \lim_{\xi \rightarrow \infty} \frac {F(z_{i1},...,z_{i a_i},\xi z_{i,a_i+1},..., \xi z_{i,a_i+b_i})}{\xi^{\bm \cdot \bb - \frac {(\ba, \bb )}2}} \ \text{ exists } \forall \ \ba,\bb \Big\} \ \subset \ \CS^+ 
\end{equation}
\begin{equation}
\label{eqn:limit2}
\CS^-_{\leq \bm} \ = \ \Big \{G \ \Big| \lim_{\xi \rightarrow 0} \frac {G(\xi z_{i1},..., \xi z_{i a_i}, z_{i,a_i+1},..., z_{i,a_i+b_i})}{\xi^{-\bm \cdot \ba + \frac {(\ba,\bb)}2}} \ \text{ exists } \forall \ \ba,\bb \Big\} \ \subset \ \CS^- 
\end{equation}
Shuffle elements in \eqref{eqn:limit1} and \eqref{eqn:limit2} will be said to have \b{slope} $\leq \bm$. \\

\begin{proposition}

For all $\ebm\in \qq$, the subspaces $\CS^\pm_{\leq \ebm} \subset \CS^\pm$ are subalgebras. \\

\end{proposition}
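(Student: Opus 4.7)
The plan is to verify directly that if $F \in \CS^+_{\leq \bm}$ has degree $\bk$ and $F' \in \CS^+_{\leq \bm}$ has degree $\bl$, then $F * F'$ satisfies the slope condition \eqref{eqn:limit1}. Fix $\ba, \bb \in \nn$ with $\ba + \bb = \bk + \bl$ and consider the effect of scaling the last $b_i$ variables of each color $i$ by $\xi$. The shuffle product \eqref{eqn:shuffle} expresses $F * F'$ as a symmetrized sum indexed by ways of distributing the $\bk + \bl$ variables between the arguments of $F$ and of $F'$; each such distribution induces splittings $\ba = \ba_F + \ba_{F'}$ and $\bb = \bb_F + \bb_{F'}$ with $\ba_F + \bb_F = \bk$ and $\ba_{F'} + \bb_{F'} = \bl$. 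The goal is to show that every summand grows no faster than $\xi^{\bm \cdot \bb - (\ba,\bb)/2}$, so that the symmetrized sum does as well.

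The key computation is the asymptotic behavior of the $\zeta$-factors in \eqref{eqn:defzeta}. Since $[y] = y^{1/2} - y^{-1/2}$ satisfies $|[y]| \sim |y|^{1/2}$ as $|y| \to \infty$ and $|[y]| \sim |y|^{-1/2}$ as $|y| \to 0$, each bracket in $\zeta(x_i/x_j)$ contributes a factor $\xi^{1/2}$ whenever exactly one of $x_i, x_j$ is scaled by $\xi$. Counting the $\delta^i_{j-1} + \delta^i_{j+1}$ brackets in the numerator and the $2\delta^i_j$ brackets in the denominator and comparing with the Killing form \eqref{eqn:kill} gives $\zeta(x_i/x_j) = O(\xi^{-(\bs^i, \bs^j)/2})$ when exactly one variable is scaled, and $\zeta(x_i/x_j) = O(1)$ when both or neither are. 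Summing the single-scaled contributions across all pairs in the shuffle product $F * F'$ yields a total $\zeta$-scaling of $\xi^{-[(\ba_F, \bb_{F'}) + (\bb_F, \ba_{F'})]/2}$, since the unscaled/unscaled and scaled/scaled pairs contribute only $O(1)$.

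Combining this with the slope hypotheses $F(\ba_F, \xi\bb_F) = O(\xi^{\bm \cdot \bb_F - (\ba_F, \bb_F)/2})$ and $F'(\ba_{F'}, \xi\bb_{F'}) = O(\xi^{\bm \cdot \bb_{F'} - (\ba_{F'}, \bb_{F'})/2})$, each summand of $F * F'$ is $O(\xi^E)$ with
$$E = \bm \cdot (\bb_F + \bb_{F'}) - \tfrac{1}{2}\bigl[(\ba_F, \bb_F) + (\ba_{F'}, \bb_{F'}) + (\ba_F, \bb_{F'}) + (\bb_F, \ba_{F'})\bigr].$$
By bilinearity and symmetry of $(\cdot,\cdot)$ this bracket equals $(\ba_F + \ba_{F'}, \bb_F + \bb_{F'}) = (\ba, \bb)$, and $\bb_F + \bb_{F'} = \bb$, so $E = \bm \cdot \bb - (\ba, \bb)/2$ as required. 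Hence $F * F' \in \CS^+_{\leq \bm}$. The argument for $\CS^-_{\leq \bm}$ is verbatim the same, replacing $\xi \to \infty$ with $\xi \to 0$ and swapping the roles of $\ba$ and $\bb$ in the required exponent, which accounts for the sign change between \eqref{eqn:limit1} and \eqref{eqn:limit2}. The main point to get right is the bookkeeping of scaling exponents: the cross terms $(\ba_F, \bb_{F'}) + (\bb_F, \ba_{F'})$ coming from the $\zeta$-factors must combine cleanly with the diagonal terms $(\ba_F, \bb_F)$ and $(\ba_{F'}, \bb_{F'})$ coming from the slopes of $F$ and $F'$ to reconstitute the global Killing form $(\ba, \bb)$ — this is precisely what makes the Killing form, rather than some other bilinear form, the correct renormalization in the definition of slope.
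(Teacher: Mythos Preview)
Your proof is correct and follows essentially the same approach as the paper: both reduce the claim to the asymptotic $\zeta(\xi z_i/z_j) \sim \xi^{-(\bs^i,\bs^j)/2}$ (the paper's \eqref{eqn:limitrof1}--\eqref{eqn:limitrof2}) and then invoke the definition of the shuffle product. Your version simply makes explicit the bookkeeping of exponents that the paper leaves as ``an immediate consequence'', in particular the observation that the cross terms $(\ba_F,\bb_{F'})+(\bb_F,\ba_{F'})$ from the $\zeta$-factors combine with the diagonal terms from the slope hypotheses on $F$ and $F'$ to rebuild $(\ba,\bb)$.
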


\begin{proof} The proposition is an immediate consequence of the definition of the shuffle multiplication in \eqref{eqn:shuffle}, as well as the simple fact that:
\begin{equation}
\label{eqn:limitrof1}
\lim_{\xi \rightarrow \infty} \ \zeta\left(\frac {\xi z_i}{z_j} \right) \cdot \xi^{\frac {(\vs^i,\vs^j)}2} \qquad \text{exists and equals} \quad \left(- \sqrt{\frac {qz_i}{z_j}} \right)^{-(\vs^i,\vs^j)} t^{\frac {\delta_{j-1}^i - \delta_{j+1}^i}2}
\end{equation}
\begin{equation}
\label{eqn:limitrof2}
\lim_{\xi \rightarrow 0} \ \zeta\left(\frac {\xi z_i}{z_j} \right) \cdot \xi^{-\frac {(\vs^i,\vs^j)}2} \qquad \text{exists and equals}  \quad \left(\sqrt{\frac {qz_i}{z_j}} \right)^{(\vs^i,\vs^j)} t^{\frac {\delta_{j+1}^i - \delta_{j-1}^i}2}
\end{equation}
for any variables $z_i$ and $z_j$ of colors $i$ and $j$, respectively.
\end{proof}

\tab
By the definition of the coproduct \eqref{eqn:cop1} and \eqref{eqn:cop2}, we have:
\begin{equation}
\label{eqn:topdegree0}
\Delta(F) = (\text{anything}) \otimes (\text{slope} \leq \bm) \qquad \qquad \forall \ F \in \CS^+_{\leq \bm} 
\end{equation}
\begin{equation}
\label{eqn:botdegree0}
\Delta(G) \ = \ (\text{slope} \leq \bm) \otimes (\text{anything}) \qquad \qquad \forall \ G \in \CS^-_{\leq \bm} 
\end{equation}
In other words, having slope $\leq \bm$ is a property that is preserved in one of the tensor factors of the coproduct. We will say that a shuffle element has slope $<\bm$ if it has slope $\leq \bm - \varepsilon$ for some small $\varepsilon \in \BQ_+^n$. Then formulas \eqref{eqn:topdegree0} and \eqref{eqn:botdegree0} can be made more precise by writing:
\begin{equation}
\label{eqn:topdegree}
\Delta(F) \ = \ \Delta_\bm(F) + (\text{anything}) \otimes (\text{slope} < \bm) \qquad \qquad \forall \ F \in \CS^+_{\leq \bm} 
\end{equation}
\begin{equation}
\label{eqn:botdegree}
\Delta(G) \ = \ \Delta_\bm(G) + (\text{slope} < \bm) \otimes (\text{anything}) \qquad \qquad \forall \ G \in \CS^-_{\leq \bm} 
\end{equation}
where the leading term is defined to be:
\begin{equation}
\label{eqn:leadingdelta1}
\Delta_\bm(F) \ := \ \sum_{\ba+\bb = \bk} \lim_{\xi \rightarrow \infty} \frac {\Delta(F)_{\ba \otimes \bb}}{\xi^{\bm \cdot \bb}} \ = \ \sum_{\ba+\bb = \bk} \ \lim_{\xi \rightarrow \infty} \frac {\ph_{\bb} F(z_{i1},..., z_{i a_i} \otimes \xi z_{i,a_i+1},..., \xi z_{i,a_i+b_i})}{\xi^{\bm \cdot \bb -  \frac {(\ba, \bb)}2} \cdot \text{monomials}}
\end{equation}
\begin{equation}
\label{eqn:leadingdelta2}
\Delta_\bm(G) \ := \ \sum_{\ba+\bb = \bk} \lim_{\xi \rightarrow 0} \frac {\Delta(G)_{\ba\otimes \bb}}{\xi^{-\bm\cdot \ba}} \ =  \ \sum_{\ba+\bb = \bk} \lim_{\xi \rightarrow 0} \frac {G(..., \xi z_{i1},..., \xi z_{i a_i}, z_{i,a_i+1},..., z_{i,a_i+b_i},...) \ph_{-\ba}}{\xi^{-\bm \cdot \ba + \frac {(\ba,\bb)}2} \cdot \text{monomials}}
\end{equation}
where we employ the notation $\ph_\bk = \prod_{i=1}^n \ph_i^{k_i}$ for all $\bk = (k_1,...,k_n) \in \nn$. In the above formulas, the word ``monomials" refers to certain products of $q,t$ and powers of the $z_{is}$ which arise from the right hand sides of the various limits \eqref{eqn:limitrof1} and \eqref{eqn:limitrof2} which appear when computing the limits \eqref{eqn:leadingdelta1} and \eqref{eqn:leadingdelta2}. As we will not need them explicitly, we will not bother writing them out. We will use the notation:
$$
\CS^+_{\leq \bm|\bk} \ = \ \CS^+_{\leq \bm} \cap \CS^+_\bk \qquad \qquad \CS^+_{\leq \bm|\bk,d} \ = \ \CS^+_{\leq \bm} \cap \CS^+_{\bk,d} 
$$
$$
\CS^-_{\leq \bm|-\bk} \ = \ \CS^-_{\leq \bm} \cap \CS^-_{-\bk} \qquad \qquad \CS^-_{\leq \bm|-\bk,d} \ = \ \CS^-_{\leq \bm} \cap \CS^-_{-\bk,d} 
$$
for the various graded pieces. By considering property \eqref{eqn:limit1} when $\ba = 0$, we observe that the bigraded piece $\CS^+_{\leq \bm|\bk,d}$ is empty unless $d\leq \bm \cdot \bk$. Similarly, the bigraded piece $\CS^-_{\leq \bm|-\bk,d}$ is empty unless $d \leq - \bm \cdot \bk$. We will clump together all of these bigraded pieces where the second degree $d$ is as large or small as possible into the subalgebras:
\begin{equation}
\label{eqn:emb1}
\CB^+_\bm \ = \ \bigoplus^{\bm \cdot \bk \in \BZ}_{\bk \in \BN^n} \CB^+_{\bm|\bk} \ := \ \bigoplus^{\bm \cdot \bk  = d}_{\bk \in \BN^n, d \in \BZ} \CS^+_{\leq \bm|\bk,d} \subset \CS^+
\end{equation}
\begin{equation}
\label{eqn:emb2}
\CB^-_\bm \ = \ \bigoplus^{\bm \cdot \bk \in \BZ}_{\bk \in \BN^n} \CB^-_{\bm|-\bk} \ := \ \bigoplus^{\bm \cdot \bk  = - d}_{\bk \in \BN^n, d \in \BZ} \CS^-_{\leq \bm|-\bk,d} \subset \CS^-
\end{equation}
Shuffle elements in \eqref{eqn:emb1} and \eqref{eqn:emb2} will be said to have \b{slope} $ = \bm$. It is easy to see that the coproduct $\Delta_\bm$ descends to a coproduct on the slightly larger subalgebras:
\begin{equation}
\label{eqn:enh1}
\CB^\geq_\bm \ := \ \left \langle \CB^+_\bm, \ \{\ph_i\}_{1\leq i \leq n} \right \rangle \ \subset \ \CS^\geq
\end{equation}
\begin{equation}
\label{eqn:enh2}
\CB^\leq_\bm \ := \ \left \langle \CB^-_\bm, \ \{\ph^{-1}_i\}_{1\leq i \leq n} \right \rangle \ \subset \ \CS^\leq
\end{equation}
The following Proposition establishes much of our interest in these subalgebras: \\

\begin{proposition}
\label{prop:silly}

The pairing \eqref{eqn:daddypair} still satisfies the bialgebra property between the subalgebras:
$$
\CB^\leq_\ebm \otimes \CB^\geq_\ebm \ \longrightarrow \ \BQ(q)
$$
with respect to the coproduct $\Delta_\ebm$. This allows us to construct the Drinfeld double:
$$
\CB_\ebm = \CB^\leq_\ebm \otimes \CB^\geq_\ebm
$$
The embeddings \eqref{eqn:enh1} and \eqref{eqn:enh2} extend to an embedding of the doubles:
\begin{equation}
\label{eqn:emb3}
\CB_\ebm \ \subset \ \CS
\end{equation}

\end{proposition}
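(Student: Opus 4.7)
The plan is to reduce the entire statement to a single degree-matching observation about the pairing \eqref{eqn:daddypair}, combined with verification that $\Delta_\ebm$ is well-defined as a coproduct valued in the appropriate subalgebra. The key lemma is that the pairing is \emph{bigraded}: for $F \in \CS^+_{\bk,d_+}$ and $G \in \CS^-_{-\bk,d_-}$, we have $\langle G,F\rangle = 0$ unless $d_+ + d_- = 0$. This follows because the integrand in \eqref{eqn:daddypair} is homogeneous in $Z$ of total degree $d_+ + d_-$, while $\zeta_p(z_{ia}/z_{jb})$ and $Dz_{ia} = dz_{ia}/(2\pi i z_{ia})$ are scale-invariant, so a homogeneous function of nonzero degree has no constant term in its Laurent expansion on the torus $|z_{ia}|=1$. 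In particular, any $G \in \CB^-_{\ebm|-\bk}$ has $d_- = -\ebm\cdot\bk$, so the linear functional $\langle G, \cdot\rangle$ annihilates every shuffle element of bidegree $(\bk, d)$ with $d \neq \ebm \cdot \bk$.

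Second, I would verify that $\Delta_\ebm(\CB^\geq_\ebm) \subset \CB^\geq_\ebm \widehat{\otimes} \CB^\geq_\ebm$, and symmetrically for $\CB^\leq_\ebm$. Each summand of $\Delta_\ebm(F)$ lying in bidegree $(\ba,d_1)\otimes(\bb,d_2)$ satisfies $d_1+d_2 = \ebm\cdot(\ba+\bb)$ by homogeneity, and both tensor factors must have slope $\leq \ebm$: if either factor carried slope strictly greater than $\ebm$, then performing a second scaling of that factor's variables would produce a limit for $F$ exceeding the slope-$\ebm$ threshold, contradicting $F \in \CB^+_\ebm$. The bigrading equality then forces each factor to lie precisely on the slope-$\ebm$ boundary, i.e.\ in $\CB^+_\ebm$. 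That $\Delta_\ebm$ is an algebra homomorphism follows from the general principle that leading-term maps of filtered bialgebras are multiplicative.

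With these preliminaries, the bialgebra identity for $\Delta_\ebm$ is immediate. Writing $\Delta(F) = \Delta_\ebm(F) + R(F)$ as in \eqref{eqn:topdegree}, the right tensor factor of $R(F)$ has slope strictly less than $\ebm$; by the key lemma it therefore pairs to zero against any element of $\CB^-_\ebm$. Thus for $G, G' \in \CB^-_\ebm$ and $F \in \CB^+_\ebm$,
\[
\langle G G', F\rangle \ = \ \langle G \otimes G', \Delta(F)\rangle \ = \ \langle G \otimes G', \Delta_\ebm(F)\rangle,
\]
with the symmetric identity $\langle G, F F'\rangle = \langle \Delta^\op_\ebm(G), F \otimes F'\rangle$ following identically from \eqref{eqn:botdegree}. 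The Drinfeld double $\CB_\ebm$ is thereby constructed via \eqref{eqn:reldrinfeld}, and the embedding $\CB_\ebm \hookrightarrow \CS$ holds because the Drinfeld-double relations of $\CS$ (computed via $\Delta$) and of $\CB_\ebm$ (computed via $\Delta_\ebm$) coincide on $\CB^\leq_\ebm \otimes \CB^\geq_\ebm$, once again by the key lemma. The main obstacle is the second step: the interchange-of-limits argument needed to verify that $\Delta_\ebm$ factors through the slope-$\ebm$ subalgebra on each tensor leg. The remaining steps are essentially bookkeeping with the bigraded pairing once this factorization is in hand.
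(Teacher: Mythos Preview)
Your proof is correct and follows essentially the same route as the paper: both arguments use the bigrading of the pairing \eqref{eqn:daddypair} (only opposite bidegrees pair nontrivially) together with the decomposition \eqref{eqn:topdegree}--\eqref{eqn:botdegree} to kill the difference $\Delta - \Delta_\ebm$ when paired against slope-$=\ebm$ elements, and then observe that the Drinfeld-double relations in $\CB_\ebm$ and in $\CS$ coincide for the same reason. Your second step (verifying $\Delta_\ebm$ lands in $\CB^\geq_\ebm \widehat\otimes \CB^\geq_\ebm$) is something the paper states without proof just before the Proposition, so you are simply being more explicit; the ``interchange-of-limits'' concern you flag is handled in the paper by the direct limit formulas \eqref{eqn:leadingdelta1}--\eqref{eqn:leadingdelta2}.
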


\begin{proof} By Exercise \ref{ex:pairing}, we have:
$$
\langle \Delta^\op(G), F \otimes F' \rangle = \langle G, F * F' \rangle \qquad \qquad \langle G \otimes G', \Delta(F) \rangle = \langle G * G', F \rangle
$$
for any $F,F' \in \CS^+$ and $G,G' \in \CS^-$. If we further assume $F,F' \in \CB^+_\bm$ and $G,G' \in \CB^-_\bm$, we need to show that the above properties still hold if we replace $\Delta$ by $\Delta_\bm$. However, this is clear from \eqref{eqn:topdegree} and \eqref{eqn:botdegree} since:
$$
\Big \langle (\text{anything}) \otimes (\text{slope} < \bm), F \otimes F' \Big \rangle \ = \ \Big \langle G \otimes G', (\text{anything}) \otimes (\text{slope} < \bm) \Big \rangle \ = \ 0
$$
for any $F,F',G,G'$ of slope $= \bm$. This follows from the definition of the slope, and because $\langle \cdot, \cdot \rangle$ only pairs non-trivially shuffle elements of opposite bidegrees. A similar argument proves that \eqref{eqn:emb3} is an algebra homomorphism: one needs to show that whenever \eqref{eqn:reldrinfeld} holds in $\CB_\bm$ with respect to the coproduct $\Delta_\bm$, it holds in $\CS$ with respect to the coproduct $\Delta$. \\
\end{proof}

\section{Arcs and subalgebras}
\label{sec:arcs}
	
\noindent One of the main results of this Chapter is to identify the subalgebras of \eqref{eqn:emb3}: \\

\begin{theorem}
\label{thm:subalg}
For any rational slope $\ebm \in \qq$, we have an isomorphism:
\begin{equation}
\label{eqn:subalgebra}
\CB_\ebm \ \cong \ \bigotimes_{h=1}^g U_q(\dot{\fgl}_{l_h})
\end{equation}
where the natural numbers $g,l_1,...,l_g$ depend on $\ebm$.
\end{theorem}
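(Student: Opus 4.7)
The plan is as follows. First I would unpack the numerical data $g,l_1,\ldots,l_g$ attached to $\bm\in\qq$. Consider the fractional parts $\alpha_i = \{m_1+\cdots+m_{i-1}\}\in[0,1)$ for $i\in\{1,\ldots,n\}$, and partition $\{1,\ldots,n\}$ into equivalence classes $I_1,\ldots,I_g$ of sizes $l_1,\ldots,l_g$ according to the value of $\alpha_i$. A minimal $\bm$-integral arc $[i;j)$ is exactly one whose endpoints $i$ and $j$ lie in the same class and are cyclically consecutive in it. Each class $I_h$ of size $l_h$ thus carries a cyclic structure from which the $\bm$-integral arcs supported in $I_h$ play the role of positive roots for $\dot{\fgl}_{l_h}$.

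Next I would construct the generators of the purported tensor product. For every $\bm$-integral arc $[i;j)$, define shuffle elements $P_{[i;j)}^\bm\in\CS^+$ and $Q_{-[i;j)}^\bm\in\CS^-$ by the eccentric correspondence formulas \eqref{eqn:defp0} and \eqref{eqn:defq0}, with the degree vector $(d_i,\ldots,d_{j-1})$ chosen so that the total homogeneous degree equals $m_i+\cdots+m_{j-1}\in\BZ$. A direct computation of the limits \eqref{eqn:limit1} and \eqref{eqn:limit2}, using the leading-term rules \eqref{eqn:limitrof1}--\eqref{eqn:limitrof2} for $\zeta$, then shows that these shuffle elements lie at the extremal slope, i.e., in $\CB_\bm^\pm$. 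Along with the Cartan elements $\ph_{[i;j)}$ and their antipodes $f_{\pm[i;j)}$ this yields the candidate generators for the right-hand side of \eqref{eqn:subalgebra}.

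The central step is to verify the defining RTT relations \eqref{eqn:rtt1}--\eqref{eqn:rtt3} for the generators coming from a single block $I_h$, and to show that generators coming from different blocks $I_h\neq I_{h'}$ commute. The first is a shuffle-theoretic computation: the commutation relations between the $P,Q$ associated to two minimal arcs in the same class can be reduced to the cross-pairing \eqref{eqn:commutator} via the Drinfeld double formula \eqref{eqn:reldrinfeld2}, whose right-hand side is computable from the integral formula \eqref{eqn:daddypair}. Commutation between blocks is cleaner: a commutator $[P_{[i;j)}^\bm, Q_{-[i';j')}^\bm]$ with $i,i'$ in different classes lands in the bidegree of an arc crossing classes, which cannot be $\bm$-integral; the non-Cartan part of the Drinfeld double relation must therefore vanish for slope reasons, producing an honest commutation. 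Together these assemble a bialgebra homomorphism $\bigotimes_{h=1}^g U_q(\dot{\fgl}_{l_h})\to\CB_\bm$ respecting the doubled structure of Proposition \ref{prop:silly}.

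Finally, to upgrade this homomorphism to the claimed isomorphism one needs injectivity and surjectivity. Injectivity follows from the non-degeneracy of the bialgebra pairing \eqref{eqn:daddypair} restricted to $\CB_\bm$: the matrix $\langle\cdot,\cdot\rangle$ between ordered monomials in the $P_{[i;j)}^\bm$'s and ordered monomials in the $Q_{-[i;j)}^\bm$'s is triangular with non-zero diagonal, giving a PBW-type basis for the image. Surjectivity is the more delicate point: one argues that any $F\in\CB_{\bm|\bk}^+$ can be peeled down to the generators by iteratively applying the leading coproduct $\Delta_\bm$ and subtracting off contributions in the image, until only the extremal pieces remain; the graded dimension bound \eqref{eqn:dimestimate} applied inside each factor matches the count of partitions $\bk=[i_1;j_1)+\cdots+[i_t;j_t)$ into $\bm$-integral arcs, which is exactly the cardinality of ordered PBW monomials on the right. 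The main obstacle I anticipate is this last surjectivity/spanning step, because it requires controlling how the coproduct interacts with the wheel conditions \eqref{eqn:shufelem}; the slope-filtration formulas \eqref{eqn:topdegree}--\eqref{eqn:botdegree} give the right bookkeeping, but turning them into a clean inductive reduction to the generators is where the real work lies.
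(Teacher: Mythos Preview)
Your overall architecture---construct explicit generators in $\CB_\bm^\pm$, establish a homomorphism, then argue injectivity and surjectivity---matches the paper's, but the execution differs in two substantive ways.

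First, the paper does \emph{not} verify the RTT relations \eqref{eqn:rtt1}--\eqref{eqn:rtt3} directly. Instead it invokes a general principle: to obtain a bialgebra homomorphism from $\bigotimes_h U_q(\dot{\fgl}_{l_h})$ into any Drinfeld double with non-degenerate pairing, it suffices to exhibit elements with the same \emph{coproduct} and \emph{pairing} as the root generators $e_{\pm[i;j)}$, $\psi_k$. The coproduct check is Exercise \ref{ex:mich}, yielding \eqref{eqn:copy}; the pairing check is \eqref{eqn:pairy}, obtained through the norm maps $\eta^{q_1}_{[i;j)}$, $\eta^{q_2}_{-[i;j)}$ of Exercises \ref{ex:pseudo}--\ref{ex:comppair}. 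Non-degeneracy of the pairing on $\CB_\bm$ is established just before the proof via the criterion \eqref{eqn:principle}. This route handles commutation between blocks automatically and is far more economical than your proposed direct RTT computation, which while not wrong in principle would be laborious.

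Second, and more critically, your surjectivity step has a genuine gap. The paper does not use a ``peeling via $\Delta_\bm$'' argument. Surjectivity instead follows from the dimension upper bound \eqref{eqn:bbound}, namely $\dim \CB^+_{\bm|\bk} \leq \#\{\bm\text{-integral partitions of }\bk\}$, which is Lemma \ref{lem:bound} specialized as in Remark \ref{rem:b}. The proof of that lemma is the real technical core: one filters $\CS^+_{\leq\bm|\bk,d}$ by the specialization maps $\ph_\rho$ sending $z_a\mapsto y_s q_1^a$ along each partition $\rho$ of $\bk$ into arcs, uses the wheel conditions \eqref{eqn:shufelem} to cancel denominators in the resulting rational function, and then bounds the degree in each $y_s$ via the slope condition \eqref{eqn:limit1}. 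Once this bound is in hand, it matches the dimension \eqref{eqn:dimestimate} of the source, so an injective homomorphism is automatically surjective. Your peeling idea, as you yourself suspect, would be difficult to execute without precisely this kind of control; the ingredients it would require (classification of primitives, compatibility of the reduction with wheel conditions) are essentially what the specialization argument in Lemma \ref{lem:bound} packages.
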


\tab
Let us now show how to construct the natural numbers $g,l_1,...,l_g$ which appear in the above Theorem. We say that an arc $[i;j)$ in the cyclic quiver is $\bm-$\textbf{integral} if:
$$
\bm \cdot [i;j) = m_i+...+m_{j-1} \in \BZ
$$
We leave it as a simple exercise for the reader to show that there exists a $\bm-$integral arc as above, starting at each vertex $i$. Therefore, there exists a well-defined minimal $\bm-$integral arc interval starting at each vertex $i$, and we will denote it by:
\begin{equation}
\label{eqn:minimal}
[i;\upsilon_\bm(i))
\end{equation}
We consider $n$ points corresponding to the residues modulo $n$, and draw an oriented edge between the point $i$ and the point $\upsilon_\bm(i)$. The uniqueness of the minimal $\bm-$integral arc implies that the resulting graph will be a union of oriented cycles:
\begin{equation}
\label{eqn:cycle}
C_1 \sqcup ... \sqcup C_g
\end{equation}
where $C_h = \{i_1,i_2,...,i_{l_h}\}$ is such that:
$$
i_{e+1} = \upsilon_\bm(i_e) \quad \forall \ e \in \{1,...,l_h\} \qquad \quad \Leftrightarrow \qquad \quad [i_1;i_2), \ [i_2;i_3), \ ..., \ [i_{l_h};i_1)
$$
are all minimal $\bm-$integral arcs. This determines the numbers $g,l_1,...,l_h$ from Theorem \ref{thm:subalg}. Given any arc $[i;j)$ in the cycle $C_h$, the labels on its vertices will be $i_1$, $i_2$,..., $i_k$ mod $n$, such that $i_{e+1} = \upsilon_\bm(i_e)$, $i = i_1$ and $j = i_{k+1}$. To $[i;j)$ we associate the following $\bm-$integral arc in the cyclic quiver:
\begin{equation}
\label{eqn:arc}
[i;j)_{h} \ := \ [i_1;i_2) \cup ... \cup [i_k;i_{k+1})
\end{equation}
%In particular, we can take any arc which winds once around the cycle $C_h$, and the resulting arc \eqref{eqn:arc} will wind $\omega_h$ times around the cyclic quiver. The number $\omega_h$ will be called the \b{winding number} of the cycle $C_h$. \\ 

\begin{example}
\label{ex:basic}

\noindent When $\bm = \left(\frac ab,\frac ab,...,\frac ab\right)$ for coprime natural numbers $a,b$, then we have $\upsilon_\bm(i) = i+b$ for all $i$ modulo $n$. Therefore, we have: 
$$
g = \gcd(b,n) \qquad \text{and} \qquad l_h = \frac ng \qquad \forall \ h\in \{1,...,g\} %\qquad \omega_h = \frac bg
$$
and the graph \eqref{eqn:cycle} is a disjoint union of $g$ cycles of length $\frac ng$.
\end{example}

\noindent 
The isomorphism \eqref{eqn:subalgebra} will be given by sending the root generators $e_{\pm [i;j)}^{(h)}$ of the quantum groups in the right hand side to a shuffle element of degree $[i;j)_{h}$ in the left hand side. Therefore, the restriction of this isomorphism to the positive halves of the algebras in question should also be an isomorphism:
\begin{equation}
\label{eqn:polar}
\CB^+_\bm \ \cong \ \bigotimes_{h=1}^g U^+_q(\dot{\fgl}_{l_h}) 
\end{equation}
Our main technical challenge is to make sure that the two sides of \eqref{eqn:polar} have the same dimension in every degree $\bk$. By \eqref{eqn:dimestimate}, the dimension of the graded pieces of quantum groups equals the number of partitions of the degree vector into arcs, i.e.
\begin{equation}
\label{eqn:bound0}
\dim \left(\bigotimes_{h=1}^g U^+_q(\dot{\fgl}_{l_h}) \right) \text{ in degree } \bk \ = \ \# \text{ of }\bm-\text{integral partitions of }\bk
\end{equation}
where a partition into arcs $[i;j)$ is called $\bm-$integral if all constituent arcs are $\bm-$integral. The main technical step in establishing the isomorphism \eqref{eqn:polar} is to obtain the estimate:
\begin{equation}
\label{eqn:bbound}
\dim \CB_{\bm|\bk}^+ \ \leq \ \#\text{ of }\bm-\text{integral partitions of }\bk
\end{equation}
In fact, we will prove a stronger estimate than \eqref{eqn:bbound}, by placing an upper bound on the dimension of the bigraded pieces of the subalgebra:
$$
\CS^+_{\leq \bm|\bk,d} \ \subset \ \CS^+_{\leq \bm}
$$
for any $\bm \in \qq$ and bidegree $(\bk,d) \in \nn \times \BZ$. In order to say what the upper bound is, let us define a $\BZ-$\textbf{indexed partition} $(\rho,\delta) \vdash (\bk,d)$ as an unordered decomposition:
$$
\Big(\bk,d \Big) \ = \ \Big( [i_1;j_1), d_1 \Big) + ... + \Big( [i_t;j_t), d_t \Big) \qquad \text{where} \qquad \delta = (d_1,...,d_t) \in \BZ^t
$$
is a vector of integers which sum up to $d$. We say that $(\rho,\delta)$ has slope $\leq \bm$ if: 
\begin{equation}
\label{eqn:slope}
d_s \leq \bm \cdot [i_s;j_s), \qquad \forall \ s\in \{1,...,t\}
\end{equation}

%The following Lemma proves an upper bound on the dimension of the graded pieces of the subalgebra $\CB_\bm^+$, which will later be shown to be attained. \\

%Finally, we say that an interval $[i;j)$ is:
%$$
%\bm-\text{integral if } \bm \cdot [i;j) \in \BZ
%$$
%If all constituent intervals of \eqref{eqn:partition} are $\bm-$integral, then we call $\rho$ a $\bm-$\textbf{integral partition}. 

\begin{lemma} 
\label{lem:bound}

For any $\ebm,\ebk,d$, we have:
\begin{equation}
\label{eqn:bound}
\dim \CS^+_{\leq \ebm|\ebk, d} \ \leq \ \# \Big\{(\rho, \delta) \vdash (\ebk,d) \ \text{of slope} \leq \ebm \Big \}
\end{equation}

\end{lemma}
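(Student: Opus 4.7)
The plan is to establish the bound \eqref{eqn:bound} by constructing a linear map from $\CS^+_{\leq \bm|\bk,d}$ into a vector space whose dimension is exactly the count in the right-hand side, and then arguing that this map is injective. The map will be built out of ``arc specialization'' operations, a technique standard in the shuffle algebra literature and whose prototype is the evaluation of Macdonald-style symmetric functions on geometric progressions.

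Concretely, for each unordered partition $\rho: \bk = [i_1;j_1) + \cdots + [i_t;j_t)$ of $\bk$ into arcs, I would define a specialization map
\[
\Phi_\rho : \CS^+_{\leq \bm|\bk,d} \longrightarrow \tBF[z_1^{\pm 1},\ldots,z_t^{\pm 1}]
\]
as follows: assign a formal parameter $z_s$ to each arc $[i_s;j_s)$, and set the corresponding group of shuffle variables (of consecutive colors $i_s, i_s+1, \ldots, j_s-1$) equal to the geometric progression $z_s, z_s q_2, \ldots, z_s q_2^{j_s - i_s - 1}$. The wheel conditions \eqref{eqn:shufelem} are engineered precisely so that the numerator of any shuffle element vanishes on each linear locus producing a pole of $\prod(qz_{ia}-q^{-1}z_{ib})$ under this specialization; hence $\Phi_\rho(F)$ is a genuine Laurent polynomial rather than a rational function. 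The slope condition \eqref{eqn:limit1}, applied by sending subgroups of variables belonging to individual arcs uniformly to infinity, then bounds the degree of $\Phi_\rho(F)$ in each $z_s$ by $\bm \cdot [i_s;j_s)$. Therefore the image of $\bigoplus_\rho \Phi_\rho$ restricted to $\CS^+_{\leq \bm|\bk,d}$ lies in the span of monomials $\prod_s z_s^{d_s}$ indexed by extensions of $\rho$ to $\BZ$-indexed partitions $(\rho,\delta) \vdash (\bk,d)$ of slope $\leq \bm$, which has dimension at most the right-hand side of \eqref{eqn:bound}.

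The main obstacle, and the technical heart of the argument, is to show that $\Phi = \bigoplus_\rho \Phi_\rho$ is injective. My approach would be induction on $|\bk|$ together with a secondary induction on the maximum arc length appearing in any $\rho$: partitions consisting entirely of simple arcs of length $1$ correspond to evaluating $F$ at generic tuples of variables, for which injectivity reduces to the statement that a symmetric Laurent polynomial is determined by its values at generic points; partitions involving longer arcs are handled by extracting leading coefficients in the arc parameters $z_s$, and using the wheel-condition structure to reduce the residual shuffle element to one of strictly smaller degree. A subtle combinatorial point to manage is that different unordered $\rho$'s will in general give overlapping information on the same shuffle element, and one must verify that the multiplicities of the target monomials $\prod_s z_s^{d_s}$ align with the symmetries of shuffle elements under permutations of same-color variables, so that no overcounting inflates the right-hand side of \eqref{eqn:bound}. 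Once injectivity and this dimensional bookkeeping are settled, the estimate follows at once by comparing dimensions of source and target.
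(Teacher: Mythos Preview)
Your overall strategy—arc specialization and a dimension count—is indeed what the paper does, but there is a genuine gap in two of your intermediate claims, and the paper's key device (a lexicographic filtration) is exactly what repairs them.

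First, it is not true that $\Phi_\rho(F)$ is a Laurent polynomial. The wheel conditions cancel \emph{most} of the factors $q z_{ia} - q^{-1} z_{ib}$ appearing between variables in distinct arcs, but not all: for each pair $s<s'$, two families of simple poles survive, located at
\[
y_{s'} q_1^{a} = y_s q_1^{j_s-1} q^{2} \quad (a\equiv j_s-1), \qquad y_{s'} q_1^{a} = y_s q_1^{i_s} q^{-2} \quad (a\equiv i_s),
\]
(in the paper's $q_1$-specialization; your $q_2$ version is symmetric). So $\Phi_\rho(F)$ is only a rational function of the form $p(y_1,\dots,y_t)$ divided by these residual linear factors. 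Consequently your degree bound $\deg_{z_s}\Phi_\rho(F)\le \bm\cdot[i_s;j_s)$ is also off: the slope condition \eqref{eqn:limit1} gives instead
\[
\deg_{y_s} F_\rho \ \le \ \bm\cdot[i_s;j_s) - \tfrac{1}{2}\big([i_s;j_s),\,\bk-[i_s;j_s)\big),
\]
and once you pass to the numerator $p$ this correction does not match the pole count, so the image does not sit inside your proposed monomial span.

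The paper's fix is to order partitions $\rho$ lexicographically by part lengths and set $S_\rho=\bigcap_{\rho'>\rho}\ker\Phi_{\rho'}$. On $S_\rho$, the specialization $F_\rho$ acquires \emph{additional} zeros at $y_s q_1^{j_s}=y_{s'}q_1^a$ and $y_s q_1^{i_s-1}=y_{s'}q_1^a$, because setting two arcs adjacent like this merges them into a longer arc $\rho'>\rho$, where $F$ is assumed to vanish. These extra zeros, together with the wheel-condition zeros, exactly balance the residual poles, and one can write $F_\rho = e(y_1,\dots,y_t)\cdot(\text{explicit ratio})$ with $e$ a genuine Laurent polynomial satisfying $\deg_{y_s} e \le \bm\cdot[i_s;j_s)$. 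Then $\dim \CS^+_{\le\bm|\bk,d}\le\sum_\rho\dim\Phi_\rho(S_\rho)$ by elementary linear algebra on the filtration, and each summand is bounded by the monomial count you wanted. Your proposed injectivity induction is morally equivalent to this filtration, but you would not be able to complete it without first identifying the ``merging'' zeros that make the degree bookkeeping close up.
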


%we conclude that:
%\begin{equation}
%\label{eqn:bound}
%\dim \CB^+_{\emph{\textbf{m}}|\emph{\textbf{k}}} \leq  |\emph{\textbf{m}}-\text{integral partitions of }\emph{\textbf{k}}|
%\end{equation}
%%
%$$
%\gamma = \prod^{b\in T^\pm_{1s}}_{1<s} \frac {1 - \frac {y_1}{y_sq_1^b}}{1 - \frac {y_1 q^{\pm 2}}{y_{s} q_1^{b}}} \cdot \prod_{1<s} \frac {\left(1 - \frac {y_1 q_1^{j_1} q^2}{y_{s} q_1^{i_{s}}} \right)^{\delta^{j_1}_{i_{s}}} \left(1 - \frac  {y_1 q_1^{i_1} q^{-2}}{y_{s} q_1^{j_{s}}} \right)^{\delta_{j_{s}}^{i_1}} }{\left(1 - \frac {y_1 q_1^{j_1} q^2}{y_{s} q_1^{j_{s}}} \right)^{\delta_{j_{s}}^{j_1}} \left(1 - \frac {y_1 q_1^{i_1} q^{-2}}{y_{s} q_1^{i_{s}}} \right)^{\delta_{i_{s}}^{i_1}} }  
%$$

\begin{proof} We will use the argument of \citep{Ntor}, itself a generalization of the argument in \citep{FHHSY}. Consider an arbitrary shuffle element: 
$$
F(...,z_{i1},...,z_{ik_i},...)\in \CS^+_{\leq \bm|\bk,d}
$$ 
and any partition $\rho \vdash \bk$ into arcs:
$$
\rho \ : \ [i_1;j_1) + ... + [i_t;j_t) \ = \ \bk
$$	
We will assume that the parts of $\rho$ are ordered in terms of their lengths $l_1 \geq ... \geq l_t$, where $l_s = j_s-i_s$. To this data, we may assign the specialization:
\begin{equation}
\label{eqn:specialize}
F_\rho(y_1,...,y_t) \ = \ F(...,y_sq_1^{i_s},...,y_sq_1^{j_s-1},...) 
\end{equation}
where recall that $q_1 = qt$. Let us now consider the map:
$$
\ph_\rho \ : \ \CS^+_{\leq \bm|\bk,d} \ \longrightarrow \ \tBF(y_1,...,y_t)^\sym \qquad \qquad \ph_\rho(F) = F_\rho
$$
defined above, where $\tBF = \BQ(q,t)$. The superscript $\sym$ refers to the fact that $F_\rho$ must be symmetric in the variables $y_s$ and $y_{s'}$ if $[i_s;j_s) = [i_{s'};j_{s'})$ modulo $\BZ(n,n)$. We will consider the filtration:
\begin{equation}
\label{eqn:gordon}
S_\rho \ := \ \bigcap_{\rho'>\rho} \text{Ker } \ph_{\rho'}
\end{equation}
where partitions are ordered lexicographically in the lengths of their parts:
$$
\rho \geq \rho' \quad \Leftrightarrow \quad \left(\text{the lengths }l_1,...,l_t \text{ of }\rho \right) \geq \left( \text{the lengths }l'_{1},...,l'_{t'} \text{ of }\rho'\right)
$$
When $\rho$ is the finest partition (whose parts are just simple roots $\bs^i$), we have $S_\rho = 0$. When $\rho$ is a maximal partition, we have $S_\rho = \CS^+_{\leq \bm|\bk,d}$. Therefore, we conclude that:
$$
\dim \CS^+_{\leq \bm|\bk,d} \ \leq \ \sum_{\rho \vdash \bk} \dim \ph_\rho(S_\rho)
$$
so it is enough to show that:
\begin{equation}
\label{eqn:bali}
\dim \ph_\rho(S_\rho) \ \leq \ \# \Big\{(d_1,...,d_t)\in \BZ^t \ \text{ such that } \ d_s \leq \bm \cdot [i_s;j_s) \quad \forall \ s \Big\}^\sym
\end{equation}
The superscript $\sym$ refers to the fact that the collections $(d_1,...,d_t)$ are unordered in those entries $d_s, d_{s'}$ such that $[i_s;j_s) = [i_{s'};j_{s'})$. To prove \eqref{eqn:bali}, let us choose any: 
$$
F_\rho \ = \ \ph_\rho(F) \ \in \ \ph_\rho(S_\rho)
$$
By the very definition of shuffle elements in \eqref{eqn:defshuf}, we can write: 
\begin{equation}
\label{eqn:special}
F_\rho = \frac {r(...,y_sq_1^{i_s},...,y_sq_1^{j_s-1},...)}{\prod^{a\equiv a'}_{s \neq s'} \prod^{i_s \leq a < j_s}_{i_{s'} \leq a'< j_{s'}} \left[ \frac {y_{s'}q_1^{a'}}{y_sq_1^a q^2} \right]}
\end{equation}
for some symmetric Laurent polynomial $r$ which satisfies the wheel conditions \eqref{eqn:shufelem}. These conditions imply that $r$ is divisible the following linear factors:
$$
y_s q_1^{a} = y_{s'} q_1^{a'} q^{-2} \qquad \text{for} \quad a\equiv a' \quad \text{and} \quad i_s \leq a < j_s - 1, \quad i_{s'} \leq a' \leq  j_{s'} - 1
$$
$$
y_s q_1^{a}  = y_{s'} q_1^{a'}q^2 \qquad \text{for} \quad a\equiv a' \quad \text{and} \quad i_s < a \leq j_s - 1, \quad i_{s'} \leq a' \leq j_{s'} - 1
$$
for any $s<s'$. The above zeroes are counted with the correct multiplicities, so we may cancel them out with the factors in the denominator. We conclude that:
\begin{equation}
\label{eqn:funky}
F_\rho(y_1,...,y_t) = \frac {p(y_1,...,y_t)}{ \prod^{s<s', a \equiv j_s - 1}_{i_{s'} \leq a < j_{s'}} \left[\frac {y_{s'}q_1^{a}}{y_s q_1^{j_s-1} q^2} \right] \prod^{s<s', a \equiv i_s}_{i_{s'} \leq a < j_{s'}} \left[\frac {y_{s'} q_1^{a}}{y_s q_1^{i_s} q^{-2}} \right]} 
\end{equation}
for a Laurent polynomial $p$, which is symmetric in the variables $y_s$ and $y_{s'}$ whenever $[i_s;j_s) = [i_{s'};j_{s'})$. Moreover, under any one of the specializations:
\begin{equation}
\label{eqn:jorah}
y_s q_1^{j_s} = y_{s'} q_1^{a} \qquad \text{for some} \quad a \equiv j_s \qquad \qquad i_{s'} \leq a' < j_{s'}
\end{equation}
\begin{equation}
\label{eqn:jeor}
y_s q_1^{i_s-1} = y_{s'} q_1^{a} \qquad \text{for some} \quad a \equiv i_s-1 \qquad i_{s'} \leq a' < j_{s'}
\end{equation}
we have $F_\rho = F_\rho'$ for a partition $\rho'>\rho$ in lexicographic ordering. By the assumption that $F\in S_\rho$, we conclude that $F_\rho$, and hence also $p$, vanishes at each of the specializations \eqref{eqn:jorah} and \eqref{eqn:jeor}. Therefore, we can rewrite \eqref{eqn:funky} as:
\begin{equation}
\label{eqn:fresh}
F_\rho(y_1,...,y_t) = e(y_1,...,y_t) \cdot \frac {\prod^{s<s', a \equiv j_s}_{i_{s'} \leq a < j_{s'}} \left[\frac {y_{s'}q_1^{a}}{y_s q_1^{j_s}} \right] \prod^{s<s', a \equiv i_s-1}_{i_{s'}  \leq a < j_{s'} } \left[\frac {y_{s'} q_1^{a}}{y_s q_1^{i_s-1}} \right]}{\prod^{s<s', a \equiv j_s - 1}_{i_{s'} \leq a < j_{s'}} \left[\frac {y_{s'}q_1^{a}}{y_s q_1^{j_s-1} q^2} \right] \prod^{s<s', a \equiv i_s}_{i_{s'} \leq a < j_{s'}} \left[\frac {y_{s'} q_1^{a}}{y_s q_1^{i_s} q^{-2}} \right]} 
\end{equation}
for a Laurent polynomial $e$, which is symmetric in the variables $y_s$ and $y_{s'}$ if $[i_s;j_s) = [i_{s'};j_{s'})$ modulo $\BZ(n,n)$. Let us now recall \eqref{eqn:limit1}, which gives us estimates on the degree of $F_\rho$ in every variable:
\begin{equation}
\label{eqn:honhonhon}
\deg_{y_s} F_\rho \leq \bm\cdot [i_s;j_s) - \frac {\Big([i_s;j_s), \bk - [i_s;j_s) \Big)}2
\end{equation}
for all $s\in \{1,...,t\}$, and hence:
$$
\deg_{y_s} e \leq \bm\cdot [i_s;j_s) - \frac {\Big([i_s;j_s), \bk - [i_s;j_s) \Big)}2
- \sum^{s<s', a \equiv j_s}_{i_{s'} \leq a < j_{s'} } \frac 12 -
\sum^{s<s', a \equiv i_s-1}_{i_{s'} \leq a < j_{s'}} \frac 12 +
$$
$$
+ \sum^{s<s', a \equiv j_s - 1}_{i_{s'} \leq a < j_{s'}} \frac 12 + \sum^{s<s', a \equiv i_s}_{i_{s'} \leq a < j_{s'}} \frac 12 - \sum^{s>s', a \equiv j_{s'}}_{i_{s} \leq a < j_{s}} \frac 12 - \sum^{s>s', a \equiv i_{s'}-1}_{i_{s} \leq a < j_{s}} \frac 12 + \sum^{s>s', a \equiv j_{s'} - 1}_{i_{s} \leq a < j_{s}} \frac 12 + \sum^{s>s', a \equiv i_{s'}}_{i_{s} \leq a < j_{s}} \frac 12
$$
Adding together the above sums yields $\frac 12\sum_{s\neq s'}([i_s;j_s), [i_{s'},j_{s'}))$, so we conclude that $\deg_{y_s} e \leq \bm \cdot [i_s;j_s)$. Since the homogenous degree of $F$ equals $d$, we conclude that:
$$
e(y_1,...,y_t) \ = \ \sum_{d_1+...+d_t = d}^{d_s \leq \bm \cdot [i_s;j_s)} c_{d_1,...,d_t} \cdot y_1^{d_1}...y_t^{d_t} \qquad  \text{for some constants } \quad  c_{d_1,...,d_t} \in \tBF
$$
Moreover, $e$ must be symmetric in the variables $y_s$ and $y_{s'}$ whenever $[i_s;j_s) = [i_{s'};j_{s'})$. Since $e$ determines $F_\rho$ completely by \eqref{eqn:fresh}, this precisely proves the bound \eqref{eqn:bali}. \\
\end{proof}

\begin{remark}
\label{rem:b}

When $d = \bm \cdot \bk$, any $\BZ-$indexed partition of slope $\leq \bm$ is forced to satisfy $d_s = \bm \cdot [i_s;j_s) \in \BZ$ for all $s\in \{1,...,t\}$. Therefore, \eqref{eqn:bound} implies \eqref{eqn:bbound}. 

\tab
Running the argument of Lemma \ref{lem:bound} gives us a useful criterion for when a shuffle element is 0. Assume that bounds \eqref{eqn:honhonhon} are strict for any partition $\rho$ with more than one part, and that $F_\rho = 0$ for all partitions $\rho$ with exactly one part; then $F=0$. In terms of the coproduct, this can be translated into:
\begin{equation}
\label{eqn:principle}
\text{if }F\in \CB_{\bm|\bk}^+ \text{ is primitive,} \ \text{ i.e. } \ \Delta_\bm(F) = \ph_{\deg F} \otimes F + F \otimes 1
\end{equation}
$$
\text{and }F(q_1^i,...,q_1^{j-1}) = 0 \text{ for all } [i;j) = \bk \qquad \Rightarrow \qquad F = 0
$$
A similar principle holds when we replace $q_1$ by $q_2$.
\end{remark}

\section{Explicit shuffle elements}
\label{sec:explicit} 

%\begin{equation}
%\label{eqn:defpm1}
%P^{\bm}_{[i;j)} = \textrm{Sym} \left[\frac {\prod_{a=i}^{j-1} z_a^{\lceil m_i+...+m_a \rceil - \lceil m_i+...+m_{a-1} \rceil}}{\prod_{a=i+1}^{j-1} \left(1 - \frac {z_{a-1} t}{z_{a}q} \right)} \prod_{i \leq a < b < j} \zeta \left( \frac {z_{b}}{z_{a}} \right) \right] 
%\end{equation}
%\begin{equation}
%\label{eqn:defpm2}
%P^{\bm}_{-[i;j)} = \textrm{Sym} \left[\frac {\prod_{a=i}^{j-1} z_a^{\lceil m_i+...+m_{a-1} \rceil - \lceil m_i+...+m_{a} \rceil}}{\prod_{a=i+1}^{j-1} \left( 1 - \frac {z_{a}}{z_{a-1} qt} \right)} \prod_{i \leq a < b < j} \zeta \left( \frac {z_{a}}{z_{b}} \right) \right]
%\end{equation}

\noindent We will now construct elements of $\CB^+_\bm$ and $\CB^-_\bm$, which will allow us to prove that the bounds in \eqref{eqn:bbound}, and later \eqref{eqn:bound}, are effective. As predicted by Lemma \ref{lem:bound}, there should be one generator of these algebras in every bidegree:
$$
(\pm[i;j), d) \quad \text{such that} \quad d = \pm \ \bm \cdot [i;j) \in \BZ
$$
To construct such shuffle elements, we will consider special cases of \eqref{eqn:defp0} and \eqref{eqn:defq0}:
\begin{equation}
\label{eqn:defpm1}
P^{\pm \bm}_{\pm [i;j)} \ = \ \textrm{Sym} \left[\frac {\prod_{a=i}^{j-1} z_a^{\lfloor m_i + ... + m_{a} \rfloor - \lfloor m_i + ... + m_{a-1} \rfloor}}{t^{\ind^\bm_{[i;j)}} q^{i-j}\prod_{a=i+1}^{j-1} \left(1 - \frac {q_2 z_{a}}{z_{a-1}} \right)} \prod_{i \leq a < b < j} \zeta \left( \frac {z_{b}}{z_{a}} \right) \right] \ \in \ \CS^\pm
\end{equation}
\begin{equation}
\label{eqn:defpm2}
Q^{\pm \bm}_{\mp [i;j)} \ = \ \textrm{Sym} \left[\frac {\prod_{a=i}^{j-1} z_a^{\lfloor m_i + ... + m_{a-1} \rfloor - \lfloor m_i + ... + m_{a} \rfloor}}{t^{-\ind^\bm_{[i;j)}} \prod_{a=i+1}^{j-1} \left(1 - \frac {q_1 z_{a-1}}{z_{a}} \right)} \prod_{i \leq a < b < j} \zeta \left( \frac {z_{a}}{z_{b}} \right) \right] \ \in \ \CS^\pm
\end{equation}
where we recall that $q_1 = qt$ and $q_2 = \frac qt$, and define the constant:
\begin{equation}
\label{eqn:cohen}
\ind^\bm_{[i;j)} \ = \ \sum_{a=i}^{j-1} \Big( m_i + ... + m_a - \lfloor m_i + ... + m_{a-1} \rfloor \Big) %\sigma^\bm_{[i;j)} \ = \ \sum_{a=i}^{j-1} a(\lfloor m_i + ... + m_a \rfloor - \lfloor m_i + ... + m_{a-1} \rfloor - m_a) % \qquad \beta_{[i;j)}^{\bm} = q_2^{\sum_{a=i}^{j-1} a(m_a - \lceil m_i + ... + m_a \rceil + \lceil m_i + ... + m_{a-1} \rceil)}
\end{equation}
The notation is reminiscent of \eqref{eqn:ind}, which was a generalization of \eqref{eqn:cohen} above that depended on the shape of a ribbon. Finally, set:
$$
P^{\bm}_{\pm [i;j)} = Q^{\bm}_{\pm [i;j)} = 0 \qquad \text{if} \qquad \bm \cdot [i;j) \notin \BZ
$$
As we showed in Exercise \ref{ex:gen}, the rational functions $P_{\pm [i;j)}^\bm$ and $Q_{\pm [i;j)}^\bm$ are shuffle elements, and in fact, they lie in the image of the map $\Upsilon$. However, at this stage we are more interested in the fact that these elements have slope $=\bm$: \\

%\begin{equation}
%\label{eqn:defpm1}
%P^{\pm \bm}_{\pm [i;j)} \ = \ \textrm{Sym} \left[\frac {\prod_{a=i}^{j-1} z_a^{\lceil m_i + ... + m_a \rceil - \lceil m_i + ... + m_{a-1} \rceil}}{\prod_{a=i+1}^{j-1} \left(1 - \frac {z_{a-1} t}{z_{a}q} \right)} \prod_{i \leq a < b < j} \zeta \left( \frac {z_{b}}{z_{a}} \right) \right] \ \in \ \CS^\pm
%\end{equation}
%\begin{equation}
%\label{eqn:defpm2}
%Q^{\pm \bm}_{\pm [i;j)} \ = \ \textrm{Sym} \left[\frac {\prod_{a=i}^{j-1} z_a^{\lfloor m_i + ... + m_{a} \rfloor - \lfloor m_i + ... + m_{a-1} \rfloor}}{q \prod_{a=i+1}^{j-1} \left(q - \frac {z_{a}}{t z_{a-1}} \right)} \prod_{i \leq a < b < j} \zeta \left( \frac {z_{a}}{z_{b}} \right) \right] \ \in \ \CS^\pm
%\end{equation}

\begin{exercise} 
\label{ex:mich}

We have $P^\bm_{\pm [i;j)}, Q^\bm_{\pm [i;j)} \in \CB^\pm_\bm$, and their coproduct is given by:
\begin{equation}
\label{eqn:copy}
\Delta_\bm(P^\bm_{[i;j)}) = \sum_{a=i}^j P^\bm_{[a;j)} \ph_{[i;a)}  \otimes P^\bm_{[i;a)} \qquad \Delta_\bm(Q^\bm_{[i;j)}) = \sum_{a=i}^j Q^\bm_{[i;a)} \ph_{[a;j)} \otimes Q^\bm_{[a;j)} \qquad
\end{equation}
$$
\Delta_\bm(P^\bm_{-[i;j)}) = \sum_{a=i}^j  P^\bm_{-[a;j)} \otimes P^\bm_{-[i;a)} \ph_{-[a;j)} \ \qquad \Delta_\bm(Q^\bm_{-[i;j)}) = \sum_{a=i}^j  Q^\bm_{-[i;a)} \otimes Q^\bm_{-[a;j)} \ph_{-[i;a)} 
$$

%for the positive ones, and the opposite coproduct relations:
%\begin{equation}
%\label{eqn:copy3}
%\Delta_\bm(P^\bm_{-[i;j)}) = \sum_{i \leq a \leq j}  P^\bm_{-[i;a)} \otimes   \ph_{-[i;a)} P^\bm_{-[a;j)} 
%\end{equation}
%\begin{equation}
%\label{eqn:copy2}
%\Delta_\bm(Q^\bm_{[i;j)}) = \sum_{i \leq a \leq j}  \ph_{[i;a)}  Q^\bm_{[a;j)}  \otimes Q^\bm_{[i;a)} 
%\end{equation}
%for the negative ones. 

%\begin{equation}
%\label{eqn:copy1}
%\Delta_\bm(P^\bm_{[i;j)}) = \ph_{[i;j)} \otimes P^\bm_{[i;j)} + P^\bm_{[i;j)}  \otimes 1 + [q^2] \sum_{i < a <  j} \ph_{[a;j)} P^\bm_{[i;a)}  \otimes P^\bm_{[a;j)}
%\end{equation}
%\begin{equation}
%\label{eqn:copy2}
%\Delta_\bm(P^\bm_{-[i;j)}) = 1 \otimes P^\bm_{-[i;j)} + P^\bm_{-[i;j)} \otimes   \ph_{-[i;j)} + [q^2] \sum_{i < a < j}  P^\bm_{-[a;j)} \otimes   \ph_{-[a;j)} P^\bm_{-[i;a)} 
%\end{equation}

\end{exercise}

\tab
The above coproduct relations match \eqref{eqn:ray1} and \eqref{eqn:ray2} for the quantum affine algebra. In order to draw a complete parallel between the two pictures, we need to compute the various pairings of the shuffle elements $P^\bm_{\pm[i;j)}$ and $Q^\bm_{\pm[i;j)}$ in the subalgebras $\CB_\bm^\pm$. To this end, we introduce the \b{norm maps}:
\begin{equation}
\label{eqn:eta1}
\eta^{q_1}_{\pm [i;j)}:\CS_{\pm [i;j)} \longrightarrow \tBF, \qquad \qquad \eta^{q_1}_{\pm [i;j)}(F) = \frac {F(q_1^{i},...,q_1^{j-1})}{\prod_{i \leq a < b < j} \zeta(q_1^{b-a})}
\end{equation}
\begin{equation}
\label{eqn:eta2}
\eta^{q_2}_{\pm [i;j)}:\CS_{\pm [i;j)} \longrightarrow \tBF, \qquad \qquad \eta^{q_2}_{\pm [i;j)}(G) = \frac {G(q_2^{-i},...,q_2^{-j+1})}{\prod_{i \leq a < b < j} \zeta(q_2^{b-a})}
\end{equation}
The above specializations are very important, because $\zeta(q_1^{-1}) = \zeta(q_2^{-1}) = 0$, which will force most summands in the shuffle product will vanish. This is made precise by the following Exercise: \\

%$$
%\eta^{q_2}_{[i;j)}(f*f') = 
%\begin{cases}
%\eta^{q_2}_{[i;a)}(f)\eta^{q_2}_{[a;j)}(f') & \text{ if } \ \ \exists \ a \text{ s.t. } \deg f = [i;a) \text{ and } \deg f' = [a;j) \\
%0 & \text{ otherwise}
%\end{cases}
%$$
%$$
%\eta^{q_1}_{[i;j)}(g*g') = 
%\begin{cases}
%\eta^{q_1}_{[i;a)}(g)\eta^{q_1}_{[a;j)}(g') & \text{ if } \ \ \exists \ a \text{ s.t. } \deg g = -[i;a) \text{ and } \deg g' = -[a;j) \\
%0 & \text{ otherwise}
%\end{cases}
%$$

\begin{exercise}
\label{ex:pseudo}
The norm maps are \textbf{pseudo-multiplicative}, in the sense that any product $F*F' \in \CS_{[i;j)}$ or $G*G' \in \CS_{-[i;j)}$, we have:
$$
\eta^{q_1}_{[i;j)}(F*F') = 
\begin{cases}
\eta^{q_1}_{[a;j)}(F)\eta^{q_1}_{[i;a)}(F') & \text{ if } \ \ \exists \ a \text{ s.t. } \deg F = [a;j) \text{ and } \deg F' = [i;a) \\
0 & \text{ otherwise}
\end{cases}
$$
$$
\eta^{q_2}_{-[i;j)}(G*G') = 
\begin{cases}
\eta^{q_2}_{[a;j)}(G)\eta^{q_2}_{[i;a)}(G') & \text{ if } \ \ \exists \ a \text{ s.t. } \deg G = -[a;j) \text{ and } \deg G' = -[i;a) \\
0 & \text{ otherwise}
\end{cases}
$$
Similar results hold for $\eta^{q_1}_{-[i;j)}$ and $\eta^{q_2}_{[i;j)}$, although one has to switch the order of the shuffle product $F*F' \leftrightarrow F'*F$ and $G*G' \leftrightarrow G'*G$ in the above formulas. Moreover:
\begin{equation}
\label{eqn:normp1}
\eta^{q_1}_{[i;j)} \left( P_{[i';j')}^{\bm} \right) \ = \ \delta_{[i';j')}^{[i;j)} \cdot \frac {q_1^{\sum_{a=i}^{j-1} am_a} q^{\ind_{[i;j)}^\bm}}{q^{i-j}(1-q^{2})^{j-i-1}} %\delta_{[i';j')}^{[i;j)} \cdot \frac {q_1^{\sum_{a=i}^{j-1} a \left( \lfloor m_i+...+m_a\rfloor - \lfloor m_i+...+m_{a-1}\rfloor \right)}}{t^{\sigma^\bm_{[i;j)}}  q^{i-j}(1-q^{2})^{j-i-1}}
\end{equation}
\begin{equation}
\label{eqn:normp2}
\eta^{q_2}_{-[i;j)} \left( Q_{-[i';j')}^{\bm} \right)  =  \delta_{[i';j')}^{[i;j)} \cdot \frac {q_2^{\sum_{a=i}^{j-1} am_a} q^{\ind^\bm_{[i;j)}}}{(1-q^2)^{j-i-1}}  %\delta_{[i';j')}^{[i;j)} \cdot \frac {q_2^{\sum_{a=i}^{j-1} a \left( \lfloor m_i+...+m_{a}\rfloor - \lfloor m_i+...+m_{a-1}\rfloor \right)}}{t^{-\sigma^\bm_{[i;j)}}  (1-q^2)^{j-i-1}} 
\end{equation}
whenever $[i;j)$ is $\bm-$integral arc. 

%while:
%\begin{equation}
%\label{eqn:normp3}
%\eta^{q_2}_{[i;j)} \left( P_{\pm[i';j')}^{\pm \bm} \right) \ = \ ...
%\end{equation}
%\begin{equation}
%\label{eqn:normp4}
%\eta^{q_1}_{[i;j)} \left( Q_{\pm [i';j')}^{\pm \bm} \right) \ = \ ...
%\end{equation}

%$$
%\eta^{q_2}_{[i;j)}(f*f') = 
%\begin{cases}
%\eta^{q_2}_{[i;a)}(f)\eta^{q_2}_{[a;j)}(f') & \text{ if } \ \ \exists \ a \text{ such that } \deg f = [i;a) \text{ and } \deg f' = [a;j) \\
%0 & \text{ if } \not \exists \ a \text{ such that } \deg f = [i;a) \text{ and } \deg f'
%= [a;j) 
%\end{cases}
%$$
%$$
%\eta^{q_1}_{-[i;j)}(g*g') = 
%\begin{cases}
%\eta^{q_1}_{-[i;a)}(g)\eta^{q_1}_{-[a;j)}(g') & \text{ if } \ \ \exists \ a \text{ such that } \deg g = -[i;a) \text{ and } \deg g' = -[a;j) \\
%0 & \text{ if } \not \exists \ a \text{ such that } \deg g = -[i;a) \text{ and } \deg g' = -[a;j) 
%\end{cases}
%$$

\end{exercise} 

\tab
The following Exercise proves that the norm maps \eqref{eqn:normp1} and \eqref{eqn:normp2} coincide with the linear form obtained by contracting the Hopf pairing \eqref{eqn:daddypair} with $Q_{-[i;j)}^\bm$ and $P_{[i;j)}^\bm$: \\

\begin{exercise}
\label{ex:comppair}
For any $f\in \CB^+_\bm$ and $g\in \CB^-_\bm$, we have:
\begin{equation}
\label{eqn:functional1}
\langle Q_{-[i;j)}^{\bm}, F \rangle \ = \ (1-q^2)^{j-i} \cdot q_1^{-\sum_{a=i}^{j-1} am_a} q^{i-j-\ind_{[i;j)}^\bm} \cdot \eta^{q_1}_{[i;j)}(F) %q^{\sum_{a=i}^{j-1}a\left( \lfloor m_i+...+m_{a-1} \rfloor - \lfloor m_i+...+m_{a} \rfloor \right)}t^{-\sum_{a=i}^{j-1}am_a}
\end{equation}
\begin{equation}
\label{eqn:functional2}
\langle G, P_{[i;j)}^\bm \rangle \quad = \quad (1-q^2)^{j-i} \cdot q_2^{-\sum_{a=i}^{j-1} am_a} q^{-\ind_{[i;j)}^\bm} \cdot \eta^{q_2}_{-[i;j)}(G) %q^{\sum_{a=i}^{j-1}a\left( \lfloor m_i+...+m_{a-1} \rfloor - \lfloor m_i+...+m_{a} \rfloor \right)}t^{\sum_{a=i}^{j-1}am_a} 
\end{equation}
for any $\bm-$integral arc $[i;j)$. The pairings $\langle F, Q_{[i;j)}^{-\bm} \rangle$ and $\langle P_{-[i;j)}^{-\bm}, G \rangle$ are given by the same formulas \eqref{eqn:functional1} and \eqref{eqn:functional2}, respectively.
\end{exercise}

%\begin{equation}
%\label{eqn:functional3}
%\langle P_{-[i;j)}^\bm, f \rangle = q_2^{\sum_{a=i}^{j-1}a\left(\lfloor m_i+...+m_{a} \rfloor - \lfloor m_i+...+m_{a-1} \rfloor \right)} \cdot \eta^{q_2}_{[i;j)}(f)
%\end{equation}
%\begin{equation}
%\label{eqn:functional4}
%\langle Q_{-[i;j)}^\bm, f \rangle = q_1^{\sum_{a=i}^{j-1}a\left( \lceil m_i+...+m_{a-1} \rceil - \lceil m_i+...+m_{a} \rceil \right)} \cdot \eta^{q_1}_{[i;j)}(f)
%\end{equation}

\tab
As a consequence of the above Exercise and either \eqref{eqn:normp1} or \eqref{eqn:normp2}, we obtain:
\begin{equation}
\label{eqn:pairy}
\langle Q_{-[i';j')}^\bm, P_{[i;j)}^\bm \rangle \ = \ \delta_{[i';j')}^{[i;j)} \cdot (1-q^2) \ = \ \langle P_{-[i';j')}^\bm, Q_{[i;j)}^\bm \rangle
\end{equation}
for all $\bm-$integral arcs $[i;j)$ and $[i';j')$. Even more so, formula \eqref{eqn:functional1} and the fact that $\langle \cdot, \cdot \rangle$ is a bialgebra pairing imply that:
$$
\langle Q_{-[i_1;j_1)}^\bm...Q_{-[i_t;j_t)}^\bm, F \rangle = \text{const}\cdot \eta^{q_1}_{[i_1;j_1)} \circ ... \circ \eta^{q_1}_{[i_t;j_t)} \Big( \Delta^{t-1}(F) \Big)
$$
This formula allows us to prove that the pairing is non-degenerate on $\CB_\bm$:
$$
\langle \cdot , \cdot \rangle \ : \ \CB^-_\bm \otimes \CB^+_\bm \ \longrightarrow \ \BC(q) \qquad \qquad \text{is non-degenerate}
$$
Indeed, if there existed an element $F \in \CB^+_{\bm|\bk}$ such that $\langle G, F \rangle = 0$ for all $G \in \CB_{\bm|-\bk}^-$, the above formula would imply that:
$$
\eta^{q_1}_{[i_1;j_1)} \circ ... \circ \eta^{q_1}_{[i_t;j_t)} \Big( \Delta^{t-1}(F) \Big) = 0
$$
for all partitions $\rho = [i_1;j_1) + ... + [i_t;j_t) \vdash \bk$. This would allow us to prove that $F$ is 0, inductively in $\bk$, based on the principle \eqref{eqn:principle}. Indeed, the induction hypothesis implies that any intermediate factor in the coproduct of $F$ must be 0, hence $F$ must be primitive. Then the fact that any linear map $\eta^{q_1}_{[i;j)}$ annihilates $F$ implies that $F=0$. \\

%In particular, this implies that the dimension of $\CB^+$ equals the right hand side of \eqref{eqn:bound}, and therefore the pairing: 
%$$
%\langle \cdot , \cdot \rangle \ : \ \CB^-_\bm \otimes \CB^+_\bm \ \longrightarrow \ \BC(q)
%$$
%is non-degenerate. \\

\begin{proof} \emph{of Theorem \ref{thm:subalg}:} The Theorem is a consequence of a general principle which was proved as part of Lemma 4.4. in \citep{Ntor}. The principle says in order to define a bialgebra homomorphism:
\begin{equation}
\label{eqn:billie}
\bigotimes_{h=1}^g U_q(\dot{\fgl}_{l_h}) \ \stackrel{\cong}\longrightarrow \ A
\end{equation}
to a Drinfeld double $A = A^\leq \otimes A^\geq$ defined with respect to a \textbf{non-degenerate} bialgebra pairing, it is enough to produce elements:
$$
\widetilde{e}_{\pm[i;j)}, \ \widetilde{\ph}_k \ \in \ A 
$$
for any arc $[i;j)$ and any vertex $k$ in the cycles $C_1,...,C_g$ of \eqref{eqn:cycle}, which satisfy the same coproduct and pairing relations as the generators of the quantum affine group \eqref{eqn:ray1} and \eqref{eqn:charles}. When $A = \CB_\bm$, we set:
\begin{equation}
\label{eqn:choices}
\widetilde{e}_{[i;j)} \ = \ P^\bm_{[i;j)_h} \qquad \qquad \widetilde{e}_{-[i;j)} \ = \ Q^\bm_{-[i;j)_h} \qquad \qquad \widetilde{\ph}_k = \ph_{[k;\upsilon_\bm(k))}
\end{equation}
for any arc $[i;j)$ and any vertex $k$ in each cycle $C_h$, where the arc $[i;j)_h$ in the cyclic quiver is defined in \eqref{eqn:arc} and the minimal arc $[k;\upsilon_\bm(k))$ in the cyclic quiver is defined in \eqref{eqn:minimal}. The fact that the above choices \eqref{eqn:choices} satisfy the required coproduct and pairing relations follows from \eqref{eqn:copy} and \eqref{eqn:pairy}, which completes the proof of the Theorem. Although we will not need it, we claim that the isomorphism \eqref{eqn:billie} sends:
$$
f_{[i;j)} \ \longrightarrow \ Q^\bm_{[i;j)_h} \qquad \text{and} \qquad f_{-[i;j)} \ \longrightarrow \ P^\bm_{-[i;j)_h}
$$
The main technical requirement in proving this statement is computing $\eta^{q_1}_{-[i;j)} ( Q_{[i';j')}^{\bm} )$ or $\eta^{q_2}_{[i;j)} ( P_{-[i';j')}^{\bm} )$ by analogy with \eqref{eqn:normp1} and \eqref{eqn:normp2}. This computation is messy and akin to the proof of Proposition 4.18 of \citep{Ntor}, and so we will not pursue it. \\
\end{proof}

\section{Algebra factorizations and $R-$matrices}
\label{sec:factorizing}

\noindent Theorem \ref{thm:subalg} allows us to identify the subalgebras $\CB_{\bm}$ for various $\bm \in \qq$. According to the following Proposition, these are the building blocks of the whole shuffle algebra: \\

\begin{proposition}
\label{prop:factorizing}

For any fixed slope vector $\ebm = (m_1,...,m_n) \in \qq$, we have:
\begin{equation}
\label{eqn:factor1}
\CS^+ \ = \ \bigotimes_{r \in \BQ}^\rightarrow \CB^+_{\ebm + r\bth}
\end{equation}
where $\bth = (1,...,1)$. \\

\end{proposition}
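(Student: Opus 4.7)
The plan is to prove that the ordered multiplication map
$$
\Theta \ : \ \bigoplus_{r_1 < \cdots < r_t \in \BQ}^{t\in \BN} \bigotimes_{s=1}^t \CB^+_{\bm + r_s\bth} \ \longrightarrow \ \CS^+, \qquad F_1 \otimes \cdots \otimes F_t \longmapsto F_1 * \cdots * F_t,
$$
is an isomorphism of vector spaces, which is a precise reformulation of \eqref{eqn:factor1}. The strategy is a PBW-style argument based on the slope filtration, comparing dimensions via Lemma \ref{lem:bound} and then establishing injectivity via the leading coproduct $\Delta_{\bm'}$.

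First, I would verify that $\Theta$ respects the slope filtration. Since each $\CB^+_{\bm + r_s\bth} \subset \CS^+_{\leq \bm + r_s\bth}$ and $\CS^+_{\leq \bm'}$ is a subalgebra for every $\bm' \in \qq$, any product $F_1 * \cdots * F_t$ with $r_s \leq r$ lies in $\CS^+_{\leq \bm + r\bth}$. This shows that for each $r \in \BQ$, $\Theta$ restricts to a map $\Theta_r$ into $\CS^+_{\leq \bm + r\bth}$, and in fact every element of $\CS^+$ lies in some $\CS^+_{\leq \bm + r\bth}$ for $r$ large enough, so it suffices to show each $\Theta_r$ is an isomorphism.

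Next, I would compute the dimension of the source of $\Theta_r$ in bidegree $(\bk, d)$. By Theorem \ref{thm:subalg} combined with the PBW theorem for $\bigotimes_{h=1}^g U^+_q(\dot{\fgl}_{l_h})$, the graded piece $\CB^+_{\bm + r\bth | \bk}$ has dimension equal to the number of partitions of $\bk$ into $(\bm + r\bth)$-integral arcs (compare \eqref{eqn:dimestimate}). The key combinatorial bijection is: a $\BZ$-indexed partition $(\rho, \delta) \vdash (\bk, d)$ of slope $\leq \bm + r\bth$ corresponds uniquely to the data of, for each rational $r' \leq r$, a partition of some sub-degree $\bk(r')$ into $(\bm + r'\bth)$-integral arcs, where $r' = (d_s - \bm \cdot [i_s;j_s))/(j_s - i_s)$ is read off each constituent arc. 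Grouping arcs by their associated $r'$ gives the bijection, and hence the source of $\Theta_r$ in bidegree $(\bk, d)$ has dimension equal to the right-hand side of \eqref{eqn:bound}. By Lemma \ref{lem:bound}, this already upper-bounds $\dim \CS^+_{\leq \bm + r\bth | \bk, d}$, so $\Theta_r$ is surjective as soon as it is injective.

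Finally, I would prove injectivity of $\Theta_r$ by induction on $t$ using the leading coproduct $\Delta_{\bm + r_t \bth}$. Given a relation $\sum_\alpha F_1^{(\alpha)} * \cdots * F_t^{(\alpha)} = 0$ with $r_1 < \cdots < r_t$, the multiplicativity of the slope filtration ensures that after applying $\Delta_{\bm + r_t\bth}$ and projecting to the right tensor factor of slope $= \bm + r_t\bth$, only the top-slope contribution $\sum_\alpha (F_1^{(\alpha)} * \cdots * F_{t-1}^{(\alpha)}) \cdot \ph_{\deg F_t^{(\alpha)}} \otimes F_t^{(\alpha)}$ survives, the lower-slope contaminations having been killed by projection. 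Pairing the second tensor factor against $\CB^-_{\bm + r_t \bth}$ using the non-degenerate Hopf pairing of Proposition \ref{prop:silly} and the inductive hypothesis then forces each coefficient $\sum_\alpha$ (grouped by $F_t^{(\alpha)}$ running over a basis) to vanish separately. The main obstacle I expect is this last step: keeping track carefully of how $\Delta_{\bm + r_t\bth}$ interacts with the shuffle product, since the ``lower slope'' part of $\Delta(F_s)$ for $s<t$ can a priori pollute the apparent leading term, and one must use \eqref{eqn:topdegree} together with the compatibility of $\Delta_{\bm'}$ with multiplication on $\CB^\geq_{\bm'}$ (Proposition \ref{prop:silly}) to confirm that the extraction of the top-slope piece truly isolates $F_t$.
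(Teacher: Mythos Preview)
Your proposal is correct and follows essentially the same approach as the paper: reduce to the finite-dimensional filtered pieces $\CS^+_{\leq \bm + r\bth | \bk, d}$, match the dimension of the source of the multiplication map to the bound of Lemma~\ref{lem:bound} via the bijection between $\BZ$-indexed partitions of slope $\leq \bm + r\bth$ and tuples of $(\bm + r_s\bth)$-integral partitions, and then prove injectivity by extracting the top-slope factor with $\Delta_{\bm + r_t\bth}$. The only cosmetic difference is that the paper organizes the injectivity step as a direct maximality argument (isolating the factor $G \in \CB^+_{\bm+r\bth}$ of maximal $r$ and maximal degree $\bk$, then looking at the component of $\Delta_{\bm+r\bth}$ in bidegree $(\text{anything}) \otimes \CS^+_{\bk,(\bm+r\bth)\cdot\bk}$) rather than an induction on $t$; the underlying mechanism---that lower-slope factors contribute nothing to the slope-$=\bm+r_t\bth$ piece of the second tensor factor---is identical and addresses exactly the obstacle you flagged.
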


\begin{proof} Relation \eqref{eqn:factor1} requires us to show that any shuffle element $F \in \CS^+$ can be uniquely written as:
\begin{equation}
\label{eqn:Rtrivial}
F = \mathop{\sum^{\bk_1, ..., \bk_t}_{(\bm+r_i\bth)\cdot \bk^i \in \BZ}}^{r_1<...<r_t \in \BQ} F_{1} * ... * F_{t}, \qquad \text{where} \quad F_{i} \in \CB_{\bm+r_i\bth|\bk_i}^+
\end{equation}
In other words, for any bidegree $(\bk,d) \in \nn \times \BZ$, the product map:
$$
\mathop{\bigoplus^{d = d_1+...+d_t}_{d_i := (\bm+r_i\bth)\cdot \bk^i \in \BZ}}_{r_1<...<r_t \in \BQ}^{\bk = \bk_1 + ... + \bk_t} \CB^+_{\bm+r_1\bth} \otimes ... \otimes \CB^+_{\bm+r_t\bth} \ \longrightarrow \ \CS^+_{\bk,d}
$$ 
is an isomorphism. We would like to do this by a dimension estimate, but a quick glance shows us that both the left and right hand sides are infinite dimensional vector spaces. However, as we observed in Section \ref{sec:slopefiltration}, the graded piece $\CS^+_{\bk,d}$ is filtered by the finite-dimensional vector spaces $\CS^+_{\leq \bm + r \bth |\bk,d}$, as $r$ goes over all of $\BQ$. Then we will prove that the product map induces an isomorphism on all pieces of this filtration:
\begin{equation}
\label{eqn:psi}
\mathop{\bigoplus^{d = d_1+...+d_t}_{d_i := (\bm+r_i\bth)\cdot \bk^i \in \BZ}}_{r_1<...<r_t \leq r \in \BQ}^{\bk = \bk_1 + ... + \bk_t} \CB^+_{\bm+r_1\bth} \otimes ... \otimes \CB^+_{\bm+r_t\bth} \ \stackrel{\Psi}\longrightarrow \ \CS^+_{\leq \bm+r\bth|\bk,d}
\end{equation}
for any $\bk \in \nn$, $d\in \BZ$ and $r\in \BQ$. Note that there are now finitely many summands in the left hand side: if $r_1$ were arbitrarily small, this would force $d_1$ to be arbitrarily small, which would force one of the other $d_i$'s to be arbitrarily large, which would force some $r_i$ to be arbitrarily large, which would contradict the fact that $r_1<...<r_t\leq r$ in the above sum. Theorem \ref{thm:subalg} implies that the dimension of the domain of $\Psi$ equals the number of $\BZ-$indexed partitions (see Section \ref{sec:arcs} for the terminology):
$$
(\rho,\delta) \ \vdash \ (\bk,d) \qquad \text{where} \qquad \rho = [i_1;j_1) + ... + [i_t;j_t), \quad d = d_1 + ... + d_t
$$
where each constituent arc $[i_s;j_s)$ of $\rho$ and its index $d_s$ satisfy:
\begin{equation}
\label{eqn:rey}
(\bm+r_s\bth)\cdot [i_s;j_s) = d_s
\end{equation}
for some rational numbers $r_1 < ... < r_t \leq r$. Note that each $r_s$ is uniquely determined by the condition \eqref{eqn:rey}, since $\bth\cdot[i;j) = j-i$ is non-zero for all arcs $[i;j)$. The number of partitions computed above is precisely equal to the upper bound for $\CS^+_{\leq \bm+r\bth|\bk,d}$ we found in \eqref{eqn:bound}, so we have just showed that the domain of the map $\Psi$ in \eqref{eqn:psi} has dimension $\geq$ the dimension of the codomain. In order to prove that $\Psi$ is an isomorphism, it is therefore enough to prove that $\Psi$ is injective. This is equivalent to showing that no shuffle element of the form \eqref{eqn:Rtrivial} can be 0 unless the sum is trivial. To do this, assume that:
\begin{equation}
\label{eqn:rtrivial}
\left( \sum_{\text{various summands}} F_1 * ... * F_l \right) * G \ = \ \sum_{\text{various summands}} F_1' * ... * F_{l'}'
\end{equation}
where $G \in \CB_{\bm+r\bth|\bk}^+$ is maximal in the following sense. It lies in $\CB_{\bm+r\bth}^+$, all other factors in \eqref{eqn:rtrivial} lie in $\CB_{\bm+r'\bth}^+$ for some $r'\leq r$, and $G$ mas maximal possible degree $\bk$ among all factors from \eqref{eqn:rtrivial} which lie in $\CB_{\bm+r\bth}^+$. Finally, we assume that $G$ is linearly independent from any other factors in the right hand side which have the same maximality properties as $G$, which we can do after further simplifying the sum in the right hand side of \eqref{eqn:rtrivial}. Then we take the coproduct $\Delta_{\bm+r\bth}$ of both sides of \eqref{eqn:rtrivial}: in the left hand side, there will be a single summand in degree:
\begin{equation}
\label{eqn:bidegrees}
(\text{anything}) \otimes \CS^+_{\bk, (\bm+r\bth)\cdot \bk}, \qquad \text{namely} \quad 
\left( \sum_{\text{various summands}} F_1 * ... * F_l \right) \otimes G
\end{equation}
Meanwhile, the coproduct of the right hand side of \eqref{eqn:rtrivial} has no summands in bidegree \eqref{eqn:bidegrees}, and so we have proved that non-trivial relations such as \eqref{eqn:rtrivial} cannot exist. Therefore, the map $\Psi$ is injective, and our estimates on the dimension of its domain and codomain imply that it is an isomorphism. \\
\end{proof}

\begin{proof} \emph{of Theorem \ref{thm:iso}:} Proposition \ref{prop:factorizing} implies that $\CS^+$, the positive half of the shuffle algebra, is generated by the root generators $P_{[i;j)}^\bm$ of \eqref{eqn:defpm1}. Since such elements lie in $\text{Im } \Upsilon^+ : \UUp \rightarrow \CS^+$ by Exercise \ref{ex:gen}, we conclude that $\Upsilon^+$ is surjective.
\end{proof}

%all other factors in the above sum are either in $\CB_{\bm+r'\bth}^+$ for some $r'<r$, or in $\CB_{m+r\bth|\bk'}$ for some $\bk'$ not greater than $\bk$. \footnote{If this failed to happen, and we had two summands with terms in the same $\CB_{\bm+r\bth|\bk}^+$, then we would simply repeat the argument by considering maximal $F_{l-1}F_l$ etc} Then we can take the coproduct $\Delta_{\bm+r\bth}$ of both sides of \eqref{eqn:rtrivial}, and note that the left hand side of this coproduct has a single summand inside the tensor product:
%\begin{equation}
%\label{eqn:bidegrees}
%(\text{anything}) \otimes \CS^+_{\bk, (\bm+r\bth)\cdot \bk} \qquad \text{namely} \qquad 
%F_1 * ... * F_{l-1} \otimes F_l
%\end{equation}
%Meanwhile, the coproduct of the right hand side of \eqref{eqn:rtrivial} has no summands in bidegree $(\bk, (\bm+r\bth))$ in the second tensor factor, so a non-trivial relation of the form \eqref{eqn:rtrivial} cannot exist. 

\tab 
Analogously, we have a similar factorization result for the negative shuffle algebra:
\begin{equation}
\label{eqn:factor2}
\CS^- \ = \ \bigotimes_{r \in \BQ}^\rightarrow \CB^-_{\bm + r\bth}
\end{equation}
We can combine \eqref{eqn:factor1} and \eqref{eqn:factor2} to obtain the following factorization of the double shuffle algebra:
\begin{equation}
\label{eqn:factor}
\CS \ = \  \bigotimes_{r \in \BQ}^\rightarrow \CB^+_{\bm + r\bth} \otimes \CB_{\infty \cdot \bth} \otimes \bigotimes_{r \in \BQ}^\rightarrow \CB^-_{\bm + r\bth}
\end{equation}
where $\CB_{\infty \cdot \bth} = \langle \ph_{i,d}^\pm \rangle^{d\in \BN_0}_{1\leq i \leq n}$. Recall that the above factorization should be interpreted by saying that any element of $\CS$ can be written uniquely as a sum of products:
$$
F_1 * ...* F_s * \ph * G_1 * ... * G_t
$$
where $F_i \in \CB^+_{\bm+r_i\bth}$, $\ph \in \CB_{\infty \cdot \bth}$ and $G_i \in \CB^-_{\bm+r_i'\bth}$ for certain rational numbers $r_1<...<r_s$ and $r_1'<...<r_t'$. Finally, let us observe that the argument that established non-trivial relations \eqref{eqn:rtrivial} cannot exist also implies that the isomorphisms \eqref{eqn:factor1} and \eqref{eqn:factor2} respect the pairing. This means that in order to compute the pairing between:
$$
G = \mathop{\sum^{\bk_1, ..., \bk_t}_{(\bm+r_i\bth)\cdot \bk^i \in \BZ}}^{r_1<...<r_t \in \BQ} G_{1} * ... * G_{t} \qquad \text{and} \qquad F = \mathop{\sum^{\bk_1, ..., \bk_t}_{(\bm+r_i\bth)\cdot \bk^i \in \BZ}}^{r_1<...<r_t \in \BQ} F_{1} * ... * F_{t}
$$
where $F_{i} \in \CB_{\bm+r_i\bth|\bk_i}^+$ and $G_{i} \in \CB_{\bm+r_i\bth|\bk_i}^-$, we need only multiply the pairings of their constituent factors:
\begin{equation}
\label{eqn:pair}
\langle G, F \rangle \ = \ \mathop{\sum^{\bk_1, ..., \bk_t}_{(\bm+r_i\bth)\cdot \bk^i \in \BZ}}^{r_1<...<r_t \in \BQ} \langle G_1,F_1\rangle... \langle G_t, F_t \rangle
\end{equation}
This observation, together with the factorization \eqref{eqn:factor}, will allow us to construct the factorization \eqref{eqn:rfac1} of the $R-$matrix of $\CS$. Given a bialgebra $A$, recall that the universal $R-$matrix is an element $\CR \in A \woo A$ such that:
\begin{equation}
\label{eqn:prop1}
\CR \cdot \Delta(a) = \Delta^{\op}(a) \cdot \CR, \qquad \forall a\in A
\end{equation}
\begin{equation}
\label{eqn:prop2}
(\Delta \otimes 1) \CR = \CR_{13} \CR_{23}, \qquad (1 \otimes \Delta) \CR = \CR_{13} \CR_{12}
\end{equation}
We will write $\CR = \CR_A$ in order to emphasize to which algebra the $R-$matrix belongs. Property \eqref{eqn:prop1} implies that for any $V,W\in \text{Rep}(A)$, the operator $R_{VW}$ given by: 
$$
A \ \woo \ A \longrightarrow \text{End}(V \otimes W), \qquad \CR \longrightarrow R_{VW}
$$ 
intertwines the tensor product representations $V \otimes W$ and $W \otimes V$, which explains the terminology ``universal" and ``matrix". When $A = A^- \otimes A^+$ is presented as a Drinfeld double, then a universal $R-$matrix always exists: 
$$$$

\begin{exercise} 
\label{ex:universal}
	
Let $\{F_i\}$ and $\{G_i\}$ be dual bases of $A^+$ and $A^-$ with respect to the bialgebra pairing. Then the canonical tensor is a universal $R-$matrix:
$$
\CR \ = \ \sum_i F_i \otimes G_i \ \in \ A^+ \ \woo \ A^- \ \subset \ A \ \woo \ A
$$

\end{exercise}

\tab
We will use the above Proposition to factor the universal $R-$matrix of the algebra $\CS$. As was proved \eqref{eqn:pair}, the factorizations \eqref{eqn:factor1} and \eqref{eqn:factor2} respect the bialgebra pairing. This implies that dual bases $\{F_i\}$ and $\{G_i\}$ of $\CS^+$ and $\CS^-$ can be defined as:
$$
F_i = \prod_{r\in \BQ}^\rightarrow F_i^{(r)} \qquad \qquad G_i = \prod_{r\in \BQ}^\rightarrow G_i^{(r)} 
$$
where $\{F_i^{(r)}\}$ and $\{G_i^{(r)}\}$ are dual bases of $\CB_{\bm+r\bth}^+$ and $\CB_{\bm+r\bth}^-$, respectively. Together with Exercise \ref{ex:universal}, this implies the following factorization formula for the universal $R-$matrix of the double shuffle algebra:
\begin{equation}
\label{eqn:rfac3}
\CR_{\CS} \ = \ \left( \prod^{\rightarrow}_{r \in \BQ} \CR_{\CB_{\bm+r\bth}} \right) \cdot \CR_{\CB_{\infty \cdot \bth}} \ \in \ \CS \ \widehat{\otimes} \ \CS
\end{equation}
The above result mirrors such factorization formulas for universal $R-$matrices of quantum groups, as featured for example in \citep{KT}. In all cases, the philosophy is to break up the universal $R-$matrix of the quantum toroidal algebra $\CS$ into universal $R-$matrices of the simpler subalgebras:
$$
\CB_{\bm + r \bth} \ \cong \ \bigotimes_{h=1}^{g^{(r)}} U_q(\dot{\fgl}_{l^{(r)}_h})
$$
according to the decomposition of Theorem \ref{thm:subalg}, where the numbers $g^{(r)}$ and $l_1^{(r)},...,l_{g^{(r)}}^{(r)}$ are associated to the slope $\bm+r\bth$ in Section \ref{sec:arcs}. Since $\CS \cong \UU$ by Theorem \ref{thm:iso0}, we conclude that:
\begin{equation}
\label{eqn:rfac4}
\CR_{\UU} \ = \ \left( \prod^{\rightarrow}_{r \in \BQ} \prod_{h=1}^{g^{(r)}} \CR_{U_q\left(\dot{\fgl}_{l_h}^{(r)}\right)} \right) \cdot \CR_{\text{Heisenberg}}^{\otimes n}
\end{equation}

\chapter{Pieri Rules for Stable Bases}
\label{chap:pieri}
\section{Stable bases for the cyclic quiver}
\label{sec:stabcyc}

%In order to make our formulas more palatable, we will rescale these operators:
%\begin{equation}
%\label{eqn:renormp}
%\oP_{[i;j)}^{\bm} \ = \ \frac {P_{[i;j)}^{\bm}}{q_1^{\sigma_{\bm, [i;j)}}} \qquad \qquad  \oQ_{-[i;j)}^{\bm} \ = \ \frac {Q_{-[i;j)}^{\bm}}{q_2^{\sigma_{\bm, [i;j)}}}
%\end{equation}
%This renormalization is constructed so that formulas \eqref{eqn:normp1} and  \eqref{eqn:normp2} become:
%\begin{equation}
%\label{eqn:eval1}
%\oP_{[i';j')}^\bm (q_1^{i},...,q_1^{j-1}) \ = \ \delta_{[i';j')}^{[i;j)} \cdot \frac {q_1^{im_i + ... + (j-1)m_{j-1}}}{q^{i-j}\prod_{i\leq a < b < j} \zeta(q_1^{b-a})}
%\end{equation}
%\begin{equation}
%\label{eqn:eval2}
%\oQ_{-[i';j')}^{\bm} (q_2^{-i},...,q_2^{-j+1}) \ = \ \delta_{[i';j')}^{[i;j)} \cdot \frac {q_2^{im_i + ... + (j-1)m_{j-1}}}{\prod_{i\leq a < b < j} \zeta(q_2^{b-a})}
%\end{equation}

\noindent In the previous Chapters, we studied the elements $P_{[i;j)}^\bm, Q_{-[i;j)}^\bm \in \CS$ for all $\bm \in \qq$ and all $\bm-$integral arcs $[i;j)$. Now we will consider the operators they induce on the $K-$theory of Nakajima cyclic quiver varieties by Theorem \ref{thm:act}:
$$
P_{[i;j)}^{\bm}, \ Q_{-[i;j)}^{\bm} \ \curvearrowright \ K(\bw)
$$ 
Recall that a basis of $K(\bw)$ is given by the classes of the torus fixed points $|\bla \rangle$, as $\bla$ ranges over all $\bw-$partitions. In terms of this basis, Proposition \ref{prop:act} gives us:
\begin{equation}
\label{eqn:fixedp}
\langle \bla | \oP_{[i;j)}^\bm |\bmu \rangle \ \ \ = \ \ \ \oP_{[i;j)}^\bm(\blamu) \prod_{\bsq \in \blamu} \left[\prod_{\square \in \bmu} \zeta \left( \frac {\chi_\bsq}{\chi_\square}\right) \prod_{k=1}^\bw \Big[\frac {u_k}{q\chi_\bsq} \Big]  \right]
\end{equation}
\begin{equation}
\label{eqn:fixedq}
\langle \bmu | \oQ_{-[i;j)}^\bm |\bla \rangle \ = \ \oQ_{-[i;j)}^\bm(\blamu) \prod_{\bsq \in \blamu} \left[\prod_{\square \in \bla} \zeta \left( \frac {\chi_\sq}{\chi_\bsq}\right) \prod_{k=1}^\bw \Big[\frac {\chi_\bsq}{qu_k} \Big]  \right]^{-1} %\prod_{\bsq \in \blamu} \left[\prod_{\square \in \bmu} \zeta \left( \frac {\chi_\bsq}{\chi_\square}\right) \prod_{i=1}^\bw \Big[\frac {u_i}{q\chi_\bsq} \Big]  \right]
\end{equation}
where the sums go over all skew diagrams $\blamu$ of size $[i;j)$. Up to the product of factors $\zeta$ and $[\cdot]$, the above formulas rely on formulas \eqref{eqn:defpm1} and \eqref{eqn:defpm2} that present the shuffle elements $\oP_{[i;j)}^\bm$ and $\oQ_{-[i;j)}^\bm$ as a symmetrization. When evaluating these shuffle elements at the set of weights of a skew diagram $\blamu$, the only summands which survive correspond to almost standard tableaux of shape $\blamu$, as described in Section \ref{sec:eccentric}. However, there are many such tableaux for a given skew diagram, so it seems that formulas \eqref{eqn:fixedp} and \eqref{eqn:fixedq} turn out to be quite complicated computationally.

\tab
The purpose of this Chapter is to find a basis in which the operators $\oP_{[i;j)}^\bm$ and $\oQ_{-[i;j)}^\bm$ are ``nice", and specifically to prove Theorem \ref{thm:pieri}. In Section \ref{sec:eccentric}, we conjectured that these operators are given by the so-called eccentric Lagrangian correspondences (the conjecture hinges on the yet unproved lci-ness of the eccentric correspondences). Therefore, it is natural to look for Lagrangian classes in which the operators $\oP_{[i;j)}^\bm$ and $\oQ_{-[i;j)}^\bm$ act, and these will be given by the stable basis of \eqref{eqn:stable}. Specifically, let:
$$
A \ \subset \ T_\bw \qquad \text{where} \qquad A \ = \ \BC^*_t \times \prod_{k=1}^\bw \BC^*_{u_k}
$$
denote the codimension one Hamiltonian torus that acts on $\CN_{\bv,\bw}$, and let us consider a generic one-dimensional subtorus:
\begin{equation}
\label{eqn:hamiltonian}
\BC^* \ \stackrel{\sigma}\longrightarrow \ A \qquad \qquad \qquad  a \rightarrow (a,a^{N_1},...,a^{N_{\bw}}) 
\end{equation}
where $N_1 \ll ... \ll N_{\bw}$ are integers. Specifically, the inequalities are chosen so that at any fixed point of $\CN_{\bv,\bw}$, the decomposition into attracting and repelling directions of the tangent space matches \eqref{eqn:floc1} and \eqref{eqn:floc2}, and those $\sigma$ with this property determine a chamber in the Lie algebra $\fa$. Fixed points for $\sigma$ are the same as fixed points for the whole of $T_\bw$, and thus:
$$
\CN_{\bv,\bw}^\sigma \ = \ \{I_\bla \quad \forall \ \bla \text{ a }\bw-\text{partition of size }\bv\}
$$
Because $N_1 \ll ... \ll N_{\bw}$, the flow ordering \eqref{eqn:flow} on fixed points induced by $\sigma$ coincides with the dominance ordering on $\bw-$partitions, which is also the reason why we have used the same symbol $\unlhd$ to denote them both. Then the Maulik-Okounkov stable basis is defined as in \eqref{eqn:stable}:
$$
s^{\bm}_\bla \ = \ \stab^\sigma_\bm(|\bla\rangle)  \ \in \ K_{\bv,\bw} %\qquad \qquad s^{-,\bm}_\bla \ = \ \stab^{\sigma^{-1}}_\bm(|\bla\rangle) \ \in \ K_{\bv,\bw}
$$
We suppress the superscript $+$, because we will only work with the positive stable basis in this Chapter. Letting $T^-$ denote the repelling part of the tangent bundle with respect to the flow $\sigma$, then $\{s^{\bm}_\bla\}$ is the unique integral basis which is triangular in the dominance ordering on $\bw-$partitions:
\begin{equation}
\label{eqn:tri1}
s^{\bm}_\bla = |\bla\rangle \cdot \left[T^-_\bla \CN_{\bv,\bw} \right] + \sum_{\bmu \lhd \bla} |\bmu\rangle \cdot s^{\bm}_{\bla|\bmu} 
\end{equation}
where the coefficients $s_{\bla|\bmu}^{\bm}$ satisfy the smallness property on the Newton polytopes:
\begin{equation}
\label{eqn:small1}
P_A(s^{\bm}_{\bla|\bmu}) \ \subset \ P^\circ_A (s^{\bm}_{\bmu|\bmu}) + \bm \cdot (\bc_\bmu-\bc_\bla) \qquad \forall \ \bmu \lhd \bla
\end{equation}
where for any partition $\bla$, we write $\bc_\bla = (c_\bla^1,...,c_\bla^n)$ with:
$$
c_\bla^i \ = \ \sum_{\square \in \bla}^{c_\square \equiv i} c_\square
$$
Recall that the content of a box was defined in \eqref{eqn:content}. Note that $o_\bla^{\bs^i} = \exp(c_\bla^i) \in A$ is nothing but the torus weight of the one-dimensional space $\CO_i(1)|_\bla$, and this matches \eqref{eqn:ind3}. As we explained in Section \ref{sec:torusaction}, proving inclusions of Newton polytopes is equivalent to proving them upon projection to any one dimensional subtorus $\sigma$ as in \eqref{eqn:hamiltonian}. The advantage of this is that we specialize $u_i \mapsto t^{N_i}$, and so the restrictions of the stable basis become Laurent polynomials in two variables: 
$$
s^{\bm}_{\bla|\bmu} |_{u_i \mapsto t^{N_i}} \ \in \ \BZ[q^{\pm 1}, t^{\pm 1}]
$$
The smallest and largest $t-$degree of the above Laurent polynomials will be denoted:
$$
\mindeg s^{\bm}_{\bla|\bmu} \qquad \text{and} \qquad \maxdeg s^{\bm}_{\bla|\bmu} \quad \in \ \BZ
$$
When $\bla = \bmu$, the leading term of \eqref{eqn:tri1} does not depend on $\bm$, so we denote it by: 
\begin{equation}
\label{eqn:lead}
\fK_\bla \ := \ s_{\bla|\bla}^{\bm}
\end{equation}
According to \eqref{eqn:tanmod0}, we have:
\begin{equation}
\label{eqn:lead2}
\fK_\bla \ = \ \prod_{\sq \in \bla} \left[\prod_{\bsq \in \bla} \zeta\left(\frac {\chi_\bsq}{\chi_\sq}\right) \prod_{k=1}^{\bw} \Big[\frac {u_k}{q\chi_\sq} \Big] \Big[\frac {\chi_\sq}{qu_k} \Big]  \right]^{(-)}
\end{equation} 
To explain the notation $(-)$, recall that $\zeta$ is a product of quantum numbers $[x] = x^{\frac 12} - x^{-\frac 12}$. Therefore, the right hand side of \eqref{eqn:lead2} is a product of such $[x]$, and the superscripts $(-)$ or $(0)$ or $(+)$ will refer to the subproduct consisting of those factors $[x]$ such that $\deg x < 0$ or $\deg x = 0$ or $\deg x > 0$. In the specialization $u_i \mapsto t^{N_i}$, the minimal $t-$degree of $\fK_\bla$ is the opposite of its maximal $t-$degree, so let us write:
\begin{equation}
\label{eqn:kap}
\maxdeg \fK_\bla \ =: \ k_\bla \ := \ - \mindeg \fK_\bla
\end{equation}
Moreover, let us define the term of lowest $t-$degree in $\fK_\bla$, namely those terms where $\mindeg$ is attained:
\begin{equation}
\label{eqn:kappa}
\kappa_\bla \ := \ \ld \fK_\bla %\qquad \Longrightarrow \qquad (-1)^{|\bla|}  \kappa^{-1}_\bla \ = \ \ld \fK_\bla
\end{equation}
By the principle explained in Section \ref{sec:torusaction}, it is sufficient to check condition \eqref{eqn:small1} on the inclusion of two Newton polytopes after projecting $u_i \mapsto t^{N_i}$ onto the line determined by \eqref{eqn:hamiltonian}, for any choice of $N_1\ll... \ll N_\bw$. Then the inclusion of two intervals becomes equivalent to the two inequalities: 

\begin{equation}
\label{eqn:smallish1}
\maxdeg s^{\bm}_{\bla|\bmu} \ \leq \ k_\bmu + \bm\cdot (\bc_\bmu - \bc_\bla)
\end{equation}
\begin{equation}
\label{eqn:smallish2}
\mindeg s^{\bm}_{\bla|\bmu} \ > \ - k_\bmu + \bm\cdot (\bc_\bmu - \bc_\bla)
\end{equation}
for all $\bmu \lhd \bla$. Throughout the remainder of this paper, we will always work under the specializations $u_i \mapsto t^{N_i}$, or equivalently $a_i \mapsto N_i$, and so $\bc_\bla$ belongs to $\zz$. \\

\section{Operators in the stable basis}
\label{sec:operators}

\noindent Suppose we are given an operator $F : K_{\bv',\bw} \rightarrow K_{\bv,\bw}$ whose matrix coefficients in terms of fixed points are known:
$$
F |\bmu \rangle = \sum_{|\bla| = \bv} F^\bla_\bmu \cdot |\bla \rangle \qquad \text{in other words} \qquad F^\bla_\bmu = \langle \bla |F| \bmu \rangle
$$ 
Fix $\bm \in \qq$ and suppose that the operator $F$ is Lagrangian, meaning that it takes any stable basis vector to an integral combination of stable basis vectors:
\begin{equation}
\label{eqn:crucial}
F \cdot s_\bmu^{\bm} = \sum_{|\bla| = \bv} \gamma^{\bla}_\bmu \cdot s_\bla^{\bm} \qquad \qquad \forall \ |\bmu| = \bv'
\end{equation}
for certain Laurent polynomials $\gamma^\bla_\bmu(q,t,u_1,...,u_{\bw})$. The following Lemma tells us what these coefficients have to be, provided that we know certain bounds on the $\mindeg$ and $\maxdeg$ of the matrix coefficients $F^\bla_\bmu$. Note that the degree will always be measured with respect to the specialization provided by the torus \eqref{eqn:hamiltonian}. \\

\begin{lemma}
\label{lem:compute}
If we assume that:
\begin{equation}
\label{eqn:giovanni1}
\emaxdeg F^\bla_\bmu \ \leq \ k_\bla - k_\bmu + \ebm \cdot (\bc_\bla - \bc_\bmu)
\end{equation}
\begin{equation}
\label{eqn:giovanni2}
\emindeg F^\bla_\bmu \ \geq \ k_\bmu - k_\bla +  \ebm \cdot (\bc_\bla - \bc_\bmu)
\end{equation}
then:
\begin{equation}
\label{eqn:moroder}
\gamma^{\bla}_\bmu \ = \ \left( \eld F_\bmu^\bla \right) \cdot \frac {\kappa_\bmu}{\kappa_{\bla}} %\qquad \qquad \gamma^{\bla,-}_\bmu \ = \ \left( \eld F_\bmu^\bla \right) \cdot \frac {\kappa_\bmu^-}{\kappa_{\bla}^-}
\end{equation}
where the term of lowest degree of $F_\bmu^\bla$ is defined as consisting of those monomials of degree equal to the right hand side of \eqref{eqn:giovanni2}. If the right hand side of \eqref{eqn:giovanni2} is not integral, then the lowest degree term of $F_\bmu^\bla$ is taken to be 0, and hence $\gamma^\bla_\bmu = 0$. \\ %In particular,
%$$
%\gamma_\bmu^{\bla,\pm} \ = \ 0 \qquad \text{if } \ \ebm \cdot (c_\bla - c_\bmu) \not \in \BZ
%$$
%which for generic $\ebm \in \qq$ implies that many coefficients of the operator $F$ in the stable basis will vanish. \\

\end{lemma}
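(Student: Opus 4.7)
The plan is to read off $\gamma^\bla_\bmu$ from the identity obtained by substituting the triangular expansion \eqref{eqn:tri1} into \eqref{eqn:crucial} and equating coefficients of $|\bla\rangle$:
\[
\fK_\bmu\, F^\bla_\bmu + \sum_{\bnu \lhd \bmu} s^\bm_{\bmu|\bnu}\, F^\bla_\bnu \;=\; \gamma^\bla_\bmu\, \fK_\bla + \sum_{\brh \rhd \bla} \gamma^\brh_\bmu\, s^\bm_{\brh|\bla}.
\]
I would proceed by downward (Noetherian) induction on $\bla$ in the dominance order: for $\bla$ maximal the last sum is empty, and in general the inductive hypothesis lets me treat the $\brh \rhd \bla$ contributions on the right as already known and of a very specific form.

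The core of the induction step is a degree sandwich for $\gamma^\bla_\bmu$. Combining the hypotheses \eqref{eqn:giovanni1}--\eqref{eqn:giovanni2} with the smallness estimates \eqref{eqn:small1} (strict on the min side, weak on the max side), each product $s^\bm_{\bmu|\bnu}\, F^\bla_\bnu$ on the left satisfies
\[
\maxdeg \leq k_\bla + \bm\cdot(\bc_\bla - \bc_\bmu), \qquad \mindeg \geq -k_\bla + \bm\cdot(\bc_\bla - \bc_\bmu),
\]
where the $\mindeg$ inequality is strict whenever $\bnu \lhd \bmu$. By the inductive hypothesis, every $\gamma^\brh_\bmu$ with $\brh \rhd \bla$ is supported in the single $t$-degree $\bm\cdot(\bc_\brh - \bc_\bmu)$, and multiplying by $s^\bm_{\brh|\bla}$ produces the same $\maxdeg$ bound together with a strict $\mindeg$ bound. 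These bounds transfer through the identity to the single remaining term $\gamma^\bla_\bmu\, \fK_\bla$, and since $\fK_\bla$ is a nonzero Laurent polynomial whose top and bottom $t$-degrees $\pm k_\bla$ have nonvanishing leading coefficients (as is visible from \eqref{eqn:lead2}), the additivity $\maxdeg(p\,q) = \maxdeg p + \maxdeg q$ for nonzero Laurent polynomials passes the bounds down to $\gamma^\bla_\bmu$ itself. This forces $\mindeg \gamma^\bla_\bmu \geq \bm\cdot(\bc_\bla - \bc_\bmu)$ and $\maxdeg \gamma^\bla_\bmu \leq \bm\cdot(\bc_\bla - \bc_\bmu)$, so $\gamma^\bla_\bmu$ is supported in the single $t$-degree $\bm\cdot(\bc_\bla - \bc_\bmu)$, and in particular vanishes whenever this rational number is not an integer.

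The final step is to identify the coefficient by extracting the lowest-degree part of the identity at the common degree $-k_\bla + \bm\cdot(\bc_\bla - \bc_\bmu)$. All contributions from $\bnu \lhd \bmu$ on the left and from $\brh \rhd \bla$ on the right vanish at this degree by the strict $\mindeg$ inequalities just established, so the equation collapses to $\gamma^\bla_\bmu\, \kappa_\bla = \kappa_\bmu \cdot \ld F^\bla_\bmu$, which is \eqref{eqn:moroder} after dividing by $\kappa_\bla$ (that the resulting ratio is a Laurent polynomial is guaranteed by the integrality of the stable basis expansion). The main obstacle I anticipate is the degree bookkeeping in the induction: specifically, verifying that the inductively-assumed concentration of $\gamma^\brh_\bmu$ in a single $t$-degree really survives multiplication by $s^\bm_{\brh|\bla}$ so as to yield both the correct $\maxdeg$ bound and a strict $\mindeg$ bound, since without the latter the extraction of the $\brh = \bla$ term from the right-hand side at lowest degree would fail.
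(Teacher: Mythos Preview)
Your proposal is correct and follows essentially the same approach as the paper: descending induction on $\bla$, a degree sandwich using the smallness conditions \eqref{eqn:smallish1}--\eqref{eqn:smallish2} together with the hypotheses \eqref{eqn:giovanni1}--\eqref{eqn:giovanni2} to pin $\gamma^\bla_\bmu$ to a single $t$-degree, followed by extraction of the lowest-degree part where the strict inequality \eqref{eqn:smallish2} kills the off-diagonal contributions. The paper packages the right-hand side as an auxiliary element $\theta_\bmu$ and restricts to the fixed point $\bla$, but this is the same identity you wrote down by equating coefficients of $|\bla\rangle$.
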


\begin{proof} We will prove the Lemma for any fixed $\bmu$, by descending induction on $\bla$. The induction step will also explain how to take care of the base case, so assume that \eqref{eqn:moroder} is known for any $\bla' \rhd \bla$ and let us prove it for $\bla$. Consider the expression:
$$
F \cdot s_\bmu^{\bm} - \sum^{\bla' \rhd \bla}_{|\bla'| = \bv} \gamma^{\bla'}_\bmu \cdot s_{\bla'}^{\bm} \ = \ \theta_\bmu \ = \ \sum^{\bla' \not \rhd \bla}_{|\bla'| = \bv} \gamma^{\bla'}_\bmu \cdot s_{\bla'}^{\bm} \quad \in \ K_{\bv,\bw}
$$
Because of the the above equality, we can evaluate the restriction of $\theta_\bmu$ to the fixed point $\bla$ in two different ways:
\begin{equation}
\label{eqn:donna}
\langle \bla | F \cdot s_\bmu^{\bm} - \sum^{\bla' \rhd \bla}_{|\bla'| = \bv} \gamma^{\bla'}_\bmu \cdot s_{\bla'|\bla}^{\bm} \ = \ \langle \bla|\theta_\bmu \ = \ \gamma^{\bla}_\bmu \cdot \fK_\bla
\end{equation}
In itself, the above formula is not particularly useful in computing $\gamma_\bmu^{\bla}$, because it would involve knowing the restrictions $s_{\bla'|\bla}^{\bm}$ for any $\bla' \rhd \bla$. But in the situation at hand, we may use the given assumptions \eqref{eqn:giovanni1} and \eqref{eqn:giovanni2}, as well as the first equality of \eqref{eqn:donna} to obtain the following estimates:
$$
\maxdeg \langle \bla|\theta_\bmu \leq \ma_{\bla' \rhd \bla} \left(\maxdeg \langle \bla | F \cdot s_\bmu^{\bm}, \ \maxdeg \gamma_\bmu^{\bla'} + \maxdeg s_{\bla'|\bla}^{\bm} \right)
$$
and the analogous relation for min deg. If we write $s_\bmu^{\bm} = \sum_{\bmu' \unlhd \bmu} s_{\bmu|\bmu'}^{\bm}\cdot |\bmu'\rangle$, the above implies that $\maxdeg \langle \bla|\theta_\bmu \leq$
$$
\ma^{\bmu' \unlhd \bmu}_{\bla' \rhd \bla} \left(\maxdeg F^\bla_{\bmu'} + \maxdeg s_{\bmu|\bmu'}^{\bm}, \ \maxdeg \gamma_\bmu^{\bla'} + \maxdeg s_{\bla'|\bla}^{\bm} \right)
$$
and the analogous relation for min deg. Using \eqref{eqn:small1}, \eqref{eqn:giovanni1} and \eqref{eqn:moroder}, we obtain:
$$
\maxdeg \langle \bla|\theta_\bmu \ \leq \ k_\bla  + \bm \cdot (\bc_\bla - \bc_\bmu)
$$
and the analogous result for min deg:
$$
\mindeg \langle \bla|\theta_\bmu \ \geq \ - k_\bla  + \bm \cdot (\bc_\bla - \bc_\bmu)
$$
Using the second equality of \eqref{eqn:donna}, we obtain:
\begin{equation}
\label{eqn:inec}
\maxdeg \gamma^{\bla}_\bmu \ \leq \ \bm \cdot (\bc_\bla - \bc_\bmu) \ \leq \ \mindeg \gamma^{\bla}_\bmu
\end{equation}
The only way \eqref{eqn:inec} can happen is if all constituent monomials of the Laurent polynomial $\gamma^{\bla}_\bmu$ are concentrated in $t-$degree $m = \bm \cdot (\bc_\bla - \bc_\bmu)$. If $m \not \in \BZ$, this forces $\gamma^\bla_\bmu = 0$. If $m \in \BZ$, we conclude that:
$$
\gamma^{\bla}_\bmu = \ld \gamma^{\bla}_\bmu = \frac {\ld \gamma^{\bla}_\bmu}{\kappa_\bla} = \frac {\ld \langle \bla | \theta_\bmu}{\kappa_\bla} = \frac {\ld \left( \langle \bla| F \cdot s_\bmu^{\bm} - \sum^{\bla' \rhd \bla}_{|\bla'| = \bv} \gamma^{\bla'}_\bmu \cdot s_{\bla'|\bla}^{\bm} \right)}{\kappa_\bla}
$$
However, because the inequality \eqref{eqn:smallish2} is strict, the sum over $\bla' \rhd \bla$ does not contribute anything to the term of lowest degree. We conclude that:
$$
\gamma^{\bla}_\bmu = \frac {\ld \langle \bla| F \cdot s_\bmu^{\bm}}{\kappa_\bla} = \frac {\ld \left( \langle \bla| F |\bmu \rangle \cdot \fK_\bmu \right) + \sum^{\bmu' \lhd \bmu}_{|\bmu| = \bv'} \ld \left( \langle \bla| F |\bmu' \rangle \cdot s_{\bmu|\bmu'}^{\bm} \right)}{\kappa_\bla}
$$
Similarly, because the inequality \eqref{eqn:smallish2} is strict, the sum over $\bmu' \lhd \bmu$ does not contribute anything to the term of lowest degree, hence \eqref{eqn:moroder} follows.  
\end{proof}

\noindent We wish to apply the above Lemma in order to prove Theorem \ref{thm:pieri}, and therefore we need to estimate the matrix coefficients in the fixed point basis $|\bla\rangle$ of the operators $F = \oP_{[i;j)}^\bm$ or $\oQ_{-[i;j)}^\bm$. As these matrix coefficients are given by \eqref{eqn:fixedp} and \eqref{eqn:fixedq}, we need to evaluate the min deg and max deg of the quantities:
\begin{equation}
\label{eqn:a1}
\oP_{[i;j)}^\bm(\blamu) \qquad \text{and} \qquad \oQ_{-[i;j)}^\bm(\blamu)
\end{equation}
as well as:
\begin{equation}
\label{eqn:a2}
\fG^+_{\blamu} \ := \ \prod_{\bsq \in \blamu} \left[\prod_{\square \in \bmu} \zeta \left( \frac {\chi_\bsq}{\chi_\square}\right) \prod_{k=1}^\bw \Big[\frac {u_k}{q\chi_\bsq} \Big] \right]
\end{equation}
\begin{equation}
\label{eqn:a3}
\fG^-_{\blamu} \ := \ \prod_{\bsq \in \blamu} \left[\prod_{\square \in \bla} \zeta \left( \frac {\chi_\sq}{\chi_\bsq}\right) \prod_{k=1}^\bw \Big[\frac {\chi_\bsq}{qu_k} \Big] \right]^{-1}
\end{equation}
The formulas for \eqref{eqn:a1} will be discussed in the next Section. In the remainder of the present Section, we will take care of the rather tedious computation of the quantities $\fG^\pm_\blamu$. As in formulas \eqref{eqn:moroder}, it makes more sense to study the ratios:
\begin{equation}
\label{eqn:ratio} 
\fR^{+}_\blamu \ := \ \fG^+_\blamu \cdot \frac {\fK_\bmu}{\fK_\bla} \qquad \qquad \fR^{-}_\blamu \ := \ \fG^-_\blamu \cdot \frac {\fK_\bla}{\fK_\bmu} 
\end{equation}
We consider the minimal and maximal degrees:
\begin{equation}
\label{eqn:top}
\maxdeg \fR^\pm_\blamu \ =: \ r_\blamu \ = \mindeg \fR^\pm_\blamu \ %\qquad \qquad r^-_\bmula \ := \ \maxdeg  \fR^{-}_\bmula
\end{equation}
which will be shown in Exercise \ref{ex:ohboy} to not depend on $\pm$. The equality between the $\maxdeg$ and the opposite $\mindeg$ is a consequence of the fact that $\fR^\pm_\blamu$ is a product of quantum numbers $[x] = x^{\frac 12} - x^{-\frac 12}$. Consider the terms of lowest degree:
\begin{equation}
\label{eqn:lowest} 
\rho^\pm_\blamu \ = \ \ld \fR^\pm_\blamu %\qquad \qquad \rho^-_\bmula \ = \ \ld \fR^-_\bmula
\end{equation}
Exercise \ref{ex:ohboy} below will compute the quantities $r_\blamu$ and $\rho^\pm_\blamu$ in a way which will be used in the next Section. With the convention that Kronecker delta symbols are taken mod $n$, define the integer valued function:
$$
z(x) = \frac {|x|}2 \Big( \delta_x^1 + \delta_x^{-1} - 2\delta_x^0 \Big) 
$$
and note that:
\begin{equation}
\label{eqn:thing}
\maxdeg \zeta \left(\frac {\chi_\sq}{\chi_\bsq} \right) \ = \ z(c_\sq - c_\bsq) \ = \ - \mindeg \zeta \left(\frac {\chi_\sq}{\chi_\bsq} \right)
\end{equation}
for all boxes $\sq$ and $\bsq$. It is clear that $z(x) = z(-x)$, and therefore:
\begin{equation}
\label{eqn:ossum}
\maxdeg \frac {\zeta \left(\frac {\chi_\sq}{\chi_\bsq} \right)}{\zeta \left(\frac {\chi_\bsq}{\chi_\sq} \right)} \ = \ \mindeg \frac {\zeta \left(\frac {\chi_\sq}{\chi_\bsq} \right)}{\zeta \left(\frac {\chi_\bsq}{\chi_\sq} \right)} \ = \ 0
\end{equation}

\begin{exercise}
\label{ex:ohboy}
For skew diagrams $\blamu$, we have:
\begin{equation}
\label{eqn:stannis}
r_\blamu \ = \ - \frac 12 \sum_{\sq , \bsq \in \blamu} z(\sq - \bsq) %\qquad \qquad r^-_\bmula \ = \ - \frac 12 \sum_{\sq , \bsq \in \bmula} z(\sq - \bsq)
\end{equation}
Moreover, when $\blamu = C$ is a cavalcade of ribbons as in Section \ref{sec:basicpar}, we have:
\begin{equation}
\label{eqn:cal}
\rho^+_\blamu = \frac {[q^{-2}]^{\#_C} \cdot (-q)^{N_{C}^+}}{\prod^{a \leftrightarrow b}_{i \leq a < b < j} \ \ld \zeta \left( \frac {\chi_{\sq_b}}{\chi_{\sq_a}} \right)}
\end{equation}
whereas if $\blamu = S$ is a stampede of ribbons as in Section \ref{sec:basicpar}, we have:
\begin{equation}
\label{eqn:stamp}
\rho^-_\blamu = \frac {[q^{-2}]^{\#_S} \cdot  (-q)^{N_{S}^-}}{\prod^{a\leftrightarrow b}_{i \leq a < b < j} \ \ld \zeta \left( \frac {\chi_{\sq_a}}{\chi_{\sq_b}} \right)}
\end{equation}

\end{exercise}

\tab In the above Exercise, we index the boxes of a cavalcade $\sq_i,...,\sq_{j-1}$ in order from the northwest to the southeast (see Figure \ref{fig:cavalcade}). The ribbons of a stampede are ordered from the southwest to the northeast, though the boxes of each individual ribbon in a stampede will still be ordered from the northwest to the southeast (see Figure \ref{fig:stampede}). We write $a \leftrightarrow b$ to indicate that $\sq_a$ and $\sq_b$ should not be next to each other in the same ribbon, and note that such factors should be removed from \eqref{eqn:cal} and \eqref{eqn:stamp} because they could produce factors of $1-1=0$ in the denominator. Recall that the number $N_{\bsq|\bla}^\pm$ denotes the signed number of corners ($\pm$ inner $\mp$ outer) of $\bla$ of the same color as $\bsq$ and with content larger or smaller than that of $\bsq$, depending on whether the sign is $+$ or $-$. We set $N_C^+ = \sum_{\bsq \in C} N^+_{\bsq|\bla}$ for a cavalcade $C$, while for a stampede $S$ we define $N^-_S$ by the slightly more complicated formula \eqref{eqn:slightly}.

\tab
Let us abbreviate $\bc_\blamu = \bc_\bla - \bc_\bmu$. Lemma \ref{lem:compute} reduces Theorem \ref{thm:pieri} to proving that:
\begin{equation}
\label{eqn:stat1}
\maxdeg \oP_{[i;j)}^\bm(\blamu) \ \ \leq \ \ \bm \cdot \bc_\blamu - r_\blamu
\end{equation}
\begin{equation}
\label{eqn:stat2}
\mindeg \oP_{[i;j)}^\bm(\blamu) \ \ \geq \ \ \bm \cdot \bc_\blamu + r_\blamu
\end{equation}
\begin{equation}
\label{eqn:stat3}
\maxdeg \oQ_{-[i;j)}^\bm(\blamu) \ \leq \ \bm \cdot \bc_\blamu - r_\blamu
\end{equation}
\begin{equation}
\label{eqn:stat4}
\mindeg \oQ_{-[i;j)}^\bm(\blamu) \ \geq \ \bm \cdot \bc_\blamu + r_\blamu
\end{equation}
and that equality holds in \eqref{eqn:stat2} and in \eqref{eqn:stat4} whenever $\blamu$ is a cavalcade $C$ of $\bm-$integral ribbons or a stampede $S$ of $\bm-$integral ribbons, respectively. In these cases, the lowest degree terms corresponding to equality in \eqref{eqn:stat2} and \eqref{eqn:stat4} are:
\begin{equation}
\label{eqn:stat5}
\ld \oP_{[i;j)}^\bm(C) = o_C^\bm  \cdot q^{\#_C + \high C + \ind_{C}^{\bm}} \prod^{a\leftrightarrow b}_{i \leq a < b < j} \ \ld \zeta \left( \frac {\chi_{\sq_b}}{\chi_{\sq_a}} \right) %\prod_{a=i}^{j-1} \chi_a^{m_a}  \cdot q^{\ind^+_{S,\bm} + \#_S} \prod_{i \leq a < b < j} \hd \zeta \left( \frac {\chi_{a}}{\chi_{b}} \right)^{(+)}
\end{equation}
\begin{equation}
\label{eqn:stat6}
\ld \oQ_{-[i;j)}^\bm(S) = o_S^{-\bm} \cdot q^{\#_S + \wide S - \ind_{S}^{\bm}-j+i} \prod^{a\leftrightarrow b}_{i \leq a < b < j} \ \ld \zeta \left( \frac {\chi_{\sq_a}}{\chi_{\sq_b}} \right)
%\prod_{a=i}^{j-1} \chi_a^{m_a}  \cdot q^{\ind^-_{C,\bm} + \#_C} \prod_{i \leq a < b < j} \ld \zeta \left( \frac {\chi_{b}}{\chi_{a}} \right)^{(-)}
\end{equation}
Recall the notation $\ind_{C}^\bm$ from \eqref{eqn:ind}. Note that, while there exists at most one cavalcade $C$ on any given skew diagram $\blamu$, there may be more than one stampede $S$. Therefore, the value for the lowest degree term of $\ld \oQ_{-[i;j)}^\bm(\blamu)$ is actually the sum of the right hand sides of \eqref{eqn:stat6} over all underlying stampedes $S$ of $\blamu$. \\

\section{The shuffle elements $\oP_{[i;j)}^\bm$ and $\oQ_{-[i;j)}^\bm$}
\label{sec:final}

%\begin{equation}
%\label{eqn:vaes}
%P_{[i;j)}^\bm(\blamu) = \sum_{\text{ASYT}^+}  \frac {\prod_{a=i}^{j-1} \chi_a^{\lceil m_i + ... + m_a \rceil - \lceil m_i + ... + m_{a-1} \rceil}}{\alpha_{[i;j)}^{-\bm,q_2} \prod_{a=i+1}^{j-1} \left(1 - \frac {q_1 \chi_{a-1}}{\chi_{a}} \right)} \prod_{i \leq a < b < j} \zeta \left( \frac {\chi_{a}}{\chi_{b}} \right)
%\end{equation}
%\begin{equation}
%\label{eqn:dothrak}
%Q_{[i;j)}^\bm(\blamu) = \sum_{\text{ASYT}^-}  \frac {\prod_{a=i}^{j-1} \chi_a^{\lfloor m_i + ... + m_a \rfloor - \lfloor m_i + ... + m_{a-1} \rfloor}}{\alpha_{[i;j)}^{\bm,q_1} \prod_{a=i+1}^{j-1} \left(1 - \frac {q_2 \chi_{a}}{\chi_{a-1}} \right)} \prod_{i \leq a < b < j} \zeta \left( \frac {\chi_{b}}{\chi_{a}} \right)
%\end{equation}

\noindent The purpose of this Section is to compute the maximal and minimal degrees of the quantities $\oP_{[i;j)}^\bm(\blamu)$ and $\oQ_{-[i;j)}^\bm(\blamu)$, as well as their lowest degree terms, and use them to prove formulas \eqref{eqn:stat1} - \eqref{eqn:stat6}. Recall that $q_1 = qt$ and $q_2 = qt^{-1}$. By the definition \eqref{eqn:defpm1} and \eqref{eqn:defpm2}, as well as the renormalization \eqref{eqn:renormalized}, we have:
\begin{equation}
\label{eqn:vaes}
\oP_{[i;j)}^\bm(\blamu) \ = \ \sum_{\text{ASYT}^+} \prod_{a=i}^{j-1} \chi_a^{m_a} \cdot \frac {\prod_{a=i}^{j-1} (\chi_a t^{-a})^{s_a}}{q^{-1} \prod_{a=i+1}^{j-1} \left(\frac 1q - \frac {\chi_{a}}{t\chi_{a-1}} \right)} \prod_{i \leq a < b < j} \zeta \left( \frac {\chi_{b}}{\chi_{a}} \right)
\end{equation}
\begin{equation}
\label{eqn:dothrak}
\oQ_{-[i;j)}^\bm(\blamu) = \sum_{\text{ASYT}^-} \prod_{a=i}^{j-1} \chi_a^{-m_a} \cdot \frac {\prod_{a=i}^{j-1} (\chi_a t^{-a})^{-s_a}}{q^{j-i-1} \prod_{a=i+1}^{j-1} \left(\frac 1q - \frac {t\chi_{a-1}}{\chi_{a}} \right)} \prod_{i \leq a < b < j} \zeta \left( \frac {\chi_{a}}{\chi_{b}} \right)
\end{equation}
where we abbreviate:
\begin{equation}
\label{eqn:numbers} 
s_a \ = \ \lfloor m_i + ... + m_a \rfloor - \lfloor m_i + ... + m_{a-1} \rfloor - m_a
\end{equation}
and we recall that almost standard Young tableaux, denoted by $\asyt$, were defined in Section \ref{sec:eccentric}. Recall that a $\asyt^+$ (respectively $\asyt^-$) refers to a way to label the boxes of $\blamu$ with the numbers $i,...,j-1$, such that the labels increase (respectively decrease) as we go up and to the right, with the possible exception that the box labeled by $a-1$ is allowed to be directly above (respectively to the left of) the box labeled by $a$. The monomial $\chi_a$ denotes the weight of the box labeled by $a$. \\

%\begin{equation}
%\label{eqn:finalmente2}
%\sum_{a=i+1}^{j-1} \min(0,x_a - x_{a-1}) \ \leq \ \sum_{a=i}^{j-1} x_a s^-_a \ \leq \ \sum_{a=i+1}^{j-1} \max(0,x_a - x_{a-1})  
%\end{equation}

\begin{exercise}
\label{ex:final}	

For any real numbers $x_i,...,x_{j-1}$, we have:
\begin{equation}
\label{eqn:finalment}
\sum_{a=i+1}^{j-1} \min(0,x_{a} - x_{a-1}) \ \leq \ \sum_{a=i}^{j-1} x_a s_a \ \leq \ \sum_{a=i+1}^{j-1} \max(0,x_{a} - x_{a-1})  
\end{equation}
where the first inequality becomes an equality if and only if:
$$
x_i = ... = x_{a_1-1} < x_{a_1} = ... = x_{a_2-1} < ... < x_{a_t} = ... = x_{j-1}
$$
while the second inequality becomes an equality if and only if:
$$
x_i = ... = x_{a_1-1} > x_{a_1} = ... = x_{a_2-1} > ... > x_{a_t} = ... = x_{j-1}
$$
for any chain of $\bm-$integral arcs $[i;a_1) \cup [a_1;a_2) \cup ... \cup [a_t;j)$. 
\end{exercise}

\tab 
Let us write $c_a$ for the content of the $a-$th box in an $\asyt^\pm$, and let us write $x_a = c_a - a$. With the above exercise in mind, relations \eqref{eqn:vaes} and \eqref{eqn:dothrak} imply that:
$$
\maxdeg \oP_{[i;j)}^\bm(\blamu) \text{ or } \oQ_{-[i;j)}^\bm(\blamu) \ \leq \ \sum_{a=i}^{j-1} m_a c_a  + \sum_{i \leq a < b < j} z(c_a - c_b) = \bm \cdot \bc_\blamu - r_\blamu
$$
$$
\mindeg \oP_{[i;j)}^\bm(\blamu) \text{ or } \oQ_{-[i;j)}^\bm(\blamu) \ \geq \ \sum_{a=i}^{j-1} m_a c_a  - \sum_{i \leq a < b < j} z(c_a - c_b) = \bm \cdot \bc_\blamu + r_\blamu
$$
We have used \eqref{eqn:thing} to estimate the min deg and the max deg of the product of $\zeta$'s. The above formulas imply \eqref{eqn:stat1} - \eqref{eqn:stat4}. Exercise \ref{ex:final} also tells us when the inequality for $\mindeg$ becomes an equality. In the case of $\oP_{[i;j)}^\bm(\blamu)$ (respectively $\oQ_{-[i;j)}^\bm(\blamu)$), the $\geq$ sign is an equality if and only if the contents are given by:
$$
c_b = c_{a_k}+ (b-a_k) \qquad \forall \ b \ \in \ \text{the }\bm-\text{integral arc} \ [a_k;a_{k+1}) 
$$
and such that $c_{a_k-1} < c_{a_{k}} - 1$ (respectively  $c_{a_k-1} > c_{a_k} - 1$). This implies that the skew $\bw-$diagram $\blamu$ is a cavalcade (respectively stampede) of $\bm-$integral ribbons of types $[i;a_1), [a_1;a_2),...,[a_t;j)$. Moreover, each $\asyt^\pm$ in \eqref{eqn:vaes} and \eqref{eqn:dothrak} for which equality is attained is the standard labeling of a cavalcade or stampede, as described in Section \ref{sec:basicpar}. Then for the cavalcade $C = \blamu$, we have:
$$
\ld \oP_{[i;j)}^\bm(C) = \prod_{a=i}^{j-1} \chi_a^{m_a} \cdot \frac {\prod_{a=i}^{j-1} (\chi_a t^{-a})^{s_a}}{q^{-1} \prod_{a=i+1}^{j-1} \ \ld \left(\frac 1q - \frac {\chi_{a}}{t\chi_{a-1}} \right)} \prod_{i \leq a < b < j} \ld \zeta \left( \frac {\chi_{b}}{\chi_{a}} \right)
$$
Concerning the denominators $\left( \frac 1q - \frac {\chi_{a}}{t\chi_{a-1}} \right)$, there are three possibilities: 

\begin{itemize}

\item if $\sq_{a-1}$ and $\sq_a$ lie in different ribbons, then the contribution of the denominator to the $\ld$ is $\frac 1q$

\item if $\sq_{a-1}$ is one box above $\sq_a$, then we obtain a factor of $\frac 1q - \frac 1q = 0$, which is canceled by a factor of $[1]$ in the numerator of $\prod \zeta \left( \frac {\chi_{b}}{\chi_{a}} \right)$

\item if $\sq_{a-1}$ is one box to the left of $\sq_{a}$, then we obtain a factor of $\frac 1q-q$, which is canceled by a factor of $[q^{-2}]$ in the numerator of $\prod \zeta \left( \frac {\chi_{b}}{\chi_{a}} \right)$
	
\end{itemize}

%Moreover, the ratio between the monomials $\chi_{a}t^{-a}$ and $\chi_{a-1}t^{1-a}$ is either $q$ or $q^{-1}$ if both $a$ and $a-1$ are in the same ribbon, depending on whether $a$ is to the right or above $a-1$. 

\noindent Recalling the notation \eqref{eqn:ind} and \eqref{eqn:ind3} for the cavalcade of ribbons $C = \blamu$, we conclude that:
$$
\ld \oP_{[i;j)}^\bm(C) = o_C^\bm  \cdot q^{\#_C + \high C + \ind^\bm_{C}} \prod^{a \leftrightarrow b}_{i \leq a < b < j} \ld \zeta \left( \frac {\chi_{b}}{\chi_{a}} \right)
$$
The reason why we do not encounter the factors $\zeta$ when $a \leftrightarrow b$ are successive boxes in the same ribbon is that they have already been used to cancel various factors from the denominator, in the second and third bullets above. The above relation establishes \eqref{eqn:stat5}. As for $\oQ_{-[i;j)}^\bm$, for any stampede $S = \blamu$, we have:
$$
\ld \oQ_{-[i;j)}^\bm(S) = o_S^{-\bm} \cdot \frac {\prod_{a=i}^{j-1} (\chi_at^{-a})^{-s_a}}{q^{j-i-1} \prod_{a=i+1}^{j-1} \ \ld \left(\frac 1q - \frac {t\chi_{a-1}}{\chi_{a}} \right)} \prod_{i \leq a < b < j} \ld \zeta \left( \frac {\chi_{a}}{\chi_{b}} \right)
$$
Concerning the denominators $\left( \frac 1q - \frac {t\chi_{a-1}}{\chi_{a}} \right)$, there are three possibilities: 

\begin{itemize}
	
\item if $\sq_{a-1}$ and $\sq_a$ lie in different ribbons, then the contribution of the denominator to the $\ld$ is $\frac 1q$
	
\item if $\sq_{a-1}$ is one box to the left of $\sq_a$, then we obtain a factor of $\frac 1q - \frac 1q = 0$, which is canceled by a factor of $[1]$ in the numerator of $\prod \zeta \left( \frac {\chi_{a}}{\chi_b} \right)$
	
\item if $\sq_{a-1}$ is one box above $\sq_{a}$, then we obtain a factor of $\frac 1q-q$, which is canceled by a factor of $[q^{-2}]$ in the numerator of $\prod \zeta \left( \frac {\chi_{a}}{\chi_{b}} \right)$
	
\end{itemize}

%Moreover, the ratio between the monomials $\chi_{a}q_t^{-a}$ and $\chi_{a-1} t^{1-a}$ is either 1 or $q^2$, if both $a$ and $a-1$ are in the same ribbon. 

\noindent Recalling the notation \eqref{eqn:ind} and \eqref{eqn:ind3} for the stampede of ribbons $S = \blamu$, we conclude that:
$$
\ld Q_{[i;j)}^\bm(S) = o_S^{-\bm} \cdot q^{\#_S + \wide S - \ind_{S}^\bm - j + i} \prod^{a \leftrightarrow b}_{i \leq a < b < j} \ld \zeta \left( \frac {\chi_{a}}{\chi_{b}} \right)
$$
The reason why we do not encounter the factors $\zeta$ when $a \leftrightarrow b$ are successive boxes in the same ribbon is that they have already been used to cancel various factors from the denominator, in the second and third bullets above. The above establishes \eqref{eqn:stat6}. \\

\section{Interpreting ribbons via Maya diagrams}
\label{sec:maya}

\noindent Having finished the proof of Theorem \ref{thm:pieri}, let us interpret it combinatorially. The first thing to observe is that while the action $\UU \curvearrowright K(\bw)$ depends on all the equivariant parameters $q,t,u_1,...,u_\bw$, the coefficients of both \eqref{eqn:pieri1} and \eqref{eqn:pieri2} depend only on $q$. In other words, for fixed $\bm \in \qq$ we may renormalize the basis by setting $v_\bla := s_\bla^{+,\bm} / o_\bla^{\bm}$, and Theorem \ref{thm:pieri} implies that the formula:
\begin{equation}
\label{eqn:fock1}
e_{[i;j)} \cdot v_\bmu \ = \ \sum^{\blamu \ = \ C \text{ is a }[i;j)}_{\text{cavalcade of }\bm-\text{ribbons }} v_\bla \cdot (1 - q^2)^{\#_C} (-q)^{N_{C}^+} q^{\high C + \ind_{C}^\bm}
\end{equation}
gives rise to an action of $\bigotimes_{h=1}^g U^+_q(\dot{\fgl}_{l_h}) \curvearrowright K(\bw)$. The numbers $g,l_1,...,l_g$ were associated to $\bm \in \qq$ in Section \ref{sec:arcs}. For the remainder of this Section, for simplicity we will only work with the case of creation operators $e_{[i;j)}$ and with $\bw = \bs^n$, hence:
$$
\uup \ \curvearrowright \ K(\bs^n) \ = \ \Lambda \ = \ \text{Fock space}
$$
In this case, $\bla = (\la)$ is a single partition. When $\bm = (0,...,0)$, formula \eqref{eqn:fock1} reads:
\begin{equation}
\label{eqn:easyfock1}
e_{[i;j)} \cdot v_\mu \ = \ \sum^{\lamu \ = \ C \text{ is a }[i;j)}_{\text{cavalcade of ribbons }} v_\la \cdot (1 - q^2)^{\#_C} (-q)^{N_{C}^+} q^{\high C}
\end{equation}
and it gives rise to the action $\uup \curvearrowright \Lambda$ constructed in \citep{LLT} based on the action $\sup \curvearrowright \Lambda$ of Hayashi and Misra-Miwa. However, let us note that while \loccit presents the action in terms of the simple and loop generators of $\uup$, we present it in terms of the root generators of Section \ref{sec:quantum}.  %The main feature of this particular case is that the simple generators $e_{\pm [i;i+1)}$ act on $v_\la$ by adding or removing a single box of color $i$.

\tab 
In the case of arbitrary $\bm \in \qq$, the simple generators are of the form $e_{[i;j)}$, where $[i;j)$ is a minimal $\bm-$integral arc. These generators act on $v_\la$ by adding a whole ribbon of type $[i;j)$. At first, it would seem like this is very different from the situation in the previous paragraph, where we added a single box. However, we will now show that the two situations are equivalent, by an assignment:
$$
\Big\{ \text{partition }\la \Big\} \ \longrightarrow \ \Big \{\bm-\text{core}, \ \bm-\text{quotient} \Big\}
$$
which we will now define. When $m = \left( \frac an, ..., \frac an \right)$ for $\gcd(a,n)=1$, it will precisely reduce to the well-known $n-$core and $n-$quotient construction in combinatorics. The idea is to find a language in which one can think of entire ribbons as single boxes, and a convenient way to do so is by using Maya diagrams. Specifically, one can take the Young diagram of a partition $\lambda$ and rotate it by $45^\circ$:
\begin{figure}[H]
	
\begin{picture}(300,170)(-20,-10)
	
\put(40,0){\line(1,0){320}}
\put(200,0){\line(1,1){120}}
\put(260,100){\line(-1,-1){20}}
\put(240,80){\line(-1,1){20}}
\put(220,100){\line(-1,-1){40}}
\put(180,60){\line(-1,1){20}}
\put(160,80){\line(-1,-1){20}}
\put(200,0){\line(-1,1){120}}
\put(280,80){\line(-1,1){20}}
	
\put(80,0){\circle{5}}
\multiput(80,5)(0,10){14}{\line(0,1){5}}
	
\put(100,0){\circle{5}}
\multiput(100,5)(0,10){14}{\line(0,1){5}}
	
\put(120,0){\circle{5}}
\multiput(120,5)(0,10){14}{\line(0,1){5}}
	
\put(140,0){\circle{5}}
\multiput(140,5)(0,10){14}{\line(0,1){5}}
	
\put(160,0){\circle*{5}}
\multiput(160,5)(0,10){14}{\line(0,1){5}}
	
\put(180,0){\circle{5}}
\multiput(180,5)(0,10){14}{\line(0,1){5}}
	
\put(200,0){\circle*{5}}
\multiput(200,5)(0,10){14}{\line(0,1){5}}
	
\put(220,0){\circle*{5}}
\multiput(220,5)(0,10){14}{\line(0,1){5}}
	
\put(240,0){\circle{5}}
\multiput(240,5)(0,10){14}{\line(0,1){5}}
	
\put(260,0){\circle*{5}}
\multiput(260,5)(0,10){14}{\line(0,1){5}}
	
\put(280,0){\circle{5}}
\multiput(280,5)(0,10){14}{\line(0,1){5}}
	
\put(300,0){\circle*{5}}
\multiput(300,5)(0,10){14}{\line(0,1){5}}
	
\put(320,0){\circle*{5}}
\multiput(320,5)(0,10){14}{\line(0,1){5}}
	
%\put(180,-25){\text{Figure 1.4}}
	
\end{picture}
	
\caption[The Maya diagram associated to a partition]
	
\label{fig:maya}
	
\end{figure} 

\noindent We rescale the Young diagram so that the vertical lines passing through the corners of the boxes have integer $x$ coordinate. If a vertical line passes through an outer corner or a northeast pointing edge (respectively inner corner or northwest pointing edge) of $\la$, we paint the corresponding $x$ intercept black (respectively white). We will refer to $\bullet_i$ (respectively $\circ_i$) as the black (respectively white) point situated at coordinate $i\in \BZ$. The resulting collection of black and white points on the horizontal axis is called the \b{Maya diagram} corresponding to $\la$, and note that it satisfies:
\begin{equation}
\label{eqn:selyse}
0 \ = \ \Big( \# \text{ white points } > 0 \Big) \ - \ \Big( \# \text{ black points } \leq 0 \Big)
\end{equation}
Conversely, there is a unique Young diagram corresponding to any partition of $\BZ$ into black and white points which satisfies \eqref{eqn:selyse} and is all black far enough to the right and all white far enough to the left. The size of the Young diagram can be read off the Maya diagram by the formula:
$$
|\la| \ = \ \sum_{i=1}^\infty i - \Big(\text{coordinate of the }i-\text{th black integer} \Big)
$$
Let us now reinterpret the combinatorial objects of \eqref{eqn:fock1} in terms of Maya diagrams. Adding a ribbon to a partition $\mu$ is equivalent to switching the white point $\circ$ situated at position $i$ with the black point $\bullet$ situated at position $j$ in the Maya diagram of $\mu$, for some $i<j$. This will be denoted by $\circ_i \leftrightarrow \bullet_j$, and it corresponds to adding a ribbon of type $[i;j)$. Adding a cavalcade of ribbons to a Maya diagram $\mu$ corresponds to performing a collection of such switches:
\begin{equation}
\label{eqn:cavalcademaya}
C \ = \ \{\circ_{i_1} \leftrightarrow \bullet_{j_1},...,\circ_{i_k} \leftrightarrow \bullet_{j_k}\}
\end{equation}
for some $i_1<j_1<i_2<j_2<...<i_k<j_k$. Adding a stampede of ribbons is a similar succession of switches:
\begin{equation}
\label{eqn:stampedemaya}
S \ = \ \{\circ_{i_1} \leftrightarrow \bullet_{j_1},...,\circ_{i_k} \leftrightarrow \bullet_{j_k}\}
\end{equation}
such that $i_{a+1}<j_a$ for all $a$. In the case of a stampede, the order of the switches matters, and so we perform the switches \eqref{eqn:stampedemaya} in order from $k$ to $1$. %The height (respectively width) of a ribbon corresponding to a jump $\circ_i \leftarrow \bullet_j$ is the number of white (respectively black) points between $i$ and $j$. 

%\tab 
%Finally, for a box $\bsq$ situated at coordinate $i$ in a Maya diagram $\bla$ we have:
%$$
%N_{\bsq|\bla}^+ = \# \{\alpha > 0 \text{ s.t. } \circ_{i+n\alpha}, \bullet_{i+n\alpha+1} \in \mu\} - \# \{\alpha > 0 \text{ s.t. } \bullet_{i+n\alpha}, \circ_{i+n\alpha+1} \in \mu\}
%$$
%$$
%N_{\bsq|\bla}^- = \# \{\alpha < 0 \text{ s.t. } \bullet_{i+n\alpha}, \circ_{i+n\alpha+1} \in \mu\} - \# \{\alpha < 0 \text{ s.t. } \circ_{i+n\alpha}, \bullet_{i+n\alpha+1} \in \mu\}
%$$
%and the generalizations of these notions to cavalcades and stampedes is:
%$$
%N_{C}^+ = \# \{\alpha > 0 \text{ s.t. } \circ_{i+n\alpha}, \bullet_{i+n\alpha+1} \in \mu\} - \# \{\alpha > 0 \text{ s.t. } \bullet_{i+n\alpha}, \circ_{i+n\alpha+1} \in \mu\}
%$$

%\tab 
%In terms of Maya diagrams, it is easy to rewrite formulas \eqref{eqn:fock1} and \eqref{eqn:fock2} for the action $\uu \curvearrowright \text{Fock space}$:
%$$
%e_{[i;j)} \cdot v_\mu \ = \ \sum^{\lamu \ = \ C \text{ is a }[i;j)}_{\text{cavalcade of }\bm-\text{ribbons }} v_\la \cdot (1 - q^2)^{\#_C} (-q)^{N_{C}^+} q^{\high C + \ind_{C}^\bm}
%$$
%$$
%e_{-[i;j)} \cdot v_\la \ = \ \sum^{\lamu \ = \ S \text{ is a }[i;j)}_{\text{stampede of }\bm-\text{ribbons }} v_\mu \cdot (1 - q^2)^{\#_S} (-q)^{N_{S}^-} q^{\wide S - \ind_{S}^\bm-j+i}
%$$

\tab 
Let us fix $\bm \in \qq$, and consider the corresponding union of cycles $C_1 \sqcup ... \sqcup C_g$ that we assigned to $\bm$ in Section \ref{sec:arcs}. Specifically, $C_h = \{i_1,...,i_{l_h}\} \subset \{1,...,n\}$ such that:
$$
[i_1;i_2), \ [i_2;i_3), \ ..., \ [i_{l_h},i_1) \quad \text{form a chain of minimal }\bm-\text{integral arcs}
$$
We will write $\omega_h n$ for the total length $\sum_{s=1}^{l_h} (i_{s+1}-i_s)$ of the above chain of arcs, and $\omega_h \in \BN$ will be called the \b{winding number}. Take the Maya diagram of a partition $\la$, and for all $h\in \{1,...,g\}$ and $k \in \{1,...,\omega_h\}$ consider the bi-infinite sequence $S^{h,k} \subset \BZ$ consisting of points situated at coordinates $\in C_h + kn$ modulo $\omega_h n$. Let:
$$
c^{h,k} \ := \  \# \{\circ_a, \ a>0, \ a \in C_h + kn \text{ mod }\omega_h n\}  \ - \ \# \{\bullet_a, \ a\leq 0, \ a \in C_h + kn \text{ mod }\omega_h n\} 
$$
and identify the sequence $S^{h,k} \cong \BZ$ with the Maya diagram of a partition $\la^{h,k}$. Indeed, there is a single way to do this, since property \eqref{eqn:selyse} must be satisfied for $\la^{h,k}$ to be a partition. Doing the above construction for all $h\in \{1,...,g\}$ and all $k\in \{1,...,\omega_h\}$ gives us a collection of partitions: 
\begin{equation}
\label{eqn:mquotient}
\{\la^{h,k}\}^{1\leq h \leq g}_{1\leq k \leq \omega_h} \quad \text{which will be called the }\bm-\b{quotient} \text{ of } \ \la 
\end{equation}
Let $N = N_\bm := \sum_{h=1}^g \omega_h$ be the total number of constituent partitions of the $\bm-$quotient. The numbers $c^{h,k}$ must add up to 0 in virtue of \eqref{eqn:selyse}, and:
\begin{equation}
\label{eqn:mcore}
\text{the collection} \quad \{c^{h,k}\}^{1\leq h \leq g}_{1\leq k \leq \omega_h} \in \BZ^{N-1} \quad \text{will be called the }\bm-\b{core} \text{ of } \ \la 
\end{equation}
From the construction, it is clear that the assignment:
\begin{equation}
\label{eqn:bij}
\Big\{ \text{partitions } \Big\} \ \stackrel{\Psi_\bm}\longrightarrow \ \BZ^{N-1} \times \Big\{ \text{partitions } \Big\}^N, \qquad \Psi_\bm(\la) \ = \ (c^{h,k}, \la^{h,k})^{1\leq h \leq g}_{1\leq k \leq \omega_h}
\end{equation}
is a bijection. For us, this bijection is useful because it allows us to describe $\bm-$integral ribbons. Specifically, adding a box to some partition $\la^{h,k}$ corresponds to switching a white point $\circ_{i}$ with the black point $\bullet_{i+1}$ in the Maya diagram of $\la^{h,k}$. In the Maya diagram of the original partition $\la$, this corresponds to switching the white point $\circ_i$ with the black point $\bullet_{\upsilon_\bm(i)}$. This precisely corresponds to adding a $\bm-$integral ribbon of minimal type $[i;\upsilon_\bm(i))$ to the Young diagram $\la$. 

\tab 
Adding a general ribbon to $\la^{h,k}$ corresponds to adding a general $\bm-$integral ribbon to $\la$. Therefore, we see that the bijection $\Psi_\bm$ intertwines the operator \eqref{eqn:fock1} on the left hand side of \eqref{eqn:bij} with the operator \eqref{eqn:easyfock1} on the right hand side of \eqref{eqn:bij}. This statement is more philosophical than precise: indeed, to obtain an actual intertwiner, one would have to match the various powers of $\pm q$ in the right hand sides of \eqref{eqn:fock1} and \eqref{eqn:easyfock1}, and this requires choosing an appropriate renormalization of the basis $v_\mu$. When $n=1$, this is achieved by Proposition 5.5. of \citep{Npieri}.

\chapter{Proofs of the Exercises}
\label{chap:proofs}
\begin{proof} \emph{of Exercise \ref{ex:unstable}:} We will prove the case when $\th > 0$, as the remaining case is proved by dualizing all the maps. We need to show that a quadruple $(X,Y,A,B)$ is semistable if and only if it is cyclic, by which we mean that $V$ is generated by successive applications of $X$ and $Y$ on the image of $A$. Let us first show that a cyclic quadruple is semistable. By Lemma \ref{lem:ss}, it is enough to exhibit a certain $\det^{-1}$ covariant function which does not vanish on the quadruple $(X,Y,A,B)$. Being cyclic implies that there exist $v = \dim V$ vectors $z_1,...,z_v \in W$, and certain polynomials $P_1(X,Y),...,P_v(X,Y)$ such that:
$$
V = \text{span } \Big\langle P_1(X,Y)Az_1,...,P_v(X,Y)Az_v \Big\rangle	
$$
Then a $\det^{-1}$ covariant function which does not vanish on $(X,Y,A,B)$ is precisely:
$$
\det\left(P_1(X,Y)Az_1,...,P_v(X,Y)Az_v\right)
$$	
Note that the above is $\det^{-1}$ and not $\det$ covariant, because the action of the determinant character on functions of quadruples is inverse to the action on the vector space of quadruples. Conversely, let us show that a semistable quadruple is cyclic, by proving the contrapositive: a non-cyclic quadruple is unstable. In other words, let us assume that there exists a proper subspace $V' \subset V$ which contains the image of $A$ and is preserved by $X,Y$. We will assume $V' = \BC^{v'} \subset \BC^v = V$ as the standard inclusion of the first $v'$ basis vectors. Then both $X$ and $Y$ are block triangular with the topmost block of size $v'$, while $A$ only has non-zero entries in the first $v'$ rows. If we consider the following one parameter subgroup in the $G_v-$orbit of $(X,Y,A,B)$:
$$
\text{diag}(\underbrace{1,...,1}_{v'},\underbrace{t,...,t}_{v-v'}) \cdot (X,Y,A,B)
$$ 
then we observe that it has a well-defined limit as $t\rightarrow \infty$. In this limit, $\det^{-1}$ of the above one parameter subgroup goes to 0, so we conclude that the closure of the orbit of $(X,Y,A,B)$ intersects the zero section. Therefore, the quadruple is unstable. \\
\end{proof}

\begin{proof} \emph{of Exercise \ref{ex:tangent}:} Since $\CN_{v,w}$ is defined as the Hamiltonian reduction \eqref{eqn:nak0}, we conclude that we have short exact sequences:
\begin{equation}
\label{eqn:lana}
\xymatrix{
\fg_v \ar@{^{(}->}[r] & T \left( \mu^{-1}(0) \right) \ar@{^{(}->}[d] \ar@{->>}[r]
& T\CN_{v,w} \\
& N_{v,w} \ar@{->>}[d]^{d\mu} & \\
& \fg_v^\vee &} 
\end{equation}
where $N_{v,w} = \Hom(V,V) \oplus \Hom(V,V) \oplus \Hom(W,V) \oplus \Hom(V,W)$ and we identify the tangent bundle to a vector space with the vector space itself:
\begin{equation}
\label{eqn:kane}
TN_{v,w} =  \frac 1{qt} \cdot \CV \otimes \CV^\vee + \frac tq \cdot \CV \otimes \CV^\vee + \sum_{j=1}^w \frac {1}{qu_j} \cdot \CV + \sum_{j=1}^w  \frac {u_j}q \cdot \CV^\vee
\end{equation}
where $\CV$ denotes the tautological vector bundle with fibers $V$. We have introduced equivariant parameters in front of each summand of \eqref{eqn:kane} so the above becomes an equality of $T-$equivariant vector bundles. The left horizontal map of \eqref{eqn:lana} is the infinitesimal action of $G_v$ on $\mu^{-1}(0)$, which is injective because the $G_v$ action is free on the semistable locus. Meanwhile, the surjectivity of the bottom vertical map is equivalent to the fact that Nakajima quiver varieties have precisely the expected dimension. Then we conclude that the $K-$theory class of the tangent bundle to Nakajima quiver varieties is given by subtracting the tangent directions to $G_v$ and the normal directions to $\fg_v^\vee$ from \eqref{eqn:kane}:
$$
T\CN_{v,w} = \sum_{j=1}^w \Big( \frac {1}{qu_j} \cdot \CV +  \frac {u_j}q \cdot \CV^\vee \Big) + \left(\frac 1{qt} + \frac tq - 1 - \frac 1{q^2} \right) \CV \otimes \CV^\vee
$$
where $\CV$ is the tautological vector bundle on $\CN_{v,w}$ with fibers given by the vector space $V$. Then the proof of the Exercise reduces to the statement that:
\begin{equation}
\label{eqn:fibertaut}
\CV|_\bla \ = \ \sum_{\sq \in \bla} \chi_\sq
\end{equation}
This follows from semistability, which implies that the fiber $\CV|_\bla$ is spanned by $v_\sq = (X^iY^jA) \cdot \omega_k$ over all boxes $\sq = (i,j)$ in the $k-$th constituent partition of $\bla$, where $\omega_1,...,\omega_w$ denote the standard basis of $W$. The character of $T$ on the vector $v_\sq$ therefore equals $q_1^iq_2^j qu_k$, which is precisely the weight $\chi_\sq$ defined in \eqref{eqn:weight}. \\
\end{proof}

\begin{proof} \emph{of Exercise \ref{ex:tangent2}:} This Exercise is proved much like the previous one. Points of $\fZ_{\bv^+,\bv^-,\bw}$ are collections of linear maps as in the commutative diagram \eqref{eqn:bigdiag}:
\begin{equation}
\label{eqn:bigdiagram}
\xymatrix{ & & \ar@/^/[dl]^{Y_{i-1}^+} V_i^+ \ar@{->>}[dd] \ar@/^/[dr]^{X_i^+} & \\
... \ V_{i-2}^\pm \ar@/^/[r]^{X^\pm_{i-2}} & \ar@/^/[l]^{Y^\pm_{i-2}} V_{i-1}^\pm \ar@/^/[ru]^{X_{i-1}^+} \ar@/_/[rd]_{X_{i-1}^-} & & V_{i+1}^\pm \ar@/^/[r]^{X^\pm_{i+1}} \ar@/^/[lu]^{Y_i^+} \ar@/_/[ld]_{Y_{i}^-} & \ar@/^/[l]^{Y^\pm_{i+1}} V_{i+2}^\pm \ ... \\
 &	& \ar@/_/[lu]_{Y_{i-1}^-} V_i^- \ar@/_/[ru]_{X_{i}^-} &}
\end{equation}	
We suppress the $A$ and $B$ maps in order to make the above diagram more readable. The above linear maps must satisfy the moment map conditions:
$$
X_{j-1}^+Y_{j-1}^- - Y_{j}^+X_{j}^- + A_{j}^+B_{j}^- \ = \ 0 \ \in \ \Hom(V_{j}^-,V_{j}^+) \qquad \forall \ j \in \{1,...,n\}
$$
and must be taken modulo the subgroup $P \subset G_{\bv^+} \times G_{\bv^-}$ which preserves the collection of quotients $V^+ \twoheadrightarrow V^-$. This subgroup has Lie algebra: 
$$
\fp \ = \  \BC \oplus \bigoplus_{k=1}^n \text{Hom}(V_{k}^-,V_{k}^+)
$$	
where the first copy of $\BC$ simply rescales the one-dimensional kernel of $V_i^+ \twoheadrightarrow V_i^-$. As in the proof of the previous Exercise, the fact that $\fZ_{\bv^+,\bv^-,\bw}$ has the expected dimension implies that its tangent space is given by the affine space of quadruples minus the moment map conditions, and minus the gauge transormations in $\fp$. We obtain the following equality of $K-$theory classes:
\begin{equation}
\label{eqn:bb}
T \fZ_{\bv^+,\bv^-,\bw} = \sum_{j=1}^\bw \Big( \frac 1{qu_j} \cdot \CV_{j}^+ + \frac {u_j}q \cdot \CV_{j}^{-\vee} \Big) + 
\end{equation}
$$
+ \sum_{k=1}^n \Big( \frac 1{qt} \cdot \CV_{k+1}^{+} \otimes \CV_k^{-\vee} + \frac tq \cdot \CV_{k}^+ \otimes \CV_{k-1}^{-\vee} - \CV^+_k \otimes \CV^{- \vee}_k - \frac 1{q^2} \cdot \CV^+_k \otimes \CV^{- \vee}_k \Big) - 1
$$
where $\CV^\pm_j$ denotes the pull-back of the $j-$th tautological vector bundle from the quiver variety $\CN_{\bv^\pm,\bw}$, for all $j$ modulo $n$. When $j\neq i$ modulo $n$, we have $\CV^+_j = \CV^-_j$, while when $j = i$ modulo $n$, the tautological line bundle $\CL$ coincides with $\text{Ker}(\CV^+_i \twoheadrightarrow \CV^-_i)$. Together with \eqref{eqn:fibertaut}, \eqref{eqn:bb} implies \eqref{eqn:tansim}. \\	
\end{proof} 

%$$
%X_{i-2}Y_{i-2} - Y_{i-1}^-X_{i-1}^+ + A_{i-1}B_{i-1} \ = \ 0 \ \in \ \Hom(V_{i-1}^\pm,V_{i-1}^\pm)
%$$
%$$
%X_{i-1}^-Y_{i-1}^+ - Y_{i}^-X_{i}^+ + A_{i}^-B_{i}^+ \quad = \quad 0 \quad \in \quad \Hom(V_{i}^+,V_{i}^-)
%$$
%$$
%X_{i}^-Y_{i}^+ - Y_{i+1}X_{i+1} + A_{i+1}B_{i+1} \ \ = \ \ 0 \ \in \ \Hom(V^\pm_{i+1},V^\pm_{i+1})
%$$

\begin{proof} \text{of Exercise \ref{ex:intcorr}:} We will prove that the right hand side of \eqref{eqn:formula1} equals \eqref{eqn:form1}, and leave the analogous case of $e_{i,d}^-$ to the interested reader. We may expand the right hand side in terms of fixed points:
$$
\int z^d \cdot \ov{f(X - z) \zeta\Big(\frac zX \Big)} \prod_{1\leq j \leq \bw}^{u_j \equiv i} \Big[\frac {u_{i}}{qz} \Big] Dz = 
$$
$$
=\sum_{\bla^+} |\bla^+ \rangle \int z^d f\left(\chi_{\bla^+} - z\right) \zeta\left(\frac z{\chi_{\bla^+}} \right) \prod_{1\leq j \leq \bw}^{u_j \equiv i} \Big[\frac {u_{i}}{qz} \Big] Dz
$$
If we recall the definition of $\zeta$ in \eqref{eqn:defzeta}, we see that:
\begin{equation}
\label{eqn:tarth}
\zeta\left(\frac z{\chi_{\bla}} \right) \prod_{1\leq j \leq \bw}^{u_j \equiv i} \Big[\frac {u_{i}}{qz} \Big] = \frac {\prod^{\text{inner corners}}_{\sq \text{ of }\bla\text{ of color }i} \left[\frac {\chi_\sq}{q^2 z} \right]}
{\prod^{\text{outer corners}}_{\sq \text{ of }\bla\text{ of color }i} \left[ \frac {\chi_\sq}{q^2 z} \right]}
\end{equation}
for any $\bw-$partition $\bla$. Therefore, we have:
$$
\sum_{\bla^+} |\bla^+ \rangle \int z^d f(\chi_{\bla^+} - z ) \zeta\left(\frac z{\chi_{\bla^+}} \right) \prod_{1\leq j \leq \bw}^{u_j \equiv i} \Big[\frac {u_{i}}{qz} \Big] Dz =
$$
$$
= \sum_{\bla^+} |\bla^+ \rangle \int z^d f(\chi_{\bla^+} - z) \frac {\prod^{\text{inner corners}}_{\sq \text{ of }\bla^+\text{ of color }i} \left[\frac {\chi_\sq}{q^2 z} \right]}
{\prod^{\text{outer corners}}_{\sq \text{ of }\bla^+\text{ of color }i} \left[ \frac {\chi_\sq}{q^2 z} \right]} Dz
$$
Instead of integrating around $0$ and $\infty$, we could integrate over small contours around all the other poles. These poles are $z = \chi_\sq q^{-2}$ for an outer corner $\sq \in \bla^+$ of color $i$. Such $z$ is precisely the weight of a box $\bsq$ which can be removed from $\bla^+$ in order to produce a smaller partition $\bla^- \leq_i \bla^+$. We conclude that:
$$
\sum_{\bla^+} |\bla^+ \rangle \int z^d \cdot f(\chi_{\bla^+}-z) \frac {\prod^{\text{inner corners}}_{\sq \text{ of }\bla^+\text{ of color }i} \left[\frac {\chi_\sq}{q^2 z} \right]}
{\prod^{\text{outer corners}}_{\sq \text{ of }\bla^+\text{ of color }i} \left[ \frac {\chi_\sq}{q^2 z} \right]} Dz = 
$$
$$
= \sum_{\bla^+ \geq_i \bla^-}^{\bsq = \bla^+/\bla^-} |\bla^+ \rangle \cdot \chi_\bsq^d \cdot f(\chi_{\bla^-}) \cdot (1-1) \cdot \frac {\prod^{\text{inner corners}}_{\sq \text{ of }\bla^+\text{ of color }i} \left[\frac {\chi_\sq}{q^2 \chi_\bsq} \right]}
{\prod^{\text{outer corners}}_{\sq \text{ of }\bla^+\text{ of color }i} \left[ \frac {\chi_\sq}{q^2 \chi_\bsq} \right]} = 
$$
$$
=  \sum_{\bla^+ \geq_i \bla^-}^{\bsq = \bla^+/\bla^-} |\bla^+ \rangle \cdot \frac {\chi_\bsq^d}{[q^{-2}]} \cdot f(\chi_{\bla^-}) \cdot \frac {\prod^{\text{inner corners}}_{\sq \text{ of }\bla^-\text{ of color }i} \left[\frac {\chi_\sq}{q^2 \chi_\bsq} \right]}
{\prod^{\text{outer corners}}_{\sq \text{ of }\bla^-\text{ of color }i} \left[ \frac {\chi_\sq}{q^2 \chi_\bsq} \right]} = 
$$
$$
= \sum_{\bla^+ \geq_i \bla^-}^{\bsq = \bla^+/\bla^-} | \bla^+ \rangle \cdot \frac {\chi_\bsq^d}{[q^{-2}]} \cdot f(\chi_{\bla^-}) \cdot \zeta \left( \frac {\chi_\bsq}{\chi_{\bla^-}} \right)  \prod^{u_j\equiv i}_{1\leq j \leq \bw} \Big[\frac {u_j}{q\chi_\bsq} \Big] 
$$
where in the last equality we have applied \eqref{eqn:tarth} again. Comparing this with \eqref{eqn:form1} gives us the required result. \\
\end{proof}

\begin{proof} \text{of Exercise \ref{ex:opposite}:} Let us assume that $\alpha \in K_T(F)$ and $\beta \in K_T(F')$ for two connected components of the fixed locus $F,F' \subset X^A$, and let us first consider the case when $F = F'$. Note that the order of any two fixed components with respect to $\sigma$ and $-\sigma$ are exact opposites of each other:
$$
F_1 \unlhd_\sigma F_2 \qquad \Leftrightarrow \qquad F_1 \unrhd_{-\sigma} F_2
$$
Therefore, the correspondences $Z^\sigma_F$ and $Z^{-\sigma}_F$ only intersect on the diagonal $\Delta_F \subset F \times F$, and they do so properly. Indeed, property \eqref{eqn:leading} gives us: 
$$
\stab_\CL^\sigma(\alpha)|_F = \alpha \cdot \left[N_{F \subset X}^-\right] \qquad \qquad \stab_{\CL^{-1}}^{\sigma^{-1}}(\beta)|_F = \beta \cdot \left[N_{F \subset X}^+\right]
$$
If we let $\iota^F:F \hookrightarrow X$ be the standard embedding, equivariant localization gives us:
$$
\stab_\CL^\sigma(\alpha) = \widetilde{\iota}^F_*\left( \frac {\alpha}{\left[N_{F \subset X}^+\right]} \right) + \text{terms supported on }F_0 \unlhd F
$$
$$
\stab_{\CL^{-1}}^{\sigma^{-1}}(\beta) = \widetilde{\iota}^F_*\left( \frac {\beta}{\left[N_{F \subset X}^-\right]} \right) + \text{terms supported on }F_0 \unrhd F
$$
which implies \eqref{eqn:pairing}. When $F \neq F'$, the RHS of \eqref{eqn:pairing} is trivially zero, and so we must show the same for the LHS. Because $\stab_\CL^\sigma$ (respectively $\stab_{\CL^{-1}}^{\sigma^{-1}}$) is supported on the attracting set of $F$ (respectively the repelling set of $F'$), we conclude that the left hand side of \eqref{eqn:pairing} can be non-zero only if $F' \lhd F$. If this is the case, then equivariant localization gives us:
\begin{equation}
\label{eqn:enigma}
K_T(\pt) \ \ni \ \left(\stab_\CL^\sigma(\alpha), \stab_{\CL^{-1}}^{\sigma^{-1}}(\beta) \right)_X = \sum_{F' \unlhd F_0 \unlhd F} \frac {\stab^\sigma_\CL|_{F \times F_0} \cdot \stab^{\sigma^{-1}}_{\CL^{-1}}|_{F' \times F_0} \cdot \alpha \cdot \beta}{[N_{F_0 \subset X}]}
\end{equation}
Let us consider the minimal and maximal degree of the above expression in the direction of $\sigma \in A \subset T$. Without loss of generality, we may assume that $\alpha$ and $\beta$ have equivariant parameters concentrated in a single degree, which is possible since they are $K-$theory classes on a variety with trivial action. Set:
$$
a \ = \ \mindeg \alpha \ = \ \maxdeg \alpha \qquad \qquad b \ = \ \mindeg \beta \ = \ \maxdeg \beta
$$
Then condition \eqref{eqn:small} tells us that:
$$
\maxdeg \stab^{\sigma}_\CL|_{F \times F_0} \ \leq \ \maxdeg [N^-_{F_0 \subset X}] + \CL|_{F_0} - \CL|_{F}
$$ 
$$
\maxdeg \stab^{\sigma^{-1}}_{\CL^{-1}}|_{F' \times F_0} \ \leq \ \maxdeg [N^+_{F_0 \subset X}] - \CL|_{F_0} + \CL|_{F'}
$$ 
$$
\mindeg \stab^{\sigma}_\CL|_{F \times F_0} \ \geq^* \ \mindeg [N^-_{F_0 \subset X}] + \CL|_{F_0} - \CL|_{F}
$$
$$
\mindeg \stab^{\sigma^{-1}}_{\CL^{-1}}|_{F' \times F_0} \ \geq^\circ \ \mindeg [N^+_{F_0 \subset X}] - \CL|_{F_0} + \CL|_{F'}
$$
where equality holds in $\geq^*$ only if $F_0 = F$, and equality holds in $\geq^\circ$ only if $F_0 = F'$. Since $F \neq F'$ these inequalities cannot both be equalities, hence adding the four inequalities above gives us:
$$
\maxdeg \text{of \eqref{eqn:enigma}} \ \leq \ [N^+] + [N^-] + \CL|_{F'} - \CL|_F + a + b - [N] = \CL|_{F'} - \CL|_F + a + b
$$
$$
\mindeg \text{of \eqref{eqn:enigma}} \ > \ [N^+] + [N^-] + \CL|_{F'} - \CL|_F + a + b - [N] = \CL|_{F'} - \CL|_F + a + b
$$
where we use the shorthand notation $N = N_{F_0 \subset X}$ for any fixed component $F_0$. The only way the minimal degree of \eqref{eqn:enigma} can be strictly bigger than the maximal degree is if the Laurent polynomial \eqref{eqn:enigma} equals zero. \\
\end{proof}

\begin{proof} \emph{of Exercise \ref{ex:correspondence}:} Points of $\fZ_i$ are quadruples of linear maps that preserve an collection of quotients $\{V^+_j \twoheadrightarrow V^-_j\}$ of codimension $\delta_j^i$. To prove \eqref{eqn:reasonable}, we must show that if the maps satisfy properties \eqref{eqn:att} or \eqref{eqn:rep} on the collection of vector spaces $\{V^-_j\}_{1\leq j \leq n}$, they also satisfy the same properties on the collection of vector spaces $\{V^+_j\}_{1\leq j \leq n}$. \footnote{This would establish the fact that the operators $e^+_{i,d}$ are Lagrangian. The case of the operators $e^-_{i,d}$ is proved by switching $+$ with $-$, and the argument is analogous} Without loss of generality, let us study the attracting case, i.e. property \eqref{eqn:att}. The assumption tells us that there exists a filtration of $\{V^-_j\}_{1\leq j \leq n}$ whose associated graded vector spaces are generated by $A \cdot (\text{the basis vectors of }W)$ and on which the $X$ maps are nilpotent. To extend this filtration to the vector spaces $\{V^+_j\}_{1\leq j \leq n}$, we must decide in which filtration degree to put $l \in \text{Ker} (V_i^+ \twoheadrightarrow V_i^-)$. By semistability, we may write:
\begin{equation}
\label{eqn:sum}
l \ = \ \sum_{j=1}^\bw P_j(X,Y)\cdot A\omega_j	
\end{equation}
for various polynomials $P_j$, and we simply define the filtration on $\{V^+_j\}_{1\leq j \leq n}$ by placing $l$ in filtration degree equal to the highest $j$ which can appear non-trivially in sums of the form \eqref{eqn:sum}. Finally, the $X$ maps are nilpotent on $\{V^+_j\}_{1\leq j \leq n}$ as on $\{V^-_j\}_{1\leq j \leq n}$, because $X_i \left( V_i^+ \right) \subset V_{i+1}^-$. The repelling case is treated by replacing the words ``highest $j$" with ``lowest $j$" and ``$X$ nilpotent" with ``$Y$ nilpotent". \\	
\end{proof}

\begin{proof} \emph{of Exercise \ref{ex:antipode}:} Let us prove only the first of the required identities, as the rest are completely analogous. For any $F \in \CS_\bk^+$, let us expand \eqref{eqn:cop1}:
$$
\Delta(F) \ = \ \sum_{\text{expansion in }z_{ia} \ll z_{jb}}^{0 \leq \bl \leq \bk} \frac {\left[ \prod^{b > l_j}_{1\leq j \leq n} \ph^+_j(z_{jb}) \right] F'(z_{i1},...,z_{il_i}) \otimes F''(z_{i,l_i+1},...,z_{ik_i})}{\prod^{a \leq l_i}_{1\leq i \leq n} \prod_{1\leq j \leq n}^{b > l_j} \zeta\left( z_{jb} / z_{ia}  \right)}	
$$	
where we use the notation $F',F''$ to separate the variables of $F$ into two groups, with regard to the expansion in $z_{ia} \ll z_{jb}$ for all $a\leq l_i$ and $b>l_j$. Applying the antipode to the above coproduct gives us $\qquad (S \otimes \text{Id}) \circ \Delta(F) =$	
$$
= \ \sum_{\text{expand in }z_{ia} \ll z_{jb}}^{0 \leq \bl \leq \bk} \frac {S\Big[F'(z_{i1},...,z_{il_i}) \Big] S\left[ \prod^{b > l_j}_{1\leq j \leq n} \ph^+_j(z_{jb}) \right] \otimes F''(z_{i,l_i+1},...,z_{ik_i})}{\prod^{a \leq l_i}_{1\leq i \leq n} \prod_{1\leq j \leq n}^{b > l_j} \zeta\left( z_{jb} / z_{ia}  \right)}	
$$
since $S$ is an anti-homomorphism. Multiplying the tensor factors together gives us:
$$
S(F_1)F_2 = \text{shuffle product applied to }(S \otimes \text{Id}) \circ \Delta(F) = \sum_{\text{expand in }z_{ia} \ll z_{jb}}^{0 \leq \bl \leq \bk} 
$$	
$$
\frac {\left[ \prod^{a \leq l_i}_{1\leq i \leq n} \left(-\ph^+_i(z_{ia}) \right)^{-1} \right] * F'(z_{i1},...,z_{il_i}) * \left[ \prod^{b > l_j}_{1\leq j \leq n} \ph^+_j(z_{jb})^{-1} \right] * F''(z_{i,l_i+1},...,z_{ik_i})}{\prod^{a \leq l_i}_{1\leq i \leq n} \prod_{1\leq j \leq n}^{b > l_j} \zeta\left( z_{jb} / z_{ia}  \right)}	
$$
We can use \eqref{eqn:colombia} to commute all the $\ph$'s to the front, hence $\qquad S(F_1)F_2 = $
$$
= \ \left[ \prod^{1\leq a \leq k_i}_{1\leq i \leq n} \ph^+_i(z_{ia}) ^{-1} \right] * \sum_{\text{expand in }z_{ia} \ll z_{jb}}^{0 \leq \bl \leq \bk}  (-1)^{|\bl|} \frac {F'(z_{i1},...,z_{il_i}) * F''(z_{i,l_i+1},...,z_{ik_i})}{\prod^{a \leq l_i}_{1\leq i \leq n} \prod_{1\leq j \leq n}^{b > l_j} \zeta\left( z_{ia} / z_{jb}  \right)} =
$$
$$
=  \left[ \prod^{1\leq a \leq k_i}_{1\leq i \leq n} \ph^+_i(z_{ia}) ^{-1} \right] * \sum_{\text{expand in }z_{ia} \ll z_{jb}}^{0 \leq \bl \leq \bk}  (-1)^{|\bl|} \cdot \sym \ F(...,z_{i1},...,z_{ik_i},...) 
$$
where the last equality follows by the very definition of $F'$ and $F''$, and $\sym$ refers to symmetrization with respect to the groups of variables $\{z_{ia}\}_{a\leq l_i}$ and $\{z_{jb}\}_{b>l_j}$. We can package this symmetrization by rewriting the sum as:
\begin{equation}
\label{eqn:conclude}
S(F_1)F_2 =  \left[ \prod^{1\leq a \leq k_i}_{1\leq i \leq n} \ph^+_i(z_{ia}) ^{-1} \right] * \sum^{V \subset \{...,z_{ia},...\}^{1\leq i \leq n}_{1\leq a \leq k_i}}_{\text{expand in }z_{ia} \ll z_{jb} \text{ for }z_{ia}\in V, z_{jb} \notin V} (-1)^{|V|} F(...,z_{i1},...,z_{ik_i},...) \qquad
\end{equation}
where the sum goes over all subsets of the set of variables. We claim that the sum over all subsets $V$ vanishes, on account of the power $(-1)^{|V|}$ and the inclusion-exclusion principle. While expressions such as $S(F_1)F_2$ make sense in a formal completion, to ensure convergence one needs to evaluate them in certain representations where $\ph^\pm_i(z)$ act via rational functions, such as $K(\bw)$. \\
\end{proof}

\begin{proof} \emph{of Exercise \ref{ex:pairing}:} We need to prove the following formulas:
\begin{equation}
\label{eqn:for1}
\left \langle \ph_i^-(w) \otimes G, \Delta(F) \right \rangle = \left \langle G \prod^{1\leq j \leq n}_{1\leq a \leq k_j} \frac {\zeta(z_{ja}/w)}{\zeta(w/z_{ja})} \otimes \ph_i^-(w), \Delta(F) \right \rangle 
\end{equation}
\begin{equation}
\label{eqn:for2}
\left \langle \Delta^{\op}(G), \ph^+_i(w) \otimes F \right \rangle = \left \langle \Delta^{\op}(G), F \prod^{1\leq j \leq n}_{1\leq a \leq k_j} \frac {\zeta(w/z_{ja})}{\zeta(z_{ja}/w)} \otimes \ph_i^+(w)  \right \rangle
\end{equation}
for any $F \in \CS^+_{\bk}$ and $G \in \CS^-_{-\bk}$, as well as:
\begin{equation}
\label{eqn:for3}
\left \langle G * G', F \right \rangle = \left \langle G \otimes G', \Delta(F) \right \rangle \qquad \quad \forall \ F \in \CS^+_{\bk+\bl}, \ G \in \CS^-_{-\bk}, \ G' \in \CS^-_{-\bl}
\end{equation}
\begin{equation}
\label{eqn:for4}
\left \langle G, F * F' \right \rangle = \left \langle \Delta^{\op}(G), F \otimes F' \right \rangle \qquad \quad \forall \ F \in \CS^+_{\bk}, \ F' \in \CS^+_{\bl}, \ G \in \CS^-_{-\bk-\bl}
\end{equation}
We will only prove \eqref{eqn:for2} and \eqref{eqn:for4}, since the other two are analogous. As the pairing $\langle \cdot, \cdot \rangle$ only pairs non-trivially shuffle elements of opposite bidegrees, and since $\Delta^{\op}(G) = G \otimes 1 + \prod^{1 \leq j \leq n}_{1\leq a \leq k_j} \ph^-_j(z_{ja}) \otimes G + \text{intermediate terms}$, relation \eqref{eqn:for2} becomes:
$$
\left \langle \prod^{1 \leq j \leq n}_{1\leq a \leq k_j} \ph^-_j(z_{ia}),  \ph_i^+(w) \right \rangle \cdot \langle G,  F \rangle =  \left \langle G,F \cdot \frac {\zeta(w/z_{ja})}{\zeta(z_{ja}/w)} \right \rangle \cdot \langle 1, \ph_i^+(w) \rangle 
$$
Since $\langle 1, \cdot \rangle$ is just the counit of the algebra, it is enough to prove:
$$
\left \langle \prod^{1 \leq j \leq n}_{1\leq a \leq k_j} \ph^-_j(z_{ia}),  \ph_i^+(w) \right \rangle = \prod^{1\leq j \leq n}_{1\leq a \leq k_j} \frac {\zeta(w/z_{ja})} {\zeta(z_{ja}/w)}
$$ 
This follows from the fact that the coproduct of currents $\ph_j^\pm(w)$ is group-like and from \eqref{eqn:pairshuf}. In order to prove \eqref{eqn:for4}, note that its left hand side equals:
$$
\frac {1}{(\bk+\bl)!} \int^{|q| < 1 < |p|}_{|z_{ia}|=|w_{jb}|=1} \frac {G(z_{ia},w_{jb}) \cdot \sym \left[ F(z_{ia})F'(w_{jb}) \prod \zeta \left( \frac {z_{ia}}{w_{jb}} \right) \right]}{\prod \zeta_{p} \left( \frac {z_{ia}}{z_{jb}} \right) \prod \zeta_{p} \left( \frac {w_{ia}}{w_{jb}} \right) \prod \zeta_{p} \left( \frac {w_{jb}}{z_{ia}} \right) \prod \zeta_{p} \left( \frac {z_{ia}}{w_{jb}} \right) } \prod Dz_{ia} Dw_{jb} \ \Big |_{p \mapsto q}
$$
where we denote the set of variables of $F$ by $\{z_{ia}\}$ and that of $F'$ by $\{w_{jb}\}$. To keep our formulas legible, we assume that all products go over all $i,j \in \{1,...,n\}$ and over all possible indices $a,b$. Since the contours are symmetric in all the variables, we may eliminate the symmetrization, so the above equals:
$$
\frac {1}{\bk! \cdot \bl!} \int^{|q| < 1 < |p|}_{|z_{ia}|=|w_{jb}|=1} \frac {G(z_{ia},w_{jb}) F(z_{ia})F'(w_{jb}) \prod \zeta \left( \frac {z_{ia}}{w_{jb}} \right)}{\prod \zeta_{p} \left( \frac {z_{ia}}{z_{jb}} \right) \prod \zeta_{p} \left( \frac {w_{ia}}{w_{jb}} \right) \prod \zeta_{p} \left( \frac {w_{jb}}{z_{ia}} \right) \prod \zeta_{p} \left( \frac {z_{ia}}{w_{jb}} \right) } \prod Dz_{ia} Dw_{jb} \ \Big |_{p \mapsto q}
$$
The above integrand has the following poles that involve both $z$'s and $w$'s:
$$
z_{ia} = qt w_{i-1,b} \qquad z_{ia} = qt^{-1} w_{i+1,b} \qquad z_{ia} = p^{-2} w_{ia} \qquad z_{ia} = p^{2} w_{ia}
$$
and a factor of $z_{ia} = q^2w_{ia}$ in the numerator. Because of this factor, any residues obtained at the simple pole $z_{ia} = p^{2} w_{ia}$ will vanish upon setting $p \mapsto q$. As for the other poles, we encounter none of them as we move the contours to ensure $|z_{ia}| \gg |w_{jb}|$ for all possible indices, because we assumed that $|q|<1<|p|$. We conclude that:
\begin{equation}
\label{eqn:lhs}
\text{LHS of \eqref{eqn:for4} } \ = \ \frac {1}{\bk! \cdot \bl!} \int^{|q| < 1 < |p|}_{|z_{ia}| \gg |w_{jb}|} \frac {G(z_{ia},w_{jb}) F(z_{ia})F'(w_{jb}) \prod Dz_{ia} Dw_{jb}}{\prod \zeta_{p}\left( \frac {z_{ia}}{z_{jb}} \right) \prod \zeta_{p}\left( \frac {w_{ia}}{w_{jb}} \right) \prod \zeta \left( \frac {w_{ia}}{z_{jb}} \right)} \ \Big |_{p \mapsto q}
\end{equation}
The reason why we were able to replace $\zeta_{p}$ by $\zeta$ in the denominator is that they coincide in the limit $|z_{ia}| \gg |w_{jb}|$, after one sets $p \mapsto q$. Meanwhile, to compute the right hand side of \eqref{eqn:for4}, we use the definition of the coproduct in \eqref{eqn:cop2}:
$$
= \langle \Delta(G), F' \otimes F \rangle = \frac {1}{\bk! \cdot \bl!} \int^{|q| < 1 < |p|}_{|w_{jb}| \ll  |z_{ia}|} \frac {G(w_{jb},z_{ia}) F'(w_{jb})F(z_{ia}) \prod Dz_{ia} Dw_{jb}}{\prod \zeta_{p} \left( \frac {z_{ia}}{z_{jb}} \right) \prod \zeta_{p} \left( \frac {w_{ia}}{w_{jb}} \right) \prod \zeta \left( \frac {w_{ia}}{z_{jb}} \right)} \ \Big |_{p \mapsto q}
$$
Since $G$ is symmetric, this matches with \eqref{eqn:lhs} and so the proof is complete. \\
\end{proof}

\begin{proof} \emph{of Exercise \ref{ex:shap}:} We will first prove that formula \eqref{eqn:shapshuffle} is equivalent to \eqref{eqn:shapshufflehopf}. The proof will follow closely that of Exercise \ref{ex:antipode}. Specifically, we have:
$$
f_1 \otimes f_2 = \Delta(f) = \sum_{\bv = \bv^1+\bv^2} \frac {\ph(Z^2)f(Z^1 \otimes Z^2)}{\zeta \left( \frac {Z^2}{Z^1} \right)}
$$
where the sum goes over all ways to partition the degree vector $\bv$ of $f$ into two parts, $\bv^1 = \deg f_1$ and $\bv^2 = \deg f_2$, and $Z$, $Z^1$, $Z^2$ are place-holders for the variables of $f$, $f_1$, $f_2$, respectively. The above sum is expanded in the range $|Z^1| \ll |Z^2|$. Recalling the definition of the antipode $S$ from \eqref{eqn:antshuf1}, we have:
\begin{equation}
\label{eqn:kraft}
f_1 * S(f_2) = \sum_{\bv = \bv^1+\bv^2} (-1)^{|Z^2|} \frac {f(Z^1 * Z^2)}{\zeta \left( \frac {Z^1}{Z^2} \right)} = \sum_{Z = Z^1 + Z^2} (-1)^{|Z^2|} f(Z)
\end{equation}
The last equality is due to the definition of the shuffle product, and it involves symmetrizing over all variables involved. Therefore, in the right hand side we must sum over all partitions of the set of variables $Z$ into two sets $Z^1$ and $Z^2$. Just like in the proof of Exercise \ref{ex:antipode}, relation \eqref{eqn:kraft} pairs trivially with any shuffle element $f'$. However, this is no longer true if we divide it by the polynomial with coefficient among the $u_i$'s that appears in \eqref{eqn:shapshufflehopf}:
$$
\left \langle \frac {\left(f_1 * S (f_2)\right)^T}{\prod_{i=1}^\bw \left[\frac {Z^1_i}{qu_i} \right]\left[\frac {Z^2_i}{qu_i} \right]}, \frac {f'}{\prod_{i=1}^\bw \left[\frac {u_i}{qZ_i} \right]} \right \rangle = \sum_{Z = Z^1 + Z^2} (-1)^{|Z^2|} \int_{|Z^1| \ll 1 = |Z^2|}  \frac {f(Z) f'(Z) \  DZ}{\zeta\left(\frac ZZ \right) \prod_{i=1}^\bw \left[\frac {Z_i}{qu_i} \right] \left[\frac {u_i}{qZ_i} \right]}
$$
where in the above equality we used formula \eqref{eqn:daddypair} for the Hopf pairing. Note that it was no longer necessary to invoke $\zeta_p$ instead of $\zeta$, because $f$ and $f'$ are assumed to be Laurent polynomials and thus do not have any denominators at $qz_{ia} - q^{-1}z_{ib}$. The integrand does not depend on the partition $Z = Z^1+Z^2$, but the alternating sum over all partitions has the effect of removing the poles at $\infty$. Hence we obtain \eqref{eqn:shapshuffle}. \\	
\end{proof}

%\begin{proof} \emph{of Exercise \ref{ex:average}:} Let us write $G = G_{k+l}$ and $H = P_{k,l}$. The required equality is simply the formula for the character of parabolic induction. Given any $H-$module $V$, the induced representation is the total space of the vector bundle:
%$$
%	\xymatrix{W = G \times_H V \ar[d]^\pi \\ G/H} \qquad \qquad \text{where} \qquad G \times_H V = \frac {G \times V}{(g,v) \sim (gh^{-1}, h \cdot v)} 
%	$$
%	with the left $G$ action. Since $[\cdot/H] = [(G/H)/G]$, then the class of $V \in K([\cdot/H])$ coincides with the class of $W$ in $K[(G/H)/G]$. The answer to the exercise, namely $\gamma_*(V)$ equals the push-forward of $W$ to $K[\cdot/G]$. Since $W$ is topologically trivial
%\end{proof}

\begin{proof} \emph{of Exercise \ref{ex:stackhall}:} Given rational functions $F$ and $F'$, we think of:
\begin{equation}
\label{eqn:class}
F \otimes F' = F(...,z_{ia},...) F'(...,z_{jb},...)
\end{equation}
as a representation of $G_{\bk} \times G_{\bl}$, namely an element in the $K-$theory of the stack $\fC_{\bk} \times \fC_{\bl}$ which is pulled back from a point. The pull-back $\pi_2^*(F \otimes F')$ to the $K-$theory of the stack $\fP_{\bk,\bl}$ is also given by the class \eqref{eqn:class}. However, before pushing this class forward to $\fC_{\bk+\bl}$, we need to understand the map $\pi_1$. Explicitly, we have the diagram:
\begin{equation}
\label{eqn:dia}
\xymatrix{
\nu^{-1}(0) \ar@{^{(}->}[r] \ar@{^{(}->}[d]^-\alpha & (X_i,Y_i)^{\text{preserve}}_{1\leq i \leq n} \ar[r]^-{\nu} \ar@{^{(}->}[d]^-\beta & \text{End}(\BC^{\bk+\bl})^{\text{preserve}} \ar@{^{(}->}[d]^-\gamma \\
\mu^{-1}(0) \ar@{^{(}->}[r] & (X_i,Y_i)_{1\leq i \leq n} \ar[r]^-{\mu} & \text{End}(\BC^{\bk+\bl})} 
\end{equation}
The bottom right entry is the affine space of all endomorphisms of the collection of vector spaces $\{\BC^{k_i+l_i}\}_{1\leq i \leq n}$, while the bottom middle entry is the affine space consisting of all collections of linear maps $(X_i,Y_i)^{1\leq i \leq n}$ between vector spaces:
$$
\xymatrix{\BC^{k_i+l_i} \ar@/^/[r]^{X_i} & \BC^{k_{i+1}+l_{i+1}} \ar@/^/[l]^{Y_i}}
$$
The two entries directly above them refer to those collections which preserve the collection of subspaces $\BC^{\bk} \subset \BC^{\bk+\bl}$. Therefore, we conclude that:
$$
\widetilde{\beta}_*(1) = \prod_{i=1}^n \left[\frac 1{qt} \cdot \Hom(\BC^{k_i},\BC^{l_{i+1}})\right] \left[\frac tq \cdot \Hom(\BC^{k_{i+1}},\BC^{l_i})\right]
$$
where the equivariant parameters $qt$ and $qt^{-1}$ are necessary as they scale the linear maps $X$ and $Y$, respectively. As representations of $G_\bk \times G_\bl$, the above equals:
$$
\widetilde{\beta}_*(1) = \prod_{i=1}^n \left( \prod^{1\leq a \leq k_i}_{k_{i+1} < b \leq k_{i+1}+l_{i+1}} \left[\frac {z_{i+1,b}}{qtz_{ia}} \right] \prod^{1\leq a \leq k_i}_{k_{i-1} < b \leq k_{i-1}+l_{i-1}} \left[\frac {tz_{i-1,b}}{qz_{ia}} \right] \right)
$$
The same computation establishes the push-forward under $\alpha$, with the caveat that we imposed too many relations. More specifically, we do not need to impose those relations that lie in the cokernel of $\gamma$, so we obtain:
$$
\widetilde{\alpha}_*(1) = \prod_{i=1}^n \frac {\prod^{1\leq a \leq k_i}_{k_{i+1} < b \leq k_{i+1}+l_{i+1}} \left[\frac {z_{i+1,b}}{qtz_{ia}} \right] \prod^{1\leq a \leq k_i}_{k_{i-1} < b \leq k_{i-1}+l_{i-1}} \left[\frac {tz_{i-1,b}}{qz_{ia}} \right]} {\prod^{1\leq a \leq k_i}_{k_{i} < b \leq k_{i}+l_{i}} \left[\frac {z_{ib}}{q^2z_{ia}} \right]}
$$
Since the elements \eqref{eqn:class} are equivariant constants, we obtain a similar formula:
$$
\widetilde{\alpha}_*(F \otimes F') = F(...,z_{ia},...) F'(...,z_{jb},...) \cdot
$$
$$
\prod_{i=1}^n \frac {\prod^{1\leq a \leq k_i}_{k_{i+1} < b \leq k_{i+1}+l_{i+1}} \left[\frac {z_{i+1,b}}{qtz_{ia}} \right] \prod^{1\leq a \leq k_i}_{k_{i-1} < b \leq k_{i-1}+l_{i-1}} \left[\frac {tz_{i-1,b}}{qz_{ia}} \right]} {\prod^{1\leq a \leq k_i}_{k_{i} < b \leq k_{i}+l_{i}} \left[\frac {z_{ib}}{q^2z_{ia}} \right]}
$$
The same formula holds if we interpret $\alpha$ as the map:
$$
\fP_{\bk,\bl} = \nu^{-1}(0)/P_{\bk,\bl} \ \stackrel{\alpha}\longrightarrow \ \Gamma = \mu^{-1}(0)/P_{\bk,\bl}
$$
In order to obtain the required result, we need to compose $\alpha$ above with the map: 
$$
\Gamma = \mu^{-1}(0)/P_{\bk,\bl} \ \stackrel{\tau}\longrightarrow \ \fC_{\bk+\bl} = \mu^{-1}(0)/G_{\bk+\bl}
$$
Since push-forward under $\tau$ is induction from $P_{\bk,\bl}$ to $G_{\bk+\bl}$, we may use \eqref{eqn:induction} to conclude that $\widetilde{\tau \circ \alpha}_*(f \boxtimes g)$ equals:
$$
\sym \left( \frac {F(...,z_{ia},...) F'(...,z_{jb},...)}{\bk! \cdot \bl!} \prod_{i=1}^n \frac {\prod^{1\leq a \leq k_i}_{k_{i+1} < b \leq k_{i+1}+l_{i+1}} \left[\frac {z_{i+1,b}}{qtz_{ia}} \right] \prod^{1\leq a \leq k_i}_{k_{i-1} < b \leq k_{i-1}+l_{i-1}} \left[\frac {tz_{i-1,b}}{qz_{ia}} \right]} {\prod^{1\leq a \leq k_i}_{k_{i} < b \leq k_{i}+l_{i}} \left[\frac {z_{ib}}{q^2z_{ia}} \right] \prod^{1\leq a \leq k_i}_{k_{i} < b \leq k_{i}+l_{i}} \left[\frac {z_{ib}}{z_{ia}} \right]} \right)
$$
Since $\widetilde{\tau \circ \alpha}_*(F \otimes F') = F * F'$ by definition, we conclude \eqref{eqn:mult}. \\
\end{proof}

\begin{proof} \emph{of Exercise \ref{ex:e}:} For any $\bw-$partition $\bla^-$ of size $\bv^-$, we have:
$$
\fA\left(|\bla^-\rangle \right) = \widetilde{\pi}_{1*}\Big( \sum_{\bla^+ \in \CN_{\bv^+,\bw}^{\text{fixed}}} [\CE|_{\bla^+,\bla^-}] \cdot  |(\bla^+,\bla^-)\rangle \Big) = \sum_{\bla^+} |\bla^+\rangle \cdot \frac {[\CE|_{\bla^+,\bla^-}]}{[T_{\bla^-} \CN_{\bv^-,\bw}]}  
$$
We may use \eqref{eqn:league} to express the $[\cdot]$ class of the vector bundle $\CE$, and \eqref{eqn:tanmod0} to express the $[\cdot]$ class of the tangent bundle to Nakajima quiver varieties. We obtain:
$$
\langle \bla^+ | \fA |\bla^-\rangle = \frac { \prod_{i=1}^n \frac {\Big[ \frac {\CV^-_{i+1}}{qt \CV^+_i} \Big] \Big[ \frac {t\CV^-_{i-1}}{q\CV^+_i} \Big] }{\Big[ \frac {\CV^-_i}{\CV^+_i} \Big] \Big[ \frac {\CV^-_i}{q^2\CV^+_i} \Big]} \prod_{j=1}^{\bw} \Big[ \frac {\CV^-_{j}}{qu_j} \Big] \Big[\frac {u_j}{q \CV^+_{j}} \Big]}{ \prod_{i=1}^n \frac {\Big[ \frac {\CV^-_{i+1}}{qt \CV^-_i} \Big] \Big[ \frac {t\CV^-_{i-1}}{q\CV^-_i} \Big] }{\Big[ \frac {\CV^-_i}{\CV^-_i} \Big] \Big[ \frac {\CV^-_i}{q^2\CV^-_i} \Big]} \prod_{j=1}^{\bw} \Big[ \frac {\CV^-_{j}}{qu_j} \Big] \Big[\frac {u_j}{q \CV^-_{j}} \Big]} \quad \Big|_{\{\CV^\pm_i\}_{1 \leq i \leq n} \mapsto \chi_{\bla^\pm}} \quad = 
$$
$$
= \prod_{\bsq \in \cray} \left[\prod_{\sq \in \bla^-} \frac {\Big[ \frac {\chi_\sq}{qt \chi_\bsq} \Big]^{\delta_{c_{\bsq}+1}^{c_\sq}} \Big[ \frac {t\chi_\sq}{q\chi_\bsq} \Big]^{\delta_{c_{\bsq}-1}^{c_\sq}} }{\Big[ \frac {\chi_\sq}{\chi_\bsq} \Big]^{\delta_{c_\bsq}^{c_\sq}} \Big[ \frac {\chi_\sq}{q^2\chi_\bsq} \Big]^{\delta_{c_\bsq}^{c_\sq}}} \prod_{1\leq j \leq \bw}^{u_j \equiv c_\bsq} \Big[\frac {u_j}{q\chi_\bsq} \Big] \right] = 
$$
$$
= \prod_{\bsq \in \cray} \left[\prod_{\sq \in \bla^-} \zeta\left(\frac {\chi_\bsq}{\chi_\sq}\right) \prod_{1\leq j \leq \bw}^{u_j \equiv c_\bsq} \Big[\frac {u_j}{q\chi_\bsq} \Big] \right]
$$
Comparing the above with \eqref{eqn:fixedplus} for $F = 1$, we obtain the desired conclusion. \\
\end{proof}

\begin{proof} \emph{of Exercise \ref{ex:eccentric}:} Let us treat the case of positive shuffle elements, i.e. when the sign is $+$, since the opposite case is analogous. We start from the fact that fixed points of $\fZ_{[i;j)}^{q_1}$ are given by positive almost standard Young tableaux $(\bla^+ \geq \bla^-,\psi)$. Let us recall that the datum $\psi$ is a labeling $\sq_i, ..., \sq_{j-1}$ of the boxes of $\cray$, where the labels increase as we go up and to the right, with the exception that $\sq_{a-1}$ is allowed to be located directly above $\sq_{a}$. Therefore, \eqref{eqn:formula} implies:
$$
P^{(d)}_{[i;j)} \left(|\bla^- \rangle \right) = \sum_{(\bla^+\geq \bla^-,\psi)} |\bla^+\rangle \cdot \chi_i^{d_i}... \chi_{j-1}^{d_{j-1}} \cdot \frac {[T_{\bla^+}\CN_{\bv^+,\bw}]}{[T_{(\bla^+ \geq \bla^-,\psi)} \fZ_{[i;j)}^{q_1}]}
$$ 
where $\chi_a$ denotes the weight of the box labeled $a$ in an $\asyt$ $\psi$. We may use \eqref{eqn:tanmod0} and \eqref{eqn:tywin} to evaluate the tangent spaces in question:
$$
P^{(d)}_{[i;j)} \left(|\bla^- \rangle \right) = \sum_{\bla^+ \geq \bla^-} \frac {|\bla^+\rangle}{[q^{-2}]}  \sum^\psi_{\asyt^+} \chi_i^{d_i}... \chi_{j-1}^{d_{j-1}} \cdot \frac {\prod_{i \leq a < b + 1 < j + 1}^{a \equiv b+1} \Big[\frac {\chi_{a}}{qt\chi_{b}} \Big]  \prod_{i \leq a < b - 1 < j - 1}^{a \equiv b-1} \Big[ \frac {t\chi_{a}}{q\chi_{b}} \Big]}{\prod_{i \leq a < b < j}^{a \equiv b} \Big[ \frac {\chi_{a}}{\chi_{b}} \Big] \prod_{i \leq a < b < j}^{a\equiv b} \Big[ \frac {\chi_{a}}{q^2\chi_{b}} \Big]} \cdot
$$
$$
\cdot \prod_{a=i}^{j-1} \left( \prod_{\sq \in \bla^-} \frac {\Big[\frac {\chi_\sq}{qt\chi_a} \Big]^{\delta^{c_\sq}_{c_a+1}}\Big[\frac {t\chi_\sq}{q\chi_a} \Big]^{\delta^{c_\sq}_{c_a-1}}}{\Big[\frac {\chi_\sq}{\chi_a} \Big]^{\delta^{c_\sq}_{c_a}}\Big[\frac {\chi_\sq}{q^2\chi_a} \Big]^{\delta^{c_\sq}_{c_a}}} \prod_{1\leq k \leq \bw}^{u_k \equiv a} \Big[ \frac {u_k}{q\chi_a} \Big] \right)
$$
where in each summand, we write $\chi_a$ for the weight of the box labeled by $a$ in the $\asyt^+$ given by $\psi$. Recalling the definition of $\zeta$ in \eqref{eqn:defzeta}, we obtain:
$$
\langle \bla^+ | P^{(d)}_{[i;j)} |\bla^- \rangle = \frac 1{[q^{-2}]} \sum^\psi_{\asyt^+} \chi_i^{d_i}... \chi_{j-1}^{d_{j-1}} \cdot
$$
$$
\frac {\prod_{i\leq a < b \leq j} \zeta\left(\frac {\chi_b}{\chi_a} \right)}{\prod_{a=i+1}^{j-1} \Big[\frac {t\chi_{a-1}}{q\chi_a}\Big]} \prod_{a=i}^{j-1} \left( \prod_{\sq \in \bla^-} \zeta \left(\frac {\chi_a}{\chi_\sq} \right) \prod_{1\leq k \leq \bw}^{u_k \equiv a} \Big[ \frac {u_k}{q\chi_a} \Big] \right)
$$
In the right hand side above, we claim that:
$$
\sum^\psi_{\asyt^+} \chi_i^{d_i}... \chi_{j-1}^{d_{j-1}} \frac {\prod_{i\leq a < b \leq j} \zeta\left(\frac {\chi_b}{\chi_a} \right)}{\prod_{a=i+1}^{j-1} \Big[\frac {t\chi_{a-1}}{q\chi_a}\Big]} = \sym \left[z_i^{d_i}... z_{j-1}^{d_{j-1}} \frac {\prod_{i\leq a < b \leq j} \zeta\left(\frac {z_b}{z_a} \right)}{\prod_{a=i+1}^{j-1} \Big[\frac {tz_{a-1}}{qz_a}\Big]} \right] \Big|_{z_i,...,z_{j-1} \mapsto \chi_\cray}
$$
Indeed, the symmetrization is a sum over all ways to labeling the boxes of the skew tableau $\cray$, but as we saw in the proof of Proposition \ref{prop:act}, the corresponding summand is zero unless the labeling is an $\asyt^+$. Then \eqref{eqn:fixedplus} implies the Exercise. \\	
\end{proof}

\begin{proof} \emph{of Exercise \ref{ex:gen}:} We will prove the required result for the rational function $P_{[i;j)}^{(d)}$, since the other cases are analogous. We have:
$$
P_{[i;j)}^{(d)} = \frac {r(z_i,...,z_{j-1})}{\prod_{i \leq a\equiv b < j} \Big[\frac {z_b}{q^2z_a} \Big]}
$$
where:
\begin{equation}
\label{eqn:rr}
r = \textrm{Sym} \left[ z_i^{d_i}...z_{j-1}^{d_{j-1}} \prod_{a=i+1}^{j-1} \frac {[q^{-2}]}{\Big[\frac {t z_{a-1}}{qz_{a}} \Big]} \prod_{i\leq a < b < j} \frac {\Big[\frac {z_a}{qtz_b} \Big]^{\delta_{b+1}^a}\Big[\frac {tz_a}{qz_b} \Big]^{\delta_{b-1}^a}\Big[\frac {z_b}{q^2z_a} \Big]^{\delta_{a}^b}}{\Big[\frac {z_a}{z_b} \Big]^{\delta^{a}_b}} \right] 
\end{equation}
It is clear that $r$ is a symmetric Laurent polynomial, since the denominator of the first product divides the numerator of the second product. Meanwhile, the denominator of the second product has poles of order $\leq 1$ at $z_a-z_b$ when $a \equiv b$, and these poles will be eliminated by the symmetrization. To show that $r$ satisfies the wheel conditions, we must specialize three of the variables to: 
\begin{equation}
\label{eqn:specialization}
z_a = q^{-1}, z_b = t^{\pm 1}, z_c = q \qquad \text{for some} \quad a \equiv c \equiv b \mp 1
\end{equation}
and show that $r$ vanishes. Let us take care of the case when the above sign is $+$, since the other case is analogous. Then we will show not only that $r$ vanishes, but that each summand of the symmetrization \eqref{eqn:rr} vanishes. Indeed, 
$$
\Big[\frac {z_a}{qtz_b} \Big] = 0 \qquad \text{unless} \quad a < b
$$ 
$$
\frac {\Big[\frac {tz_c}{qz_b} \Big]}{ \Big[ \frac {t z_i}{q z_{i+1}} \Big] ... \Big[\frac {tz_{j-2}}{qz_{j-1}} \Big]} = 0 \qquad \text{unless} \quad b < c \quad \text{or} \quad b = c+1
$$
$$
\Big[\frac {z_c}{q^2z_a} \Big] = 0 \qquad \text{unless} \quad c < a
$$
Since the three inequalities on $a,b,c$ cannot hold simultaneously, we conclude that each summand of \eqref{eqn:rr} vanishes at the specialization \eqref{eqn:specialization}, and thus $P_{[i;j)}^{(d)} \in \CS^+$. Let us now show that $P_{[i;j)}^{(d)}$ lies in the image of $\Upsilon$ for any vector of integers $d = (d_i,...,d_{j-1})$. We will proceed by induction of $j-i$, and consider the vector space $I$ of Laurent polynomials $r(z_i,...,z_{j-1})$ such that:
$$
\sym \left[ \frac {r(z_i,...,z_{j-1})}{\prod_{a=i+1}^{j-1} \Big[\frac {t z_{a-1}}{qz_{a}} \Big]} \prod_{i\leq a < b < j} \frac {\Big[\frac {z_a}{qtz_b} \Big]^{\delta_{b+1}^a}\Big[\frac {tz_a}{qz_b} \Big]^{\delta_{b-1}^a}}{\Big[\frac {z_a}{z_b} \Big]^{\delta^{a}_b} \Big[\frac {z_a}{q^2z_b} \Big]^{\delta^{a}_b} } \right] \ \in \ \text{Im } \Upsilon
$$
The induction hypothesis implies that if $tz_{a-1}-qz_a$ divides $r$, then $r\in I$. To establish the fact that $I$ consists of all Laurent polynomials, we therefore need only find a single Laurent polynomial $r \in I$ such that $r(q_2^{-i},...,q_2^{-j+1}) \neq 0$. Such an example is provided by a linear combination of the shuffle elements:
$$
\text{Im } \Upsilon \ \ni \ z_i^{e_i} * ... * z_{j-1}^{e_{j-1}} \ = \ \sym \left( z_i^{e_i}...z_{j-1}^{e_{j-1}} \prod_{i \leq a < b < j} \frac {\Big[\frac {z_b}{qtz_a} \Big]^{\delta_{a+1}^b}\Big[\frac {tz_b}{qz_a} \Big]^{\delta_{a-1}^b}}{\Big[\frac {z_b}{z_a} \Big]^{\delta^{b}_a} \Big[\frac {z_b}{q^2z_a} \Big]^{\delta^{b}_a} }  \right)
$$
Indeed, an appropriate linear combination of the monomials $z_i^{e_i}...z_{j-1}^{e_{j-1}}$ ensures that:
$$
\text{Im } \Upsilon \ \ni \ \sym \left[ \frac 1{\prod_{a=i+1}^{j-1} \Big[\frac {t z_{a-1}}{qz_{a}} \Big]} \prod_{i \leq a < b < j} \frac {\Big[\frac {z_a}{qtz_b} \Big]^{\delta_{b+1}^a} \Big[\frac {tz_a}{qz_b} \Big]^{\delta_{b-1}^a} \Big[\frac {z_a}{q^2z_b} \Big]^{\delta^{a}_b}  \Big[\frac {z_b}{qtz_a} \Big]^{\delta_{a+1}^b}   \Big[\frac {tz_b}{qz_a} \Big]^{\delta_{a-1}^b}}{\Big[\frac {z_b}{z_a} \Big]^{\delta^{b}_a}  \Big[\frac {z_a}{q^2z_b} \Big]^{\delta^{a}_b}  } \right]
$$
Indeed, the first denominator is not an issue, since it is canceled by the second factor in the numerator. The above factors can be reshuffled around so that:
$$
\text{Im } \Upsilon \ \ni \ \sym \left[ \frac {\prod_{i \leq a < b < j} \Big[\frac {z_b}{qtz_a} \Big]^{\delta_{a+1}^b}   \Big[\frac {tz_b}{qz_a} \Big]^{\delta_{a-1}^b}  \Big[\frac {z_a}{q^2z_b} \Big]^{\delta^{a}_b}  }{\prod_{a=i+1}^{j-1} \Big[\frac {t z_{a-1}}{qz_{a}} \Big]} \prod_{i \leq a < b < j} \frac {\Big[\frac {z_a}{qtz_b} \Big]^{\delta_{b+1}^a} \Big[\frac {tz_a}{qz_b} \Big]^{\delta_{b-1}^a}}{\Big[\frac {z_a}{z_b} \Big]^{\delta^{b}_a}  \Big[\frac {z_a}{q^2z_b} \Big]^{\delta^{a}_b}  } \right]
$$
Since the numerator of the first fraction does not vanish when we set $z_i = q_2^{-i}$,..., $z_{j-1} = q_2^{-j+1}$, the proof is complete. \\
\end{proof}

%\sym \frac {\prod_{i \leq a < b < j} \Big[\frac {z_a}{qtz_b} \Big]^{\delta_{b+1}^a} \prod_{i \leq a < b - 1 < j - 1}  \Big[\frac {tz_a}{qz_b} \Big]^{\delta_{b-1}^a} \prod_{i \leq a < b < j}  \Big[\frac {z_b}{qtz_a} \Big]^{\delta_{a+1}^b} \prod_{i \leq a < b < j}  \Big[\frac {tz_b}{qz_a} \Big]^{\delta_{a-1}^b}}{\prod_{i \leq a < b < j}  \Big[\frac {z_b}{z_a} \Big]^{\delta^{b}_a}}

\begin{proof} \emph{of Exercise \ref{ex:hopf}:} To prove that $\langle \cdot , \cdot \rangle$ extends to a well-defined Hopf pairing, we need to show that it preserves the relations \eqref{eqn:rtt1} and \eqref{eqn:rtt2}. Specifically, we need to prove that:
$$
\left \langle T_1^-(z),  T_3^+(y) T_2^+(w) R_{23} \left(\frac wy\right) \right \rangle = \left \langle T_1^-(z), R_{23} \left(\frac wy\right) T_2^+(w) T_3^+(y) \right \rangle
$$	
and:
$$
\left \langle R_{12}\left(\frac zw\right) T_1^-(z) T_2^-(w), T_3^+(y) \right \rangle = \left \langle T_2^-(w) T_1^-(z) R_{12}\left(\frac zw\right) , T_3^+(y) \right \rangle
$$
where we write $X_1 = X \otimes \text{Id} \otimes \text{Id}$, $X_2 = \text{Id} \otimes X \otimes \text{Id}$ and $X_3 = \text{Id} \otimes \text{Id} \otimes X$ for any matrix $X$. In the interest of space, let us only prove the first formula. Since the central charge $c$ pairs trivially with anything, the bialgebra property \eqref{eqn:bialg} gives us:
$$
\left \langle T_1^-(z),  T_3^+(y) T_2^+(w) R_{23} \left(\frac wy\right) \right \rangle = \left \langle  T_1^-(zc_2) \otimes T_1^-(z) ,  T_2^+(w) \otimes T_3^+(y)  R_{23} \left(\frac wy\right) \right \rangle =
$$
$$
= R_{12} \left(\frac zw \right) R_{13} \left(\frac zy \right) R_{23} \left(\frac wy\right)  = R_{23} \left(\frac wy\right) R_{13} \left(\frac zy \right) R_{12} \left(\frac zw \right) = 
$$
$$
= \left \langle T_1^-(zc_2) \otimes T_1^-(z), R_{23} \left(\frac wy\right)  T_3^+(y) \otimes T_2^+(w) \right \rangle = \left \langle T_1^-(z), R_{23} \left(\frac wy\right) T_2^+(w) T_3^+(y) \right \rangle
$$
where the middle equality is the quantum Yang-Baxter equation \eqref{eqn:qybe}. In any Drinfeld double, the positive and negative halves interact by relation \eqref{eqn:reldrinfeld}, which for $a = T_2^-(w)$ and $b = T_1^+(z)$ implies:
$$
\left \langle T_1^+(z), T_2^-(wc_2) \right \rangle T_2^-(w) T_1^+(zc_1)  = T_1^+(z) T_2^-(wc_2) \left \langle T_1^+(zc_1), T_2^-(w) \right \rangle
$$
Replacing the pairing by $R_{12}$, we obtain precisely relation \eqref{eqn:rtt3}, as expected. \\
\end{proof}

\begin{proof} \emph{of Exercise \ref{ex:mich}:} Let us prove the required result for $P_{[i;j)}^\bm$, since the cases of $P_{-[i;j)}^\bm$, $Q^\bm_{[i;j)}$ and $Q^\bm_{-[i;j)}$ are analogous. To prove the slope condition, we need to send the subset $\{z_a\}_{a\in A}$ of variables to infinity, for any $A \subset \{i,...,j\}$, and compute the degree of the resulting expression:
$$
\text{degree in } \{z_a\}_{a\in A} = \sum_{a\in A} \Big( \lfloor m_i+...+m_a \rfloor - \lfloor m_i+...+m_{a-1} \rfloor \Big) - 
$$
$$
- \#\{a\in A \text{ such that } a - 1 \notin A\} %- \left \langle \sum_{a \notin A} a, \sum_{a\in A} a \right \rangle
$$
From \eqref{eqn:limit1}, we see that $P^\bm_{[i;j)}$ has slope $\bm$ if for all such subsets $A$, we have:
$$
\sum_{a\in A} \Big( \lfloor m_i+...+m_{a} \rfloor - \lfloor m_i+...+m_{a-1} \rfloor \Big) - \#\{a\in A \text{ s.t. } a - 1 \notin A\} \leq \sum_{a\in A} m_a  \Leftrightarrow
$$
\begin{equation}
\label{eqn:ineq}
\Leftrightarrow \sum_{a\in A} \Big (\{ m_i+...+m_{a-1} \} - \{ m_i+...+m_{a} \} \Big) \leq \#\{a\in A \text{ s.t. } a - 1 \notin A\} 
\end{equation}
where $\{x\} = x - \lfloor x \rfloor$ denotes the fractional part of $x$. Proving \eqref{eqn:ineq} is an elementary exercise, but let us trace through it, because it will be important for us when equality is attained. Let us assume that $A$ is divided into subsets of consecutive integers as:
$$
A = \{a_1+1,...,b_1\} \sqcup \{a_2+1,...,b_2\} \sqcup ... \{a_t+1,...,b_t\}
$$
for some $i-1\leq a_1<b_1<...<a_t<b_t\leq j$. Then the inequality \eqref{eqn:ineq} becomes:
$$
\sum_{c=1}^t \left(\{ m_i+...+m_{a_c} \} - \{ m_i+...+m_{b_c} \} \right) \leq t - \delta_{a_1}^{i-1}
$$
This inequality holds term-wise, meaning that it holds for each individual $c$, as:
$$
\{\alpha\} - \{\beta\} < 1 \quad \forall \ \alpha,\beta \in \BR \ \text{ proves the case }c \in \{2,...,t\}
$$
$$
0 - \{\beta\} \ \leq \ 0 \qquad \qquad \forall \ \ \beta \in \BR \quad \text{ proves the case } c = 1	
$$	
Equality can only hold if $t=1$, in other words when $A=\{i,...,a-1\}$ consists of consecutive integers starting at $i$, such that $m_{i}+...+m_{a-1} = \bm \cdot [i;a) \in \BZ$. Collecting all the terms of top degree gives us \eqref{eqn:copy}. \\
\end{proof}

\begin{proof} \emph{of Exercise \ref{ex:pseudo}:} We will prove the computation of $\eta_{[i;j)}^{q_1}(F*F')$, since that of $\eta_{-[i;j)}^{q_2}(G*G')$ is proved analogously. Let us write $\deg F = \bk$ and $\deg F' = \bl$, where $\bk+\bl = [i;j)$, and we will think of this as a partition of the set of variables:
$$
\{z_i,...,z_{j-1}\} = X \sqcup Y \qquad \text{where} \quad |X| = \bk \qquad |Y| = \bl
$$	
By applying the definition of the shuffle product in \eqref{eqn:shuffle}, we have:
\begin{equation}
\label{eqn:model}
(F * F')(z_i,....,z_j) = \sum_{\sigma} \left[ \frac {F\left(z_{\sigma(x)}\right)_{x\in X} F'\left(z_{\sigma(y)}\right)_{y\in Y}}{\bk! \cdot \bl!} \prod^{x\in X}_{y\in Y} \zeta \left(\frac {z_{\sigma(x)}}{z_{\sigma(y)}} \right) \right]
\end{equation}
where the sum is over all permutations $\sigma$ which only permute variables whose indices are congruent modulo $n$. Applying $\eta^{q_1}_{[i;j)}$ to the above entails specializing the variables at $z_x = q_1^x$. Since $\zeta(q_1^{-1}) = 0$, the only summands which do not vanish are the ones for which:
$$
\sigma(x) - \sigma(y) \neq - 1 \qquad \forall \ x \in X \ \ y \in Y
$$
This implies that $\sigma(x) > \sigma(y)$ for all $x \in X$ and $y \in Y$, which in turn implies that $\bk = [a;j)$ and $\bl = [i;a)$ for some $a\in \{i,...,j\}$. Moreover, the only summands of \eqref{eqn:model} which survive the specialization $z_x = q_1^x$ are the ones where $\sigma$ permutes $[i;a)$ and $[a;j)$ independently of each other. Therefore, $(F * F')(q_1^i,....,q_1^{j-1})$ equals:
$$
\sum_{\sigma' \in S(\{i,...,a-1\})}^{\sigma \in S(\{a,...,j-1\})} \left[ \frac {F\left(q_1^{\sigma(a)},...,q_1^{\sigma(j-1)} \right) F'\left(q_1^{\sigma(i)},...,q_1^{\sigma(a-1)}\right)}{[i;a)! \cdot [a;j)!}  \prod^{x\in [a;j)}_{y\in [i;a)} \zeta (q_1^{x-y}) \right]
$$
Since $F$ and $F'$ are symmetric, all summands in the above expression are equal to each other, hence $\eta^{q_1}_{[i;j)} (F*F') = \eta^{q_1}_{[a;j)}(F) \eta^{q_1}_{[i;a)}(F')$. As for \eqref{eqn:normp1}, $ \quad P^{\bm}_{[i;j)}(q_1^{i},...,q_1^{j-1})=$
$$
= \sum_{\sigma : \{i,...,j-1\} \stackrel{\cong}\rightarrow \{i,...,j-1\}}^{\sigma(a) \equiv a \ \forall a} \frac {\prod_{a=i}^{j-1} q_1^{\sigma(a) \left( \lfloor m_i + ... + m_a \rfloor - \lfloor m_i + ... + m_{a-1} \rfloor \right)}}{t^{\ind_{[i;j)}^\bm} q^{i-j} \prod_{a=i+1}^{j-1} \left(1 - q_1^{\sigma(a) - \sigma(a-1)}q_2 \right)} \prod_{i \leq a < b < j} \zeta \left( q_1^{\sigma(b)-\sigma(a)} \right) 
$$
Since $\zeta(q_1^{-1}) = 0$, the only permutation for which the above does not vanish is the identity permutation $\sigma(a) = a$, hence \eqref{eqn:normp1}. Formula \eqref{eqn:normp2} is proved analogously. \\
\end{proof}

\begin{proof} \emph{of Exercise \ref{ex:comppair}:} Let us first prove \eqref{eqn:functional1}, by applying relation \eqref{eqn:daddypair}:
$$
\langle Q_{-[i;j)}^\bm, F \rangle = \int_{|z_a|=1}^{|q| < 1 < |p|} \frac {F(z_i,...,z_{j-1}) \prod_{a = i}^{j-1} z_a^{\lfloor m_i+...+m_{a-1} \rfloor - \lfloor m_i + ... + m_{a} \rfloor} \prod_{a < b} \zeta\left(\frac {z_a}{z_b}\right)}{t^{-\ind_{[i;j)}^\bm} \left(1 - \frac {q_1 z_{i}}{z_{i+1}} \right) ... \left(1 - \frac {q_1 z_{j-1}}{z_{j}} \right)\prod_{a \leq b} \zeta_p\left(\frac {z_b}{z_a}\right)\prod_{a < b} \zeta_p\left(\frac {z_a}{z_b}\right)} \Big |_{p \mapsto q}
$$
for all $F\in \CS^+$. The integrand has the following poles involving $z_a$ and $z_b$ for $a<b$:
$$
z_a = p^{2} z_b, \qquad z_a = p^{-2} z_b, \qquad z_a = q_1 z_b, \qquad z_a = q_2 z_b, \qquad z_{a+1} = q_1 z_{a}
$$
although the first pole is offset by a factor of $z_a - q^{2}z_b$ in the numerator. The other poles do not hinder moving the contours such that $z_{a} \gg z_{a+1}$, except for $z_{a+1} = q_1 z_{a}$. Therefore, we pick up residues whenever:
$$
\{z_i,...,z_{j-1}\} = \{y_1 q_1^{i_1},...,y_1q_1^{j_1-1},...,y_tq_1^{i_t},...,y_t q_1^{j_t-1}\} \qquad \text{for} \qquad y_1 \gg ... \gg y_t
$$
over all partitions of the degree vector into arcs: 
$$
[i;j) \ = \ [i_1;j_1) + ... + [i_t;j_t)
$$
Since $F \in \CB_\bm^+$, then in the limit $y_1 \gg ... \gg y_t$, the estimates \eqref{eqn:limit1} imply that the residue has order:
\begin{equation}
\label{eqn:quant2}
\sim \prod_{s=1}^t y_s^{m_{i_s} + ... + m_{j_s-1} + \lfloor m_i+...+m_{i_s-1} \rfloor - \lfloor m_i+...+m_{j_s-1} \rfloor - \delta_s^1 + \delta_s^t} 
\end{equation}
It is easy to see that the exponent of $y_1$ is $< 0$ unless $t=1$, which implies that the integral vanishes unless each exponent in \eqref{eqn:quant2} is 0. This implies that the only residue which contributes to the above sum is $z_a = q_1^{a}$ for all $a$, hence:
$$
\langle Q_{-[i;j)}^\bm, F \rangle = (q^{-1}-q)^{j-i} \cdot q_1^{\sum_{a=i}^{j-1}a\left( \lfloor m_i+...+m_{a-1} \rfloor - \lfloor m_i+...+m_{a} \rfloor \right)}t^{\ind_{[i;j)}^\bm} \cdot \frac {F(q_1^{i},...,q_1^{j-1})}{\prod_{i\leq a < b < j} \zeta(q_1^{b-a})}
$$
This implies \eqref{eqn:functional1}. As for \eqref{eqn:functional2}, relation \eqref{eqn:daddypair} yields:
$$
\langle G, P_{[i;j)}^\bm \rangle = \int_{|z_a|=1}^{|q|<1<|p|} \frac {g(z_i,...,z_{j-1}) \prod_{a=i}^{j-1} z_a^{\lfloor m_i+...+m_a \rfloor - \lfloor m_i + ... + m_{a-1} \rfloor} \prod_{a < b} \zeta\left(\frac {z_b}{z_a}\right)}{t^{\ind_{[i;j)}^\bm} q^{i-j} \left(1 - \frac {q_2z_{i+1}}{z_{i}} \right) ... \left(1 - \frac {q_2z_{j}}{z_{j-1}} \right)\prod_{a \leq b} \zeta_p\left(\frac {z_a}{z_b}\right)\prod_{a < b} \zeta_p\left(\frac {z_b}{z_a}\right)} \Big |_{p \mapsto q}
$$
for any $g\in \CS^-$. The integrand has the following poles between $z_a$ and $z_b$ for $a<b$:
$$
z_b = p^{2} z_a, \qquad z_b = p^{-2} z_a, \qquad z_b = q_1 z_a, \qquad z_b = q_2 z_a, \qquad z_{a+1} = q_2^{-1} z_{a}
$$
although the first pole is offset by a factor of $z_b - q^{2}z_a$ in the numerator. The other poles do not hinder moving the contours such that $z_{a} \ll z_{a+1}$, except for $z_{a+1} = q_2^{-1} z_{a}$. Therefore, we pick up residues whenever:
$$
\{z_i,...,z_{j-1}\} = \{y_1 q_2^{-i_1},...,y_1q_2^{-j_1+1},...,y_tq_2^{-i_t},...,y_t q_2^{-j_t+1}\} \qquad \text{for} \qquad y_1 \ll ... \ll y_t
$$
for any partition of the degree vector into arcs: 
$$
[i;j) \ = \ [i_1;j_1) + ... + [i_t;j_t)
$$
If $G \in \CB_\bm^-$, then in the limit $y_1 \ll ... \ll y_t$, the estimates \eqref{eqn:limit2} imply that the residue has order:
\begin{equation}
\label{eqn:quant1}
\sim \prod_{s=1}^t y_s^{\lfloor m_i+...+m_{j_s-1} \rfloor - \lfloor m_i+...+m_{i_{s}-1} \rfloor - m_{i_s}-...-m_{j_s-1} - \delta_s^t + \delta_s^1} 
\end{equation}
It is easy to see that the exponent of $y_t$ is $< 0$ unless $t=1$, which implies that the integral vanishes unless each exponent in \eqref{eqn:quant1} is 0. This implies that the only residue which contributes to the above sum is $z_a = q_2^{-a}$ for all $a$, hence:
$$
\langle G, P_{[i;j)}^\bm \rangle = (1-q^2)^{j-i} q_2^{\sum_{a=i}^{j-1} a\left( \lfloor m_i+...+m_{a-1} \rfloor - \lfloor m_i+...+m_{a} \rfloor \right)}t^{-\ind_{[i;j)}^\bm} \cdot \frac {g(q_2^{-i},...,q_2^{-j+1})}{\prod_{i\leq a < b < j} \zeta(q_2^{b-a})}
$$
This yields \eqref{eqn:functional2}. \\
\end{proof}

\begin{proof} \emph{of Exercise \ref{ex:universal}:} It is enough to check property \eqref{eqn:prop1} for $a\in A^+$, as the case $a\in A^-$ is analogous and the property is multiplicative in $a$. In other words, we need to check that for any basis element $F_x \in A^+$ we have:
\begin{equation}
\label{eqn:lion}
\CR \cdot \Delta(F_x) = \Delta^{\op}(F_x)\cdot \CR
\end{equation}
Because the $F_i$ and $G_i$ are dual bases of $A^+$ and $A^-$, relation \eqref{eqn:bialg} implies that the structure constants for their multiplication and comultiplication are the same, i.e.
$$
\Delta(F_x)  = \sum_{y,z} F_y \otimes F_z c^{yz}_x, \qquad \text{where} \qquad G_y G_z = \sum_x G_x c^{yz}_x
$$
$$
\Delta(G_x)  = \sum_{y,z} G_y \otimes G_z d^{yz}_x, \qquad \text{where} \qquad F_z F_y = \sum_x F_x d^{yz}_x
$$
Therefore, the desired relation \eqref{eqn:lion} becomes equivalent to:
$$
\sum_{i,y,z} F_i F_y \otimes G_i F_z c^{yz}_x \ = \ \sum_{i,y,z} F_z F_i \otimes F_y G_i c^{yz}_x \ \Leftrightarrow
$$
$$
\Leftrightarrow \ \sum_{i,j,y,z} F_j \otimes G_i F_z d^{yi}_j c^{yz}_x \ = \ \sum_{i,j,y,z} F_j \otimes F_y G_i  d^{iz}_j c^{yz}_x
$$
For any fixed $j$ and $x$, the above equality follows from:
$$
\sum_{i,y,z} G_i F_z d^{yi}_j c^{yz}_x \ = \ \sum_{i,y,z} F_y G_i d^{iz}_j c^{yz}_x 
$$
which is simply \eqref{eqn:reldrinfeld} for $a=G_j$ and $b=F_x$. As for \eqref{eqn:prop2}, we have:
$$
(\Delta \otimes 1)\CR \ = \ \sum_{i,x,y} F_x \otimes F_y \otimes G_i c^{xy}_i \ = \ \sum_{x,y} F_x \otimes F_y \otimes G_x G_y \ = \ \CR_{13} \CR_{23}
$$
$$
(1 \otimes \Delta)\CR \ = \ \sum_{i,x,y} F_i \otimes G_x \otimes G_y d^{xy}_i \ = \ \sum_{x,y} F_yF_x \otimes G_x \otimes G_y \ = \ \CR_{13}\CR_{12}
$$
\end{proof}

%$$
%\maxdeg \frac {\zeta\left(\frac {\chi_\bsq}{\chi_\sq} \right)}{\zeta\left(\frac {\chi_\sq}{\chi_\bsq} \right)} \ = \ \maxdeg \frac {\Big[\frac {u_i}{q\chi_\bsq} \Big]}{\Big[\frac {\chi_\bsq}{qu_i} \Big]} \ = \ \mindeg \frac {\zeta\left(\frac {\chi_\bsq}{\chi_\sq} \right)}{\zeta\left(\frac {\chi_\sq}{\chi_\bsq} \right)} \ = \ \mindeg \frac {\Big[\frac {u_i}{q\chi_\bsq} \Big]}{\Big[\frac {\chi_\bsq}{qu_i} \Big]} \ = \ 0
%$$

\begin{proof} \emph{of Exercise \ref{ex:ohboy}:} By definition, we have:
$$
\fR_\blamu^+ = \prod_{\bsq \in \blamu} \left( \prod_{\square \in \bmu} \zeta \left( \frac {\chi_\bsq}{\chi_\square}\right) \prod_{i=1}^\bw \Big[\frac {u_i}{q\chi_\bsq} \Big] \right) \frac {\prod_{\sq \in \bmu} \left(\prod_{\bsq \in \bmu} \zeta\left(\frac {\chi_\bsq}{\chi_\sq}\right) \prod_{i=1}^{\bw} \Big[\frac {u_i}{q\chi_\sq} \Big] \Big[\frac {\chi_\sq}{qu_i} \Big]  \right)^{(-)}}{\prod_{\sq \in \bla} \left( \prod_{\bsq \in \bla} \zeta\left(\frac {\chi_\bsq}{\chi_\sq}\right) \prod_{i=1}^{\bw} \Big[\frac {u_i}{q\chi_\sq} \Big] \Big[\frac {\chi_\sq}{qu_i} \Big] \right)^{(-)}} 
$$
where we recall that the superscripts $(+)$, $(0)$ and $(-)$ refer to the fact that we only retain those factors $[x]$ for $\deg x > 0$, $\deg x = 0$ and $\deg x < 0$, respectively. We can simplify the above formula to:
$$
\fR_\blamu^+ = \prod_{\bsq \in \blamu}  \frac {\left(\prod_{\square \in \bmu} \zeta \left( \frac {\chi_\bsq}{\chi_\square}\right) \prod_{i=1}^\bw \Big[\frac {u_i}{q\chi_\bsq} \Big]  \right)^{(+) \text{ and }(0) \text{ and }(-)} }{\left(\prod_{\square \in \bmu} \zeta \left( \frac {\chi_\bsq}{\chi_\square}\right) \prod_{i=1}^\bw \Big[\frac {u_i}{q\chi_\bsq} \Big]  \prod_{\sq \in \bla} \zeta\left(\frac {\chi_\sq}{\chi_\bsq}\right) \prod_{i=1}^{\bw} \Big[\frac {\chi_\bsq}{qu_i} \Big]  \right)^{(-)}} = %\prod_{\bsq' \in \blamu} \zeta\left(\frac {\chi_{\bsq'}}{\chi_{\bsq}}\right)
$$
\begin{equation}
\label{eqn:cavalcade}
=  \frac {\left( \prod^{\bsq \in \blamu}_{\square \in \bmu} \zeta \left( \frac {\chi_\bsq}{\chi_\square}\right) \prod_{i=1}^\bw \Big[\frac {u_i}{q\chi_\bsq} \Big] \right)^{(+)} }{\left(\prod^{\bsq \in \blamu}_{\sq \in \bmu} \zeta\left(\frac {\chi_\sq}{\chi_\bsq}\right) \prod_{i=1}^{\bw} \Big[\frac {\chi_\bsq}{qu_i} \Big]  \right)^{(-)}} \cdot \frac {\prod_{\bsq \in \blamu} \left( \prod_{\square \in \bmu} \zeta \left( \frac {\chi_\bsq}{\chi_\square}\right) \prod_{i=1}^\bw \Big[\frac {u_i}{q\chi_\bsq} \Big] \right)^{(0)}}{\prod_{\bsq,\bsq' \in \blamu} \zeta\left(\frac {\chi_\bsq}{\chi_{\bsq'}} \right)^{(-)}}
\end{equation}
The degree 0 factors do not contribute to the max deg and min deg. Because of \eqref{eqn:ossum}, the first fraction does not contribute anything to the maximal degree, so we conclude that:
$$
r_\blamu = \maxdeg \fR^+_\blamu = \maxdeg \frac 1{\prod_{\bsq,\bsq' \in \blamu} \zeta \left(\frac {\chi_{\bsq'}}{\chi_\bsq}\right)^{(-)}} = -\frac 12 \sum_{\bsq,\bsq' \in \blamu} z(c_\bsq - c_{\bsq'})
$$
The reason behind the last equality is that for any pair of boxes $\bsq, \bsq'$, precisely one of the factors $\Big[ \frac {\chi_\bsq}{q_1\chi_{\bsq'}}\Big]$ and $\Big[ \frac {\chi_{\bsq'}}{q_2\chi_{\bsq}}\Big]$ appears in the above product (unless these factors have degree 0, in which case they do not appear at all). One proves the statement for $\mindeg$ analogously. Finally, we need to estimate the lowest degree term of \eqref{eqn:cavalcade} when $\blamu$ is a cavalcade. According to \eqref{eqn:tarth}, for any skew partition $\bmu$ and any box $\bsq$ we have:
\begin{equation}
\label{eqn:ics}
\prod_{\sq \in \bmu} \zeta\left(\frac {\chi_\bsq}{\chi_\sq} \right) \prod_{i=1}^{\bw} \Big[\frac {u_i}{q\chi_\bsq} \Big] \ = \ \frac {\prod^{\text{inner corners}}_{\sq \text{ of }\bmu} \Big[ \frac {\chi_\sq}{q^2 \chi_\bsq} \Big]}{\prod^{\text{outer corners}}_{\sq \text{ of }\bmu} \Big[ \frac {\chi_\sq}{q^2 \chi_\bsq} \Big]} 
\end{equation}
\begin{equation}
\label{eqn:igrec}
\prod_{\sq \in \bmu} \zeta\left(\frac {\chi_\sq}{\chi_\bsq} \right) \prod_{i=1}^{\bw} \Big[\frac {\chi_\bsq}{qu_i} \Big]  \ = \ \frac {\prod^{\text{inner corners}}_{\sq \text{ of }\bmu} \Big[ \frac {\chi_\bsq}{\chi_\sq} \Big]}{\prod^{\text{outer corners}}_{\sq \text{ of }\bmu} \Big[ \frac {\chi_\bsq}{\chi_\sq} \Big]} 
\end{equation}
and so we can write \eqref{eqn:cavalcade} as:

\begin{equation}
\label{eqn:ofribbons}
\fR_\blamu^+ = \frac {\prod_{\bsq \in \blamu} \prod^{\text{inner corners } \sq \text{ of }\bmu}_{\text{of content }c_\sq > c_\bsq} \frac {\Big[ \frac {\chi_\sq}{q^2 \chi_\bsq} \Big]}{\Big[ \frac {\chi_\bsq}{\chi_\sq} \Big]}}{\prod_{\bsq \in \blamu} \prod^{\text{outer corners } \sq \text{ of }\bmu}_{\text{of content }c_\sq > c_\bsq} \frac {\Big[ \frac {\chi_\sq}{q^2 \chi_\bsq} \Big]}{\Big[ \frac {\chi_\bsq}{\chi_\sq} \Big]}} \cdot \frac {\prod_{\bsq \in \blamu} \frac {\prod^{\text{inner corners } \sq \text{ of }\bmu}_{\text{of content }c_\sq = c_\bsq} \Big[ \frac {\chi_\sq}{q^2 \chi_\bsq} \Big]}{\prod^{\text{outer corners } \sq \text{ of }\bmu}_{\text{of content }c_\sq = c_\bsq} \Big[ \frac {\chi_\sq}{q^2 \chi_\bsq} \Big]}}{\prod_{\bsq,\bsq' \in \blamu} \zeta\left(\frac {\chi_\bsq}{\chi_{\bsq'}} \right)^{(-)}}
\end{equation}
To evaluate the term of lowest degree in the first product, observe that:
\begin{equation}
\label{eqn:elde1}
\ld \frac {\Big[ \frac {\chi}{q^2 \chi'} \Big]}{\Big[ \frac {\chi'}{\chi} \Big]} \ = \ -q %\hd \frac {\Big[ \frac {\chi}{q^2 \chi'} \Big]}{\Big[ \frac {\chi'}{\chi} \Big]} \ = \ (-q)^{-1}
\end{equation}
for any $\chi>\chi'$. There are as many such factors of $-q$ in \eqref{eqn:ofribbons} as there are boxes $\bsq \in \blamu$ to the northwest of a signed corner (count with sign $+$ if an inner corner and with sign $-$ if an outer corner) in $\bmu$. For $C$ a cavalcade of ribbons, this number is precisely $N_{C}^+$. As for the second product in \eqref{eqn:ofribbons}, the numerator picks up a factor of $[q^{-2}]$ (respectively $[q^{-2}]^{-1}$) whenever the cavalcade $C$ intersects an inner (respectively outer) corner of the Young diagram $\bmu$. Since a ribbon always intersects one more inner corner than outer corners, the contribution of the numerator is $[q^{-2}]^{\# \text{ of ribbons}} = [q^{-2}]^{\#_C}$. This concludes our estimate of \eqref{eqn:cal}. Let us now study the case of $-$:
$$
\fR_\blamu^- = \frac 1{\prod_{\bsq \in \blamu} \left( \prod_{\square \in \bla} \zeta \left( \frac {\chi_\square}{\chi_\bsq}\right) \prod_{i=1}^\bw \Big[\frac {\chi_\bsq}{qu_i} \Big] \right)} \cdot \frac {\prod_{\sq \in \bla} \left( \prod_{\bsq \in \bla} \zeta\left(\frac {\chi_\bsq}{\chi_\sq}\right) \prod_{i=1}^{\bw} \Big[\frac {u_i}{q\chi_\sq} \Big] \Big[\frac {\chi_\sq}{qu_i} \Big] \right)^{(-)}}{\prod_{\sq \in \bmu} \left(\prod_{\bsq \in \bmu} \zeta\left(\frac {\chi_\bsq}{\chi_\sq}\right) \prod_{i=1}^{\bw} \Big[\frac {u_i}{q\chi_\sq} \Big] \Big[\frac {\chi_\sq}{qu_i} \Big]  \right)^{(-)}}
$$
$$
= \prod_{\bsq \in \blamu}  \frac {\left( \prod_{\sq \in \bla} \zeta\left(\frac {\chi_\sq}{\chi_\bsq}\right) \prod_{i=1}^{\bw} \Big[\frac {\chi_\bsq}{qu_i} \Big] \prod_{\sq \in \bmu} \zeta\left(\frac {\chi_\bsq}{\chi_\sq}\right)  \prod_{i=1}^\bw \Big[\frac {u_i}{q\chi_\bsq} \Big] \right)^{(-)}}{\left( \prod_{\square \in \bla} \zeta \left( \frac {\chi_\square}{\chi_\bsq}\right) \prod_{i=1}^\bw \Big[\frac {\chi_\bsq}{qu_i} \Big] \right)^{(+) \text{ and }(0) \text{ and }(-)}} = 
$$
$$
=  \frac {\left( \prod^{\bsq \in \blamu}_{\sq \in \bla} \zeta\left(\frac {\chi_\bsq}{\chi_\sq}\right)  \prod_{i=1}^\bw \Big[\frac {u_i}{q\chi_\bsq} \Big] \right)^{(-)}}{\left( \prod^{\bsq \in \blamu}_{\square \in \bla} \zeta \left( \frac {\chi_\square}{\chi_\bsq}\right) \prod_{i=1}^\bw \Big[\frac {\chi_\bsq}{qu_i} \Big] \right)^{(+)}} \cdot \frac 1{\left( \prod^{\bsq \in \blamu}_{\square \in \bla} \zeta \left( \frac {\chi_\square}{\chi_\bsq}\right) \prod_{i=1}^\bw \Big[\frac {\chi_\bsq}{qu_i} \Big] \right)^{(0)} \prod^{\bsq \in \blamu}_{\bsq' \in \blamu} \zeta\left(\frac {\chi_\bsq}{\chi_\bsq'}\right)^{(-)}}
$$
The first factor does not contribute anything to the computation of the maximal degree $r_\blamu$, because of \eqref{eqn:ossum}. Meanwhile, the degree 0 terms also contribute nothing, so we conclude that the maximal degree of $\fR_\blamu^-$ equals: 
$$
\maxdeg \fR^-_\blamu = \maxdeg \frac 1{\prod_{\bsq,\bsq' \in \blamu} \zeta \left(\frac {\chi_{\bsq'}}{\chi_\bsq}\right)^{(-)}} = -\frac 12 \sum_{\bsq,\bsq' \in \blamu} z(c_\bsq - c_{\bsq'})
$$
which is precisely \eqref{eqn:stannis}. The case of $\mindeg$ is treated analogously. Let us now compute the term of lowest degree, and we will do so by rewriting the above expression according to \eqref{eqn:ics} and \eqref{eqn:igrec}:
\begin{equation}
\label{eqn:manyribbons}
\fR_\blamu^- = \frac {\prod_{\bsq \in \blamu} \prod^{\text{inner corners } \sq \text{ of }\bla}_{\text{of content }c_\sq < c_\bsq} \frac {\Big[ \frac {\chi_\sq}{q^2 \chi_\bsq} \Big]}{\Big[ \frac {\chi_\bsq}{\chi_\sq} \Big]}}{\prod_{\bsq \in \blamu} \prod^{\text{outer corners } \sq \text{ of }\bla}_{\text{of content }c_\sq < c_\bsq} \frac {\Big[ \frac {\chi_\sq}{q^2 \chi_\bsq} \Big]}{\Big[ \frac {\chi_\bsq}{\chi_\sq} \Big]}} \cdot \frac {\prod_{\bsq \in \blamu} \frac {\prod^{\text{outer corners } \sq \text{ of }\bla}_{\text{of content }c_\sq = c_\bsq} \Big[ \frac {\chi_\bsq}{\chi_\sq} \Big]}{\prod^{\text{inner corners } \sq \text{ of }\bla}_{\text{of content }c_\sq = c_\bsq} \Big[ \frac {\chi_\bsq}{\chi_\sq} \Big]}}{\prod^{\bsq \in \blamu}_{\bsq' \in \blamu} \zeta\left(\frac {\chi_\bsq}{\chi_\bsq'}\right)^{(-)}} %\prod_{\bsq,\bsq' \in \blamu} \zeta\left(\frac {\chi_\bsq}{\chi_{\bsq'}} \right)^{(-)}
\end{equation}
Let us compute $\rho^-_\blamu$, namely the lowest degree terms of the above expression. As:
\begin{equation}
\label{eqn:elde2}
\ld \frac {\Big[ \frac {\chi}{q^2 \chi'} \Big]}{\Big[ \frac {\chi'}{\chi} \Big]} \ = \ (-q)^{-1} 
\end{equation}
for any $\chi<\chi'$, the lowest degree term of the first factor of \eqref{eqn:manyribbons} consists of as many factors of $(-q)$ as there are boxes $\bsq \in \blamu$ to the southeast of a signed corner (count with sign $-$ for an inner corner and with sign $+$ for an outer corner) in $\bla$. The lowest degree term of the numerator of the second factor of \eqref{eqn:manyribbons} equals the product $[q^{-2}]...[q^{-2d_\sq}]$ for any outer corner of $\bla$ which has $d_\sq$ boxes of $\blamu$ diagonally southwest of it, times the product $[q^{-2}]^{-1}...[q^{-2d_\sq}]^{-1}$ for any inner corner of $\bla$ which has $d_\sq$ boxes of $\blamu$ diagonally southwest of it. Therefore, to prove \eqref{eqn:stamp}, we need to show that if $\blamu = S$ is a stampede of ribbons, we have:

\begin{equation}
\label{eqn:finalmente}
\frac {\prod_{i\leq a < b <j}^{a\leftrightarrow b} \ld \zeta \left(\frac {\chi_{\sq_a}}{\chi_{\sq_b}}\right)}{\prod_{i\leq a, b < j} \ld \zeta\left(\frac {\chi_{\sq_a}}{\chi_{\sq_b}}\right)^{(-)}} = \frac {\prod^{\sq \text{ inner}}_{\text{corner of }\bla} [q^{-2}]...[q^{-2d_\sq}]}{\prod^{\sq \text{ outer}}_{\text{corner of }\bla} [q^{-2}]...[q^{-2d_\sq}]} \cdot [q^{-2}]^{\#_R} (-q)^{N_S^- - \widetilde{\#}}
\end{equation}
where $\widetilde{\#}$ denotes the number of signed corners ($-$ inner $+$ outer) of $\bla$ to the northwest of any box $\sq_i,...,\sq_{j-1}$, counted with multiplicities. The left hand side of \eqref{eqn:finalmente} equals:
$$
\frac {\prod_{i\leq a < b <j}^{a\leftrightarrow b} \ld \zeta \left(\frac {\chi_{\sq_a}}{\chi_{\sq_b}}\right)^{(-) \text{ or } (0) \text{ or }(+)}}{\prod_{a < b} \ld \zeta\left(\frac {\chi_{\sq_a}}{\chi_{\sq_b}}\right)^{(-)}\prod_{a < b} \ld \zeta\left(\frac {\chi_{\sq_b}}{\chi_{\sq_a}}\right)^{(-)}} = \prod_{i \leq a < b < j} \zeta\left(\frac {\chi_{\sq_a}}{\chi_{\sq_b}}\right)^{(0)}  \prod_{i\leq a < b <j}^{a\leftrightarrow b} \ld \frac {\zeta \left(\frac {\chi_{\sq_a}}{\chi_{\sq_b}}\right)^{(+)}}{\zeta\left(\frac {\chi_{\sq_b}}{\chi_{\sq_a}}\right)^{(-)}}
$$
By dividing out \eqref{eqn:ics}/\eqref{eqn:igrec} for the diagram $\bla$ by \eqref{eqn:ics}/\eqref{eqn:igrec} for $\bmu$, we obtain:
\begin{equation}
\label{eqn:division}
\prod_{\sq \in \blamu} \zeta\left(\frac {\chi_\bsq}{\chi_\sq} \right) \ = \ \frac {\prod^{\text{inner corners}}_{\sq \text{ of }\blamu} \Big[ \frac {\chi_\sq}{q^2 \chi_\bsq} \Big]}{\prod^{\text{outer corners}}_{\sq \text{ of }\blamu} \Big[ \frac {\chi_\sq}{q^2 \chi_\bsq} \Big]} 
\end{equation}
\begin{equation}
\label{eqn:bell}
\prod_{\sq \in \blamu} \zeta\left(\frac {\chi_\sq}{\chi_\bsq} \right) \ = \ \frac {\prod^{\text{inner corners}}_{\sq \text{ of }\blamu} \Big[ \frac {\chi_\bsq}{\chi_\sq} \Big]}{\prod^{\text{outer corners}}_{\sq \text{ of }\blamu} \Big[ \frac {\chi_\bsq}{\chi_\sq} \Big]} 
\end{equation}
where an inner (respectively outer) corner of $\blamu$ is defined as either an inner (respectively outer) corner of $\bla$ or as an outer (respectively inner) corner of $\bmu$. Recall from Section \ref{sec:basicpar} that a stampede of ribbons traces out a collection of intermediate diagrams between $\bmu$ and $\bla$:
$$
\bla = \bnu_0 \geq \bnu_1 \geq ... \geq \bnu_k = \bmu \qquad \qquad R_s = \bnu_{s-1} / \bnu_{s}
$$
Therefore, \eqref{eqn:bell} implies that:
$$
\prod_{i \leq a < b < j} \zeta\left(\frac {\chi_{\sq_a}}{\chi_{\sq_b}}\right)^{(0)} = \prod_{s=1}^k \prod_{\bsq \in R_s} \prod_{\sq \in R_{1} \sqcup ... \sqcup R_{s-1}} \zeta\left(\frac {\chi_\sq}{\chi_\bsq}\right)^{(0)} = \prod_{s=1}^k \prod_{\bsq \in R_s} \frac {\prod^{\text{inner corners}}_{\sq \text{ of }\bla \backslash \bnu_{s-1}} \Big[ \frac {\chi_\bsq}{\chi_\sq} \Big]}{\prod^{\text{outer corners}}_{\sq \text{ of }\bla \backslash \bnu_{s-1}} \Big[ \frac {\chi_\bsq}{\chi_\sq} \Big]} 
$$
The right hand side contributes $[q^{-2}]^{\#_R}$ from the inner/outer corners of each $\bnu_{s-1}$, and precisely $\frac {\prod^{\sq \text{ inner}}_{\text{corner of }\bla} [q^{-2}]...[q^{-2d_\sq}]}{\prod^{\sq \text{ outer}}_{\text{corner of }\bla} [q^{-2}]...[q^{-2d_\sq}]}$ from the inner/outer corners of $\bla$. Applying \eqref{eqn:division} and \eqref{eqn:bell} implies that:
$$
\prod_{i\leq a < b <j}^{a\leftrightarrow b} \frac {\zeta \left(\frac {\chi_{\sq_a}}{\chi_{\sq_b}}\right)^{(+)}}{\zeta\left(\frac {\chi_{\sq_b}}{\chi_{\sq_a}}\right)^{(-)}} = \prod_{s=1}^k \prod_{\bsq \in R_s} \prod_{\sq \in R_{1} \sqcup ... \sqcup R_{s}} \frac {\zeta \left(\frac {\chi_{\sq}}{\chi_{\bsq}}\right)^{(+)}}{\zeta\left(\frac {\chi_{\bsq}}{\chi_{\sq}}\right)^{(-)}} = \prod_{s=1}^k \prod_{\bsq \in R_s} \frac {\prod^{\text{inner corners}}_{\sq \text{ of }\bla \backslash \bnu_{s}} \frac {\Big[ \frac {\chi_\bsq}{\chi_\sq} \Big]^{(+)}}{\Big[ \frac {\chi_\sq}{q^2\chi_\bsq} \Big]^{(-)}} }{\prod^{\text{outer corners}}_{\sq \text{ of }\bla \backslash \bnu_{s}} \frac {\Big[ \frac {\chi_\bsq}{\chi_\sq} \Big]^{(+)}}{\Big[ \frac {\chi_\sq}{q^2\chi_\bsq} \Big]^{(-)}}} 
$$
We may compute the lowest degree term of the above by using \eqref{eqn:elde1}, and we obtain:
$$
\prod_{i\leq a < b <j}^{a\leftrightarrow b} \ld \frac {\zeta \left(\frac {\chi_{\sq_a}}{\chi_{\sq_b}}\right)^{(+)}}{\zeta\left(\frac {\chi_{\sq_b}}{\chi_{\sq_a}}\right)^{(-)}} = (-q)^{N_S^- - \widetilde{\#}}
$$
This count completes the proof of \eqref{eqn:finalmente}, and hence \eqref{eqn:stamp}. \\
\end{proof}

\begin{proof} \emph{of Exercise \ref{ex:final}:} Write $x_a = \delta_i+...+\delta_{a}$, and the inequality becomes:
$$
\sum_{a=i+1}^{j-1} \min(0,\delta_{a}) \leq \sum_{a=i+1}^{j} \delta_{a} \left(m_i+...+m_{a-1} - \lfloor m_i+...+m_{a-1} \rfloor \right)  \leq \sum_{a=i+1}^{j-1} \max(0,\delta_{a})
$$
The above inequalities hold for all real numbers $\delta_i,...,\delta_{j-1}$, and the first inequality can be an equality only if $\delta_{a} = 0$ or if $m_i+...+m_{a-1} \in \BZ$ and $\delta_{a}>0$. The second inequality can be an equality only if $\delta_a = 0$ or $m_i+...+m_{a-1} \in \BZ$ and $\delta_a<0$. 

\end{proof}

\startbibliography
 \begin{singlespace} 
  \bibliography{References} 
 \end{singlespace}

\end{document}